\newcommand{\R}{{\mathbb{R}}}
\newcommand{\Z}{{\mathbb{Z}}}
\newcommand{\C}{\mathbb{C}}
\newcommand{\F}{\mathcal{F}}
\renewcommand{\H}{\mathbb{H}}
\newcommand{\BB}{\widehat{B}}
\def\vecx{{\text{\boldmath$x$}}}
\def\vecy{{\text{\boldmath$y$}}}
\def\vecz{{\text{\boldmath$z$}}}
\def\vecu{{\text{\boldmath$u$}}}
\def\vecm{{\text{\boldmath$m$}}}
\def\vece{{\text{\boldmath$e$}}}
\def\vecM{{\text{\boldmath$M$}}}
\def\vecX{{\text{\boldmath$X$}}}
\def\vecal{{\text{\boldmath$\alpha$}}}
\def\vecgam{{\text{\boldmath$\gamma$}}}
\def\vecdel{{\text{\boldmath$\delta$}}}
\def\vecom{{\text{\boldmath$\omega$}}}
\def\vecmu{{\text{\boldmath$\mu$}}}
\def\vecnu{{\text{\boldmath$\nu$}}}
\def\vec0{{\text{\boldmath$0$}}}
\def\Re{\operatorname{Re}}
\newcommand{\B}{\mathfrak{B}}
\newcommand{\ve}{\varepsilon}
\newcommand{\sfrac}[2]{{\textstyle \frac {#1}{#2}}}
\newcommand{\qspace}{\Gamma \setminus \H^{n+1}}
\newcommand{\matr}[4]{\left( \begin{smallmatrix} #1 & #2 \\ #3 & #4
\end{smallmatrix} \right) }
\newcommand{\matri}[4]{\left( \begin{matrix} #1 & #2 \\ #3 & #4
\end{matrix} \right) }
\newcommand{\PSL}{\mathrm{PSL}}
\newcommand{\PSO}{\mathrm{PSO}}
\newcommand{\SL}{\mathrm{SL}}
\newcommand{\SO}{\mathrm{SO}}
\newcommand{\GL}{\mathrm{GL}}
\newcommand{\Gammainfty}{\Gamma_{\infty}}
\newtheorem{thm}{Theorem}[section]
\newtheorem{lem}[thm]{Lemma}
\newtheorem{prop}[thm]{Proposition}
\newtheorem{cor}[thm]{Corollary}
\newtheorem{defi}[thm]{Definition}
\theoremstyle{remark}
\newtheorem{remark}[thm]{Remark}
\numberwithin{equation}{section}
\begin{document}
\title[On the uniform equidistribution of closed horospheres]{On the uniform equidistribution of closed horospheres in hyperbolic manifolds}
\author{Anders S\"odergren}
\address{Department of Mathematics, Uppsala University, Box 480, 751 06 Uppsala, Sweden\newline
\rule[0ex]{0ex}{0ex}\hspace{8pt} {\tt sodergren@math.uu.se}\newline
\rule[0ex]{0ex}{0ex} \hspace{8pt}\textit{Present address:} 
School of Mathematics, Institute for Advanced Study, Einstein\newline 
\rule[0ex]{0ex}{0ex} \hspace{8pt}Drive, Princeton, NJ 08540, USA\newline
\rule[0ex]{0ex}{0ex} \hspace{8pt}{\tt sodergren@math.ias.edu}} 
\date{\today}

\maketitle

\begin{abstract}
We prove asymptotic equidistribution results for pieces of large closed
horospheres in cofinite hyperbolic manifolds of arbitrary dimension. This extends earlier results
by Hejhal \cite{dennis2} and Str\"ombergsson \cite{andreas} in dimension 2. Our proofs use spectral methods, and lead to precise estimates on the rate of convergence to equidistribution.
\end{abstract}

\tableofcontents

\section{Introduction}

Let $M$ be a non-compact hyperbolic manifold of finite volume. To present our problem we first consider the case when $M$ is two dimensional. Then the surface $M$ has a finite number of cusps, and to each cusp corresponds a one-parameter family of closed horocycle curves in $M$. In each family there exists a unique closed horocycle of any given length $\ell>0$, and it is known that as $\ell\to\infty$, the closed horocycle becomes asymptotically equidistributed on $M$ with respect to the hyperbolic area measure. Investigations related to this fact have been carried out by a number of people over the years, including Selberg (unpublished), Zagier \cite{zag}, Sarnak \cite{sarnak}, Hejhal \cite{dennis2}, Flaminio and Forni \cite{flam}, and
Strömbergsson \cite{andreas}. 

In \cite{dennis2}, Hejhal asked to what exact degree of \textit{uniformity} does this equidistribution hold? Specifically, given a \textit{subsegment} of length $\ell_1<\ell$ of a closed horocycle of length $\ell$, under what conditions on $\ell_1$ can we ensure that this subsegment becomes asymptotically equidistributed on $M$? Hejhal proved that this holds as long as we keep $\ell_1\geq\ell^{c+\ve}$ ($\ve>0$) as $\ell\to\infty$, where $c\geq\frac{2}{3}$ is a constant which only depends on the surface $M$. This was later improved by Str\"ombergsson \cite{andreas} to allowing $c=\frac{1}{2}$, independently of $M$. This constant is optimal. The equidistribution results both in \cite{dennis2} and \cite{andreas} were obtained with explicit rates. 

Our purpose in the present paper is to generalize these equidistribution results to the case when $M$ is a hyperbolic manifold of arbitrary dimension $n+1$. We realize $M$ as the quotient $M=\Gamma\setminus \H^{n+1}$, where $\H^{n+1}$ is the hyperbolic upper half space,
\begin{align*}
\H^{n+1}=\big\{P=(\vecx,y)\mid \vecx\in\R^{n},y\in\R_{>0}\big\}
\end{align*}
with Riemannian metric $ds^2=y^{-2}(dx_1^2+\ldots+dx_n^2+dy^2)$, and $\Gamma$ is a cofinite (but not cocompact) discrete subgroup of the group $G$ of orientation preserving isometries of $\H^{n+1}$. Without loss of generality we can assume that one of the cusps is placed at infinity. Then the fixator group $\Gamma_{\infty}\subset\Gamma$ contains a subgroup of finite index consisting of translations,
\begin{align*}
 \Gamma_{\infty}'=\Big\{(\vecx,y)\mapsto (\vecx+\vecom,y)\:\big|\:\vecom\in\Lambda\Big\}
\end{align*}
where $\Lambda$ is some lattice in $\R^n$. Let $\Omega=\{\vecom_1,\vecom_2,\ldots,\vecom_n\}$ be a basis of $\Lambda$. Now, for each $y>0$ and any $\alpha_i, \beta_i\in\R$ with $\alpha_i<\beta_i$, we set
\begin{align*}
 \B=\Big\{(u_1\vecom_1+\cdots+u_n\vecom_n,y)\mid u_i\in[\alpha_i,\beta_i]\:\textrm{for}\:i=1,\ldots,n\Big\}.
\end{align*}
This is a box-shaped subset of a closed horosphere in $M$. (Note, though, that if $\Gamma_{\infty}\neq\Gamma_{\infty}'$ then the map $\B\to M$ need not be injective, even if $\beta_i<\alpha_i+1$ for all $i$.) Our first main theorem says that as $y\to0$, the box $\B$ becomes asymptotically equidistributed in $M$ with respect to the hyperbolic volume measure $d\nu=y^{-n-1}dx_1\ldots dx_ndy$, so long as we keep all $\beta_i-\alpha_i\geq y^{1/2-\ve}$.

\begin{thm}\label{thmA}
Let $\Gamma$ be a cofinite discrete subgoup of $G$ such that $\Gamma\setminus \H^{n+1}$ has a cusp at infinity. Let $\ve>0$, and let $f$ be a fixed continuous function of compact support on $\Gamma\setminus \H^{n+1}$. Then
\begin{align}\label{thmAeq}
 \frac{1}{(\beta_1-\alpha_1)\cdots(\beta_n-\alpha_n)}\int_{\alpha_1}^{\beta_1}\cdots\int_{\alpha_n}^{\beta_n}f(u_1\vecom_1+\cdots+u_n\vecom_n,y)\,du_1\ldots
du_n\\
\to\frac{1}{\nu(\Gamma\setminus\H^{n+1})}\int_{\Gamma\setminus\H^{n+1}}f(P)\,d\nu(P),\nonumber
\end{align}
uniformly as $y\to0$ so long as $\beta_1-\alpha_1,\ldots,\beta_n-\alpha_n\geq y^{1/2-\ve}$.
\end{thm}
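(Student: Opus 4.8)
The plan is to combine the spectral decomposition of $L^2(\Gamma\setminus\H^{n+1})$ with the Fourier expansion along the horosphere at the cusp at infinity, following the strategy used by Str\"ombergsson in dimension $2$.

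\textbf{Reduction to smooth $f$ and spectral decomposition.} Since the normalised box measure on the left of \eqref{thmAeq} has total mass $1$, approximating a continuous compactly supported $f$ uniformly by a smooth compactly supported $f_\ve$ reduces the statement to proving \eqref{thmAeq}, with an explicit rate, for $f\in C_c^\infty(\Gamma\setminus\H^{n+1})$, uniformly over all admissible boxes. For such $f$ one writes the spectral expansion
\begin{align*}
f=\frac{\langle f,1\rangle}{\nu(\Gamma\setminus\H^{n+1})}+\sum_{j\geq1}c_j\phi_j+\sum_{\eta}\frac{1}{c_\eta}\int_{\R}g_\eta(r)\,E_\eta\bigl(\,\cdot\,,\tfrac n2+ir\bigr)\,dr,
\end{align*}
the middle sum running over the discrete spectrum (cusp forms and, if present, residues of Eisenstein series), with eigenvalues $0<\lambda_1\leq\lambda_2\leq\cdots$. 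Smoothness of $f$ and repeated application of $\Delta$ give $c_j\ll_{f,N}\lambda_j^{-N}$ and $g_\eta(r)\ll_{f,N}(1+|r|)^{-N}$ for every $N$; together with Weyl's law and Sobolev bounds for $\|\phi_j\|_\infty$ and for $E_\eta(\,\cdot\,,\tfrac n2+ir)$ on compacta this makes the series converge uniformly on $\overline{\B}\cup\operatorname{supp} f$, so that it may be integrated term by term over $\B$. The constant term reproduces exactly the right-hand side of \eqref{thmAeq}; the task is to show that every other term tends to $0$, with control on the rate.

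\textbf{Fourier expansion and the main estimate.} Fix a non-constant eigenfunction $\varphi$ (a cusp form, a residual form, or an Eisenstein series $E_\eta(\,\cdot\,,\tfrac n2+ir)$) of eigenvalue $\lambda=s(n-s)$, $s=\sigma+ir$, and write its Fourier expansion at infinity,
\begin{align*}
\varphi(\vecx,y)=a_0(y)+\sum_{\vecm\in\Lambda^{*}\setminus\{\vec0\}}\rho_\varphi(\vecm)\,y^{n/2}K_{s-n/2}\bigl(2\pi|\vecm|y\bigr)\,e^{2\pi i\,\vecm\cdot\vecx},
\end{align*}
where $a_0\equiv0$ for cusp forms and $a_0(y)\ll y^{\min(\sigma,\,n-\sigma)}$ for the other two cases. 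Averaging over $\B$ and integrating term by term, the zero mode contributes the box-independent quantity $a_0(y)\to0$, whereas the $i$-th coordinate integral of $e^{2\pi i\,\vecm\cdot\vecx}$ over $\B$ produces a factor of modulus $\leq\min\bigl(1,(\pi|\vecm\cdot\vecom_i|(\beta_i-\alpha_i))^{-1}\bigr)$. Writing $W_i:=\beta_i-\alpha_i$, the crux is to bound
\begin{align*}
\sum_{\vecm\neq\vec0}|\rho_\varphi(\vecm)|\,y^{n/2}\,\bigl|K_{s-n/2}(2\pi|\vecm|y)\bigr|\,\prod_{i=1}^{n}\min\Bigl(1,\tfrac{1}{\pi|\vecm\cdot\vecom_i|W_i}\Bigr)
\end{align*}
by $\ll_\lambda y^{n/2}\bigl(\log\tfrac1y\bigr)^{O(1)}/(W_1\cdots W_n)$, with polynomial dependence on $\lambda$ and on $|r|$. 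I would split at $|\vecm|\asymp y^{-1}$: for $|\vecm|>y^{-1}$ the Bessel factor is exponentially small, which together with the crude Rankin--Selberg bound $\sum_{|\vecm|\leq T}|\rho_\varphi(\vecm)|^2\ll_\lambda T^{\,n}$ (and the usual polynomial bounds on divisor sums and $\zeta$ in the Eisenstein case) kills the tail; for $|\vecm|\leq y^{-1}$ one decomposes dyadically in each of the $n$ coordinates $\vecm\cdot\vecom_i$, on each dyadic block uses the small-argument bound for $K_{s-n/2}$, and applies Cauchy--Schwarz against the weight, which is essentially constant on that block. Finally one sums the resulting per-$\varphi$ bounds against the rapidly decaying $c_j$ and $g_\eta(r)$ and invokes $W_i\geq y^{1/2-\ve}$, so that the total non-constant contribution is $\ll_{f,\ve}y^{\delta}$ for some $\delta=\delta(\Gamma,n,\ve)>0$, which is \eqref{thmAeq} with an explicit rate.

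\textbf{The main obstacle.} Two points make dimension $n+1>2$ genuinely harder than the classical case. First, the weight $\prod_i\min(1,(\pi|\vecm\cdot\vecom_i|W_i)^{-1})$ is concentrated near the coordinate hyperplanes, not in a ball, so the dyadic/Cauchy--Schwarz step above requires a \emph{box-shaped} Rankin--Selberg estimate $\sum_{|\vecm\cdot\vecom_i|\leq T_i\ (1\leq i\leq n)}|\rho_\varphi(\vecm)|^2\ll_\lambda T_1\cdots T_n$, uniform in $\lambda$, for general cofinite $\Gamma\subset G$; establishing this (say by unfolding coordinate by coordinate, or by a suitable large-sieve inequality) is the technically most demanding part. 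Second, for the \emph{finitely many} exceptional parameters with $\lambda<n^2/4$ --- small-eigenvalue cusp forms and residual forms --- the absolute-value estimate fails, because $K_{s-n/2}(2\pi|\vecm|y)\asymp(|\vecm|y)^{-(\sigma-n/2)}$ grows as $|\vecm|y\to0$; these must be handled by a separate argument that retains the oscillation of the Fourier series rather than passing to absolute values (for instance an interpolation/contour-shift argument, comparing with the bound on the line $\sigma=n/2$ and the trivial bound at $s=n$). Granting both of these, the scheme above goes through in all dimensions and yields the precise rate of convergence advertised in the abstract.
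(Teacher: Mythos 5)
Your proposal correctly identifies the skeleton (spectral expansion, termwise integration along the horosphere, isolating the constant term), and you are right that the two issues you flag are exactly where the difficulty lies. But you do not overcome either one, and the route you propose is in fact not the route the paper takes.

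The paper sidesteps your first obstacle entirely. Rather than integrating the spectral expansion directly against the characteristic function of the box, the paper first proves an equidistribution statement (Theorem \ref{chithm}) with a \emph{smooth} cutoff $\chi_{\vecdel,\vecgam}$, and only afterwards obtains Theorem \ref{thmA} by mollifying the indicator $\chi_0$ of $[-\tfrac12,\tfrac12]^n$ via convolution with an approximate identity $\psi_h$ (Theorem \ref{chi0thm}). Because $\widehat{\chi_{\vecdel,\vecgam}}$ then decays like $|\vecnu|^{-k}$ for any $k$ (cf.\ \eqref{transfestim}), the Eisenstein and large-eigenvalue contributions can be controlled with the \emph{ball}-shaped Rankin--Selberg bounds (Propositions \ref{cusprankin}, \ref{FinalRS}) combined with Cauchy--Schwarz in the spectral parameter; no box-shaped Rankin--Selberg estimate is ever needed. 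The mollification costs $\|\chi_h\|_{n,1}\ll h^{1-n}$, and the paper optimizes over $h$ to trade this against $\|f\|_{L^\infty}\cdot h$; this is precisely why Theorem \ref{chi0thm} carries an exponent $1-\sigma_1/n$ rather than $n-\sigma_1$. Your plan to work directly with $\prod_i\min(1,(\pi|\vecm\cdot\vecom_i|W_i)^{-1})$ and a box Rankin--Selberg estimate is exactly what the introduction describes (the remarks after Theorem \ref{thmB}) as a strategy the author could carry out for $n=2$ but explicitly states has not been made to work for $n\geq 3$; asserting that one should be able to unfold ``coordinate by coordinate, or by a suitable large-sieve inequality'' leaves the hardest step unproved.

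Your second obstacle --- the small-eigenvalue and non-cuspidal modes, where $K_{s-n/2}(u)\asymp u^{n/2-\sigma}$ grows as $u\to 0$ --- is where the real work of the paper lives, and you do not supply the argument. The paper handles these via bounds on exponentially twisted Fourier-coefficient sums for cusp forms (Proposition \ref{Lemma1}) and, much more delicately, for non-cuspidal eigenfunctions (Theorem \ref{Lemma1'}, which occupies ten pages and requires the invariant height function, Shimizu's lemma, and the counting estimates of Section 3), followed by repeated summation by parts (Lemma \ref{sumbp}, Corollary \ref{corsumbp}) to convert these twisted sums into bounds on the horosphere integral (Propositions \ref{chicuspprop}, \ref{chinoncuspprop}). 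Saying such modes ``must be handled by a separate argument that retains the oscillation... for instance an interpolation/contour-shift argument'' names the right principle but supplies no proof. As written, the proposal is therefore a sound outline of where one must do work, not a proof; both gaps are genuine and neither is a routine exercise.
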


Note that when $n=1$ this specializes to the equidistribution result from \cite{andreas} described above, since $\ell\sim y^{-1}$ in this case. The exponent $\frac{1}{2}$ in Theorem \ref{thmA} is in fact the best possible in arbitrary dimension, if we restrict to considering boxes which have all side lengths comparable,
as $y\to0$. Indeed, if we keep \mbox{$\beta_1-\alpha_1=\ldots=\beta_n-\alpha_n=cy^{1/2}$,} say,
with a sufficiently small fixed constant $c>0$, then the box $\B$
can be placed in such a way that all of $\B$ stays far out in some
cuspidal region of $M$ as $y\to0$. Cf.\ Remark \ref{finalremark}.

We obtain all our equidistribution results with \textit{precise rates}, provided that $f$ is sufficiently smooth. In this direction we obtain the best results if instead of the box $\B$ we use a smooth cutoff function in the closed horosphere. Thus let us fix $\chi:\R^n\to \R$ to be a smooth function of compact support. Given $\delta_1,\ldots,\delta_n\in(0,1]$ and $\vecgam:=(\gamma_1,\ldots,\gamma_n)\in\R^n$ we define
\begin{align*}
\chi_{\vecdel,\vecgam}(\vecu):=\chi\big(\sfrac{u_1-\gamma_1}{\delta_1},\ldots,\sfrac{u_n-\gamma_n}{\delta_n}\big),
\end{align*}
and consider the following horosphere integral
\begin{align*}
 \frac{1}{\delta_1\cdots\delta_n}\int_{\R^n}\chi_{\vecdel,\vecgam}(\vecu)f(u_1\vecom_1+\cdots+u_n\vecom_n,y)\,d\vecu,
\end{align*}
where $d\vecu=du_1\ldots du_n$. Note that if we relax the condition that $\chi$ be smooth, and take $\chi$ to be the characteristic function of the unit cube $[-\frac{1}{2},\frac{1}{2}]^n$, then we get back the left hand side of \eqref{thmAeq} with $\alpha_i=\gamma_i-\delta_i/2$ and $\beta_i=\gamma_i+\delta_i/2$.

The precise rate of equidistribution which we obtain depends on the spectrum of the Laplace-Beltrami operator $\Delta$ on $M$. If there are small eigenvalues $0<\lambda<n^2/4$ of $-\Delta$ on $M$, then we take $\sigma_1\in(\frac{n}{2},n)$ so that $\lambda=\sigma_1(n-\sigma_1)$ is the smallest non-zero eigenvalue; otherwise, if there are no small eigenvalues, we set $\sigma_1=\frac{n}{2}$.

\begin{thm}\label{thmB}
Let $\Gamma$ be a cofinite discrete subgroup of $G$ such that
$\Gamma\setminus \H^{n+1}$ has a cusp at infinity. Let $\ve>0$,
let $f$ be a fixed $C^{\infty}$-function on $\H^{n+1}$ which is $\Gamma$-invariant and of compact support on
$\Gamma\setminus \H^{n+1}$, and let $\chi:\R^n\to \R$ be a smooth
function of compact support. Then, uniformly over all
$\vecgam=(\gamma_1,\ldots,\gamma_n)\in\R^n$ and all
$\delta_1,\ldots,\delta_n$ satisfying
$\sqrt{y}\leq\delta_1,\ldots,\delta_n\leq 1$,
 \begin{align}\label{errorterm}
  &\frac{1}{\delta_1\cdots\delta_n}\int_{\R^n}\chi_{\vecdel,\vecgam}(\vecu)f(u_1\vecom_1+\cdots+u_n\vecom_n,y)\,d\vecu\\
  &=\frac{\langle\chi\rangle}{\nu(\Gamma\setminus\H^{n+1})}\int_{\Gamma\setminus\H^{n+1}}f(P)\,d\nu(P)+O\Big(y^{-\ve}\big(y/\delta_{\min}^2\big)^{n-\sigma_1}\Big),\nonumber
 \end{align}
where $\sigma_1\in[\frac{n}{2},n)$ is as above, $\langle\chi\rangle=\int_{\R^n}\chi(\vecx)\,d\vecx$, $\delta_{\min}=\underset{i\in\{1,\ldots,n\}}{\min}\delta_i$, and the implied constant depends on $\Gamma$, $f$, $\chi$, $\ve$ and $\Omega$.
\end{thm}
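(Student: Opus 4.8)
The plan is to combine the spectral decomposition of $f$ on $M:=\Gamma\setminus\H^{n+1}$ with the Fourier expansion at the cusp at infinity. Write
\[
f=\frac{\langle f,1\rangle}{\nu(\Gamma\setminus\H^{n+1})}+\sum_j\langle f,\phi_j\rangle\,\phi_j+\frac1{4\pi}\sum_\kappa\int_{-\infty}^{\infty}\langle f,E_\kappa(\cdot,\tfrac n2+ir)\rangle\,E_\kappa(P,\tfrac n2+ir)\,dr,
\]
where $\{\phi_j\}$ is an orthonormal basis of the non-constant discrete spectrum (Maass cusp forms together with any residual Eisenstein series) and the $E_\kappa$ are the Eisenstein series attached to the cusps. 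Since $f$ is $C^\infty$ and compactly supported on $M$, repeated use of $\Delta$ shows that $\langle f,\phi_j\rangle$ and $\langle f,E_\kappa(\cdot,\tfrac n2+ir)\rangle$ decay faster than any power of the spectral parameter, so the spectral expansion converges absolutely and very rapidly, and it is enough to bound the horosphere integral of each component with implied constants that are at most polynomial in the spectral parameter. Each such component $\psi$ has at the cusp at infinity a Fourier expansion
\[
\psi(\vecx,y)=c_0^{(\psi)}(y)+\sum_{\vecm\in\Lambda^*\setminus\{\vec0\}}c_\vecm^{(\psi)}\,y^{n/2}K_{\sigma-n/2}(2\pi|\vecm|y)\,e^{2\pi i\langle\vecm,\vecx\rangle},
\]
with $\sigma=\tfrac n2+ir$ for the continuous spectrum, $\sigma\in(\tfrac n2,n)$ for residual Eisenstein series (with $c_0^{(\psi)}(y)$ a constant times $y^{n-\sigma}$), $\sigma=\tfrac n2$ for the constant function, and $c_0^{(\psi)}\equiv0$ for cusp forms.

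Substituting this into the horosphere integral and carrying out the $\vecu$-integration turns the contribution of $\psi$ into $\langle\chi\rangle\,c_0^{(\psi)}(y)$ plus $\sum_{\vecm\ne\vec0}c_\vecm^{(\psi)}y^{n/2}K_{\sigma-n/2}(2\pi|\vecm|y)\,e^{2\pi i\langle\vecl,\vecgam\rangle}\,\widehat\chi(\delta_1l_1,\dots,\delta_nl_n)$, up to a phase, where $l_i=\langle\vecm,\vecom_i\rangle\in\Z$ and $\widehat\chi$ is the Fourier transform of $\chi$. Summing the zeroth terms: the constant function gives exactly the main term $\frac{\langle\chi\rangle}{\nu(\Gamma\setminus\H^{n+1})}\int_Mf\,d\nu$, while the residual and the continuous spectrum contribute, after integrating in $r$ against the rapidly decaying $\langle f,E_\kappa\rangle$ and using $|\varphi_{\kappa\kappa'}(\tfrac n2+ir)|\le1$, quantities of size $O(y^{n-\sigma_1})$ and $O(y^{n/2})$, both of which are $\ll(y/\delta_{\min}^2)^{n-\sigma_1}$ since $\delta_{\min}\le1$ and $\sigma_1\ge\tfrac n2$.

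It remains to estimate the sums over $\vecm\ne\vec0$. The key observation is that $\widehat\chi$ is of rapid decay, so each such sum is effectively supported on $|l_i|\ll\delta_i^{-1}$, hence on $|\vecm|\ll\delta_{\min}^{-1}$; since $\delta_{\min}\ge\sqrt y$ this forces $|\vecm|y\ll y/\delta_{\min}\le\sqrt y$, so we stay uniformly in the small-argument regime of $K_{\sigma-n/2}$, where $y^{n/2}K_{\sigma-n/2}(2\pi|\vecm|y)\asymp y^{\,n-\Re\sigma}|\vecm|^{-(\Re\sigma-n/2)}$ (read as $\asymp y^{n/2}$, uniformly in $\vecm$ up to logarithms, when $\Re\sigma=\tfrac n2$). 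For the continuous spectrum and the tempered cusp forms one then bounds the $\vecm$-sum by Cauchy--Schwarz, using the Rankin--Selberg estimate $\sum_{0<|\vecm|\le T}|c_\vecm^{(\psi)}|^2\ll T^n$ and $\sum_{\vecm}|\widehat\chi(\delta_1l_1,\dots,\delta_nl_n)|\ll\delta_{\min}^{-n}$, obtaining a contribution $O\big(y^{-\ve}(y/\delta_{\min}^2)^{n/2}\big)$, which is inside the claimed error term. What is left — and this is where the main difficulty resides — is the contribution of the finitely many cusp forms and residual Eisenstein series with eigenvalue in $(0,n^2/4)$, i.e. with $\Re\sigma\in(\tfrac n2,n)$: here the trivial and Cauchy--Schwarz bounds for the $\vecm$-sum are off by a factor $\delta_{\min}^{-(\Re\sigma-n/2)}$ and must be improved. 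I would obtain the sharp bound for a single fixed such eigenfunction $\psi$ by exploiting its differential equation: integrating $\Delta\psi=-\sigma(n-\sigma)\psi$ against $\frac1{\delta_1\cdots\delta_n}\chi_{\vecdel,\vecgam}(\vecu)\,d\vecu$ and moving the part of $\Delta$ tangential to the horosphere onto $\chi_{\vecdel,\vecgam}$ shows that $h_\psi(y):=\frac1{\delta_1\cdots\delta_n}\int_{\R^n}\chi_{\vecdel,\vecgam}(\vecu)\,\psi(u_1\vecom_1+\cdots+u_n\vecom_n,y)\,d\vecu$ satisfies
\[
y^2h_\psi''(y)-(n-1)y\,h_\psi'(y)+\sigma(n-\sigma)\,h_\psi(y)=-y^2p_\psi(y),
\]
where $p_\psi$ is again a horosphere integral — of $\psi-c_0^{(\psi)}$ against a mean-zero test function — with $|p_\psi(y)|\ll_\psi\delta_{\min}^{-2}$. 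Solving against the fundamental system $\{y^\sigma,y^{n-\sigma}\}$ and bootstrapping, using the mean-zero property of the iterated test functions to keep the coefficient of $y^{n-\sigma}$ under control, should give $h_\psi(y)-\langle\chi\rangle c_0^{(\psi)}(y)\ll_\psi y^{-\ve}(y/\delta_{\min}^2)^{n-\sigma}$; summing over the finitely many such $\psi$ with the fixed coefficients $\langle f,\psi\rangle$ then completes the proof. (Equivalently, on the Fourier side the missing input is a cancellation bound $\sum_{0<|\vecm|\le M}c_\vecm^{(\psi)}e^{2\pi i\langle\vecm,\vecgam'\rangle}\ll M^{\,n-(\Re\sigma-n/2)+\ve}$, uniform in $\vecgam'$.)

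In short, the main obstacle is precisely this last step: extracting the exact exponent $n-\sigma_1$ — rather than the $\tfrac n2$ that the tempered analysis gives for free — from the small eigenvalues, the naive estimates being off by exactly $\delta_{\min}^{-(\sigma_1-n/2)}$. Everything else is routine once the framework is in place: the spectral and Fourier expansions, the rapid convergence of the spectral sums, the zeroth terms, the continuous spectrum, and the uniformity in $\vecgam$, which is automatic since $\vecgam$ enters only through the unit-modulus factors $e^{2\pi i\langle\vecl,\vecgam\rangle}$.
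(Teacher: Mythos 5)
Your overall framework matches the paper's: spectral decomposition of $f$, Fourier expansion at $\eta_1=\infty$, the constant giving the main term, Cauchy--Schwarz plus a Rankin--Selberg bound for the tempered discrete and continuous spectrum (this is assembled in the paper in Theorem~\ref{chithm} from Propositions~\ref{chicuspprop}, \ref{cuspimportant}, \ref{chinoncuspprop}, \ref{eisimportant}). You also correctly locate the whole difficulty in the finitely many small eigenvalues, and your parenthetical reformulation of the missing input --- a character-twisted coefficient bound $\sum_{0<|\vecm|\le M}c_\vecm^{(\psi)}e^{2\pi i\langle\vecm,\vecgam'\rangle}\ll M^{3n/2-\Re s+\ve}$ uniformly in $\vecgam'$ --- is exactly the paper's Theorem~\ref{Lemma1'}, whose (ten-page) proof is the technical heart of the paper.

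The gap is in the primary route you propose to that estimate. Your differential equation for $h_\psi$ is correct, and so is the observation that the inhomogeneity $p_\psi$ is a horosphere integral against a mean-zero weight, so that the $\eta_1$-constant term drops out. But the pointwise bound $|p_\psi(y)|\ll_\psi\delta_{\min}^{-2}$ is \emph{false}, uniformly in $y$ and $\vecgam$, for a non-cuspidal eigenfunction $\psi$: after removing $c_0^{(\psi)}y^{n-s}$ the function $\psi$ is still unbounded along the horospheres $\R^n\times\{y\}$ as $y\to0$, because it grows like $\mathcal{Y}_\Gamma(P)^{n-s}$ whenever the horosphere passes close to a $\Gamma$-translate of a cusp at which $\psi$ has a nonzero constant term; along the height-$y$ horosphere one has $\sup_{\vecx}\mathcal{Y}_\Gamma(\vecx+yi_n)\gg y^{-1}$, and the subtracted term $c_0^{(\psi)}y^{n-s}$ is tiny and does nothing to control the other cusps. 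So the inhomogeneity is only $O_\psi\big(\delta_{\min}^{-2}\,y^{-(n-s)}\big)$ pointwise, and solving the ODE with that input recovers nothing beyond trivial estimates; nor is it clear how the bootstrap would start or why it would converge, since each iteration multiplies the homogeneous initial data by another $\delta_{\min}^{-2}$. (For exceptional \emph{cusp} forms $\psi$ the bound $|p_\psi|\ll\delta_{\min}^{-2}$ is true since $\psi$ is bounded, but the non-cuspidal case is precisely the hard one.) The paper handles it not by an ODE argument but by estimating a Mellin-type integral of $\psi$ over the box, bounding $|\psi|$ by $\mathcal{Y}_\Gamma^{n-s}$ via Lemma~\ref{estim}, unfolding $\mathcal{Y}_\Gamma$ as a maximum over $\Gamma$, and then carrying out a delicate geometric count over $\Gamma$-orbits in dyadic boxes (Shimizu's lemma, Lemma~\ref{shim}; Corollary~\ref{corc}; Lemma~\ref{csum}). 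That is the content of Theorem~\ref{Lemma1'}: you have stated the right estimate, but not proved it, and the route you sketch for obtaining it does not go through.
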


We will actually prove a stronger version of Theorem \ref{thmB}, where the dependence on $f$ and $\chi$ is explicit, see Theorem \ref{chithm}. Moreover, Theorem \ref{thmA} follows from \mbox{Theorem~\ref{chithm}} by an approximation argument; in fact we obtain Theorem \ref{thmA} with the rate $O\Big(y^{-\ve}\big(y/\delta_{\min}^{2}\big)^{1-\sigma_1/n}\Big)$ and with an explicit dependence on $f$, cf.\ Theorem \ref{chi0thm}.

Our method of proof of these results is to use the spectral decomposition of the Laplace-Beltrami operator on $M$, involving the theory of Eisenstein series. To obtain our results we need to develop several bounds on sums over the Fourier coefficients of the individual eigenfunctions of $M$, which are also of independent interest. In particular we prove a precise Rankin-Selberg type bound (cf.\ Proposition \ref{FinalRS}) and bounds on coefficient sums twisted with an additive character (cf.\ Proposition \ref{Lemma1} and Theorem \ref{Lemma1'}).

Bounds of the latter type allow us to prove that in the
special case when the test function $f$ in Theorem \ref{thmB} is a
\textit{cusp form} (i.e.\ an eigenfunction of the Laplace-Beltrami operator
which decays exponentially in each cusp) with eigenvalue
$\lambda=s(n-s)$, $s\in(\frac{n}{2},n)$ or $s\in\frac{n}{2}+i\R_{\geq0}$, then the $\delta_j$'s can be allowed to shrink much more rapidly than before, and we obtain \eqref{errorterm} with an error term $O\Big(y^{-\ve}\big(y/\delta_{\min}\big)^{n-\Re
s}\Big)$, uniformly over $0<\delta_1,\ldots,\delta_n\leq1$. Cf.\ Proposition \ref{chicuspprop}.

\vspace{5pt}

It is interesting to compare the results in Theorems \ref{thmA} and \ref{thmB},
and in particular the explicit bounds obtained in Theorem \ref{thmB}, with
known facts related to the mixing property of the geodesic flow on
the unit tangent bundle $T_1M$ of $M$.
It is well known that in the special case of \textit{fixed}
$\delta_1,\ldots,\delta_n$ (say $\delta_1=\ldots=\delta_n=1$),
the equidistribution result in Theorem \ref{thmA} (and Theorem \ref{thmB} except for the precise rate)
can be obtained as a consequence of the fact that the geodesic flow
on $T_1M$ is mixing. Cf.\ Kleinbock and Margulis \cite[Prop.\ 2.4.8]{klein}
and also Eskin and McMullen \cite[Thm.\ 7.1]{em}.
Using the fact that the geodesic flow is known to be mixing with
an exponential rate (Moore, \cite{moore}), we can even obtain this
equidistribution result with a rate of type $O(y^{a_1})$ for some $a_1>0$.

To give a precise statement, first recall that the homogeneous space
$\Gamma\backslash G$ can be identified with the (oriented orthonormal) \textit{frame bundle}
over $M$, consisting of oriented
orthonormal bases $\vece_1,\ldots,\vece_{n+1}$ in the tangent space $T_pM$
above each point $p\in M$. There is a distinguished one-parameter
subgroup
$\{\varphi_t \: | \: t\in\R\}\subset G$ such that the flow
$\Gamma g \mapsto \Gamma g\varphi_t$ corresponds to unit speed parallel
transport
along the geodesic in the direction given by $\vece_1$.\footnote{Explicitly,
if we identify $\Gamma\backslash G$ with the frame bundle of $M$
in such a way that $\Gamma e$ corresponds to $(\vecx,y)=(\mathbf{0},1)\in
\H^{n+1}$ and a frame with $\vece_1$ pointing straight upwards,
and if we use $G=\PSL(2,C_n)$ as introduced in Section \ref{notation} below,
then $\varphi_t=\matr{e^{t/2}}00{e^{-t/2}}$;
if we use instead the notation in \cite[Ch.\ 4.5-6]{rat} then
$\varphi_t\in \PSO(n+1,1)$ is the matrix which has bottom right block equal to $\matri{\cosh t}{\sinh t}{\sinh t}{\cosh t}$, and all other entries as in the identity matrix.}
Clearly this flow descends to the geodesic flow on $T_1M$ under the
projection
$\Gamma\backslash G\ni(p,\vece_1,\ldots,\vece_{n+1})\mapsto(p,\vece_1)\in T_1M$.
Now using representation theoretic techniques
(cf.\ Hirai \cite{hir},
Knapp \cite[Ch.\ VIII.8]{knapp},
Moore \cite[Thm.\ 4.1]{moore}, Shalom \cite[Thm.\ 2.1, \S 8.1]{shalom}), one has the following
precise version of (optimal rate) exponential mixing for the flow $\{\varphi_t\}$:
For any fixed, sufficiently smooth and decaying functions $v,w\in L^2(\Gamma\backslash G)$, with either $v$ or $w$ being a lift of a function on $M$, we have
\begin{align} \label{EXPMIXING}
&\int_{\Gamma\backslash G} v(g\varphi_t)w(g)\,dg
=\int_{\Gamma\backslash G} v(g)\,dg \int_{\Gamma\backslash G} w(g)\,dg
+O_{v,w,\ve}\big(e^{(\sigma_1-n+\ve)t}\big)\hspace{7pt}\text{as }\: t\to\infty,
\end{align}
where $\sigma_1$ is as before, and where $dg$ is the Haar measure on $G$
normalised so that $\int_{\Gamma\backslash G} dg=1$.
Cf.\ also Ratner \cite{ratner1} in the 2-dimensional case and
Pollicott \cite{polli} in the 3-dimensional case.

See \cite[Prop 2.4.8]{klein} for how to use a mixing result as in \eqref{EXPMIXING} to
obtain
an equidistribution result in the situation of Theorem \ref{thmB} with
$\delta_1=\ldots=\delta_n=1$, with a rate of type $O(y^{a_1})$
($a_1>0$). Note however that this argument results in a
sacrifice
in the exponent; $a_1$ comes out significantly smaller than $n-\sigma_1-\ve$. By
contrast,
a nice feature of Theorem \ref{thmB} above is that the exponent is just as good as
what one would \textit{formally} obtain from \eqref{EXPMIXING} by taking
$v$ to be the appropriate \textit{measure} on $\Gamma\backslash G$
with compact support inside the closed horosphere at $y=1$, and taking $w$
to be the lift of $f$ to $\Gamma\backslash G$.

\vspace{5pt}

Let us now return to Theorem \ref{thmA}, i.e.\ let us again consider the box $\B$
in place of the smooth cutoff function $\chi$. It is an interesting
question to ask what explicit rate of convergence we can ideally hope to
obtain in this equidistribution result. As we have already mentioned, using
an approximation argument we are able to prove Theorem \ref{thmA} with an explicit
rate $O\Big(y^{-\ve}\big(y/\delta_{\min}^2\big)^{1-\sigma_1/n}\Big)$.
When $n=1$ this is basically the same rate as was obtained by Str\"ombergsson in
\cite[Thm.\ 3]{andreas}. However, for $n\geq 2$ we expect that it should be possible to obtain a
better error term by working more directly with the left hand side of \eqref{thmAeq}, applying the spectral expansion of $f$.
Indeed for $n=2$ we have found a proof along these lines of an explicit
error bound
$O\Big(y^{-\ve}\big(y/\delta_{\min}^{2}\big)^{2-\sigma_1}\Big)$
in Theorem \ref{thmA}, which is just as good as the bound in Theorem \ref{thmB}.
This proof, which will not be given in the present paper, is \textit{far}
from a direct adaptation of our proof of Theorem \ref{thmB}; in particular it
involves using bounds on sums of the Eisenstein series coefficients
twisted with an additive character (viz., an extension of Theorem \ref{Lemma1'} to the case of Eisenstein series) whereas in the proof
of Theorem \ref{thmB} we only need a Rankin-Selberg type bound on the Eisenstein series coefficients.
We have so far not been able to carry over this treatment to the
case $n\geq 3$ in a satisfactory way,
although certain partial results lead us to hope that it might
be possible to eventually obtain the bound
$O\Bigl(y^{-\ve} \big(y/\delta_{\min}^{2}\big)^{\min(1,n-\sigma_1)}\Bigr)$
in Theorem \ref{thmA}.

\vspace{5pt}

We note that it is also possible to prove Theorem \ref{thmA} using the classification of
all ergodic invariant measures for the horosphere action on
$\Gamma\backslash G$ given by Ratner in \cite{ratner2}. This approach
leads to a more general formulation of our result in that we obtain
asymptotic equidistribution on the frame bundle of
$M$. We furthermore note that it is possible to relax the conditions
on the function $f$ in Theorem \ref{thmA} to a condition on the growth of $f$ in each cusp. For
the details of the argument in the case $n=1$ see Str\"ombergsson \cite{andreas}.

We remark that another possible approach to the precise study of error
terms in our equidistribution results would be to try to extend the
representation theoretic method of Flaminio and Forni \cite{flam}, involving the classification of invariant distributions  of the horocycle flow, to the present case. If this could be carried out, it would have the benefit of
also giving results on the frame bundle of $M$.

\vspace{5pt}

Finally we mention that recently Kontorovich and Oh \cite{KoOh1}, \cite{KoOh2} have proved effective results on the equidistribution of expanding closed horospheres also in two and three dimensional hyperbolic manifolds of \emph{infinite} volume. In addition to being interesting in their own right, these results have beautiful applications in several number theoretic counting problems.

\section{Preliminaries}

\subsection{Basic set-up}\label{notation}

The upper half-space model $\H^{n+1}$ of the $(n+1)$-dimensional hyperbolic space consists of the set
\begin{align*}
 \H^{n+1}:=\big\{P=(x_1,x_2,\ldots,x_{n},y)\mid (x_1,x_2,\ldots,x_{n})\in\R^{n},y\in\R_{>0}\big\}
\end{align*}
equipped with the Riemannian metric
\begin{align}\label{metric}
 ds^2=\frac{dx_1^2+dx_2^2+\cdots+dx_{n}^2+dy^2}{y²}.
\end{align}
We will frequently use the notation $P=(\vecx,y)$ for points in $\H^{n+1}$, where $\vecx=(x_1,\ldots,x_{n})\in\R^{n}$. The hyperbolic volume element is given by
\begin{align*}
 d\nu=\frac{dx_1\ldots dx_{n}dy}{y^{n+1}}
\end{align*}
and the Laplace-Beltrami operator associated with \eqref{metric} is given by
\begin{align*}
 \Delta=y^2\Big(\frac{\partial^2}{\partial x_1^2}+\cdots+\frac{\partial^2}{\partial x_{n}^2}+\frac{\partial^2}{\partial y^2}\Big)-(n-1)y\frac{\partial}{\partial y}.
\end{align*}
It is well known that the group of orientation preserving isometries of $\H^{n+1}$, $\mathrm{Isom}^+\,\H^{n+1}$, is isomorphic to the classical Lie group $\PSO(n+1,1)$ (see e.g.\ \cite[Sec.\ 3.2 (Cor.\ 3)]{rat}). It is also possible to realize $\mathrm{Isom}^+\,\H^{n+1}$ as the group $\mathcal{M}(n)$ of orientation preserving M\"obius transformations of $\hat{\R}^{n}$, acting on $\H^{n+1}$ by their Poincaré extensions, with the topology of uniform convergence in the chordal metric on $\hat{\R}^{n}$ (cf.\ \cite[Cor.\ 1.10]{his} and \cite[Sec.\ 3.7]{bear}). These M\"obius transformations can be expressed in terms of $2\times2$ matrices whose entries are (certain) Clifford numbers. The details are as follows.

Consider the Clifford algebra $C_{n}$. This is the real associative algebra generated by $i_1,\ldots,i_{n-1}$ subject (only) to the relations $i_k^2=-1$ for $k=1,\ldots,n-1$ and $i_ki_h=-i_hi_k$ for $h\neq k$. The elements of $C_{n}$ are called \emph{Clifford numbers}. Each element $a\in C_{n}$ can be uniquely written as $a=\sum_{I}a_II$, where $a_I\in\R$ and the summation runs over all products $I=i_{\nu_1}\cdots i_{\nu_p}$ with $1\leq\nu_1<\cdots<\nu_p\leq n-1$.\footnote{Here the empty product is interpreted as the real number $1$.} Thus $C_{n}$ is a vector space of real dimension $2^{n-1}$. We equip $C_{n}$ with the square norm, i.e.\ if $a=\sum_{I}a_II$ then $|a|^2=\sum_Ia_I^2$. Furthermore there are three commonly used involutions of $C_{n}$. The first involution, $':C_{n}\rightarrow C_{n}$, is the algebra automorphism which replaces each $i_k$ with $-i_k$. The second involution, $^*:C_{n}\rightarrow C_{n}$, reverses the order of the factors in each $i_{\nu_1}\cdots i_{\nu_p}$ and determines an anti-automorphism of $C_{n}$. The final involution, $^-:C_{n}\rightarrow C_{n}$, is a combination of the previous two, i.e. for each $a\in C_{n}$ we have $\bar{a}=(a')^*=(a^*)'$.

Elements of $C_{n}$ of the form $\vecx=x_0+x_1i_1+\cdots+x_{n-1}i_{n-1}$ are called \emph{vectors}. They form an $n$-dimensional vector space which we naturally identify with $\R^n$. For vectors we have the identity $\vecx\bar{\vecx}=\bar{\vecx}\vecx=|\vecx|^2$, which implies that every non-zero vector is invertible in $C_{n}$. Since products of invertible elements are invertible, the non-zero vectors generate a multiplicative group, $\Gamma_n$, called the \emph{Clifford group}. It is proved in \cite{ahl} that $a\bar{a}=\bar{a}a=|a|^2$ holds for all $a\in\Gamma_n$ and also that if $a,b\in\Gamma_n$ then $|ab|=|a||b|$.

\begin{defi}\label{GLSL}
We define the following groups of matrices:
\begin{enumerate}
 \item[(i)]$\GL(2,C_n):=\bigg\{\matri{a}{b}{c}{d}\,\Big|\, a,b,c,d\in\Gamma_n\cup\{0\},$
 \begin{flushright}$ab^*, cd^*, c^*a, d^*b\in\R^n, ad^*-bc^*\in\R\setminus\{0\}\bigg\}.$
 \end{flushright}
 \item[(ii)]$\SL(2,C_n):=\bigg\{\matri{a}{b}{c}{d}\in\GL(2,C_n)\,\Big|\, ad^*-bc^*=1\bigg\}.$
 \item[(iii)]$\PSL(2,C_n):=\SL(2,C_n)/\{\pm I\}.$
\end{enumerate}
\end{defi}

A matrix $g\in\GL(2,C_n)$ acts on elements in $\hat\R^n$ according to the formula
\begin{align}\label{action}
 g\vecx=(a\vecx+b)(c\vecx+d)^{-1}.
\end{align}
It is clear that $C_n\subset C_{n+1}$ and that $\Gamma_n\subset\Gamma_{n+1}$. Hence it follows that $\GL(2,C_n)$ acts, by the formula \eqref{action}, also on $\hat\R^{n+1}$. It is shown in \cite{ahl} and \cite{wat} that $\GL(2,C_n)$ is precisely the set of $2\times2$ matrices, with Clifford numbers as entries, that induce bijective mappings $\hat\R^n\rightarrow\hat\R^n$ and $\hat\R^{n+1}\rightarrow\hat\R^{n+1}$. \cite{ahl} and \cite{wat} further show that $\PSL(2,C_n)$ acts on $\hat\R^n$ as the group of orientation preserving M\"obius transformations and on $\hat\R^{n+1}$ as the Poincaré extensions of these M\"obius transformations. According to the above this means that $\PSL(2,C_n)$ is isomorphic to $\mathrm{Isom}^+\,\H^{n+1}$. In coordinates the Poincaré extension of \eqref{action} to $\H^{n+1}$ looks like
\begin{align*}
P=\vecx+yi_{n}\mapsto g(P)=\vecx_{g(P)}+y_{g(P)}i_n
\end{align*}
where $\vecx_{g(P)}$ and $y_{g(P)}$ are given by
\begin{align}\label{xcoord}
\vecx_{g(P)}=\frac{a\bar c|P|^2+b\bar d+a\vecx\bar d+b\bar \vecx\bar c}{|cP+d|^2}=\frac{(a\vecx+b)(\overline{c\vecx+d})+a\bar cy^2}{|c\vecx+d|^2+|c|^2y^2}
\end{align}
and
\begin{align}\label{ycoord}
y_{g(P)}=\frac{(ad^*-bc^*)y}{|cP+d|^2}=\frac{(ad^*-bc^*)y}{|c\vecx+d|^2+|c|^2y^2},
\end{align}
respectively. 

\begin{defi}
 For $g=\matr{a}{b}{c}{d}$, with $a,b,c,d\in C_n$, the norm of $g$ is given by
\begin{align*}
 \|g\|=\big(|a|^2+|b|^2+|c|^2+|d|^2\big)^{1/2}.
\end{align*}
\end{defi}

We give $\GL(2,C_n)$ and $\SL(2,C_n)$ the topology induced by the metric $d(g,h)=\|g-h\|$. In the same way as in \cite[pp.\ 78-81]{bear}, but keeping in mind the noncommutativity of $C_n$, one can show that the two-fold cover $\Phi:\SL(2,C_n)\rightarrow\PSL(2,C_n)\equiv\mathcal{M}(n)$ induces the topology of uniform convergence in the chordal metric on $\hat\R^n$, i.e. the natural topology, on $\mathcal{M}(n)$.

\vspace{5pt}

Let $\Gamma\subset\PSL(2,C_n)$ be a discrete subgroup of finite covolume acting on $\H^{n+1}$. We further assume that the quotient manifold $\qspace$ has at least one cusp, i.e.\ that $\Gamma$ is not cocompact. Let $\eta_{1},...,\eta_{\kappa}$ be a maximal set of $\Gamma$-inequivalent cusps of $\Gamma$. For every $k\in\{1,...,\kappa\}$ we choose $A_{k}\in\PSL(2,C_n)$ such that $A_k(\eta_k)=\infty$. In particular we get $A_k^{}\Gamma_{\eta_k}A_k^{-1}=(A_k^{}\Gamma A_k^{-1})_{\infty}$, where $\Gamma_{\eta_k}$ denotes the stabilizer of $\eta_k$ in $\Gamma$. We now let $N(C_n)$ be the abelian subgroup $\big\{\matr{1}{\vecz}{0}{1}\:|\:\vecz\in\R^n\big\}$ of $\PSL(2,C_n)$ and define
\begin{align*}
 \Gamma_{\eta_k}':=\Gamma\cap A_k^{-1}N(C_n)A_k=\Gamma_{\eta_k}\cap A_k^{-1}N(C_n)A_k,
\end{align*}
for $k\in\{1,...,\kappa\}$. We note that $A_k^{}\Gamma_{\eta_k}'A_k^{-1}=(A_k^{}\Gamma A_k^{-1})_{\infty}'$ and recall that
\begin{align*}
(A_k^{}\Gamma A_k^{-1})_{\infty}'=\bigg\{\matri{1}{\vecom}{0}{1}\:\Big|\:\vecom\in\Lambda_k\bigg\}
\end{align*}
for some lattice $\Lambda_k$ in $\R^n$. By \cite[Thm.\ 5.5.5 and Thm.\ 5.4.3]{rat} we also know that $(A_k^{}\Gamma A_k^{-1})_{\infty}'$ has finite index in $(A_k^{}\Gamma A_k^{-1})_{\infty}$. Finally, we note that
\begin{multline*}
(A_k^{}\Gamma A_k^{-1})_{\infty}\subset\bigg\{\matri{a}{a\vecom}{0}{a^{*-1}}=\matri{a}{0}{0}{a^{*-1}}\matri{1}{\vecom}{0}{1}\:\Big|\: a\in\Gamma_n, |a|=1, \vecom\in\R^n\bigg\}.
\end{multline*} 
We recall here that $\big\{\matr{a}{0}{0}{a^{*-1}}\,|\,a\in\Gamma_n, |a|=1\big\}/\{\pm I\}$ is (in the natural way) identified with $\SO(n)$, cf.\ \cite[pp.\ 89-90]{wat}.

For each $k\in\{1,...,\kappa\}$, we choose a closed convex fundamental polyhedron $\mathcal{C}_k$ for the action of $(A_k^{}\Gamma A_k^{-1})_{\infty}$ on $\R^n$. We may (and will) assume that the cusp normalizing maps $A_k$ have been chosen in such a way that $|\mathcal{C}_k|=1$ for each $k\in\{1,...,\kappa\}$, where $|\mathcal{C}_k|$ denotes the Euclidean $n$-dimensional volume of the set $\mathcal{C}_k$. We define, for each $B>0$ and each $k\in\{1,...,\kappa\}$, the sets
\begin{align}\label{semicuspreg}
 \widetilde{\mathcal{F}}_k(B)=\big\{P=\vecx+yi_n\in\H^{n+1}\mid \vecx\in\mathcal{C}_k, y\geq B\big\}.
\end{align}
We also let 
\begin{align*}
\tau_k:=\min_{\vecom\in\Lambda_k\setminus\{\vec0\}}|\vecom|.
\end{align*} 
It follows from \cite[Prop.\ 3.3]{her1} (see also \cite[p.\ 49, Thm.\ 3.4]{egm}) that there exists a constant $B_0>\max(1,\max_{k\in\{1,...,\kappa\}}\tau_k)$ such that for all $B\geq B_0$, the cuspidal regions
\begin{align}\label{cuspreg}
 \mathcal{F}_k(B):=A_k^{-1}\big(\widetilde{\mathcal{F}}_k(B)\big),\qquad k\in\{1,...,\kappa\},
\end{align}
are disjoint and their union form part of a fundamental domain for $\Gamma$. In fact, for each $B\geq B_0$ we can choose a certain bounded set $\F_B$ so that
\begin{align*}
 \F:=\F_B\cup\bigcup_{k=1}^{\kappa} \mathcal{F}_k(B)
\end{align*}
is a fundamental domain for $\Gamma$ (see e.g. \cite[p.\ 51, Prop.\ 3.9]{egm} for the \mbox{$3$-dimensional} case). We fix the choice of $\F_B$ in such a way that $\F_B\cap\bigcup_{k=1}^{\kappa} \mathcal{F}_k(B)=\emptyset$ and $\F_B\subset\F_{B_0}\cup\bigcup_{k=1}^{\kappa} \mathcal{F}_k(B_0)$ for all $B\geq B_0$.

\subsection{Two elementary lemmas}

\begin{lem} \label{shim}
Let $k,\ell\in\{1,...,\kappa\}$ and $T=\matr{a}{b}{c}{d}\in A_k^{}\Gamma A_{\ell}^{-1}$ be given. Then $|c|\geq 1/\sqrt{\tau_k\tau_{\ell}}$, unless $k=\ell$ and $T\in A_k^{}\Gamma_{\eta_k}A_k^{-1}$.
\end{lem}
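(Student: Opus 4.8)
The plan is to split off the degenerate case $c=0$ and then, for $c\neq 0$, to run a Shimizu-type iteration inside one of the cusp-normalized conjugates of $\Gamma$. Write $T=A_k^{}\gamma A_\ell^{-1}$ with $\gamma\in\Gamma$. If $c=0$, then $T$ fixes $\infty$, so $\gamma=A_k^{-1}TA_\ell^{}$ maps the cusp $\eta_\ell$ to $A_k^{-1}(\infty)=\eta_k$; since $\eta_1,\dots,\eta_\kappa$ are pairwise $\Gamma$-inequivalent this forces $k=\ell$, and then $\gamma\in\Gamma_{\eta_k}$, so $T\in A_k^{}\Gamma_{\eta_k}A_k^{-1}$ — i.e.\ we are in the excepted case. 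Hence whenever we are \emph{not} in the excepted case we have $c\neq0$, and it remains to prove $|c|\geq 1/\sqrt{\tau_k\tau_\ell}$.

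Set $\Gamma_j:=A_j^{}\Gamma A_j^{-1}$ for $j=k,\ell$; each is discrete with a cusp at $\infty$ whose translation lattice is $\Lambda_j$, and $T\Gamma_\ell^{}T^{-1}=\Gamma_k$ (because $T=A_k^{}\gamma A_\ell^{-1}$ and $\gamma\Gamma\gamma^{-1}=\Gamma$). Choose $\vecom\in\Lambda_\ell$ with $|\vecom|=\tau_\ell$. Using that $T=\matr{a}{b}{c}{d}\in\SL(2,C_n)$ has inverse $\matr{d^*}{-b^*}{-c^*}{a^*}$, and the relations $cd^*=dc^*$, $ab^*=ba^*$, $da^*-cb^*=1$ coming from Definition \ref{GLSL}, a direct computation should give
\begin{align*}
 h:=T\matr{1}{\vecom}{0}{1}T^{-1}=\matr{1-a\vecom c^*}{a\vecom a^*}{-c\vecom c^*}{1+c\vecom a^*}\in\Gamma_k .
\end{align*}
The lower-left entry is $-c\vecom c^*$, which is nonzero and has norm $|c|^2\tau_\ell$ by multiplicativity of $|\cdot|$ on the Clifford group; in particular $h$ does not fix $\infty$.

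Now iterate inside $\Gamma_k$: fix $\vecnu\in\Lambda_k$ with $|\vecnu|=\tau_k$, put $h_0:=h$ and $h_{j+1}:=h_j^{}\matr{1}{\vecnu}{0}{1}h_j^{-1}\in\Gamma_k$. By the same formula, if $c_j$ denotes the lower-left entry of $h_j$ then $c_{j+1}=-c_j\vecnu c_j^*$, so $|c_{j+1}|=|c_j|^2\tau_k$, while the top-row entries stay bounded and the lower-right entry tends to $1$. Suppose, for contradiction, that $|c|^2\tau_k\tau_\ell<1$, i.e.\ $|c_0|\tau_k<1$ (recall $|c_0|=|c|^2\tau_\ell$). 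Then $|c_j|=|c_0|\,(|c_0|\tau_k)^{2^j-1}\to0$, the elements $h_j\in\Gamma_k$ are pairwise distinct (their lower-left entries are nonzero with strictly decreasing norms) and all lie in a bounded subset of $\SL(2,C_n)$ — contradicting the discreteness of $\Gamma_k=A_k^{}\Gamma A_k^{-1}$. Hence $|c|^2\tau_k\tau_\ell\geq1$, which is the claim.

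The only genuine work is the entry computation for $T\matr{1}{\vecom}{0}{1}T^{-1}$ and the check that the products appearing there stay in the Clifford group so that $|\cdot|$ is multiplicative; the subsequent convergence bookkeeping (boundedness of the top-row entries) is routine and parallels the classical proof of Shimizu's lemma. I expect the Clifford-algebra identities to be the only place where care is needed. Equivalently one could argue geometrically — $T\{y>\tau_\ell\}$ is a Euclidean ball resting on $\R^n$ of diameter $1/(|c|^2\tau_\ell)$, and disjointness of this horoball from $\{y>\tau_k\}$ forces $|c|^2\tau_k\tau_\ell\geq1$ — but justifying that disjointness is itself exactly Shimizu's lemma in this setting, so the algebraic iteration above seems the cleanest self-contained route.
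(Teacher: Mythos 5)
Your proof is correct, and its opening move is \emph{identical} to the paper's: set $\gamma=\matr 1{\vecom}01$ with $\vecom\in\Lambda_\ell$ of minimal norm $\tau_\ell$, compute $T\gamma T^{-1}=\matr{1-a\vecom c^*}{a\vecom a^*}{-c\vecom c^*}{1+c\vecom a^*}$ using the $\SL(2,C_n)$ relations $cd^*=dc^*$, $ab^*=ba^*$, $da^*-cb^*=1$, and observe that the lower-left entry has norm $|c|^2\tau_\ell$. Where the two arguments diverge is how this is turned into a lower bound on $|c|$: the paper invokes Shimizu's lemma as a black box (citing Hersonsky's extension to M\"obius groups over $\R^N$), which says that any element of $A_k^{}\Gamma A_k^{-1}$ not fixing $\infty$ has lower-left norm $\geq 1/\tau_k$; you instead \emph{re-prove} that statement inline via the classical doubling iteration $h_0=T\gamma T^{-1}$, $h_{j+1}=h_j\matr 1{\vecnu}01 h_j^{-1}\in\Gamma_k$, using $c_{j+1}=-c_j\vecnu c_j^*$ and multiplicativity of $|\cdot|$ on $\Gamma_n$ to get $|c_j|=|c_0|(|c_0|\tau_k)^{2^j-1}$, then deriving a contradiction with discreteness if $|c_0|\tau_k<1$. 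Both routes are valid; yours is self-contained at the cost of a few lines of bookkeeping. The one step you wave at — boundedness of the top-row entries — does need a sentence: from $a_{j+1}=1-a_j\vecnu c_j^*$ and $b_{j+1}=a_j\vecnu a_j^*$ one gets $|a_{j+1}|\leq 1+|a_j|\,u_j$ with $u_j=(|c_0|\tau_k)^{2^j}$, and since $\prod_j(1+u_j)<\infty$ the sequence $|a_j|$ (hence also $|b_j|$ and $|d_j|$) stays bounded; together with $|c_j|\to0$ and the $|c_j|$ being pairwise distinct and nonzero, this gives infinitely many distinct elements of the discrete group $\Gamma_k$ in a compact set, the desired contradiction. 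Your treatment of the excepted case ($c=0$ implies $W(\eta_\ell)=\eta_k$, forcing $k=\ell$ and $W\in\Gamma_{\eta_k}$) matches the paper's exactly.
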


\begin{proof}
By definition $T=A_k^{}W A_{\ell}^{-1}$ with some $W\in\Gamma$. We let $\gamma=\matr{1}{\vecom}{0}{1}\in A_{\ell}^{}\Gamma_{\eta_{\ell}}' A_{\ell}^{-1}$, where $\vecom\in\Lambda_{\ell}\setminus{\{\vec0\}}$ is chosen such that $|\vecom|$ is minimal (i.e.\ $|\vecom|=\tau_{\ell}$), and consider $T\gamma T^{-1}$. We find that
\begin{align*}
 T\gamma T^{-1}=\matri{a}{b}{c}{d}\matri{1}{\vecom}{0}{1}\matri{d^*}{-b^*}{-c^*}{a^*}=\matri{1-a\vecom c^*}{a\vecom a^*}{-c\vecom c^*}{1+c\vecom a^*}
\end{align*}
and $T\gamma T^{-1}\in A_k^{}\Gamma A_k^{-1}$. Here $A_k^{}\Gamma A_k^{-1}$ is a discrete subgroup of $\PSL(2,C_n)$ with a cusp at $\infty$ having stabilizer $(A_k^{}\Gamma A_k^{-1})_{\infty}=A_k^{}\Gamma_{\eta_k}A_k^{-1}$. Hence, by Shimizu's lemma (as in \cite[Thm.\ A]{her2} and \cite[p.\ 471, Rem.\ 1]{her1}), we have either $|-c\vecom c^*|\geq1/\tau_k$ or $|-c\vecom c^*|=0$. Since $|c^*|=|c|$ this shows that either $|c|\geq 1/\sqrt{\tau_k\tau_{\ell}}$ or $T\gamma T^{-1}\in(A_k^{}\Gamma A_k^{-1})_{\infty}'$. When $T\gamma T^{-1}\in(A_k^{}\Gamma A_k^{-1})_{\infty}'$ it follows that $T^{-1}(\infty)$ is fixed by $\gamma$, which in turn implies that $W(\eta_{\ell})=\eta_k$. Since $\eta_{1},...,\eta_{\kappa}$ by construction are pairwise $\Gamma$-inequivalent the only possibility is that $k=\ell$ and $W\in\Gamma_{\eta_k}$, i.e. $T=A_k^{}W A_k^{-1}\in A_k\Gamma_{\eta_k} A_k^{-1}$.
\end{proof}

\begin{lem}\label{cuspineq}
For all $P\in\H^{n+1}$ and $W\in\Gamma$ we have
\begin{align*}
y_{A_k W(P)}\leq\frac{\tau_k\tau_{\ell}}{y_{A_{\ell}(P)}},
\end{align*}
unless $k=\ell$ and $W\in\Gamma_{\eta_k}$.
\end{lem}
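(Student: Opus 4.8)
The plan is to deduce Lemma \ref{cuspineq} directly from Lemma \ref{shim} together with the explicit formula \eqref{ycoord} for the $y$-coordinate under a M\"obius transformation. First I would observe that $y_{A_k W(P)}$ can be computed by writing $A_k W = (A_k W A_\ell^{-1}) A_\ell$ and applying the cocycle-type behaviour of the $y$-coordinate, so that it suffices to understand how the single matrix $T := A_k W A_\ell^{-1} \in A_k\Gamma A_\ell^{-1}$ acts on the point $Q := A_\ell(P)$. Concretely, if $T = \matr{a}{b}{c}{d}$ and $Q = \vecx + y i_n$ with $y = y_{A_\ell(P)}$, then \eqref{ycoord} gives
\begin{align*}
y_{A_k W(P)} = y_{T(Q)} = \frac{y_{A_\ell(P)}}{|c\vecx+d|^2 + |c|^2 y_{A_\ell(P)}^2},
\end{align*}
using $ad^* - bc^* = 1$ since $T \in \PSL(2,C_n)$ (the entries being only defined up to an overall sign, which does not affect the moduli).

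Next, since the first term $|c\vecx+d|^2$ in the denominator is non-negative, I would bound the denominator from below by $|c|^2 y_{A_\ell(P)}^2$, giving
\begin{align*}
y_{A_k W(P)} \leq \frac{y_{A_\ell(P)}}{|c|^2 y_{A_\ell(P)}^2} = \frac{1}{|c|^2 \, y_{A_\ell(P)}}.
\end{align*}
Now I invoke Lemma \ref{shim}: in the case that we are \emph{not} in the exceptional situation $k = \ell$, $W \in \Gamma_{\eta_k}$ (equivalently $T \in A_k \Gamma_{\eta_k} A_k^{-1}$), we have $|c| \geq 1/\sqrt{\tau_k \tau_\ell}$, hence $|c|^2 \geq 1/(\tau_k \tau_\ell)$, and therefore
\begin{align*}
y_{A_k W(P)} \leq \frac{\tau_k \tau_\ell}{y_{A_\ell(P)}},
\end{align*}
which is exactly the claimed inequality. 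I should double check that the exceptional clause in Lemma \ref{shim} matches the one in Lemma \ref{cuspineq}: Lemma \ref{shim} excludes $T \in A_k \Gamma_{\eta_k} A_k^{-1}$, and $T = A_k W A_k^{-1}$ lies in this set precisely when $W \in \Gamma_{\eta_k}$, so the two exceptional cases indeed coincide.

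The argument is essentially a two-line estimate once the setup is in place, so there is no serious obstacle; the only point requiring a little care is the bookkeeping with the overall $\pm$ ambiguity of the Clifford matrix entries and confirming that $c = 0$ cannot occur outside the exceptional case (which is precisely the content of Lemma \ref{shim}, since $c = 0$ would force $|c| < 1/\sqrt{\tau_k\tau_\ell}$). One might also remark that the lemma is vacuous, or rather trivially handled, when $c = 0$ is allowed only in the excluded case, so the displayed bound is meaningful exactly in the stated range. I would close by noting that this lemma will be used to control how far the images $A_k W(P)$ of a point can travel into the cusp at infinity, a standard ingredient in organizing the sum over $\Gamma$ in the later Fourier-analytic estimates.
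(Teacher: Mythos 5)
Your proof is correct and follows essentially the same route as the paper: write $T = A_k W A_\ell^{-1}$, use Lemma \ref{shim} to get $|c| \geq 1/\sqrt{\tau_k\tau_\ell}$ outside the exceptional case, and then apply \eqref{ycoord} with the lower bound $|c\vecx+d|^2 + |c|^2 y^2 \geq |c|^2 y^2$ on the denominator. The small clarifying remarks you add (the $\pm$ ambiguity in $\PSL$ and the observation that $c=0$ is automatically confined to the excluded case) are harmless and consistent with the paper's argument.
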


\begin{proof}
Let $T=\matr{a}{b}{c}{d}=A_k^{}W A_{\ell}^{-1}$. It follows from Lemma \ref{shim} that either $k=\ell$ and
$W\in\Gamma_{\eta_k}$ or else $|c|\geq 1/\sqrt{\tau_k\tau_{\ell}}$. In the second case we use \eqref{ycoord} to get
\begin{align*}
 y_{A_kW(P)}=y_{T A_{\ell}(P)}=\frac{y_{A_{\ell}(P)}}{|c\vecx_{A_{\ell}(P)}+d|^2+|c|^2(y_{A_{\ell}(P)})^2}
 \leq\frac{y_{A_{\ell}(P)}}{|c|^2(y_{A_{\ell}(P)})^2}\leq\frac{\tau_k\tau_{\ell}}{y_{A_{\ell}(P)}}.
\end{align*}
\end{proof}

\subsection{The invariant height function}

We introduce the invariant height function $\mathcal{Y}_{\Gamma}$,
defined on $\H^{n+1}$ by
\begin{align}\label{invh}
\mathcal{Y}_{\Gamma}(P):=\max_{k\in\{1,...,\kappa\}}\max_{W\in\Gamma}
\textrm{  } y_{A_kW(P)}.
\end{align}
It is straightforward to show that this function is well-defined (i.e.\ that for every $P\in\H^{n+1}$ the maximum in \eqref{invh} is attained for some $k\in\{1,\ldots,\kappa\}$ and $W\in\Gamma$), continuous and $\Gamma$-invariant. Note also that $\mathcal{Y}_{\Gamma}(P)$ only depends on $\Gamma$ and $P$, and not on our specific choices of cusp representatives $\eta_1,\ldots,\eta_{\kappa}$ and normalizing maps $A_1,\ldots,A_{\kappa}$. (This fact makes crucial use of our assumption $|\mathcal C_k|=1$.) In the next couple of lemmas we collect some more basic properties of  $\mathcal{Y}_{\Gamma}$.

\begin{lem}\label{invhineq}
For all $B\geq B_0$, $k\in\{1,...,\kappa\}$ and $P\in\F_k(B)$ we have
\begin{align*}
\mathcal{Y}_{\Gamma}(P)=y_{A_k(P)}\geq B.
\end{align*}
\end{lem}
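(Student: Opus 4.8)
The plan is to prove the two assertions of Lemma~\ref{invhineq} separately, with the inequality $\mathcal{Y}_\Gamma(P)\geq B$ being essentially immediate and the equality $\mathcal{Y}_\Gamma(P)=y_{A_k(P)}$ requiring the work. First I would observe that for $P\in\mathcal{F}_k(B)=A_k^{-1}(\widetilde{\mathcal{F}}_k(B))$ we have, writing $P=A_k^{-1}(Q)$ with $Q=\vecx+yi_n$, $\vecx\in\mathcal{C}_k$, $y\geq B$, that $y_{A_k(P)}=y_{Q}=y\geq B$. Since $\mathcal{Y}_\Gamma(P)$ is by definition a maximum over all $\ell$ and $W\in\Gamma$ of $y_{A_\ell W(P)}$, and one admissible choice is $\ell=k$, $W=\mathrm{id}$, we get $\mathcal{Y}_\Gamma(P)\geq y_{A_k(P)}\geq B$ for free.

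The remaining task is the reverse inequality $\mathcal{Y}_\Gamma(P)\leq y_{A_k(P)}$, i.e.\ to show that no other choice of $\ell\in\{1,\ldots,\kappa\}$ and $W\in\Gamma$ can beat $\ell=k$, $W=\mathrm{id}$. I would split into two cases. If $\ell=k$ and $W\in\Gamma_{\eta_k}$, then $A_k W A_k^{-1}\in(A_k\Gamma A_k^{-1})_\infty$, and by the structure of $(A_k\Gamma A_k^{-1})_\infty$ recalled in Section~\ref{notation} (it consists of maps of the form $\vecx\mapsto a\vecx a^{*-1}+\vecom$ with $|a|=1$, i.e.\ a rotation composed with a translation, all Poincaré-extended), such a map preserves the $y$-coordinate; hence $y_{A_k W(P)}=y_{A_k(P)}$. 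In the complementary case — that is, either $\ell\neq k$, or $\ell=k$ but $W\notin\Gamma_{\eta_k}$ — Lemma~\ref{cuspineq} gives $y_{A_k W(P)}$... wait, more precisely I should apply Lemma~\ref{cuspineq} in the right configuration: for any $\ell$ and any $W'\in\Gamma$ with $(\ell,W')$ not of the excluded type relative to $k$, Lemma~\ref{cuspineq} yields $y_{A_\ell W'(P)}\leq \tau_k\tau_\ell/y_{A_k(P)}$. Combined with $y_{A_k(P)}\geq B\geq B_0>\max_m\tau_m$, this forces $y_{A_\ell W'(P)}< \tau_k\tau_\ell/B_0 < B_0 \leq B \leq y_{A_k(P)}$ (using $\tau_\ell/B_0<1$ and $\tau_k<B_0$), so such a choice is strictly dominated.

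Thus every term in the maximum defining $\mathcal{Y}_\Gamma(P)$ is at most $y_{A_k(P)}$, and the maximum is attained there; together with the lower bound this gives $\mathcal{Y}_\Gamma(P)=y_{A_k(P)}\geq B$. The one point requiring a little care — and what I would flag as the main obstacle — is matching the hypothesis of Lemma~\ref{cuspineq} to every non-excluded pair $(\ell,W')$: Lemma~\ref{cuspineq} is stated for a fixed pair of cusp indices, and one must check that ``$\ell=k$ with $W'\in\Gamma_{\eta_k}$'' is exactly the case where the bound fails, so that all other $(\ell,W')$ are genuinely covered. Everything else is a short chain of inequalities using the normalization $B_0>\max(1,\max_m\tau_m)$ and the fact, established in Section~\ref{notation}, that elements of $(A_k\Gamma A_k^{-1})_\infty$ act on $\H^{n+1}$ without changing the height coordinate.
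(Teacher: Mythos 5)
Your proof is correct and follows essentially the same approach as the paper: both split on whether $(\ell,W)$ has $\ell=k$, $W\in\Gamma_{\eta_k}$ (giving $y_{A_\ell W(P)}=y_{A_k(P)}$) or not (applying Lemma~\ref{cuspineq} and the normalization $B_0>\max_m\tau_m$ to conclude $y_{A_\ell W(P)}<B\leq y_{A_k(P)}$). The only cosmetic difference is that the paper bounds $\tau_k\tau_\ell/y_{A_k(P)}<\sqrt{\tau_k\tau_\ell}<B$ while you use $\tau_k\tau_\ell/B_0<\tau_k<B_0\leq B$; both chains rely on the same facts.
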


\begin{proof}
For $B\geq B_0$ and $P\in\F_k(B)$ we have by definition
$y_{A_k(P)}\geq B>\sqrt{\tau_k\tau_{\ell}}$. If $\ell=k$ and $W\in\Gamma_{\eta_k}$ then
$y_{A_{\ell}W(P)}=y_{A_k(P)}$. For all other pairs
$(\ell,W)\in\{1,...,\kappa\}\times\Gamma$ Lemma \ref{cuspineq} implies
\begin{align*}
y_{A_{\ell}W(P)}\leq \frac{\tau_k\tau_{\ell}}{y_{A_k(P)}}<\sqrt{\tau_k\tau_{\ell}}<B,
\end{align*}
and the lemma follows.
\end{proof}

\begin{lem}\label{invhbound}
The function $\mathcal{Y}_{\Gamma}$ satisfies
$\mathcal{Y}_{\Gamma}^0:=\inf_{P\in\H^{n+1}}\mathcal{Y}_{\Gamma}(P)>0$.
\end{lem}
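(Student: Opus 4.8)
The plan is to show that $\mathcal{Y}_{\Gamma}$ is bounded away from $0$ by exploiting the fact that $\mathcal{Y}_{\Gamma}$ is $\Gamma$-invariant and continuous, so that its infimum over $\H^{n+1}$ equals its infimum over the closure of the fundamental domain $\F = \F_{B_0} \cup \bigcup_{k=1}^{\kappa} \mathcal{F}_k(B_0)$. First I would split according to this decomposition. On each cuspidal region $\mathcal{F}_k(B_0)$, Lemma \ref{invhineq} gives immediately $\mathcal{Y}_{\Gamma}(P) = y_{A_k(P)} \geq B_0 > 1$, so there is nothing to prove there. Hence the only issue is the bounded piece $\F_{B_0}$.

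On $\F_{B_0}$ I would argue as follows. For any $P \in \H^{n+1}$ one trivially has $\mathcal{Y}_{\Gamma}(P) \geq y_{A_k W(P)}$ for the specific choice $k=1$, $W = \mathrm{id}$, i.e.\ $\mathcal{Y}_{\Gamma}(P) \geq y_{A_1(P)}$. Since $A_1 \in \PSL(2,C_n)$ is a fixed isometry, the map $P \mapsto y_{A_1(P)}$ is continuous and strictly positive on all of $\H^{n+1}$. Now $\F_{B_0}$ is a bounded subset of $\H^{n+1}$; taking its closure $\overline{\F_{B_0}}$ (a compact subset of $\H^{n+1}$, using that $\F_{B_0}$ is bounded and, being the complement in a fundamental domain of the cuspidal regions, stays a positive distance away from the boundary of $\H^{n+1}$ — its $y$-coordinates are bounded below), the continuous positive function $y_{A_1(\cdot)}$ attains a positive minimum on it. Therefore $\inf_{P \in \F_{B_0}} \mathcal{Y}_{\Gamma}(P) \geq \min_{P \in \overline{\F_{B_0}}} y_{A_1(P)} > 0$. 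Combining the two cases, $\mathcal{Y}_{\Gamma}^0 = \inf_{P \in \F} \mathcal{Y}_{\Gamma}(P) \geq \min\big(B_0,\ \min_{P \in \overline{\F_{B_0}}} y_{A_1(P)}\big) > 0$, as desired.

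The one point requiring a little care — and the part I would regard as the main (though minor) obstacle — is verifying that $\overline{\F_{B_0}}$ is genuinely compact \emph{as a subset of} $\H^{n+1}$, i.e.\ that it does not approach the boundary $y = 0$ or escape to $y = \infty$. Boundedness of $\F_{B_0}$ (in the ambient $\R^{n+1}$ with the Euclidean metric) is part of the set-up in Section \ref{notation}, which rules out $y \to \infty$ and large $|\vecx|$; that $\F_{B_0}$ stays away from $y=0$ follows because $\F_{B_0}$ was chosen disjoint from the cuspidal regions $\mathcal{F}_k(B_0)$ and contained in $\F_{B_0}$'s own construction, so any point of $\H^{n+1}$ with very small $y$-coordinate is $\Gamma$-equivalent into some $\mathcal{F}_k(B)$ with $B$ large, hence not in $\F_{B_0}$ — more directly, one may simply invoke that a fundamental domain for a cofinite $\Gamma$ meets only finitely many of the horoball neighborhoods of the cusps and is otherwise contained in a region where $y$ is bounded below. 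Once this compactness is in hand, the rest is the elementary observation that a positive continuous function on a compact set has a positive minimum.

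Alternatively, and perhaps more cleanly, one can avoid discussing $\F_{B_0}$ directly: note that $\mathcal{Y}_{\Gamma}(P) \geq y_{A_1(P)}$ always, so it suffices to bound $y_{A_1(P)}$ from below on a fundamental domain; and on the cuspidal parts $\mathcal{F}_k(B_0)$ we already have the much stronger bound $\geq B_0$ from Lemma \ref{invhineq}. Either way the proof reduces to compactness of the non-cuspidal part of the fundamental domain together with positivity and continuity of the coordinate function $y_{A_1(\cdot)}$.
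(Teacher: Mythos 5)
Your proof follows essentially the same route as the paper: reduce to $\F_{B_0}$ via $\Gamma$-invariance and Lemma \ref{invhineq}, then use that a positive continuous function on the (relatively compact) bounded set $\F_{B_0}$ has positive infimum. The paper invokes continuity of $\mathcal{Y}_{\Gamma}$ directly where you use the slightly more elementary $y_{A_1(\cdot)}$; both are fine, and your extra care about why $\overline{\F_{B_0}}$ is compact in $\H^{n+1}$ is a reasonable expansion of what the paper leaves implicit in the word ``bounded.''
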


\begin{proof}
By the $\Gamma$-invariance of $\mathcal{Y}_{\Gamma}$ and Lemma \ref{invhineq} it suffices to prove that
\linebreak$\inf_{P\in\F_{B_0}}\mathcal{Y}_{\Gamma}(P)>0$. But this follows from the continuity of $\mathcal{Y}_{\Gamma}$, since $\F_{B_0}$ is a bounded set. 
\end{proof}

\begin{lem}\label{estim}
Let $f$ be a $\Gamma$-invariant function satisfying, for some
constants $a\in\R$, $C_1,C_2\geq 0$ and for all $k\in\{1,...,\kappa\}$,
\begin{align}\label{cond}
|f(P)|\leq C_1 (y_{A_k(P)})^a+C_2
\end{align}
when $y_{A_k(P)}\geq\mathcal{Y}_{\Gamma}^0$. Then
\begin{align*}
|f(P)|\leq C_1 \big(\mathcal{Y}_{\Gamma}(P)\big)^a+C_2
\end{align*}
for all $P\in\H^{n+1}$.
\end{lem}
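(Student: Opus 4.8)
The plan is to reduce the desired bound at an arbitrary point $P\in\H^{n+1}$ to the hypothesized bound \eqref{cond}, which a priori is only assumed to hold at points of large height in a cusp, by first moving $P$ by a suitable element of $\Gamma$. The two ingredients that make this work are the $\Gamma$-invariance of $f$ (and of $\mathcal{Y}_{\Gamma}$) and the fact, noted above, that the maximum in the definition \eqref{invh} of $\mathcal{Y}_{\Gamma}$ is actually attained.

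Concretely, I would fix $P\in\H^{n+1}$ and choose $k_0\in\{1,\ldots,\kappa\}$ and $W_0\in\Gamma$ realizing the maximum, so that $y_{A_{k_0}W_0(P)}=\mathcal{Y}_{\Gamma}(P)$, and then set $Q:=W_0(P)$. The first step is to observe that $y_{A_{k_0}(Q)}=\mathcal{Y}_{\Gamma}(Q)$: indeed $\mathcal{Y}_{\Gamma}(Q)\geq y_{A_{k_0}(Q)}=y_{A_{k_0}W_0(P)}=\mathcal{Y}_{\Gamma}(P)$, while $\Gamma$-invariance of $\mathcal{Y}_{\Gamma}$ gives $\mathcal{Y}_{\Gamma}(Q)=\mathcal{Y}_{\Gamma}(P)$, so both inequalities are equalities. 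In particular $y_{A_{k_0}(Q)}=\mathcal{Y}_{\Gamma}(Q)\geq\mathcal{Y}_{\Gamma}^0$ by Lemma \ref{invhbound} and the definition of $\mathcal{Y}_{\Gamma}^0$ as an infimum, so the hypothesis \eqref{cond} does apply at the point $Q$ with index $k=k_0$. This is the crucial point: the hypothesis must be invoked at the translate $Q=W_0(P)$, not at $P$ itself.

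The last step is routine assembly: \eqref{cond} yields $|f(Q)|\leq C_1\big(y_{A_{k_0}(Q)}\big)^a+C_2=C_1\big(\mathcal{Y}_{\Gamma}(P)\big)^a+C_2$, and $\Gamma$-invariance of $f$ gives $|f(P)|=|f(W_0(P))|=|f(Q)|$, whence $|f(P)|\leq C_1\big(\mathcal{Y}_{\Gamma}(P)\big)^a+C_2$; since $P$ was arbitrary, the lemma follows. I do not expect any serious obstacle here — the argument is elementary and, since it only uses $C_1\geq0$, it is insensitive to the sign of $a$. The only thing one must be slightly careful about is verifying that the height $y_{A_{k_0}(Q)}$ genuinely lands in the range $\geq\mathcal{Y}_{\Gamma}^0$ where \eqref{cond} is assumed valid, which is exactly what the computation in the second step guarantees.
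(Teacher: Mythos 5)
Your argument is correct and is essentially identical to the paper's proof: both pick $(k,W)$ attaining the maximum in \eqref{invh}, note that $y_{A_kW(P)}=\mathcal{Y}_{\Gamma}(P)\geq\mathcal{Y}_{\Gamma}^0$, and then invoke \eqref{cond} at the translated point $W(P)$ together with the $\Gamma$-invariance of $f$. The only difference is that you spell out the verification that the hypothesis applies at $Q=W(P)$ a bit more explicitly than the paper does.
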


\begin{proof}
Take any $P\in\H^{n+1}$ and choose $(k,W)\in\{1,...,\kappa\}\times\Gamma$ such that
$y_{A_kW(P)}=\mathcal{Y}_{\Gamma}(P)\geq\mathcal{Y}_{\Gamma}^0$. By the $\Gamma$-invariance of
$f$ and condition
\eqref{cond} we get
\begin{align*}
|f(P)|=|f(W(P))|\leq C_1
(y_{A_kW(P)})^a+C_2=C_1\big(\mathcal{Y}_{\Gamma}(P)\big)^a+C_2.
\end{align*}
\end{proof}

Finally, we estimate the integral of the $n$:th power of $\mathcal{Y}_{\Gamma}$ over the bounded set $\F_B$.

\begin{lem}\label{integral}
For all $B\geq B_0$,
\begin{align*}
\int_{\F_B}\big(\mathcal{Y}_{\Gamma}(P)\big)^n\,d\nu(P)
=O\big(\log(2B)\big),
\end{align*}
where the implied constant depends only on $\Gamma$.
\end{lem}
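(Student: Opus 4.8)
The plan is to decompose the bounded set $\F_B$ according to the invariant height function and reduce everything to a one-dimensional integral over $y$. The key observation is that, by construction, $\F_B \subset \F_{B_0} \cup \bigcup_{k=1}^{\kappa} \F_k(B_0)$, and on each cuspidal region $\F_k(B_0)$ Lemma \ref{invhineq} tells us that $\mathcal Y_\Gamma(P) = y_{A_k(P)}$, while $\F_B \cap \bigcup_k \F_k(B) = \emptyset$ forces the points of $\F_B$ lying in $\F_k(B_0)$ to satisfy $y_{A_k(P)} < B$. So first I would split
\begin{align*}
\int_{\F_B}\big(\mathcal Y_\Gamma(P)\big)^n\,d\nu(P) = \int_{\F_B \cap \F_{B_0}}\big(\mathcal Y_\Gamma(P)\big)^n\,d\nu(P) + \sum_{k=1}^{\kappa}\int_{\F_B \cap \F_k(B_0)}\big(\mathcal Y_\Gamma(P)\big)^n\,d\nu(P).
\end{align*}
The first integral is over the fixed bounded set $\F_{B_0}$ (independent of $B$), on which $\mathcal Y_\Gamma$ is continuous hence bounded, so it contributes an $O(1)$ term. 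This disposes of the ``bulk'' part.

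Next I would estimate each cuspidal contribution. After applying the isometry $A_k$ (which preserves $d\nu$), the region $\F_B \cap \F_k(B_0)$ maps into a subset of $\widetilde\F_k(B_0) = \{\vecx + y i_n : \vecx \in \CC_k,\ y \geq B_0\}$ on which additionally $y = y_{A_k(P)} < B$; that is, into $\{\vecx \in \CC_k,\ B_0 \leq y < B\}$. On this set $\mathcal Y_\Gamma(P) = y$, so the integral becomes
\begin{align*}
\int_{\CC_k}\int_{B_0}^{B} y^n \cdot \frac{dy\,d\vecx}{y^{n+1}} = |\CC_k|\int_{B_0}^{B}\frac{dy}{y} = \log(B/B_0),
\end{align*}
using $|\CC_k| = 1$. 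Summing over the $\kappa$ cusps gives $\kappa\log(B/B_0) = O(\log(2B))$ with implied constant depending only on $\Gamma$ (through $\kappa$ and $B_0$), and combining with the $O(1)$ bulk term finishes the proof.

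The only point requiring a little care — and the step I expect to be the mild obstacle — is justifying that $\F_B \cap \F_k(B_0)$, after transport by $A_k$, really does land inside $\{\vecx \in \CC_k,\ B_0 \leq y < B\}$, i.e.\ that the upper cutoff $y < B$ genuinely holds there. This is where one invokes the disjointness $\F_B \cap \bigcup_k \F_k(B) = \emptyset$ together with the fact that a point of $\F_k(B_0)$ with $y_{A_k(P)} \geq B$ would lie in $\F_k(B)$; one also needs that a point of $\F_B$ cannot lie in two different $\F_k(B_0)$'s, which follows from the disjointness of the cuspidal regions for $B = B_0$ guaranteed when $B_0 \geq B_0$. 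Everything else is a direct computation with the volume element and the normalization $|\CC_k| = 1$.
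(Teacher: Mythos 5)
Your proof is correct and follows essentially the same route as the paper: split $\F_B$ into the fixed bounded piece $\F_{B_0}$ (contributing $O(1)$ by continuity of $\mathcal{Y}_\Gamma$) and the cuspidal shells $\F_B\cap\F_k(B_0)=A_k^{-1}\big(\CC_k\times[B_0,B)\big)$, where by Lemma \ref{invhineq} the integrand is $y^n$ and the computation gives $\kappa\log(B/B_0)$. The ``point requiring care'' you flag is indeed just the observation that $\F_B\cap\F_k(B_0)=\F_k(B_0)\setminus\F_k(B)$, which the paper states directly; your justification of it is sound.
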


\begin{proof}
We write
\begin{align*}
\F_B=\F_{B_0}\cup\bigcup_{k=1}^{\kappa}\big(\F_k(B_0)\cap\F_B\big)
\end{align*}
and note that
\begin{align*}
\F_k(B_0)\cap\F_B=\F_k(B_0)\setminus\F_k(B)=A_k^{-1}\big(\mathcal{C}_k\times[B_0,B)\big).
\end{align*}
Using the continuity of $\big(\mathcal{Y}_{\Gamma}(P)\big)^n$ and Lemma \ref{invhineq} we get,
\begin{align*}
&\int_{\F_B}\big(\mathcal{Y}_{\Gamma}(P)\big)^n\,
d\nu(P)=O(1)+\sum_{k=1}^{\kappa}\int_{\F_k(B_0)\cap\F_B}\big(\mathcal{Y}_{\Gamma}(P)\big)^n\,
d\nu(P)\\
&=O(1)+\sum_{k=1}^\kappa\int_{\mathcal{C}_k\times[B_0,B)}y^n\,\frac{dx_1\ldots dx_{n}dy}{y^{n+1}}
= O(1)+\kappa\int_{B_0}^B
\frac{dy}{y}=O\big(\log(2B)\big),
\end{align*}
since $B\geq B_0>1$.
\end{proof}

\subsection{Repeated summation by parts}

We will use summation by parts extensively in later sections. Here we fix the notation and give a convenient formula for repeated summation by parts.

We are interested to perform summation by parts on sums of the form
\begin{align*}
 \sum_{m_1=\alpha_1}^{\beta_1}\cdots\sum_{m_n=\alpha_n}^{\beta_n}g(m_1,\ldots,m_n)a(m_1,\ldots,m_n),
\end{align*}
where $\alpha_j,\beta_j\in\Z$ satisfy $0\leq \alpha_j\leq\beta_j$ for $j\in\{1,\ldots,n\}$, $a:\Z_{\geq0}^n\to\C$ and $g$ is a smooth function. We define
\begin{align*}
S(X_1,\ldots,X_n):=\sum_{0\leq m_1\leq X_1}\cdots\sum_{0\leq m_n\leq X_n}a(m_1,\ldots,m_n).
\end{align*}
We let $N:=\{1,\ldots,n\}$. Given any $A=\{i_1,\ldots,i_{|A|}\}\subset N$, with $i_1<\cdots<i_{|A|}$, and any $B\subset N\setminus A$ we define the embeddings $I_{A,B}:\R^{|A|}\to\R^n$ and $\tilde{I}_{A,B}:\R^{|A|}\to\R^n$ by
\begin{align*}
 I_{A,B}(\vecx)=I_{A,B}(x_1,\ldots,x_{|A|})=(x_1',\ldots,x_n'), \hspace{5pt}\textrm{where}\hspace{5pt}
x_j'=\begin{cases}
    \alpha_j & \textrm{ if $j\notin A\cup B$}\\
    \beta_j & \textrm{ if $j\in B$}\\
    x_{\ell} & \textrm{ if $j=i_{\ell}\in A$}
    \end{cases}
\end{align*}
and
\begin{align*}
 \tilde{I}_{A,B}(\vecx)=(x_1',\ldots,x_n'), \hspace{5pt}\textrm{where}\hspace{5pt}
x_j'=\begin{cases}
    \alpha_j-1 & \textrm{ if $j\notin A\cup B$}\\
    \beta_j & \textrm{ if $j\in B$}\\
    x_{\ell} & \textrm{ if $j=i_{\ell}\in A$.}
    \end{cases}
\end{align*}
We also introduce the notation
\begin{align}\label{g}
 g_{A,B}(\vecx):=\frac{\partial^{|A|}(g\circ I_{A,B})}{\partial x_1\cdots\partial x_{|A|}}(\vecx)
\end{align}
and
\begin{align}\label{S}
 S_{A,B}(\vecx):=S\circ \tilde{I}_{A,B}(\vecx).
\end{align}

\begin{lem}\label{sumbp}
Let $\alpha_j,\beta_j\in\Z$ satisfy $0\leq \alpha_j\leq\beta_j$. Let $g:\prod_{j\in N}[\alpha_j,\beta_j]\to\C$ be a smooth function and let $a:\Z_{\geq0}^n\to\C$. Then
\begin{multline}\label{intbp}
\sum_{m_1=\alpha_1}^{\beta_1}\cdots\sum_{m_n=\alpha_n}^{\beta_n}g(m_1,\ldots,m_n)a(m_1,\ldots,m_n)\\
=(-1)^n\underset{A\subset N}{\sum}\underset{B\subset N\setminus A}{\sum}(-1)^{|B|}\underset{\prod_{j\in A}[\alpha_j,\beta_j]}{\int} g_{A,B}(\vecx)S_{A,B}(\vecx)\,d\vecx,
\end{multline}
where we interpret the integral to be equal to $g_{\emptyset,B}S_{\emptyset,B}$ whenever $A=\emptyset$.
\end{lem}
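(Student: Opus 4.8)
The plan is to prove \eqref{intbp} by induction on $n$, peeling off one summation variable at a time using the classical (one-dimensional) Abel summation formula in the following integral form: for $\alpha\le\beta$ integers, a smooth $h:[\alpha,\beta]\to\C$ and a sequence $b:\Z_{\ge0}\to\C$ with partial sums $T(X)=\sum_{0\le m\le X}b(m)$,
\begin{align}\label{prop:onedim}
\sum_{m=\alpha}^{\beta}h(m)b(m)=h(\beta)T(\beta)-\int_{\alpha}^{\beta}h'(x)T(x)\,dx,
\end{align}
which itself follows by writing $b(m)=T(m)-T(m-1)$, reindexing, and then rewriting the resulting telescoped sum $\sum_{m=\alpha}^{\beta-1}\bigl(h(m)-h(m+1)\bigr)T(m)$ as $-\int h'(x)\lfloor\cdot\rfloor$-type integral; more cleanly, one observes $h(\beta)T(\beta)-\sum_{m=\alpha}^{\beta}h(m)b(m)=\sum_{m=\alpha}^{\beta-1}(h(m+1)-h(m))T(m)=\sum_{m=\alpha}^{\beta-1}T(m)\int_m^{m+1}h'(x)\,dx=\int_\alpha^\beta h'(x)T(\lfloor x\rfloor)\,dx$, and since $T$ is constant on $[m,m+1)$ this equals $\int_\alpha^\beta h'(x)T(x)\,dx$ with $T$ read as the right-continuous step function, matching the convention $S_{A,B}=S\circ\tilde I_{A,B}$ where the non-summed indices are set to $\alpha_j-1$ (floor of the left endpoint). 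Rewriting \eqref{prop:onedim} symmetrically, $\sum_{m=\alpha}^{\beta}h(m)b(m)=-\bigl[\,\int_\alpha^\beta h'(x)T(x)\,dx-h(\beta)T(\beta)+h(\alpha)T(\alpha-1)\bigr]$—wait, more usefully in the exact shape of the target: the $n=1$ case of \eqref{intbp} reads $\sum_{m=\alpha}^{\beta}g(m)a(m)=-\bigl(\int_\alpha^\beta g'(x)S(x-)\,dx\bigr)\big|_{A=\{1\},B=\emptyset}+\bigl(g(\beta)S(\beta)\bigr)\big|_{A=\emptyset,B=\{1\}}\cdot(-1)+\ldots$; so I would first verify carefully that \eqref{prop:onedim}, with the step-function reading of $T$, is \emph{exactly} the $n=1$ instance of \eqref{intbp} once the four boundary/embedding conventions are unwound—this bookkeeping check is the natural base case.

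For the inductive step, suppose \eqref{intbp} holds for $n-1$. In the $n$-fold sum, freeze $m_1,\ldots,m_{n-1}$ and apply the one-dimensional formula \eqref{prop:onedim} in the variable $m_n$, with $h(x)=g(m_1,\ldots,m_{n-1},x)$ and $b(m)=a(m_1,\ldots,m_{n-1},m)$, whose partial sums in $m_n$ (with $m_1,\ldots,m_{n-1}$ as parameters) are $\sum_{0\le k\le X}a(m_1,\ldots,m_{n-1},k)$. This splits the original sum into two $(n-1)$-fold sums over $m_1,\ldots,m_{n-1}$: a ``boundary'' term where $m_n$ is set to $\beta_n$ (this is the $n\in B$ branch) and an ``integral'' term $-\sum_{m_1,\ldots,m_{n-1}}\int_{\alpha_n}^{\beta_n}\partial_{x_n}g\cdot(\text{step partial sum})\,dx_n$ (the $n\in A$ branch). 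To each of these I apply the induction hypothesis in the variables $m_1,\ldots,m_{n-1}$. Here one must check that the relevant ``partial-sum function'' for the induction step still has the form $S\circ\tilde I$ with the correct lower-truncation convention; this is automatic because in the boundary term the relevant sequence is $k\mapsto a(m_1,\ldots,m_{n-1},k)$ summed up to $\beta_n$, i.e.\ $S$ with its last coordinate $=\beta_n$, while in the integral term it is $S$ with its last coordinate set to (the floor of) $x_n$, and since we integrate over $x_n\in[\alpha_n,\beta_n]$ this is precisely the restriction of $S_{A,B}$ with $n\in A$. Then one reassembles: each subset pair $(A',B')$ of $\{1,\ldots,n-1\}$ produced by the induction, together with the choice $n\in B$ or $n\in A$ from the first step, gives a pair $(A,B)$ of subsets of $N$ with either $n\in B$ or $n\in A$; these exhaust exactly the $(A,B)$ with $n\notin N\setminus(A\cup B)$. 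The missing case $n\notin A\cup B$—where the $n$-th coordinate is set to $\alpha_n-1$—contributes zero on the left but must also contribute zero on the right after regrouping; this works out because the $\alpha_n-1$ evaluation arises only as the value of the step function $T$ at the left endpoint in the Abel formula and we have folded it into the other two terms, so no genuinely new term appears. Tracking the signs $(-1)^n$ and $(-1)^{|B|}$: the one-dimensional step introduces one extra $(-1)$ (promoting $(-1)^{n-1}$ to $(-1)^n$) on the integral branch and, on the boundary branch, one extra $(-1)^{1}=(-1)^{|\{n\}|}$ factor matching the $(-1)^{|B|}$ weight; combined with the inductive $(-1)^{|B'|}$ these give $(-1)^{|B|}$ in all cases.

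The main obstacle, as usual with multidimensional summation-by-parts identities, is not any single analytic estimate but the \emph{combinatorial bookkeeping of the $3^n$ boundary regimes} encoded by the pairs $(A,B)$ and the three-way split ($j\in A$: integrated; $j\in B$: evaluated at $\beta_j$; $j\notin A\cup B$: evaluated at $\alpha_j-1$ or $\alpha_j$ depending on whether it sits inside $S$ or $g$). Getting the $I_{A,B}$ versus $\tilde I_{A,B}$ distinction right—$g$ is evaluated with the lower endpoints at $\alpha_j$, whereas $S$ is evaluated with them at $\alpha_j-1$—and verifying that the Abel step respects this asymmetry at each stage, is where care is needed. A clean way to organize this and sidestep a messy induction would be to instead prove \eqref{intbp} directly: start from $a(\vecm)=\sum_{\emptyset\ne D\subseteq N}(-1)^{n-|D|}\prod_{j\notin D}(\text{shift})\,S$ expressing each coefficient as an $n$-fold finite difference of $S$ (an inclusion–exclusion / telescoping identity), substitute into the left-hand side, and then for each variable $j\in D$ convert $\sum_{m_j}g\cdot(\Delta_j S)$-type expressions into integrals of $\partial_{x_j}g$ against $S$ by Abel, while the variables $j\notin D$ stay as boundary evaluations—one checks the resulting index set matches $\{(A,B):A=D,\ B\subseteq N\setminus D\}$ after accounting for which boundary ($\alpha$ or $\beta$) each telescoped difference lands on. Either route is routine once the conventions are pinned down; I would present the induction, since it keeps the sign-tracking most transparent and reuses the already-verified base case.
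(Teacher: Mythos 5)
Your overall strategy---induction on $n$, with the base case being the well-known one-dimensional summation-by-parts formula---is exactly the paper's; the paper's proof is just this one-line observation. However, your version of Abel summation in \eqref{prop:onedim} is wrong: with $T(X)=\sum_{0\le m\le X}b(m)$ and general $\alpha\ge 0$, the right-hand side is missing the boundary contribution $-h(\alpha)T(\alpha-1)$. The correct identity is
\begin{align*}
\sum_{m=\alpha}^{\beta}h(m)b(m)=h(\beta)T(\beta)-h(\alpha)T(\alpha-1)-\int_{\alpha}^{\beta}h'(x)T(x)\,dx,
\end{align*}
which you actually do write in your ``symmetric rewriting'' before abandoning it. That third term is precisely what generates the $(A,B)$ pairs with $n\notin A\cup B$ in \eqref{intbp}: it is a genuine, generically nonzero piece whenever $\alpha_n\ge 1$, since $S(\ldots,\alpha_n-1)\ne 0$ in general (it vanishes only when $\alpha_n=0$, i.e., in Corollary \ref{corsumbp}, where indeed only $A\cup B=N$ survives). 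Consequently your inductive step---which splits the $m_n$-variable into only two branches and then asserts that ``the missing case $n\notin A\cup B$ \dots\ contributes zero''---has a genuine gap, and the hand-wave about having ``folded it into the other two terms'' is incorrect: nothing folds away; one simply has three branches, not two.

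The fix preserves your outline: apply the correct three-term Abel formula in $m_n$, obtaining three $(n-1)$-fold sums with $m_n$ set to $\alpha_n$ in $g$ and $\alpha_n-1$ in $S$ (branch $n\notin A\cup B$), set to $\beta_n$ in both (branch $n\in B$), or integrated (branch $n\in A$); apply the induction hypothesis to each; and reassemble, noting that each $(A',B')$ in $\{1,\ldots,n-1\}$ together with one of the three choices for $n$ yields a unique admissible $(A,B)$. Sign bookkeeping: the first and third Abel terms carry a factor $-1$, promoting $(-1)^{n-1}$ to $(-1)^n$ with $|B|=|B'|$; the second carries $+1$ but $|B|=|B'|+1$, so $(-1)^{n-1}(-1)^{|B'|}=(-1)^n(-1)^{|B|}$ also. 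Incorporating the third branch is not a cosmetic omission; without it the claimed identity is simply false for $\alpha_j\ge 1$.
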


\begin{proof}
The proof is by a straightforward induction on $n$. (Note that when $n=1$ this is the well-known summation by parts formula.)
\end{proof}

\begin{cor}\label{corsumbp}
Let $\beta_j\in\Z_{\geq 0}$. Let $a:\Z_{\geq0}^n\to\C$ and let \mbox{$g:\prod_{j\in N}[0,\beta_j]\setminus\{\vec0\}\to\C$} be a smooth function. Suppose that $S(0,\ldots,0)=0$, i.e.\ that $a(0,\ldots,0)=0$, and define $g(\vec0)$ arbitrarily. Then
\begin{multline}\label{again}
\sum_{m_1=0}^{\beta_1}\cdots\sum_{m_n=0}^{\beta_n}g(m_1,\ldots,m_n)a(m_1,\ldots,m_n)\\
=\underset{A\subset N}{\sum}(-1)^{|A|}\underset{\prod_{j\in A}[0,\beta_j]}{\int} g_{A,N\setminus A}(\vecx)S_{A,N\setminus A}(\vecx)\,d\vecx.
\end{multline}
\end{cor}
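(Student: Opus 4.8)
The plan is to derive Corollary \ref{corsumbp} as a specialization of Lemma \ref{sumbp} with $\alpha_j=0$ for all $j\in N$, then simplify the double sum over pairs $(A,B)$ down to a single sum over $A\subset N$ by exploiting the hypothesis $a(\vec0)=0$. First I would write down \eqref{intbp} in the case $\alpha_1=\cdots=\alpha_n=0$, so that the formula reads
\begin{align*}
\sum_{m_1=0}^{\beta_1}\cdots\sum_{m_n=0}^{\beta_n}g(m_1,\ldots,m_n)a(m_1,\ldots,m_n)
=(-1)^n\underset{A\subset N}{\sum}\underset{B\subset N\setminus A}{\sum}(-1)^{|B|}\underset{\prod_{j\in A}[0,\beta_j]}{\int} g_{A,B}(\vecx)S_{A,B}(\vecx)\,d\vecx.
\end{align*}
The point is that in this situation the embedding $\tilde{I}_{A,B}$ sets the coordinates $x_j$ with $j\notin A\cup B$ equal to $\alpha_j-1=-1$. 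Since $S(X_1,\ldots,X_n)$ counts only indices $m_i$ with $0\le m_i\le X_i$, having any coordinate equal to $-1$ forces the summation range for that index to be empty, hence $S\circ\tilde{I}_{A,B}\equiv 0$ identically whenever $N\setminus(A\cup B)\ne\emptyset$. Therefore in the double sum only the terms with $B=N\setminus A$ survive.

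Next I would substitute $B=N\setminus A$, so $|B|=n-|A|$ and $(-1)^n(-1)^{|B|}=(-1)^n(-1)^{n-|A|}=(-1)^{2n-|A|}=(-1)^{|A|}$, which collapses the identity to
\begin{align*}
\sum_{m_1=0}^{\beta_1}\cdots\sum_{m_n=0}^{\beta_n}g(m_1,\ldots,m_n)a(m_1,\ldots,m_n)
=\underset{A\subset N}{\sum}(-1)^{|A|}\underset{\prod_{j\in A}[0,\beta_j]}{\int} g_{A,N\setminus A}(\vecx)S_{A,N\setminus A}(\vecx)\,d\vecx,
\end{align*}
which is precisely \eqref{again}. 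It remains to deal with two bookkeeping points. For the term $A=N$ we have $B=N\setminus A=\emptyset$, and $\tilde I_{N,\emptyset}$ is the identity, so $S_{N,\emptyset}(\vecx)=S(\vecx)$; here $g_{N,\emptyset}$ is the full mixed partial $\partial^n g/\partial x_1\cdots\partial x_n$, which is well-defined on $\prod_j[0,\beta_j]\setminus\{\vec0\}$ — and since a single point has measure zero the value $g(\vec0)$ (defined arbitrarily) does not affect the integral, nor does it affect any of the lower-order derivatives appearing for $A\subsetneq N$ since those are evaluated along faces $x_j=0$ ($j\notin A$) and the only problematic point $\vec0$ again contributes nothing to an integral over a positive-dimensional cube (or, for $A=\emptyset$, one needs $g_{\emptyset,N}=g(\vec0)$ times $S_{\emptyset,N}=S\circ\tilde I_{\emptyset,N}$; but $\tilde I_{\emptyset,N}$ has every coordinate either $-1$ or $\beta_j$, and in fact $A=\emptyset$ forces $N\setminus(A\cup B)=\emptyset$ only when $B=N$, giving $S(\beta_1,\ldots,\beta_n)$, so this term is $g(\vec0)\cdot 0$ when $a(\vec0)=0$? no — one must be careful). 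Let me instead handle $A=\emptyset$ directly: with $B=N$, the interpreted value is $g_{\emptyset,N}S_{\emptyset,N}=g(\vec0)\,S(\beta_1,\ldots,\beta_n)$, so for the collapsed formula to have the stated form we need this $A=\emptyset$ term to still be $g(\vec0)S(\beta_1,\ldots,\beta_n)$, which matches $(-1)^{|\emptyset|}=1$ times the interpreted integral — consistent; and the arbitrariness of $g(\vec0)$ is harmless precisely because... hmm, this is the one genuinely delicate point.

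The main obstacle, then, is not the algebraic collapse (which is immediate) but checking rigorously that the hypothesis $S(0,\ldots,0)=a(\vec0)=0$ is exactly what is needed to make the value $g(\vec0)$ irrelevant in the $A=\emptyset$ term. Concretely: when $A=\emptyset$, the only surviving $B$ is $B=N$, contributing $g(\vec0)\cdot S(\beta_1,\ldots,\beta_n)$ — and this does depend on $g(\vec0)$, so it would seem the arbitrariness is not harmless. The resolution is that one should not isolate $A=\emptyset$ in this way; rather, before specializing $\alpha_j=0$ one should note that $g$ is only assumed smooth on $\prod_j[0,\beta_j]\setminus\{\vec0\}$, and the clean statement is obtained by first proving an auxiliary version of Lemma \ref{sumbp} on $\prod_j[1,\beta_j]$-type ranges and then extending the $m_i=0$ slices separately using $a(\vec0)=0$; alternatively, and more cleanly, one observes that every occurrence of $I_{A,B}$ or $\tilde I_{A,B}$ that would require evaluating $g$ or its derivatives at $\vec0$ is an integral over a cube of dimension $|A|\ge 0$, and when $|A|=0$ the ``integral'' is the single value $g_{\emptyset,N}(\,\cdot\,)S_{\emptyset,N}=g(\vec0)S(\beta_1,\dots,\beta_n)$; the fix is to rewrite the original sum as $\sum g a - g(\vec0)a(\vec0)$ (which equals $\sum g a$ since $a(\vec0)=0$ regardless of how $g(\vec0)$ is chosen) and apply the lemma to this modified summand, whose value at $\vec0$ is $0$ independently of the convention. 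I expect writing up this last paragraph carefully — pinning down precisely where $g(\vec0)$ could enter and why $a(\vec0)=0$ neutralizes it — to be the only nontrivial part of the proof; everything else is the one-line induction already invoked for Lemma \ref{sumbp} together with the observation that $\tilde I_{A,B}$ produces a $-1$ coordinate, hence a vanishing $S$, off the diagonal $B=N\setminus A$.
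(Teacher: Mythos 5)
Your algebraic collapse is correct and matches the paper: with all $\alpha_j=0$, $\tilde I_{A,B}$ sends any $j\notin A\cup B$ to $-1$, so $S_{A,B}\equiv 0$ unless $A\cup B=N$, and $(-1)^n(-1)^{|B|}=(-1)^{|A|}$ folds \eqref{intbp} down to \eqref{again}. However, the way you try to handle the restricted smoothness of $g$ contains a genuine error and misses the point. You repeatedly assert that the $A=\emptyset$ term is $g(\vec0)S(\beta_1,\dots,\beta_n)$, but by the definitions $I_{\emptyset,N}$ sets every coordinate to $\beta_j$, so $g_{\emptyset,N}=g(\beta_1,\dots,\beta_n)$, not $g(\vec0)$. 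The $A=\emptyset$ term never evaluates $g$ anywhere near $\vec0$. Consequently your worry about this term is misplaced, and your proposed ``fix'' of subtracting $g(\vec0)a(\vec0)$ from the left-hand side addresses a non-issue while leaving the real one untouched.

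The genuine delicacy occurs on the other end, in the $A=N$ term (and, when some $\beta_j=0$, in the terms with $\beta_j=0$ for all $j\notin A$): there the integral runs over all of $\prod_j[0,\beta_j]$, so $g_{A,N\setminus A}$ involves derivatives of $g$ in a neighbourhood of $\vec0$, where $g$ need not be smooth. The observation you never make, and which resolves everything, is that $a(\vec0)=0$ forces $S\equiv 0$ on the whole unit cube $[0,1)^n$ (for $0\le X_j<1$ the only admissible index is $\vecm=\vec0$). Hence the factor $S_{A,N\setminus A}$ annihilates any contribution from a small neighbourhood of $\vec0$. This is exactly why the paper's brief proof works: it replaces $g$ by a $\tilde g$ smooth on all of $\prod_j[0,\beta_j]$ that agrees with $g$ off $\{|\vecy|\le 1/2\}$, applies Lemma~\ref{sumbp}, collapses as you did, and then notes both sides of \eqref{again} are unchanged under $g\to\tilde g$ — the left because the only integer point in the modified region is $\vec0$ where $a$ vanishes, the right because $S_{A,N\setminus A}$ vanishes wherever $I_{A,N\setminus A}(\vecx)$ lands in the ball. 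Your write-up would need to replace the $g(\vec0)$ discussion with this $S|_{[0,1)^n}=0$ argument to be correct.
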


\begin{proof}
We change $g$ in $\prod_{j\in N}[0,\beta_j]\cap\{\vecy\mid |\vecy|\leq 1/2\}$ in such a way that $g$ becomes smooth in $\prod_{j\in N}[0,\beta_j]$. We call the new function $\tilde g$. With $g$ replaced by $\tilde g$, \eqref{again} is just a restatement of \eqref{intbp}, for $\alpha_1=\ldots=\alpha_n=0$ implies that $S_{A,B}(\vecx)\equiv0$ whenever $A\cup B\neq N$. Finally we note that both sides of \eqref{again} are unchanged when $g$ is replaced by $\tilde g$ since $a(0,\ldots,0)=0$.
\end{proof}

\begin{remark}\label{remsumbp}
We will be interested in using formula \eqref{intbp} (and the related formula \eqref{again}) also when some of the $\beta_j$ are infinite. For formula \eqref{intbp} to hold also in this case we need to care about convergence issues. However, in each case where we are interested in applying the extended version of formula \eqref{intbp} it is immediate to verify that the summation formula still holds. In particular all terms involved will be convergent and every boundary term at infinity will be zero.
\end{remark}

We let $C\subset N$ be defined by $C:=\{j\in N\mid \beta_j=\infty\}$. In the situation described by Remark \ref{remsumbp}, formula \eqref{intbp} turns into
\begin{multline}\label{intbp*}
\sum_{m_1=\alpha_1}^{\beta_1}\cdots\sum_{m_n=\alpha_n}^{\beta_n}g(m_1,\ldots,m_n)a(m_1,\ldots,m_n)\\
=(-1)^n\underset{A\subset N}{\sum}\underset{B\subset N\setminus (A\cup C)}{\sum}(-1)^{|B|}\underset{\prod_{j\in A}[\alpha_j,\beta_j]}{\int} g_{A,B}(\vecx)S_{A,B}(\vecx)\,d\vecx.
\end{multline}

\section{Basic counting bounds in hyperbolic geometry}

In this section we give estimates of various counting functions, which we will need in later sections. We begin with two elementary lemmas concerning the geometry of $\R^n$.

\begin{lem}\label{circlelemma}
 Given any $R,r>0$ and $\vecx,\vecu_1,\vecu_2,...,\vecu_m\in\R^n$ such that $|\vecx-\vecu_j|\leq R$ and $|\vecu_i-\vecu_j|\geq r$ for all $i,j\in\{1,...,m\}$, $i\neq j$, we have
\begin{align}\label{circleproblem}
 m\leq 1+k_0\Big(\frac{R}{r}\Big)^n,
 \end{align}
where $k_0>0$ is a constant which only depends on $n$.
\end{lem}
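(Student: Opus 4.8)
The statement is a standard packing estimate, and the plan is to prove it by a volume-comparison argument, after first disposing of a trivial degenerate case. Write $V_n$ for the Euclidean volume of the unit ball in $\R^n$.

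First I would handle the case $2R<r$. Here, for any $i\neq j$ the triangle inequality gives $|\vecu_i-\vecu_j|\leq|\vecu_i-\vecx|+|\vecx-\vecu_j|\leq 2R<r$, contradicting the separation hypothesis $|\vecu_i-\vecu_j|\geq r$; hence $m\leq1$ and \eqref{circleproblem} holds trivially (for any $k_0>0$).

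Now assume $2R\geq r$. The key observation is that the open balls $B(\vecu_j,r/2)$, $j\in\{1,\ldots,m\}$, are pairwise disjoint: if $\vecz\in B(\vecu_i,r/2)\cap B(\vecu_j,r/2)$ with $i\neq j$, then $|\vecu_i-\vecu_j|<r$, again a contradiction. On the other hand, each such ball is contained in $B(\vecx,R+r/2)$, since $|\vecx-\vecu_j|\leq R$. Comparing Lebesgue measures of the disjoint union with the big ball gives $m\,V_n(r/2)^n\leq V_n(R+r/2)^n$, i.e.\ $m\leq(1+2R/r)^n$. Using $2R/r\geq1$ we get $1+2R/r\leq 4R/r$, hence $m\leq 4^n(R/r)^n$, and \eqref{circleproblem} follows with $k_0=4^n$.

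There is no real obstacle here; the only point requiring a moment's care is the degenerate regime $R/r$ small, where the volume bound alone is wasteful and one instead argues directly that at most one point can occur. The constant $k_0=4^n$ depends only on $n$, as required, and could be improved by a sharper packing argument, but this is irrelevant for our purposes.
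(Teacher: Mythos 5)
Your proof is correct and follows essentially the same volume-comparison argument as the paper: pack disjoint balls of radius $r/2$ into the ball of radius $R+r/2$ and compare Lebesgue measures, treating the regime $r>2R$ separately as trivial. The only cosmetic difference is that you make the constant $k_0=4^n$ explicit.
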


\begin{proof}
The open balls with radii $\sfrac{r}{2}$ and centers $\vecu_1,\vecu_2,\ldots,\vecu_m$ are pairwise disjoint and contained in the ball with center $\vecx$ and radius $R+\sfrac{r}{2}$. Hence
\begin{align}\label{circle}
 mV_n\Big(\frac{r}{2}\Big)^n\leq V_n\Big(R+\frac{r}{2}\Big)^n,
\end{align}
where $V_n$ is the volume of the unit ball in $\R^n$. If $r\leq 2R$ this implies that $m\ll_n\big(\frac{R}{r}\big)^n$. In the remaining case, $r>2R$, we clearly have $m\leq1$.
\end{proof}

We let $\mathfrak{B}$ be a right-angled closed box in $\R^n$, having side lengths $b_1,\ldots,b_n$.

\begin{lem}\label{boxlemma}
 Given $r>0$ and any $\vecu_1,\vecu_2,...,\vecu_m\in\mathfrak{B}$ such that $|\vecu_i-\vecu_j|\geq r$ for all $i,j\in\{1,...,m\}$, $i\neq j$, we have
\begin{align*}
 m\leq k_1\prod_{i=1}^n\Big(1+\frac{b_i}{r}\Big),
\end{align*}
where $k_1>0$ is a constant which only depends on $n$.
\end{lem}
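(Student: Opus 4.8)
The plan is to reduce Lemma \ref{boxlemma} to Lemma \ref{circlelemma} by partitioning the box $\B$ into a controlled number of pieces of bounded diameter, each of which can carry only boundedly many of the points $\vecu_j$. First I would handle the trivial case: if $r$ is large compared to $\max_i b_i$, say $r > \mathrm{diam}(\B)$, then at most one point can be placed, and the asserted bound holds since $\prod_i(1+b_i/r)\geq 1$. So assume from now on that $r \leq \mathrm{diam}(\B) = (b_1^2+\cdots+b_n^2)^{1/2}$.

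Next I would cover $\B$ by a grid of smaller boxes. For each $i\in\{1,\ldots,n\}$ choose $N_i := \lceil b_i/r \rceil$, so that $N_i \leq 1 + b_i/r$, and partition the $i$-th side $[0,b_i]$ (after translating $\B$ to the origin) into $N_i$ intervals each of length $b_i/N_i \leq r$. Taking products, this partitions $\B$ into $\prod_{i=1}^n N_i$ sub-boxes, each with all side lengths $\leq r$, hence each of Euclidean diameter at most $r\sqrt n$. Any two points $\vecu_i,\vecu_j$ lying in the same sub-box satisfy $|\vecu_i-\vecu_j|\leq r\sqrt n$, so by Lemma \ref{circlelemma} (applied with $R = r\sqrt n$ and the separation parameter $r$, choosing $\vecx$ to be one of the points in that sub-box) the number of points in a single sub-box is at most $1 + k_0 (\sqrt n)^n =: c_n$, a constant depending only on $n$. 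Summing over the sub-boxes gives
\begin{align*}
 m \leq c_n \prod_{i=1}^n N_i \leq c_n \prod_{i=1}^n \Big(1 + \frac{b_i}{r}\Big),
\end{align*}
which is the claim with $k_1 = c_n$.

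I do not anticipate a genuine obstacle here — the lemma is a packing estimate and the grid argument is the natural route. The only points requiring a little care are purely bookkeeping: making sure each sub-box genuinely has side lengths $\leq r$ (hence diameter $\leq r\sqrt n$, which is where the dimensional constant enters Lemma \ref{circlelemma}), and checking that the ceiling bound $\lceil b_i/r\rceil \leq 1 + b_i/r$ produces exactly the product form in the statement. One alternative would be to apply Lemma \ref{circlelemma} directly with $R = \mathrm{diam}(\B)$, but that yields a bound of the shape $1 + k_0(\mathrm{diam}(\B)/r)^n$, which is weaker than the product bound when the $b_i$ are of very different sizes; the grid decomposition is what recovers the sharp dependence on each side length separately.
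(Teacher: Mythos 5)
Your argument is correct, but it takes a different route from the paper's. The paper does not decompose $\mathfrak{B}$: it simply repeats the volume-packing step of Lemma~\ref{circlelemma} directly in the box geometry, observing that the open balls of radius $r/2$ centred at $\vecu_1,\ldots,\vecu_m$ are pairwise disjoint and contained in the box obtained by enlarging $\mathfrak{B}$ by $r/2$ on every side, whose volume is $\prod_{i=1}^n(b_i+r)$; comparing volumes gives
\begin{align*}
m\,V_n\Big(\frac{r}{2}\Big)^n\leq\prod_{i=1}^n(b_i+r),
\end{align*}
which is exactly the stated bound with $k_1=2^n/V_n$. Your grid decomposition — cutting $\mathfrak{B}$ into $\prod_i\lceil b_i/r\rceil$ sub-boxes of side $\leq r$ and applying Lemma~\ref{circlelemma} in each — is a valid reduction, and you are right that it recovers the product form that a single application of the circle lemma with $R=\mathrm{diam}(\mathfrak{B})$ would not. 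The difference is that you invoke Lemma~\ref{circlelemma} as a black box on each small piece, whereas the paper replays its proof with the enclosing ball replaced by an enclosing box. The two are of comparable difficulty; the paper's is a line shorter because the factor $(\sqrt n)^n$ and the constant $c_n$ that appear in your intermediate bookkeeping never arise when you compare volumes directly. Either argument is acceptable.
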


\begin{proof}
Arguing as in the proof of Lemma \ref{circlelemma} we find that the inequality corresponding to \eqref{circle} is
\begin{align*}
 mV_n\Big(\frac{r}{2}\Big)^n\leq \prod_{i=1}^n\big(b_i+r\big),
\end{align*}
which implies the desired inequality.
\end{proof}

Our next task is a generalization of \cite[Lemma 2.10]{iwa} (cf.\ Lemma \ref{counting} below).

\begin{lem}\label{counting2}
 Assume that $\Gamma$ has a cusp at $\infty$. Then there exists a constant $k_2>0$, depending only on $\Gamma$, such that the following holds for all $R,L>0$ and $P=\vecx+yi_n\in\H^{n+1}$ with $y\geq R$:
 \begin{align}\label{counting1}
  \#\Big\{T=\matr{a}{b}{c}{d}\in\Gammainfty\backslash\Gamma\,\,\big|\,\, y_{T(P)}>R\textrm{    and    } L\leq |c|<2L\Big\}\leq k_2L^n\Big(\frac{y}{R}\Big)^{n/2}.
 \end{align}
\end{lem}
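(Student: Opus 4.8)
The plan is to count elements $T = \matr abcd \in \Gammainfty\backslash\Gamma$ with $L \leq |c| < 2L$ and $y_{T(P)} > R$, where $P = \vecx + yi_n$ with $y \geq R$. First I would recall the standard fact (from \eqref{ycoord}) that for such $T$,
\begin{align*}
y_{T(P)} = \frac{y}{|c\vecx+d|^2 + |c|^2 y^2}.
\end{align*}
Since $T$ is taken modulo $\Gammainfty$ on the left, and left multiplication by $\matr{1}{\vecom}{0}{1}$ (for $\vecom\in\Lambda$) sends $\matr abcd$ to $\matr{a+\vecom c}{b+\vecom d}{c}{d}$, the quantity $|c|$ is unchanged and $\vecx_{T(P)}$ shifts by $\vecom$; so it is natural to parametrise a set of coset representatives by the bottom row $(c,d)$ together with the requirement $\vecx_{T(\cdot)}$ landing in a fixed fundamental domain $\mathcal C$ for $\Lambda$ acting on $\R^n$ (with $|\mathcal C| = 1$ by our normalisation). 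The key geometric input is then: the points $T(P)$, as $T$ ranges over the counted set, all have $y_{T(P)} > R$ and $\vecx_{T(P)}$-coordinates that are \emph{separated} — indeed if $T_1, T_2$ are inequivalent mod $\Gammainfty$ and both have height $> R$ at $P$, then because the cuspidal region $\widetilde\F(B)$ embeds in $M$ for $B \geq B_0$, two distinct such points cannot be too close; more precisely one shows $|\vecx_{T_1(P)} - \vecx_{T_2(P)}| \gg y_{T_1(P)}^{1/2} y_{T_2(P)}^{1/2}$ or similar, using that $T_2 T_1^{-1} \notin \Gammainfty$ forces (via Lemma \ref{shim}, giving a lower bound on the lower-left entry of $T_2T_1^{-1}$) a separation. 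This is the analogue of \cite[Lemma 2.10]{iwa}.

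Next I would bound $|\vecx_{T(P)} - \vecx|$ from above for the counted $T$. From the formula for $\vecx_{T(P)}$ in \eqref{xcoord} and the condition $|c| < 2L$, together with $y_{T(P)} > R$ (which forces $|c\vecx+d|^2 + |c|^2y^2 < y/R$, hence $|c\vecx + d| < (y/R)^{1/2}$ and $|c| < (y/R)^{1/2}/y = 1/(Ry)^{1/2}$ — wait, rather $|c|^2 y^2 < y/R$ so $|c| < (Ry^3)^{-1/2}\cdot$; in any case one gets clean bounds), one deduces that $\vecx_{T(P)}$ lies in a ball of radius $O((y/R)^{1/2}/(\text{something}))$ around a point depending only on $P$ and on $|c|$-scale; the upshot should be that all the points $\vecx_{T(P)}$ (after translating each into the fixed fundamental cell, or rather keeping track of which cell) lie in a box of side length $O(1 + L(y/R)^{1/2})$ — here the $L$ enters because varying $d$ over the relevant range, modulo the lattice, the first coordinate $\vecx_{T(P)}$ ranges over a region whose size grows with $L$. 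Then Lemma \ref{boxlemma}, applied with separation $r \asymp R \cdot (\text{lower height})^{-1}$... I should instead separate by height scale: partition the counted $T$ into dyadic ranges $2^{-j-1} < y_{T(P)}/R \leq 2^{-j}$... no — actually the cleanest route is to partition by the size of $|c\vecx+d|$ or simply to note $y_{T(P)} \leq y/(|c|^2y^2) \leq 1/(L^2 y) \leq 1/(L^2 R)$, so all heights are comparable up to the constraint, and a single application of Lemma \ref{boxlemma} with $b_i \asymp L(y/R)^{1/2}$ and separation $r \asymp $ (something like $(L^2 R)^{-1}\cdot$...) yields the bound $m \ll (1 + L(y/R)^{1/2})^n \cdot (\text{correction})$, which after checking that $L(y/R)^{1/2} \gg 1$ in the nontrivial range (else the cusp has few elements with that $|c|$) collapses to $k_2 L^n (y/R)^{n/2}$.

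The main obstacle I expect is the separation estimate: proving that the $n$-vectors $\vecx_{T(P)}$ attached to $\Gammainfty$-inequivalent $T$ with heights $> R$ are pairwise separated by a definite amount depending on $R$ and the heights. This is exactly where Lemma \ref{shim} (Shimizu's lemma) does the work — one writes $T_2 T_1^{-1} = \matr{a'}{b'}{c'}{d'}$ with $|c'| \geq 1/\tau$ (where $\tau = \tau_1$ for the cusp at infinity) since $T_2T_1^{-1}\notin\Gammainfty$, applies \eqref{ycoord} and \eqref{xcoord} to the point $T_1(P)$ to relate $y_{T_2(P)}$, $y_{T_1(P)}$ and $|\vecx_{T_1(P)} - \vecx_{T_2(P)}|$, and extracts the inequality $|\vecx_{T_1(P)} - \vecx_{T_2(P)}| \gtrsim \big(y_{T_1(P)}\, y_{T_2(P)}\big)^{-1/2}/|c'|$, hence $\gg (y_{T_1(P)} y_{T_2(P)})^{-1/2}$, i.e.\ $\gg (R^2)^{-1/2}\cdot$ up to the height bounds — giving separation $r \gg R/y$ after using $y_{T_i(P)} \leq y/(L^2 y^2) $. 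Once this separation and the box-containment are in hand, Lemma \ref{boxlemma} finishes it immediately; so essentially all of the content is in carefully running Shimizu's lemma through the coordinate formulas \eqref{xcoord}–\eqref{ycoord}, exactly as in the classical two-dimensional argument of \cite[Lemma 2.10]{iwa}, but bookkeeping the Clifford-algebra non-commutativity.
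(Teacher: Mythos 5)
Your overall strategy --- Shimizu's lemma for a separation bound, combined with a Euclidean packing lemma --- is the right one and matches the paper's. But you apply it to the wrong set of points, and as written the argument does not close.

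The paper does not separate the horizontal coordinates $\vecx_{T(P)}$ of the images $T(P)$; it separates the points $-c^{-1}d\in\R^n$ themselves (the feet of the isometric hemispheres, i.e.\ the $\Gamma$-translates of the cusp $\infty$). This is exactly what Lemma~\ref{shim} supplies directly: writing $T_1=\matr{a}{b}{c}{d}$, $T_2=\matr{\alpha}{\beta}{\gamma}{\delta}$ in distinct $\Gammainfty$-cosets, the lower-left entry of $T_2T_1^{-1}$ is $\gamma d^*-\delta c^*$; Shimizu gives $|\gamma d^*-\delta c^*|\geq\widehat k$, and then $|c^{-1}d-\gamma^{-1}\delta|=\tfrac{|\gamma d^*-\delta c^*|}{|\gamma||c|}\geq\widehat k/(4L^2)$. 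On the other hand, $y_{T(P)}>R$ together with $|c|\geq L$ gives $|\vecx+c^{-1}d|<L^{-1}\sqrt{y/R}$ straight from \eqref{ycoord} (you did derive the intermediate step $|c\vecx+d|<\sqrt{y/R}$ correctly). So the points $-c^{-1}d$ all lie in a ball of radius $L^{-1}\sqrt{y/R}$ about $-\vecx$ and are pairwise $\widehat k/(4L^2)$-separated, and a single invocation of Lemma~\ref{circlelemma} (not Lemma~\ref{boxlemma}) yields $m\leq 1+k_0\big(4L/\widehat k\big)^n(y/R)^{n/2}$; combined with the observation that the set is empty once $2L<\widehat k$, this is the stated bound with no dyadic decomposition.

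By contrast, running the argument on the coordinates $\vecx_{T(P)}$ is genuinely harder (they must be reduced mod $\Lambda$, and both the containment radius and the separation would have to depend on the individual heights $y_{T(P)}$), and your sketch does not pin it down. You assert a separation $|\vecx_{T_1(P)}-\vecx_{T_2(P)}|\gg y_{T_1(P)}^{1/2}y_{T_2(P)}^{1/2}$ early on and then, a few lines later, $\gg\big(y_{T_1(P)}y_{T_2(P)}\big)^{-1/2}/|c'|$, which are mutually inconsistent and neither is established. Moreover your proposed parameters in Lemma~\ref{boxlemma} do not combine to give $L^n(y/R)^{n/2}$: with $b_i\asymp L(y/R)^{1/2}$ and $r\asymp R/y$ one gets $\prod(1+b_i/r)\asymp\big(1+L(y/R)^{3/2}\big)^n$, which is the wrong power of $y/R$. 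The fix is simply to track $-c^{-1}d$ rather than $\vecx_{T(P)}$; once you do that, the rest of your plan goes through essentially verbatim.
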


Let $M$ denote the set in the left hand side of \eqref{counting1}. Before we turn to the proof of the lemma we note that $M$ is well-defined. If the elements $\matr{a}{b}{c}{d},\matr{\alpha}{\beta}{\gamma}{\delta}$ belong to the same $\Gammainfty$-coset in $\Gamma$ it follows that
\begin{align*}
 \matri{a}{b}{c}{d}=\matri{e}{f}{0}{e^{*-1}}\matri{\alpha}{\beta}{\gamma}{\delta}=\matri{*}{*}{e^{*-1}\gamma}{*},
\end{align*}
where $\matr{e}{f}{0}{e^{*-1}}\in\Gammainfty$ (recall that $|e|=1$). Hence $|c|=|e^{*-1}\gamma|=|\gamma|$.\\

\begin{proof}[Proof of Lemma \ref{counting2}]
If the elements $\matr{a}{b}{c}{d},\matr{\alpha}{\beta}{\gamma}{\delta}$ belong to different $\Gammainfty$-cosets in $\Gamma$ we note that
\begin{multline*}
 \matri{\alpha}{\beta}{\gamma}{\delta}\matri{a}{b}{c}{d}^{-1}=\matri{\alpha}{\beta}{\gamma}{\delta}\matri{d^*}{-b^*}{-c^*}{a^*}\\=\matri{\alpha d^*-\beta c^*}{-\alpha b^*+\beta a^*}{\gamma d^*-\delta c^*}{-\gamma b^*+\delta a^*}\in\Gamma-\Gammainfty.
\end{multline*}
It follows from Shimizu's lemma (see \cite[Thm.\ A]{her2} and \cite[p.\ 471, Rem.\ 1]{her1}) that there exists a constant $\widehat{k}>0$, depending only on $\Gamma$, such that $|\gamma d^*-\delta c^*|\geq \widehat{k}$. If furthermore $\matr{a}{b}{c}{d},\matr{\alpha}{\beta}{\gamma}{\delta}$ are representatives of elements in $M$ we get
\begin{align}\label{del1}
 \big|c^{-1}d-\gamma^{-1}\delta\big|=\big|\gamma^{-1}(\gamma d^*-\delta c^*)c^{*-1}\big|=\frac{|\gamma d^*-\delta c^*|}{|\gamma||c|}\geq \frac{\widehat{k}}{|\gamma||c|}>\frac{\widehat{k}}{4L^2},
\end{align}
where we have used the fact that $(c^{-1}d)^*=c^{-1}d\in\R^n$ since $cd^*\in\R^n$ (cf.\ the proof of \cite[Lemma 1.4]{ahl}).
 
On the other hand, for each $T\in M$
we have $y_{T(P)}>R$, which by \eqref{ycoord} can be written as
\begin{align}\label{again2}
  \frac{y}{|c\vecx+d|^2+|c|^2y^2}>R \iff |c\vecx+d|^2+|c|^2y^2<\frac{y}{R}.
\end{align}
From \eqref{again2} we get $|c\vecx+d|<\sqrt{\frac{y}{R}}$\,, which implies that
\begin{align}\label{del2}
|\vecx+c^{-1}d|<\frac{1}{|c|}\sqrt{\frac{y}{R}}\leq \frac{1}{L}\sqrt{\frac{y}{R}}\,.                            
\end{align}
Using \eqref{del1} and \eqref{del2} together with Lemma \ref{circlelemma} we obtain
\begin{align}\label{calc}
 \#M
 \leq 1+k_0\Big(\frac{4L}{\widehat{k}}\Big)^n\Big(\frac{y}{R}\Big)^{n/2}.
\end{align}
This concludes the proof, for note that if $2L<\widehat{k}$ then $M$ is empty.
\end{proof}

\begin{lem}\label{counting}
 Assume that $\Gamma$ has a cusp at $\infty$. Then there exists a constant $k_3>0$, depending only on $\Gamma$, such that the following holds for all $R>0$ and $P=\vecx+yi_n\in\H^{n+1}$:
 \begin{align}\label{countingresult}
  \#\Big\{T\in\Gammainfty\backslash\Gamma \,\,\big|\,\, y_{T(P)}>R\Big\}\leq1+\frac{k_3}{R^n}.
 \end{align}
\end{lem}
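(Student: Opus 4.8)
The bound in Lemma~\ref{counting} should follow by summing the dyadic estimate of Lemma~\ref{counting2} over a geometric range of scales $L$, once we observe that only finitely many dyadic scales can contribute. The plan is as follows. First, note that if $y < R$ then by \eqref{ycoord} we have $y_{T(P)} = y/(|c\vecx+d|^2 + |c|^2 y^2) \le y_{A(P)}$-type bounds... more directly: if $\Gamma_\infty\backslash\Gamma$ is indexed so that $T\in\Gamma_\infty$ gives $y_{T(P)} = y$, then the coset of the identity contributes at most $1$ to the count, and for this coset $y_{T(P)} > R$ forces $y > R$. For all other cosets $T = \matr abcd$ we have $c \ne 0$, and in fact by Shimizu's lemma (as invoked in the proof of Lemma~\ref{counting2}) we have $|c| \ge \widehat k$ for some constant $\widehat k = \widehat k(\Gamma) > 0$. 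On the other hand, the condition $y_{T(P)} > R$ rewritten via \eqref{again2} as $|c|^2 y^2 < y/R$ forces $|c|^2 < 1/(Ry)$, i.e. $|c| < (Ry)^{-1/2}$.

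The key point is then the following dichotomy. If $y < R$, the main coset may or may not contribute, but every non-trivial coset needs $\widehat k \le |c| < (Ry)^{-1/2}$, which already forces $Ry < \widehat k^{-2}$, so in particular $y$ is bounded; and then $|c|$ ranges over the finitely many dyadic windows $[2^j\widehat k, 2^{j+1}\widehat k)$ with $2^j \widehat k < (Ry)^{-1/2}$, of which there are $O\bigl(1 + \log_+\bigl((Ry)^{-1/2}/\widehat k\bigr)\bigr)$. Hmm — a pure count of dyadic windows times the per-window bound $k_2 L^n (y/R)^{n/2}$ would give something of size $L_{\max}^n (y/R)^{n/2} \asymp (Ry)^{-n/2}(y/R)^{n/2} = R^{-n} y^{-n}\cdot$(stuff), which is not obviously $\le 1 + k_3 R^{-n}$. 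So the cleaner route is: in the per-window bound $k_2 L^n (y/R)^{n/2}$, substitute the largest relevant $L$, namely $L \asymp (Ry)^{-1/2}$, giving a per-window contribution $\asymp (Ry)^{-n/2}(y/R)^{n/2} = R^{-n}$, and the geometric sum over the windows $L = 2^j \widehat k$ (which is dominated by its largest term since the exponent $n > 0$) also gives $O(R^{-n})$. Combining the main coset (at most $1$) with the $c\ne 0$ cosets (at most $O(R^{-n})$) yields $\#\{\,\cdot\,\} \le 1 + k_3 R^{-n}$.

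Concretely, the steps I would carry out: (i) split off the trivial $\Gamma_\infty$-coset, contributing $\le 1$; (ii) for a non-trivial coset $T = \matr abcd$, record $c\ne 0$ and, via Shimizu's lemma exactly as in the proof of Lemma~\ref{counting2}, the lower bound $|c|\ge \widehat k$; (iii) from $y_{T(P)}>R$ and \eqref{again2} deduce $|c| < (Ry)^{-1/2}$, in particular the whole count is empty unless $\widehat k < (Ry)^{-1/2}$, i.e. $Ry < \widehat k^{-2}$; (iv) cover the admissible range of $|c|$ by dyadic windows $L_j = 2^j\widehat k$, $j = 0, 1, \ldots, J$ with $J = O\bigl(\log_+(\widehat k^{-2}/(Ry))\bigr)$, apply Lemma~\ref{counting2} to each, and sum the resulting geometric series $\sum_{j=0}^J k_2 (2^j\widehat k)^n (y/R)^{n/2} \ll (2^J \widehat k)^n (y/R)^{n/2} \ll (Ry)^{-n/2}(y/R)^{n/2} = R^{-n}$; (v) combine. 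All constants depend only on $\Gamma$ (through $\widehat k$ and $k_2$) and on $n$, as required.

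**Main obstacle.** The only delicate point is bookkeeping the case analysis so that the final bound is uniform in $R$ and $P$ with no hidden dependence on $y$: one must make sure that when $Ry$ is large the count is simply empty (handled by Shimizu), when $Ry$ is of moderate size the geometric sum is controlled by its top term $\asymp R^{-n}$, and when $y\le R$ but $Ry$ small the same top-term estimate still gives $O(R^{-n})$ — never $O(y^{-n})$ or worse. Everything else is a routine application of Lemma~\ref{counting2} and Lemma~\ref{circlelemma}.
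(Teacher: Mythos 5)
Your dyadic‐decomposition idea is the same one the paper uses, and for $y\geq R$ your geometric sum over windows is fine (the paper sums from the top scale downward, you from the bottom scale upward; both converge to the same $O(R^{-n})$). But there is a genuine gap: you have omitted the crucial first reduction of the paper's proof, namely the observation that the count on the left of \eqref{countingresult} is invariant under replacing $P$ by $T_0(P)$ for any $T_0\in\Gamma$, so that one may assume $y$ is maximal in the $\Gamma$-orbit of $P$. After this reduction, either $y\leq R$ (and then the count is zero, because every $\Gamma$-translate has height $\leq y\leq R$), or else $y>R$ and Lemma~\ref{counting2} applies. Without the reduction, the case $y<R$ with a nonzero count for non-identity cosets can genuinely occur (e.g.\ take $P$ deep in a cuspidal neighbourhood so that some $\Gamma$-translate has large height), and in that regime you are not entitled to invoke Lemma~\ref{counting2} at all, since its hypothesis is explicitly $y\geq R$.

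Your "Main obstacle" paragraph recognises the $y\leq R$ case but asserts without justification that the "top-term estimate still gives $O(R^{-n})$." It does not: if you fall back on the intermediate bound \eqref{calc}, which is valid without the hypothesis $y\geq R$, you retain a $+1$ in every non-empty dyadic window, and these $+1$'s are not absorbed by the geometric series. The number of potentially non-empty windows is of order $\log_2\bigl((Ry)^{-1/2}/\widehat k\bigr)$, which is unbounded as $y\to 0$ for fixed $R$; this is not $\leq 1+k_3/R^n$. The mechanism that makes the paper's argument work is precisely that after the maximal-height reduction the factor $(y/R)^{n/2}$ in \eqref{calc} is at least $1$, which allows the $+1$ to be absorbed into the constant $k_2$ of Lemma~\ref{counting2}, so that the per-window bound carries no additive $+1$. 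Absent the reduction, $(y/R)^{n/2}$ can be arbitrarily small and that absorption fails. So the missing ingredient is not a computational detail but the $\Gamma$-invariance reduction itself; with it inserted at the start, the remainder of your argument goes through (indeed your use of the Shimizu lower bound $|c|\geq\widehat k$ is then redundant, since it is already built into Lemma~\ref{counting2}).
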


\begin{proof}
To begin with we note that the left hand side in the inequality above is invariant if the point $P$ is changed to $T(P)$ for any $T\in\Gamma$. Hence we may assume that $y_{T(\vecx+yi_n)}\leq y$ for all $T\in\Gamma$. If $y\leq R$ then the left side of \eqref{countingresult} equals zero and the inequality therefore holds trivially. Therefore we may also assume that $y>R$.

If $T=\matr{a}{b}{c}{d}\in\Gamma$ satisfies $y_{T(P)}>R$ then, using \eqref{again2}, we get $|c|^2y^2<\frac{y}{R}$ and it follows that $|c|<\frac{1}{\sqrt{yR}}$. We recall that $c=0$ if and only if $T\in\Gammainfty$. Using this fact together with Lemma \ref{counting2} we obtain
\begin{align*}
 &\#\Big\{T\in\Gammainfty\backslash\Gamma \,\,\big|\,\, y_{T(P)}>R\Big\}\\
 &\leq 1+\sum_{j=1}^{\infty}\#\bigg\{T=\matr{a}{b}{c}{d}\in\Gammainfty\backslash\Gamma\,\,\Big|\,\,y_{T(P)}>R\textrm{    and    }\frac{2^{-j}}{\sqrt{yR}}\leq |c|<\frac{2^{1-j}}{\sqrt{yR}}\bigg\}\\
 &\leq1+\sum_{j=1}^{\infty}k_2\Big(\frac{2^{-j}}{\sqrt{yR}}\Big)^n\Big(\frac{y}{R}\Big)^{n/2}=1+\frac{k_2}{(2^n-1)R^n},
\end{align*}
 where $k_2$ is as in Lemma \ref{counting2}.
 \end{proof}

We next consider the counting function
\begin{align*}
\mathfrak{C}_{\mathfrak B}^k(X):=\#\Big\{W=A_k^{-1}\matr{a}{b}{c}{d}\in\Gamma_{\eta_k}\backslash\Gamma \,\,\Big|\,\, 0<|c|\leq X, -c^{-1}d\in\mathfrak{B}\Big\},
\end{align*}
where $\mathfrak B$ is the box introduced just above Lemma \ref{boxlemma}. We write $S$ for the set in the right hand side. By an argument similar to the discussion of \eqref{counting1} we find that $S$ is well-defined.

\begin{lem}\label{counting3}
 For any $k\in\{1,...,\kappa\}$ and $X>0$ we have
\begin{align*}
\mathfrak{C}_{\mathfrak B}^k(X)\leq k_4\prod_{i=1}^n\big(1+b_iX^2\big),
\end{align*}
where $k_4>0$ is a constant which only depends on $\Gamma$.
\end{lem}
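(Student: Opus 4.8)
The plan is to reduce the count $\mathfrak{C}_{\mathfrak B}^k(X)$ to a packing estimate in $\R^n$ to which Lemma \ref{boxlemma} applies. First I would dyadically decompose the range $0<|c|\le X$ into pieces $L\le|c|<2L$ with $L=2^{-j}X$, $j\ge 0$, and bound the number of $W=A_k^{-1}\matr abcd$ in each piece. Fix such an $L$. For two distinct cosets represented by $\matr abcd$ and $\matr{\alpha}{\beta}{\gamma}{\delta}$ (so that $T=\matr abcd\matr{\alpha}{\beta}{\gamma}{\delta}^{-1}\notin A_k\Gamma_{\eta_k}A_k^{-1}$ after conjugating back to $A_k\Gamma A_k^{-1}$, using that the two cosets differ), the same computation as in \eqref{del1}, together with Shimizu's lemma at the cusp of $A_k\Gamma A_k^{-1}$ (as in Lemma \ref{shim} and the proof of Lemma \ref{counting2}), gives
\begin{align*}
 \big|{-c^{-1}d}-({-\gamma^{-1}\delta})\big|\ge\frac{\widehat k}{|c||\gamma|}>\frac{\widehat k}{4L^2}
\end{align*}
for a constant $\widehat k>0$ depending only on $\Gamma$. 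Thus the points $-c^{-1}d\in\mathfrak B$, as $W$ ranges over the cosets with $L\le|c|<2L$, are $\frac{\widehat k}{4L^2}$-separated in $\R^n$. (One must check, as in the discussion before the proof of Lemma \ref{counting2}, that $|c|$ and $-c^{-1}d$ depend only on the $\Gamma_{\eta_k}$-coset, so the set $S$ and the count are well-defined; this uses $|e|=1$ for the rotational part of elements of $(A_k\Gamma A_k^{-1})_\infty$.)

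Now I would apply Lemma \ref{boxlemma} with $r=\frac{\widehat k}{4L^2}$: the number of cosets in the dyadic piece is at most $k_1\prod_{i=1}^n\big(1+\tfrac{4b_iL^2}{\widehat k}\big)$. Writing $L=2^{-j}X$ and summing over $j\ge 0$ gives
\begin{align*}
 \mathfrak{C}_{\mathfrak B}^k(X)\le k_1\sum_{j=0}^{\infty}\prod_{i=1}^n\Big(1+\frac{4b_i4^{-j}X^2}{\widehat k}\Big).
\end{align*}
Expanding the product, the generic term is a constant times $X^{2|A|}4^{-j|A|}\prod_{i\in A}b_i$ for $A\subset N$, and $\sum_{j\ge0}4^{-j|A|}$ converges (it is $1$ when $A=\emptyset$ and $\le\frac{4}{3}$ otherwise), so the whole sum is $\ll_\Gamma\sum_{A\subset N}X^{2|A|}\prod_{i\in A}b_i=\prod_{i=1}^n(1+b_iX^2)$, which is the claimed bound with a suitable $k_4$ depending only on $\Gamma$ (through $\widehat k$, $n$, and the Lemma \ref{boxlemma} constant $k_1$).

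The only subtlety — and the one step where care is needed rather than routine — is the separation estimate: one must correctly conjugate so that Shimizu's lemma is being applied at the cusp $\infty$ of the group $A_k\Gamma A_k^{-1}$, handle the case of equal cosets (contributing nothing new), and verify that $-c^{-1}d\in\R^n$ and $(c^{-1}d)^*=c^{-1}d$ so that the Euclidean distance in $\R^n$ makes sense, exactly as in the parenthetical remark in the proof of Lemma \ref{counting2}. Everything after that is the elementary packing bound and a geometric-series summation, so I expect no real obstacle beyond bookkeeping.
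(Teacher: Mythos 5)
Your proposal contains a genuine gap in the final summation. The dyadic decomposition produces a divergent sum: when $A=\emptyset$, the quantity $\sum_{j\geq0}4^{-j|A|}=\sum_{j\geq0}1$ is infinite, not $1$ as you claim. Lemma \ref{boxlemma} always returns a bound $\geq k_1$ (even when a dyadic piece is empty), so $k_1\sum_{j\geq0}\prod_i(1+4b_i4^{-j}X^2/\widehat k)$ diverges. One could try to rescue this by truncating the $j$-sum at the first $j$ for which the piece is forced to be empty, using a lower bound $|c|\geq 1/\sqrt{\tau_k\tau_1}$ coming from Shimizu's lemma (applied to $A_kW=A_kWA_1^{-1}\in A_k\Gamma A_1^{-1}$); but even then the constant term is summed over $\asymp\log X$ nonempty pieces, so in the regime $b_iX^2\lesssim 1$ you would obtain a bound of size $\log X$ rather than the claimed $O(1)=\prod_i(1+b_iX^2)$. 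The dyadic split simply loses a logarithm here, so the route cannot give the lemma as stated.

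The dyadic decomposition is in fact unnecessary, and removing it both fixes the divergence and shortens the argument to essentially what the paper does. The separation estimate $\big|c^{-1}d-\gamma^{-1}\delta\big|=\frac{|\gamma d^*-\delta c^*|}{|\gamma||c|}\geq\frac{1}{\tau_k|\gamma||c|}$ (from Lemma \ref{shim} applied to $A_kW_0W_1^{-1}A_k^{-1}\in A_k\Gamma A_k^{-1}$, which is not in $A_k\Gamma_{\eta_k}A_k^{-1}$ for distinct cosets $W_0,W_1$) holds between \emph{any} two distinct elements counted in $\mathfrak{C}_{\mathfrak B}^k(X)$; since both $|c|$ and $|\gamma|$ are $\leq X$ by the very definition of the count, the separation is uniformly $\geq 1/(\tau_kX^2)$ across the entire set, not just within a dyadic shell. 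A single application of Lemma \ref{boxlemma} with $r=1/(\tau_kX^2)$ then gives $\mathfrak{C}_{\mathfrak B}^k(X)\leq k_1\prod_{i=1}^n(1+b_i\tau_kX^2)\leq k_4\prod_{i=1}^n(1+b_iX^2)$ with $k_4=k_1\max(1,\tau_k)^n$. Dyadic decomposition is the right tool for Lemma \ref{counting2}, where each shell contributes a factor $L^n$ that makes the geometric series converge; here no such factor is present, and the global bound on $|c|$ should be used directly.
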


\begin{proof}
 If $W_0=A_k^{-1}\matr{\alpha}{\beta}{\gamma}{\delta}$ and $W_1=A_k^{-1}\matr{a}{b}{c}{d}$ are any two distinct elements of $S$, then $W_0W_1^{-1}\notin\Gamma_{\eta_k}$. Furthermore it is clear that $\matr{\alpha}{\beta}{\gamma}{\delta}\matr{a}{b}{c}{d}^{-1}=A_kW_0W_1^{-1}A_k^{-1}\in A_k\Gamma A_k^{-1}$. Hence Lemma \ref{shim} gives $|\gamma d^*-\delta c^*|\geq1/\tau_k$. As in \eqref{del1}, since $0<|\gamma|,|c|\leq X$, we also get that
\begin{align*}
 \big|c^{-1}d-\gamma^{-1}\delta\big|=\frac{|\gamma d^*-\delta c^*|}{|\gamma||c|}\geq\frac{1}{\tau_k |\gamma||c|}\geq\frac{1}{\tau_kX^2}.
\end{align*}
Using Lemma \ref{boxlemma} with $r=1/(\tau_kX^2)$ we get 
\begin{align*}
 \mathfrak{C}_{\mathfrak{B}}^k(X)\leq k_1\prod_{i=1}^n\big(1+b_i\tau_kX^2\big),
\end{align*}
and the lemma follows.
\end{proof}

\begin{cor}\label{corc}
Let $b_0=0$ and $b_{n+1}=\infty$ and assume that $\mathfrak B$ is such that $0<b_1\leq\ldots\leq b_n$. Let $0<C\leq D$.
If $\frac{C}{\sqrt{b_{j+1}}}\leq X \leq\frac{D}{\sqrt{b_{j}}}$ for some $j\in\{0,\ldots,n\}$ (here $D/\sqrt{b_0}$ is interpreted as infinity), then
\begin{align*}
\mathfrak{C}_{\mathfrak B}^k(X)\leq k_5X^{2(n-j)}\prod_{i=j+1}^nb_i,
\end{align*}
where $k_5>0$ depends only on $C$, $D$ and $\Gamma$.
\end{cor}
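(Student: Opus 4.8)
The strategy is to start from the bound $\mathfrak{C}_{\mathfrak B}^k(X)\leq k_4\prod_{i=1}^n(1+b_iX^2)$ furnished by Lemma \ref{counting3}, and then to bound each factor $1+b_iX^2$ appropriately depending on whether $b_iX^2$ is $\geq 1$ or $<1$. The point of the hypothesis $\frac{C}{\sqrt{b_{j+1}}}\leq X\leq\frac{D}{\sqrt{b_j}}$, together with the ordering $b_1\leq\cdots\leq b_n$, is exactly that it splits the indices into two ranges: for $i\leq j$ we will have $b_iX^2$ bounded above (so $1+b_iX^2\ll 1$), while for $i\geq j+1$ we will have $b_iX^2$ bounded below away from $0$ (so $1+b_iX^2\ll b_iX^2$).

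More precisely, first I would record the two inequalities coming from the hypothesis: $X\leq D/\sqrt{b_j}$ gives $b_jX^2\leq D^2$, hence $b_iX^2\leq D^2$ for all $i\leq j$ since $b_i\leq b_j$; and $X\geq C/\sqrt{b_{j+1}}$ gives $b_{j+1}X^2\geq C^2$, hence $b_iX^2\geq C^2$ for all $i\geq j+1$ since $b_i\geq b_{j+1}$. (One must treat the degenerate endpoints: when $j=0$ the first range is empty and $D/\sqrt{b_0}=\infty$ imposes no constraint; when $j=n$ the second range is empty and $b_{n+1}=\infty$ imposes no constraint — in each case the corresponding product is empty and the argument goes through trivially.) Then for $i\leq j$ we estimate $1+b_iX^2\leq 1+D^2$, and for $i\geq j+1$ we estimate $1+b_iX^2\leq (1+C^{-2})b_iX^2$. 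Multiplying,
\begin{align*}
\mathfrak{C}_{\mathfrak B}^k(X)\leq k_4(1+D^2)^j(1+C^{-2})^{n-j}X^{2(n-j)}\prod_{i=j+1}^n b_i,
\end{align*}
and since $j\leq n$ the prefactor $k_4(1+D^2)^j(1+C^{-2})^{n-j}$ is bounded by a constant $k_5$ depending only on $C$, $D$, $n$ (absorbed into "$\Gamma$" via $n$ and via $k_4$), which is the claimed bound.

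There is no real obstacle here; the only thing requiring a little care is the bookkeeping at the two extreme values $j=0$ and $j=n$, and making sure the interpretation "$D/\sqrt{b_0}=\infty$" (and the unwritten "$C/\sqrt{b_{n+1}}=0$") is handled so that the hypothesis is vacuous on the appropriate side and the empty product is read as $1$. Everything else is a one-line application of Lemma \ref{counting3} together with the monotonicity $b_1\leq\cdots\leq b_n$.
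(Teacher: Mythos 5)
Your proof is correct and follows essentially the same approach as the paper: apply Lemma \ref{counting3}, split the product at index $j$, and bound $1+b_iX^2$ by $1+D^2$ for $i\leq j$ and by $(1+C^{-2})b_iX^2$ for $i\geq j+1$.
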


\begin{proof}
Using Lemma \ref{counting3} and the bounds on $X$ we find that
\begin{align*}
\mathfrak{C}_{\mathfrak B}^k(X)\leq k_4\prod_{i=1}^j(1+D^2)\prod_{i=j+1}^n\Big(\big(1+1/C^2\big)b_iX^2\Big)
\leq k_5X^{2(n-j)}\prod_{i=j+1}^nb_i,
\end{align*}
where $k_5=k_4\underset{j\in\{0,\ldots,n\}}{\max}\Big(\prod_{i=1}^j(1+D^2)\prod_{i=j+1}^n\big(1+1/C^2\big)\Big)$.
 \end{proof}

We now prove a lemma that will be very useful when dealing with non-cuspidal eigenfunctions in Section \ref{noncuspchap}.

\begin{lem}\label{csum}
Let $k\in\{1,\ldots,\kappa\}$. Assume that $\mathfrak B$ is such that $0=b_0<b_1\leq\ldots\leq b_n<b_{n+1}=\infty$. Let $0<C\leq D$ and let $0<A\leq B$ be such that $[A,B]\subset\Big[\frac{C}{\sqrt{b_{j+1}}},\frac{D}{\sqrt{b_{j}}}\Big]$ for some $j\in\{0,\ldots,n\}$. Let $\sum_{W}$ denote the sum over a set of representatives $W=A_k^{-1}\matr{a}{b}{c}{d}\in\Gamma_{\eta_k}\backslash\Gamma$ restricted by $A\leq|c|\leq B$ and $-c^{-1}d\in\mathfrak{B}$. Then the following holds:
\begin{align*}
  &(i)\,\,\sum_{W}\frac{1}{|c|^{\nu}}\leq k_5\frac{2(n-j)}{2(n-j)-\nu}B^{2(n-j)-\nu}\prod_{i=j+1}^nb_i\hspace{15pt}\text{ for any } 0<\nu<2(n-j),\\
  &(ii)\,\,\sum_{W}\frac{1}{|c|^{\nu}}\leq k_5\frac{\nu}{\nu-2(n-j)}A^{2(n-j)-\nu}\prod_{i=j+1}^nb_i\hspace{15pt}\text{ for any } \nu>2(n-j).
\end{align*}
Here $k_5=k_5(\Gamma,C,D)$ is the constant from Corollary \ref{corc}. The result in $(ii)$ holds also with $j=0$ and $B=\infty$.
\end{lem}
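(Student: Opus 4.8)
The plan is to derive both estimates from Corollary \ref{corc} by a dyadic decomposition of the range $A\le|c|\le B$ combined with a geometric-series summation. First I would partition $[A,B]$ into dyadic blocks: for $(i)$, write the sum over $W$ as a sum over $\ell\ge 0$ of the contributions from $W$ with $2^{-\ell-1}B<|c|\le 2^{-\ell}B$ (intersected with $[A,B]$), so that on each block $|c|^{-\nu}\le (2^{-\ell-1}B)^{-\nu}$ and there are at most $\mathfrak{C}^k_{\mathfrak B}(2^{-\ell}B)$ terms. Crucially, since $[A,B]\subset\bigl[\tfrac{C}{\sqrt{b_{j+1}}},\tfrac{D}{\sqrt{b_j}}\bigr]$, every dyadic endpoint $X=2^{-\ell}B$ (which lies in $[A,B]$ up to the truncation) still satisfies $\tfrac{C}{\sqrt{b_{j+1}}}\le X\le\tfrac{D}{\sqrt{b_j}}$, so Corollary \ref{corc} applies uniformly with the same $j$ and gives $\mathfrak{C}^k_{\mathfrak B}(2^{-\ell}B)\le k_5 (2^{-\ell}B)^{2(n-j)}\prod_{i=j+1}^n b_i$.

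Assembling these, the contribution of the $\ell$-th block to $\sum_W|c|^{-\nu}$ is $\ll k_5\,(2^{-\ell-1}B)^{-\nu}(2^{-\ell}B)^{2(n-j)}\prod_{i=j+1}^n b_i \ll k_5\, 2^{\nu}\, B^{2(n-j)-\nu}\, 2^{-\ell(2(n-j)-\nu)}\prod_{i=j+1}^n b_i$. For $(i)$, where $0<\nu<2(n-j)$, the exponent $2(n-j)-\nu$ is positive, so $\sum_{\ell\ge0}2^{-\ell(2(n-j)-\nu)}=\bigl(1-2^{-(2(n-j)-\nu)}\bigr)^{-1}$, a convergent geometric series; bounding this constant cleanly by $\tfrac{2(n-j)}{2(n-j)-\nu}$ (using $1-2^{-t}\ge t/(2(n-j))$ type elementary inequalities, or simply absorbing into the stated shape after checking the inequality $1-2^{-t}\ge t\ln 2\cdot 2^{-t}\ge\ldots$) yields the claimed bound $k_5\frac{2(n-j)}{2(n-j)-\nu}B^{2(n-j)-\nu}\prod_{i=j+1}^n b_i$. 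For $(ii)$, where $\nu>2(n-j)$, I would instead decompose upward from $A$: take blocks $2^{\ell}A\le|c|<2^{\ell+1}A$ for $\ell\ge0$; on such a block $|c|^{-\nu}\le(2^{\ell}A)^{-\nu}$ and the number of terms is $\le\mathfrak{C}^k_{\mathfrak B}(2^{\ell+1}A)$. One must check that $2^{\ell+1}A\le\tfrac{D}{\sqrt{b_j}}$ can fail for large $\ell$, but then those blocks are empty anyway (no $W$ with $|c|>B$ contributes), so it is harmless to use the corollary at $X=\min(2^{\ell+1}A, B)\le D/\sqrt{b_j}$; the count is then $\le k_5(2^{\ell+1}A)^{2(n-j)}\prod b_i$ (monotonicity of $\mathfrak C$ in $X$ lets us even just use $2^{\ell+1}A$ whenever that is $\le B$, and the block is empty otherwise). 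The $\ell$-th term contributes $\ll k_5\, 2^{2(n-j)}\, A^{2(n-j)-\nu}\, 2^{-\ell(\nu-2(n-j))}\prod b_i$, and now $\nu-2(n-j)>0$ makes $\sum_{\ell\ge0}2^{-\ell(\nu-2(n-j))}$ convergent, bounded by $\tfrac{\nu}{\nu-2(n-j)}$ after the same elementary cleanup. The final clause, that $(ii)$ holds also with $j=0$ and $B=\infty$, is automatic from this upward decomposition since the argument never used finiteness of $B$; with $j=0$ the constraint $[A,B]\subset[C/\sqrt{b_1},\infty)$ is exactly $A\ge C/\sqrt{b_1}$, and $D/\sqrt{b_0}=\infty$ imposes nothing.

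The only real point requiring care — the ``main obstacle,'' though it is minor — is making sure that in each dyadic block the hypothesis of Corollary \ref{corc} (namely $\tfrac{C}{\sqrt{b_{j+1}}}\le X\le\tfrac{D}{\sqrt{b_j}}$ with the \emph{same} index $j$) genuinely holds for the value of $X$ at which we invoke it, and handling the truncation at the endpoints $A$ and $B$ and the possibly-empty tail blocks so that one never applies the corollary outside its stated range. Once that bookkeeping is set up, everything reduces to summing an explicit geometric series and comparing the resulting constant with the shape $\frac{2(n-j)}{2(n-j)-\nu}$ resp.\ $\frac{\nu}{\nu-2(n-j)}$ stated in the lemma, which follows from the elementary inequality $1-2^{-t}\ge \frac{t}{t+1}$ for $t>0$ applied with $t=2(n-j)-\nu$ resp.\ $t=\nu-2(n-j)$ (and in the latter case noting $t+1\le \nu/(n-j)\cdot\ldots$; in any event the displayed constants are easily seen to dominate $\bigl(1-2^{-t}\bigr)^{-1}$ after multiplying through by the harmless factor $2^{\nu}$ or $2^{2(n-j)}$, which one absorbs by choosing $k_5$ as in Corollary \ref{corc} — or, if one prefers, by enlarging $k_5$ by a dimensional constant).
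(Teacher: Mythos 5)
Your argument is correct in spirit but takes a genuinely different route from the paper's, so let me compare. The paper writes $\sum_W |c|^{-\nu}$ as a Riemann--Stieltjes integral $\int_{A-\gamma}^{B} X^{-\nu}\,d\mathfrak{C}_{\mathfrak{B}}^k(X)$ (with $\gamma\to0$ at the end to handle the left endpoint), integrates by parts once to get a boundary term plus $\nu\int_{A-\gamma}^{B}X^{-\nu-1}\mathfrak{C}_{\mathfrak{B}}^k(X)\,dX$, invokes Corollary \ref{corc} pointwise in the integral over $[A,B]$, and evaluates the resulting elementary power integral. You instead decompose $[A,B]$ dyadically, invoke Corollary \ref{corc} at each dyadic scale, and sum a geometric series; your observations that empty tail blocks let the argument terminate and that the case $j=0$, $B=\infty$ is covered for free are the dyadic analogues of the paper's $\gamma\to0$ step and its extension to $B=\infty$. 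Both arguments rest entirely on Corollary \ref{corc}, and both give the bound in the stated shape. The one thing the integration-by-parts route buys, which you should be aware of, is the \emph{exact} constant in the lemma: there, $\frac{2(n-j)}{2(n-j)-\nu}=1+\frac{\nu}{2(n-j)-\nu}$ arises as the boundary term plus $\nu\int_A^B X^{2(n-j)-\nu-1}\,dX$, with $k_5$ literally the constant of Corollary \ref{corc}. Your geometric-series constant is $2^{\nu}\big(1-2^{-(2(n-j)-\nu)}\big)^{-1}$ in case $(i)$ (and $2^{2(n-j)}\big(1-2^{-(\nu-2(n-j))}\big)^{-1}$ in case $(ii)$), and neither of the elementary inequalities you float in the final paragraph actually dominates it by $\frac{2(n-j)}{2(n-j)-\nu}$ resp.\ $\frac{\nu}{\nu-2(n-j)}$ --- for instance with $\nu=1$ and $2(n-j)$ large the left side exceeds $2$ while the right side is close to $1$. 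So the dyadic approach forces you to enlarge $k_5$ by an extra factor bounded by $4^n$; that is harmless for every application in the paper, but it means the lemma as literally stated, with $k_5$ being the constant from Corollary \ref{corc}, is not obtained verbatim by your route.
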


\begin{proof}
We present the proof of $(i)$. The proof of $(ii)$ is entirely similar. Using Corollary \ref{corc} we get that for any $\gamma\in(0,A)$
the following holds:
\begin{align*}
\sum_{W}\frac{1}{|c|^{\nu}}&\leq\int_{A-\gamma}^{B}\frac{1}{X^{\nu}}\,d\mathfrak{C}_{\mathfrak{B}}^k(X)=\Big[\frac{1}{X^{\nu}}\mathfrak{C}_{\mathfrak{B}}^k(X)\Big]_{A-\gamma}^B+\nu\int_{A-\gamma}^{B}\frac{\mathfrak{C}_{\mathfrak{B}}^k(X)}{X^{\nu+1}}\,dX\\
&\leq k_5B^{2(n-j)-\nu}\Big(\prod_{i=j+1}^nb_i\Big)+\nu\int_{A-\gamma}^{A}\frac{\mathfrak{C}_{\mathfrak{B}}^k(X)}{X^{\nu+1}}\,dX\\
&+k_5\frac{\nu}{2(n-j)-\nu}\Big(\prod_{i=j+1}^nb_i\Big)\Big(B^{2(n-j)-\nu}-A^{2(n-j)-\nu}\Big)\\
&\leq k_5\frac{2(n-j)}{2(n-j)-\nu}B^{2(n-j)-\nu}\Big(\prod_{i=j+1}^nb_i\Big)+\nu\int_{A-\gamma}^{A}\frac{\mathfrak{C}_{\mathfrak{B}}^k(X)}{X^{\nu+1}}\,dX.
\end{align*}
Finally we let $\gamma\to0$ which gives the desired inequality.
\end{proof}

\section{Spectral theory}

\subsection{Spectral decomposition of $L^2(\Gamma\setminus\H^{n+1})$}

Let $\Delta$ denote the Laplace-Beltrami operator on $\Gamma\setminus\H^{n+1}$ and let $\phi_0,\phi_1,\phi_2,...$ be the $L^2$-eigenfunctions of $-\Delta$. We take these to be orthonormal and ordered with increasing eigenvalues $0=\lambda_0<\lambda_1\leq\lambda_2\leq...$. In general, we do not know if the set $\{\phi_0,\phi_1,\phi_2,...\}$ is infinite or not.

Each $\phi_m$ is smooth, and by mimicking \cite[pp.\ 23-26 (Prop.\ 4.10, Prop.\ 4.12)]{dennis}, \cite[pp.\ 105-107 (Thm.\ 3.1, Thm.\ 3.2)]{egm} one finds that it has a Fourier expansion at the cusp $\eta_{\ell}$ of the form
\begin{align*}
  \phi_m\big(A_{\ell}^{-1}(\vecx+yi_n)\big)=c_{\vec0}y^{n-s_m}+\sum_{\vec0\neq\vecmu\in\Lambda_{\ell}^*}c_{\vecmu}y^{n/2}K_{s_m-n/2}(2\pi|\vecmu|y)e^{2\pi i\langle\vecmu,\vecx\rangle},
 \end{align*}
where $s_m\in[n/2,n]\cup [n/2,n/2+i\infty)$ is given by $\lambda_m=s_m(n-s_m)$ and where $\Lambda_{\ell}^*$ denotes the dual lattice of $\Lambda_{\ell}$ defined by
\begin{align*}
 \Lambda_{\ell}^*=\big\{\vecmu\mid\langle\vecmu,\vecgam\rangle\in\Z\textrm{  for all   }\vecgam\in\Lambda_{\ell}\big\}.
\end{align*}
Here the coefficients $c_{\vecmu}$ depend on $\ell$ (as well as on $m$), and if we find it necessary to specify this dependence we will write $c_{\vecmu}^{(\ell)}$. Note that $c_{\vec0}^{(\ell)}=0$ for all $\ell\in\{1,\ldots,\kappa\}$ if and only if $\phi_m$ is a cusp form, and this is always the case when $\lambda_m\geq(\frac{n}{2})^2$.

Next, for each $k\in\{1,\ldots,\kappa\}$, $P\in\H^{n+1}$ and $s\in\C$ with $\Re s>n$, the \emph{Eisenstein series} $E_k(P,s)$ is defined by the absolutely convergent sum
\begin{align*}
 E_{k}(P,s):=\sum_{M\in\Gamma_{\eta_k}\setminus\Gamma}(y_{A_kM(P)})^{s}.
\end{align*}
(One easily checks that the right hand side does not depend on the choice of admissible $A_k$.)
This function has a meromorphic continuation to all $s\in\C$, and
\begin{align*}
 &E_k(W(P),s)=E_k(P,s), \qquad \forall W\in\Gamma,\,\forall P\in\H^{n+1};\\
 &E_k(P,s) \,\text{ is $C^{\infty}$ on } \H^{n+1}\times(\C-\{\text{poles}\});\\
 &\big(\Delta+s(n-s)\big)E_{k}(P,s)=0 \,\text{ on } \H^{n+1}\times(\C-\{\text{poles}\}).
\end{align*}
Furthermore, all poles of $E_k(P,s)$ in $\Re s\geq \frac{n}{2}$ are restricted to the segment $s\in(\frac{n}{2},n]$; in particular there are no poles for $\Re s=\frac{n}{2}$. Cf.\ \cite[Ch.\ 6]{cs}, and also \cite[\S 7.27]{cs}.

For $s\neq\text{pole}$, the Eisenstein series $E_{k}(P,s)$ has a Fourier expansion at the cusp $\eta_{\ell}$ of the form
\begin{multline*}
E_{k}\big(A_{\ell}^{-1}(\vecx+yi_n),s\big)=\delta_{k\ell}y^{s}+\varphi_{k\ell}(s)y^{n-s}\\
+\sum_{\vec0\neq\vecmu\in\Lambda_{\ell}^*}a_{\vecmu}(s)y^{n/2}K_{s-n/2}(2\pi|\vecmu|y)e^{2\pi i\langle\vecmu,\vecx\rangle}.
\end{multline*}
(Cf.\ \cite[6.1.42]{cs}, with a misprint corrected.) Of course the coefficients $a_{\vecmu}$ depend on $k$ and $\ell$ but this will be suppressed in the notation.
We collect the meromorphic functions $\varphi_{k\ell}$ in a matrix $\Phi(s)=\big(\varphi_{k\ell}(s)\big)$ called the \emph{scattering matrix} for $\Gamma$ (cf.\ \cite[\S 6.1.55]{cs}). $\Phi(s)$ is symmetric and for generic $s\in\C$ we have
\begin{align}\label{firelation}
 \Phi(s)\Phi(n-s)=I,\,\,\textrm{   i.e.   } \,\sum_{k=1}^{\kappa}\varphi_{jk}(s)\varphi_{k\ell}(n-s)=\delta_{j\ell},
\end{align}
and the functional equations
\begin{align}\label{functional}
E_{k}(P,n-s)=\sum_{\ell=1}^{\kappa}\varphi_{k\ell}(n-s)E_{\ell}(P,s),\qquad k\in\{1,\ldots,\kappa\}.
\end{align}
Differentiating \eqref{firelation} we obtain
\begin{align}\label{diff}
 \sum_{k=1}^{\kappa}\varphi_{jk}'(s)\varphi_{k\ell}(n-s)=\sum_{k=1}^{\kappa}\varphi_{jk}(s)\varphi_{k\ell}'(n-s).
\end{align}
Furthermore $\Phi(s)$ satisfies the relation
\begin{align}\label{transpose}
\overline{\Phi(\bar{s})}=\Phi(s),
\end{align}
and we recall that (by \eqref{firelation}, \eqref{transpose} and the symmetry) $\Phi(s)$ is unitary on the line $\Re s=\sfrac{n}{2}$, i.e.\
\begin{align}\label{unitary}
 \sum_{k=1}^{\kappa}\varphi_{jk}(\sfrac{n}{2}+iT)\overline{\varphi_{\ell k}(\sfrac{n}{2}+iT)}=\delta_{j\ell}.
\end{align}

Now any $f\in L^2(\Gamma\setminus\H^{n+1})$ can be spectrally decomposed as follows:
\begin{align}\label{specialD'}
 f(P)=\sum_{m\geq0}c_m \phi_m(P) + \sum_{\ell=1}^{\kappa}\int_{0}^{\infty}g_{\ell}(t) E_{\ell}\big(P,\sfrac{n}{2}+it\big)\,dt
\end{align}
(convergence in the $L^2(\Gamma\setminus\H^{n+1})$-norm), where $c_m=\langle f,\phi_m \rangle$ and ``$g_{\ell}(t)=\linebreak \frac{1}{2\pi}\int_{\F}f(P)\overline{E_{\ell}\big(P,\sfrac{n}{2}+it\big)}\,d\nu(P)$'' (since $E_{\ell}\big(\cdot,\sfrac{n}{2}+it\big)\notin L^2(\Gamma\setminus{\H^{n+1}})$ this has to be properly considered as a limit in $L^2(0,\infty)$), and we have a corresponding Parseval's formula:
\begin{align}\label{norm}
  \|f\|_{L^2}^2=\sum_{m\geq0}|c_m|^2+2\pi\sum_{\ell=1}^{\kappa}\int_{0}^{\infty}|g_{\ell}(t)|^2\,dt.
 \end{align}
Cf.\ \cite[Thm.\ 7.1, Cor.\ 7.2, \S 7.29]{cs} and also use \eqref{functional} and \eqref{unitary}.

\subsection{Pointwise convergence}

We will give conditions on $f$ that guarantee that the right hand side of  \eqref{specialD'} is uniformly and absolutely convergent on compact subsets of $\H^{n+1}$. Note that whenever this holds, equation \eqref{specialD'} holds true as a pointwise relation, for almost every $P\in\H^{n+1}$. This is proved by a standard argument, taking inner products with characteristic functions of nicely shrinking sets and using \cite[Thm.\ 7.10]{rudin}.

The arguments in the present section should be compared with \cite[pp.\ 243-245, 732]{dennis}, where the case $n=1$ is considered. Note that our treatment is different from Hejhal's, in that we do not use the Green's function.

For any real $k\geq0$ we introduce the Sobolev spaces $H^k(\Gamma\setminus\H^{n+1})$,
\begin{align*}
 &H^k(\Gamma\setminus\H^{n+1})\\
&:=\bigg\{f\in L^2(\Gamma\setminus{\H^{n+1}}) \,\,\Big|\,\, \sum_{m\geq0}|c_m|^2|r_m+1|^{2k}+\sum_{\ell=1}^{\kappa}\int_{0}^{\infty}|g_{\ell}(t)|^2(t+1)^{2k}\,dt<\infty\bigg\},
\end{align*}
where $r_m\in[-i\frac{n}{2},0)\cup[0,+\infty)$ is defined by the relation $s_m=\frac{n}{2}+ir_m$, and it is understood that $f$ satisfies \eqref{specialD'}. This space is equipped with the norm
\begin{align*}
 \|f\|_{H^k}^2=\sum_{m\geq0}|c_m|^2|r_m+1|^{2k}+\sum_{\ell=1}^{\kappa}\int_{0}^{\infty}|g_{\ell}(t)|^2(t+1)^{2k}\,dt.
\end{align*}

\begin{prop}\label{pointwise}
 Let $k>\sfrac{n+1}{2}$ and $f\in H^k(\Gamma\setminus\H^{n+1})$. Then the spectral decomposition \eqref{specialD'} converges uniformly and absolutely on compact subsets of $\H^{n+1}$.
\end{prop}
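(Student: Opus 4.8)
The plan is to show that both the cuspidal sum $\sum_m c_m\phi_m(P)$ and each Eisenstein integral $\int_0^\infty g_\ell(t)E_\ell(P,\tfrac n2+it)\,dt$ converge uniformly and absolutely on a fixed compact set $\mathcal{K}\subset\H^{n+1}$, given $f\in H^k$ with $k>\tfrac{n+1}{2}$. By the Cauchy--Schwarz inequality applied to the defining series/integral of the $H^k$-norm, it suffices to establish \emph{a priori} bounds of the form $\sup_{P\in\mathcal{K}}|\phi_m(P)|\ll_{\mathcal{K}}|r_m+1|^{k'}$ and $\sup_{P\in\mathcal{K}}|E_\ell(P,\tfrac n2+it)|\ll_{\mathcal{K}}(t+1)^{k'}$ with some $k'<k-\tfrac{n+1}{2}$; more precisely we need the sums $\sum_m |r_m+1|^{2k'-2k}$ and $\int_0^\infty (t+1)^{2k'-2k}\,dt$ to converge, which holds provided $2k-2k'>n$ for the integral (using that the number of $r_m$ in a unit interval grows polynomially of degree $n$ by Weyl's law, so $2k-2k' > n+\text{(Weyl exponent)}$ — but in fact a cruder argument via elliptic regularity gives room to spare). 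The cleanest route avoids sharp constants: I would instead use that $\Delta$ is elliptic, so bootstrapping the eigenvalue equation $\Delta\phi_m=-\lambda_m\phi_m$ through local elliptic estimates on balls in $\H^{n+1}$ gives $\|\phi_m\|_{C^0(\mathcal{K})}\ll_{\mathcal{K},j}(1+\lambda_m)^{j/2}\|\phi_m\|_{L^2(\mathcal{K}')}\ll(1+\lambda_m)^{j/2}$ for suitable $j$ (one needs $j>\tfrac{n+1}{2}$ Sobolev embeddings in dimension $n+1$, so $j=\lceil\tfrac{n+2}{2}\rceil$ suffices), and $|r_m+1|\asymp(1+\lambda_m)^{1/2}$ away from the finitely many small eigenvalues.

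First I would record the elementary inequality $\mathcal{Y}_\Gamma(P)\le C_{\mathcal{K}}$ on $\mathcal{K}$ (Lemma \ref{invhbound} and continuity), so that $\mathcal{K}$ is disjoint from deep cuspidal regions; then any sufficiently small $\H^{n+1}$-ball around a point of $\mathcal{K}$ injects into $\Gamma\setminus\H^{n+1}$ and carries the flat elliptic theory. Next I would prove the pointwise bound for $\phi_m$: apply interior elliptic regularity (Gårding / Sobolev) on these balls to get $|\phi_m(P)|\ll_{\mathcal{K}}(1+\lambda_m)^{(n+2)/4}$, uniformly for $P\in\mathcal{K}$. Then I would prove the analogous bound for the Eisenstein series on the critical line: since $(\Delta+s(n-s))E_\ell(P,s)=0$ and there are no poles on $\Re s=\tfrac n2$, the same elliptic bootstrap gives $|E_\ell(P,\tfrac n2+it)|\ll_{\mathcal{K}}(1+|s(n-s)|)^{(n+2)/4}\ll(1+t^2)^{(n+2)/4}$ uniformly for $P\in\mathcal{K}$; here I should be slightly careful to control the $L^2(\mathcal{K}')$-norm of $E_\ell(\cdot,\tfrac n2+it)$ polynomially in $t$, which follows from the explicit Fourier expansion at each cusp together with the polynomial growth of the scattering entries $\varphi_{k\ell}(\tfrac n2+it)$ on the critical line (these are bounded, being entries of a unitary matrix by \eqref{unitary}) and uniform bounds on $K$-Bessel functions.

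Finally I would assemble the estimate. With $k'=(n+2)/4$ we have, for $P\in\mathcal{K}$,
\begin{align*}
\sum_{m\ge0}|c_m\phi_m(P)|\ll_{\mathcal{K}}\sum_{m\ge0}|c_m||r_m+1|^{k'}
\le\Big(\sum_{m\ge0}|c_m|^2|r_m+1|^{2k}\Big)^{1/2}\Big(\sum_{m\ge0}|r_m+1|^{2k'-2k}\Big)^{1/2},
\end{align*}
and similarly for the Eisenstein integral with $\sum$ replaced by $\int_0^\infty\,dt$; the first factor is $\|f\|_{H^k}<\infty$, and the second is finite because $2k-2k'>n$ (indeed $2k>n+1\ge n+1$ while $2k'=\tfrac{n+2}{2}$, so $2k-2k'>n+1-\tfrac{n+2}{2}=\tfrac n2$ — not quite enough directly, so one must invoke Weyl's law $\#\{m:\lambda_m\le X\}\ll X^{(n+1)/2}$, forcing the comparison $2k'>\tfrac{n+1}{2}$ and $2k>n+1+\tfrac{n+1}{2}$; alternatively, and more efficiently, one interpolates between the crude $C^0$ bound and the $L^2$ normalization to lower the effective exponent, or one simply takes $k'$ as small as the Sobolev embedding in dimension $n+1$ allows, namely any $k'>\tfrac{n+1}{4}$, after which $2k-2k'>n+1-\tfrac{n+1}{2}=\tfrac{n+1}{2}$ still needs the Weyl input). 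The upshot is that the tail of each series/integral is uniformly small on $\mathcal{K}$, which gives the claimed uniform absolute convergence. The main obstacle is the bookkeeping in the last step: getting the exponent matching to close — i.e.\ ensuring the polynomial growth of the eigenfunctions on $\mathcal{K}$ combined with Weyl's law for the eigenvalue counting function (and its Eisenstein analogue, the Weyl--Selberg law for $-\tfrac{1}{4\pi}\int\varphi'/\varphi$) is dominated by the decay $|r_m+1|^{-2k}$ coming from $k>\tfrac{n+1}{2}$; this is exactly why the hypothesis is $k>\tfrac{n+1}{2}$ and not merely $k>\tfrac n2$, and it is the point where one must be most careful rather than hand-wavy.
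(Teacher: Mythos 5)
Your skeleton — Cauchy--Schwarz against the $H^k$-weight and then bound the ``dual'' spectral sum pointwise on a compact $\mathcal K$ — is the same as the paper's, but the step where you produce the pointwise bound is where the argument fails, and it fails quantitatively. Elliptic bootstrap on injective balls gives, at best, $\sup_{\mathcal K}|\phi_m|\ll_{\mathcal K}(1+\lambda_m)^{\frac{n+1}{4}+\varepsilon}\asymp|r_m+1|^{\frac{n+1}{2}+2\varepsilon}$; you cannot do better by Sobolev embedding in dimension $n+1$ (and your ``$k'>\tfrac{n+1}{4}$'' conflates exponents of $\lambda_m$ with exponents of $r_m$). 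Plugging that into the Cauchy--Schwarz split, the second factor is $\sum_m|r_m+1|^{n+1+2\varepsilon-2k}$, and Weyl's law $\#\{m:|r_m|\le T\}\asymp T^{n+1}$ makes this converge only when $2k>2(n+1)$, i.e.\ $k>n+1$ — about twice the threshold you are trying to reach. You notice the arithmetic isn't closing and gesture at ``interpolation'' and ``Weyl input'', but there is no version of this individual-eigenfunction argument, however optimized, that gets below $k>n+1$, because it ignores all cancellation between different $\phi_m$'s.

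The ingredient you are missing is precisely the \emph{on-diagonal Bessel inequality} for the full spectral measure: the quantity
\begin{align*}
S_P(T):=\sum_{|r_m|\le T}|\phi_m(P)|^2+\sum_{\ell=1}^{\kappa}\int_0^T\big|E_\ell\big(P,\sfrac n2+it\big)\big|^2\,dt
\end{align*}
satisfies $S_P(T)=O\big(T^{n+1}+T\,\mathcal Y_\Gamma(P)^n\big)$ uniformly in $P$ (this is \cite[\S 7.3, Cor.\ 7.7]{cs}, the $(n+1)$-dimensional analogue of \cite[Prop.\ 7.2]{iwa}). This estimate exploits orthogonality of the $\phi_m$, and is smaller than what one gets from summing your individual bounds by a factor of order $T^{n+1}$. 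Once you have it, a single integration by parts of $\int_0^\infty(t+1)^{-2k}\,dS_P(t)$ shows the second Cauchy--Schwarz factor is finite exactly when $2k>n+1$, which is the stated threshold $k>\tfrac{n+1}{2}$. Without replacing your pointwise bound by this Bessel-type inequality, the proof does not establish the proposition as stated.
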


\begin{proof}
Let $R>0$ be a large number, say $R\geq100$. We need to estimate the expression
\begin{align*}
D_R:=\sum_{r_m\geq R}\big|c_m \phi_m(P)\big| + \sum_{\ell=1}^{\kappa}\int_{R}^{\infty}\big|g_{\ell}(t) E_{\ell}\big(P,\sfrac{n}{2}+it\big)\big|\,dt.
\end{align*}
Using the Cauchy-Schwarz inequality we find
\begin{align}\label{CSspec}
 &D_R\leq\Big(\sum_{r_m\geq R}|c_m|^2|r_m+1|^{2k}\Big)^{1/2}\Big(\sum_{r_m\geq R}|r_m+1|^{-2k}|\phi_m(P)|^2\Big)^{1/2}\\
&+\sum_{\ell=1}^{\kappa}\Big(\int_{R}^{\infty}|g_{\ell}(t)|^2(t+1)^{2k}\,dt\Big)^{1/2}\Big(\int_{R}^{\infty}(t+1)^{-2k}\big|E_{\ell}\big(P,\sfrac{n}{2}+it\big)\big|^2\,dt\Big)^{1/2}.\nonumber
\end{align}
 Since $f\in H^k(\Gamma\setminus\H^{n+1})$, the first factor in each product in \eqref{CSspec} tends to zero as $R\to\infty$. Hence it suffices to show that the remaining factors are uniformly bounded on compact subsets of $\H^{n+1}$.

For $T\geq0$ let
\begin{align*}
 S_P(T):=\sum_{|r_m|\leq T}\big|\phi_m(P)\big|^2 + \sum_{\ell=1}^{\kappa}\int_{0}^{T}\big| E_{\ell}\big(P,\sfrac{n}{2}+it\big)\big|^2\,dt.
\end{align*}
By the Bessel inequality argument in \cite[\S 7.3, Cor.\ 7.7]{cs}, but applied with the full spectral expansion \eqref{specialD'} (cf.\ \cite[Prop.\ 7.2]{iwa} for the $2$-dimensional case), we have, for all $T\geq1$ and all $P\in\H^{n+1}$,
\begin{align}\label{sp}
S_P(T)=O\big(T^{n+1}+T\mathcal{Y}_{\Gamma}(P)^n\big),
\end{align}
where the implied constant depends only on $\Gamma$. The factors in \eqref{CSspec} that remain to be estimated all have squares bounded by the integral
\begin{align}\label{intspec}
 \int_{0}^{\infty}(t+1)^{-2k}\,dS_P(t).
\end{align}
By integration by parts we obtain
\begin{align*}
 \int_{0}^{\infty}(t+1)^{-2k}\,dS_P(t)=\Big[(t+1)^{-2k}S_P(t)\Big]_{0}^{\infty}+2k\int_{0}^{\infty}(t+1)^{-2k-1}S_P(t)\,dt.
\end{align*}
Hence, using the assumption $k>\sfrac{n+1}{2}$, the bound \eqref{sp} and the fact that $\mathcal{Y}_{\Gamma}$ is continuous, it follows that the integral in \eqref{intspec} is uniformly bounded on compact subsets of $\H^{n+1}$. This concludes the proof.
\end{proof}

Next we will give more concrete conditions on $f$ which force Proposition \ref{pointwise} to apply.

\begin{lem}\label{deltaspec}
If  $f\in C^2(\H^{n+1})\cap L^2(\Gamma\setminus\H^{n+1})$ and $\Delta f\in L^2(\Gamma\setminus\H^{n+1})$, and if $f$ has a spectral decomposition as in \eqref{specialD'}, then the spectral decomposition of $-\Delta f$ is
\begin{align*}
 -\Delta f=\sum_{m\geq0}c_m \big((\sfrac{n}{2})^2+r_m^2\big)\phi_m + \sum_{\ell=1}^{\kappa}\int_{0}^{\infty}g_{\ell}(t) \big((\sfrac{n}{2})^2+t^2\big)E_{\ell}\big(\cdot,\sfrac{n}{2}+it\big)\,dt.
\end{align*}
\end{lem}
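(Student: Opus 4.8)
The plan is to establish the claimed spectral decomposition of $-\Delta f$ by computing its spectral coefficients directly and verifying they equal the asserted ones. First I would observe that since $-\Delta f\in L^2(\Gamma\setminus\H^{n+1})$, it has \emph{some} spectral decomposition of the form \eqref{specialD'}, say with coefficients $\tilde c_m=\langle -\Delta f,\phi_m\rangle$ and $\tilde g_\ell(t)$ given (in the appropriate limiting sense) by $\frac{1}{2\pi}\int_\F (-\Delta f)(P)\,\overline{E_\ell(P,\sfrac n2+it)}\,d\nu(P)$. The task is then to show $\tilde c_m=c_m\big((\sfrac n2)^2+r_m^2\big)$ and $\tilde g_\ell(t)=g_\ell(t)\big((\sfrac n2)^2+t^2\big)$.

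The key step is an integration-by-parts (Green's formula) argument: for the discrete part, one wants $\langle -\Delta f,\phi_m\rangle=\langle f,-\Delta\phi_m\rangle=\lambda_m\langle f,\phi_m\rangle=\big((\sfrac n2)^2+r_m^2\big)c_m$, using $\lambda_m=s_m(n-s_m)=(\sfrac n2)^2+r_m^2$ from the relation $s_m=\sfrac n2+ir_m$. Since $\Gamma\setminus\H^{n+1}$ is noncompact, the self-adjointness $\langle -\Delta f,\phi_m\rangle=\langle f,-\Delta\phi_m\rangle$ is not automatic and must be justified: one exhausts the fundamental domain $\F$ by the truncated regions $\F\setminus\bigcup_k\F_k(B)$ as $B\to\infty$, applies Green's identity on each truncation, and shows the boundary terms over the horospherical cross-sections $\{y_{A_k(P)}=B\}$ vanish as $B\to\infty$. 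This uses the Fourier expansions of $f$ (which lies in $L^2$, so its zeroth Fourier coefficient at each cusp decays like $y^{n-s}$ or is absent, and the nonzero coefficients decay exponentially via the Bessel function $K$-asymptotics) together with the matching expansions for $\phi_m$; the product $f\cdot\overline{\partial_y\phi_m}$ type terms, weighted by the volume element, go to zero. For the continuous part one argues identically with $E_\ell(\cdot,\sfrac n2+it)$ in place of $\phi_m$, using $(\Delta+s(n-s))E_\ell=0$ and noting that although $E_\ell$ is not $L^2$, the truncation argument still controls the boundary terms because the growing terms $\delta_{\ell\ell'}y^s$ and $\varphi_{\ell\ell'}y^{n-s}$ have $\Re s=\Re(n-s)=\sfrac n2$ on the critical line, and are paired against the decaying Fourier data of $f$.

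I expect the main obstacle to be the careful justification of the vanishing of the boundary terms in Green's formula at each cusp, particularly handling the zeroth Fourier coefficient of $f$ at the cusps (which need not decay fast, only like $y^{-n/2}$ in the worst case after accounting for the volume element) paired against the zeroth coefficient of $E_\ell$ on the critical line, where both pieces are of size $y^{\pm i t}$ up to polynomial factors—here one needs that the \emph{product} structure and the explicit form of the boundary integrand (which is a total $y$-derivative difference $f\,\partial_y\overline\phi_m-\overline\phi_m\,\partial_y f$ times $y^{-n+1}$) produces cancellation or decay. An alternative, perhaps cleaner, route—which I would mention—is to avoid the pointwise Green's identity entirely: first prove the statement for $f$ in a dense subspace where everything is manifestly justified (e.g.\ finite linear combinations of $\phi_m$ and smooth compactly supported pieces of the Eisenstein integral, or functions of the form $g-$a suitable reference), then pass to the general $f$ by a limiting argument using that $\Delta$ is a closed operator on its natural domain and that the map sending spectral data $(c_m,g_\ell)$ to $\big((\sfrac n2)^2+r_m^2)c_m,((\sfrac n2)^2+t^2)g_\ell\big)$ is exactly multiplication by the spectral parameter, which is the abstract spectral theorem for the self-adjoint extension of $-\Delta$; the hypotheses $f\in C^2$, $f,\Delta f\in L^2$ are precisely what place $f$ in that domain.
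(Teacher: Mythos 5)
Your approach is genuinely different from the paper's. The paper avoids Green's formula on the noncompact quotient entirely: it introduces a point-pair invariant $k(P,Q)=\psi\big(|P-Q|^2/(2y_Py_Q)\big)$ with $\psi\in C_c^\infty(\R_{\geq 0})$, takes the associated integral operator $L_k$, sets $k_1:=-\Delta_1 k$, and establishes the commutation identity $L_k(-\Delta g)=L_{k_1}g$ for $g\in C^2\cap L^2$ with $\Delta g\in L^2$. Because $\psi$ has compact support, for each fixed $P$ the kernel $Q\mapsto k(P,Q)$ is supported in a bounded hyperbolic ball about $P$, so the Green's-formula argument behind this identity produces no boundary terms at infinity. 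Applying the identity to $g(P)=y_P^s$ yields $h_{k_1}(\lambda)=\lambda\,h_k(\lambda)$ for the Selberg transforms, and then comparing spectral coefficients of $-\Delta f$ and of $L_{k_1}f=L_k(-\Delta f)$ (using that for any $M$ one may choose $\psi$ with $h_k$ nonvanishing on $[0,M]$) gives the coefficient identities. What the paper's route buys is exactly that it never touches a boundary integral over a truncating horosphere $\{y_{A_k(P)}=B\}$, and hence never needs pointwise control of $f$ at the cusp.

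That is precisely where your primary route has a gap you acknowledge but do not close. For the continuous part, the pairing $\frac{1}{2\pi}\int_\F f\,\overline{E_\ell(\cdot,\sfrac n2+it)}\,d\nu$ is only defined as a limit in $L^2(0,\infty)$, not pointwise in $t$; the integrand is not even absolutely convergent, so ``compute $\tilde g_\ell(t)$ and integrate by parts'' already requires truncated Eisenstein series and the Maass--Selberg relation, or analytic continuation from $\Re s>n$. More seriously, the hypotheses $f\in C^2\cap L^2$, $\Delta f\in L^2$ control the zeroth Fourier coefficient $f_0(y)$ of $f$ at a cusp only in an $L^2$-averaged sense (and the worst-case growth compatible with $L^2$ is $y^{n/2}$ up to logarithms, not $y^{-n/2}$ as you write); you get no pointwise bound on $f_0(B)$ or $f_0'(B)$. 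Pairing against the constant term $\delta_{\ell 1}y^{n/2\pm it}+\varphi_{\ell 1}y^{n/2\mp it}$ on the horosphere at height $B$ then yields a boundary contribution of raw size $B^{-n/2}f_0(B)$ plus a comparable derivative term, which is $O(1)$ rather than $o(1)$ in the worst case. One can try to extract a good subsequence $B_j\to\infty$ from the finiteness of the relevant $L^2$ norms, but that argument is delicate and you do not carry it out. Your second, abstract route (identify $-\Delta f$ with $Af$ for the closed self-adjoint extension $A$, then use that the unitary implementing the spectral decomposition \eqref{specialD'} conjugates $A$ to multiplication by $(\sfrac n2)^2+r^2$) is sound in outline, but as written it is a plan rather than a proof: it needs essential self-adjointness of $-\Delta$ on $C_c^\infty(M)$ (Gaffney's theorem on the complete finite-volume manifold) plus the identification $\mathrm{Dom}(A)=\{u\in L^2:\Delta u\in L^2\text{ distributionally}\}$, and these should be stated and cited or proved.
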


\begin{proof}
Given $\psi\in C_c^{\infty}(\R_{\geq0})$ we let
\begin{align*}
 k(P,Q):=\psi\bigg(\frac{|P-Q|^2}{2y_Py_Q}\bigg),\qquad (P,Q)\in\H^{n+1}\times\H^{n+1},
\end{align*}
be the associated point pair invariant. The corresponding integral operator,
\begin{align*}
 L_kg(P)=\int_{\H^{n+1}}k(P,Q)g(Q)\,dv(Q),
\end{align*}
is a bounded linear operator on $L^2(\Gamma\setminus\H^{n+1})$. According to a general lemma of Selberg, \cite{selberg}, if $g\in C^2(\H^{n+1})$ is a solution to the equation $-\Delta g=\lambda g$ ($g$ need not be $\Gamma$-automorphic), then
\begin{align*}
 \int_{\H^{n+1}}k(P,Q)g(Q)\,dv(Q)=h_k(\lambda)g(P),
\end{align*}
where $h_k(\lambda)$ is independent of $g$.

We let $k_1(P,Q):=-\Delta_1k(P,Q)$, where $\Delta_1$ indicates that $\Delta$ operates in the first argument. Then $k_1$ is also a point pair invariant and if $g\in C^2(\H^{n+1})\cap L^2(\Gamma\setminus\H^{n+1})$ is such that $\Delta g\in L^2(\Gamma\setminus\H^{n+1})$ then
\begin{align}\label{lk}
L_k(-\Delta g)=L_{k_1}g
\end{align}
(cf. \cite[pp.\ 51-52 (with $\mu=Id$)]{selberg}; in particular note that $\Delta_1k(P,Q)=\Delta_2k(P,Q)$, and apply Green's formula). Applying \eqref{lk} with (e.g.) $g(P)=y_P^s$ (where $s(n-s)=\lambda$), we conclude $h_{k_1}(\lambda)= \lambda h_k(\lambda)$. Now, if we write the spectral decomposition of $-\Delta f$ as
\begin{align*}
 -\Delta f=\sum_{m\geq0}d_m \phi_m + \sum_{\ell=1}^{\kappa}\int_{0}^{\infty}j_{\ell}(t) E_{\ell}\big(\cdot,\sfrac{n}{2}+it\big)\,dt,
\end{align*}
and apply \eqref{lk} (with $g=f$), we get by comparing spectral coefficients:
\begin{align}\label{cm}
 c_m\lambda_mh_{k}(\lambda_m)=d_mh_{k}(\lambda_m),
\end{align}
and
\begin{align}\label{gl}
 g_{\ell}(t)\big((\sfrac{n}{2})^2+t^2\big)h_{k}\big((\sfrac{n}{2})^2+t^2\big)=j_{\ell}(t)h_{k}\big((\sfrac{n}{2})^2+t^2\big),
\end{align}
for all $\ell\in\{1,\ldots,\kappa\}$ and almost all $t\geq0$.

To conclude the proof, we point out that for any given $M>0$ it is possible to find a point pair invariant $k$ such that $h_k(\lambda)\neq0$ for all $\lambda\in[0,M]$; cf., e.g., \cite[Lemma 7.5]{cs}. Applying \eqref{cm} and \eqref{gl} for such choices of $k$ (letting $M\to\infty$), we conclude $d_m=c_m\lambda_m$ for all $m$ and $j_{\ell}(t)=g_{\ell}(t)\big((\sfrac{n}{2})^2+t^2\big)$ for almost all $t\geq0$, as desired.
\end{proof}

\begin{remark}\label{H2}
For $f$ as in Lemma \ref{deltaspec} we obtain, using Parseval's formula \eqref{norm}:
\begin{align*}
  \sum_{m\geq0}|c_m|^2\big|(\sfrac{n}{2})^2+r_m^2\big|^2+2\pi\sum_{\ell=1}^{\kappa}\int_{0}^{\infty}|g_{\ell}(t)|^2\big|(\sfrac{n}{2})^2+t^2\big|^2\,dt<\infty.
\end{align*}
Thus $f\in H^2(\Gamma\setminus\H^{n+1})$.
\end{remark}

\begin{remark}
Let $k_0=\lfloor\sfrac{n+1}{4}\rfloor+1$. By repeated use of the argument in Lemma \ref{deltaspec} and Remark \ref{H2} we find that if $f\in C^{2k_0}(\H^{n+1})\cap L^2(\Gamma\setminus\H^{n+1})$ is such that $\Delta^{\ell} f\in L^2(\Gamma\setminus\H^{n+1})$ for all $0\leq \ell\leq k_0$, then $f\in H^{2k_0}(\Gamma\setminus\H^{n+1})$. Since $2k_0>\sfrac{n+1}{2}$ it follows from Proposition \ref{pointwise} that the spectral expansion of any such $f$ converges uniformly and absolutely on compact subsets of $\H^{n+1}$.
\end{remark}

\section{Rankin-Selberg type bounds}\label{RSsec}

In this section we prove Rankin-Selberg type bounds on sums of absolute squares of the Fourier coefficients of the Eisenstein series or a cusp form at a cusp.

From now on we assume that $\infty\in\{\eta_1,...,\eta_{\kappa}\}$. Any discrete subgroup of $\PSL(2,C_n)$ can be brought into such a form by an auxiliary conjugation. Of course we may assume that $\eta_1=\infty$. By a further conjugation we may also assume that $A_1=\matr{1}{0}{0}{1}$. We define $\mu_0$ by $\mu_0:=\min_{\vecmu\in\Lambda_1^*\setminus{\{\vec0\}}}|\vecmu|$.

\subsection{First versions}

We first consider the Eisenstein series $E_k(P,\frac{n}{2}+iT)$ for $T\in\R_{\geq0}$, and its Fourier expansion at the cusp at $\infty$.

\begin{prop}\label{rankin1}
Let $Y>0$ be fixed and let $k\in\{1,...,\kappa\}$. Consider the Fourier expansion of $E_k(P,\frac{n}{2}+iT)$ at the cusp $\eta_1=\infty$:
\begin{multline}\label{fseries}
E_k\big(\vecx+yi_n,\sfrac{n}{2}+iT\big)=\delta_{k1}y^{\frac{n}{2}+iT}+\varphi_{k1}\big(\sfrac{n}{2}+iT\big)y^{\frac{n}{2}-iT}\\
+\sum_{\vec0\neq\vecmu\in\Lambda_1^*}a_{\vecmu}\big(\sfrac{n}{2}+iT\big)y^{n/2}K_{iT}(2\pi|\vecmu|y)e^{2\pi i\langle\vecmu,\vecx\rangle}.
\end{multline}
Then we have, uniformly over $X\geq\frac{\mu_0}{2}$ and $ 0\leq T \leq Y$:
\begin{align*}
\sum_{0<|\vecmu|\leq X}\big|a_{\vecmu}\big(\sfrac{n}{2}+iT\big)\big|^2=O\Big(X^{n}\big(1+\log^+(X)\big)\Big),
\end{align*}
where the implied constant depends only on $Y$ and $\Gamma$.
\end{prop}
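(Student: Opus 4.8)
The classical Rankin–Selberg trick is to integrate $|E_k(P,\tfrac n2+iT)|^2$ over a truncated fundamental domain and extract the Fourier coefficient sum from Parseval's theorem applied to the horospherical Fourier expansion. Concretely, for a parameter $Y_1>0$ (to be chosen larger than $\max_k \tau_k$ and than $B_0$, and large enough that the cuspidal region at $\infty$ lies above height $Y_1$) set
\begin{align*}
\mathcal{I}(Y_1):=\int_{\mathcal{C}_1}\int_{Y_1}^{\infty} \bigl|E_k(\vecx+yi_n,\tfrac n2+iT)\bigr|^2 \,\frac{dy\,d\vecx}{y^{n+1}}.
\end{align*}
Here $|\mathcal{C}_1|=1$, and by Parseval applied to the Fourier expansion \eqref{fseries} in the $\vecx$-variable over $\mathcal{C}_1$ (a fundamental domain for $\Lambda_1$, since $A_1=I$ and $(\Gamma)_\infty'$ acts by the lattice $\Lambda_1$), the integrand's $\vecx$-integral equals
\begin{align*}
\bigl|\delta_{k1}y^{\frac n2+iT}+\varphi_{k1}y^{\frac n2-iT}\bigr|^2 + y^{n}\sum_{\vec0\neq\vecmu\in\Lambda_1^*}|a_{\vecmu}|^2 K_{iT}(2\pi|\vecmu|y)^2.
\end{align*}
(One must be slightly careful: $\mathcal{C}_1$ is a fundamental polyhedron for the possibly larger group $(A_1\Gamma A_1^{-1})_\infty$, which includes rotations in $\SO(n)$; but these rotations permute the $\vecmu$ with a fixed $|\vecmu|$ and leave $|a_{\vecmu}|$ invariant, so Parseval over $\mathcal{C}_1$ still gives exactly the stated quadratic form up to the normalization $|\mathcal{C}_1|=1$. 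This is the one bookkeeping point to verify.)

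The main obstacle is the upper bound for $\mathcal{I}(Y_1)$: a priori $E_k(\cdot,\tfrac n2+iT)$ is not in $L^2$, so the full integral over $\Gamma\backslash\H^{n+1}$ diverges, and one needs a uniform-in-$T$ polynomial bound on $\mathcal{I}(Y_1)$. The standard route is the Maass–Selberg relation: the constant-term-subtracted Eisenstein series $E_k^{Y_1}$ (truncated at height $Y_1$ in every cusp) is square-integrable, and $\|E_k^{Y_1}\|_2^2$ is given by an explicit formula involving $Y_1$, the scattering entries $\varphi_{k\ell}(\tfrac n2+iT)$, and $\varphi'_{k\ell}$; on the line $\Re s=\tfrac n2$ the scattering matrix is unitary \eqref{unitary}, so $|\varphi_{k\ell}|\le 1$, and for $0\le T\le Y$ the derivative terms are bounded by a constant depending only on $Y$ and $\Gamma$ (by meromorphy and absence of poles on $\Re s=\tfrac n2$). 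This yields $\mathcal{I}(Y_1)=O_{Y,\Gamma}(1)$ after we re-add the contribution of the constant terms (which only contributes $O_{Y,\Gamma}(1)$ to $\mathcal{I}(Y_1)$ since $\int_{Y_1}^\infty |y^{\frac n2\pm iT}|^2 y^{-n-1}dy<\infty$). Alternatively, one avoids Maass–Selberg by instead integrating against an incomplete Eisenstein series / unfolding directly, but the Maass–Selberg approach gives the cleanest uniformity in $T$.

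Granting $\mathcal{I}(Y_1)=O_{Y,\Gamma}(1)$, the last step is to convert $\sum_{\vec0\neq\vecmu}|a_{\vecmu}|^2\int_{Y_1}^\infty K_{iT}(2\pi|\vecmu|y)^2\,y^{-1}dy = O_{Y,\Gamma}(1)$ into the stated partial-sum bound. For $0<|\vecmu|\le X$, substitute $t=2\pi|\vecmu|y$, so $\int_{Y_1}^\infty K_{iT}(2\pi|\vecmu|y)^2 y^{-1}dy = \int_{2\pi|\vecmu|Y_1}^\infty K_{iT}(t)^2 t^{-1}dt \ge \int_{2\pi X Y_1}^\infty K_{iT}(t)^2 t^{-1}dt =: c(X,T)$, a quantity depending only on $X,T$ and not on $\vecmu$. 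Hence $c(X,T)\sum_{0<|\vecmu|\le X}|a_{\vecmu}|^2 \le \mathcal{I}(Y_1)+O_{Y,\Gamma}(1)=O_{Y,\Gamma}(1)$, and it remains to bound $c(X,T)$ from below, uniformly in $0\le T\le Y$, by something like $\gg X^{-n}(1+\log^+ X)^{-1}$. This follows from the standard asymptotics of the $K$-Bessel function: for $t$ bounded (say $t\le 1$), $K_{iT}(t)^2 \asymp (\log(2/t))^2$ or $\asymp t^{-2T'}$-type behavior is not needed — one uses the cruder uniform bound $K_{iT}(t)\gg_Y 1$ for, say, $t\in[\tfrac12,1]$ together with $\int_{2\pi XY_1}^{\infty}K_{iT}(t)^2 t^{-1}\,dt\gg_Y \int_{\max(2\pi XY_1,1/2)}^{1}(\cdots)$; when $X$ is large the relevant range shifts and one instead uses $\int_c^\infty K_{iT}(t)^2 t^{-1}dt\gg_Y X^{-n}(\log X)^{-1}$ via the large-argument decay $K_{iT}(t)\sim\sqrt{\pi/(2t)}e^{-t}$. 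Working through these Bessel estimates and patching the small-$X$ and large-$X$ regimes is routine but is where the $X^n(1+\log^+ X)$ shape (rather than a clean power) enters. Finally, the case $X\ge \tfrac{\mu_0}{2}$ with $X$ small is handled trivially since then the sum has boundedly many terms, each $a_{\vecmu}$ being $O_{Y,\Gamma}(1)$ by the same integral bound with a fixed lower cutoff.
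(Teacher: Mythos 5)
Your plan breaks at two decisive points, and they are not bookkeeping issues but structural ones.

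First, the integral $\mathcal{I}(Y_1)$ as you define it does not converge. The constant‐term contribution is
$\int_{Y_1}^{\infty}\bigl|\delta_{k1}y^{\frac n2+iT}+\varphi_{k1}(\tfrac n2+iT)y^{\frac n2-iT}\bigr|^2\,y^{-n-1}\,dy$,
and since $|y^{\frac n2\pm iT}|^2=y^n$ the integrand behaves like $y^{-1}$; the parenthetical claim ``$\int_{Y_1}^\infty |y^{\frac n2\pm iT}|^2 y^{-n-1}dy<\infty$'' is simply false. You would have to run the whole argument with the truncated series $E_k^{Y_1}$ throughout, not re-add the constant terms at the end.

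Second, and more seriously, with a \emph{fixed} truncation height $Y_1\geq B_0$ your lower bound on the Bessel integral is fatally weak for large $|\vecmu|$. For $|\vecmu|\asymp X$ and $c=2\pi X Y_1$, the large-argument asymptotic $K_{iT}(t)\sim\sqrt{\pi/(2t)}\,e^{-t}$ gives $\int_{c}^{\infty}K_{iT}(t)^2\,t^{-1}\,dt\ll e^{-2c}$, which is \emph{exponentially} small in $X$ — nowhere near the claimed $\gg_Y X^{-n}(\log X)^{-1}$. So the quantity $c(X,T)$ in your last step tends to zero exponentially, and dividing by it yields an exponential, not polynomial, bound. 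You cannot repair this by shrinking $Y_1$ with $X$: you need $Y_1\geq B_0$ to keep $\mathcal{C}_1\times[Y_1,\infty)$ inside a fundamental domain, which is the whole point of invoking Maass–Selberg.

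The paper avoids this dilemma entirely by integrating $\bigl|E_k\bigr|^2$ over a \emph{bounded slab} $\mathcal{P}_1\times(R,H)$ with an $X$-dependent bottom $R=(2\pi X)^{-1}$ and a fixed top $H=\mu_0^{-1}$. This choice guarantees $[1,2\pi]\subset[2\pi|\vecmu|R,\,2\pi|\vecmu|H]$ for every $|\vecmu|\leq X$, so the Bessel integral is bounded below by a constant depending only on $Y$ — no exponential loss. The price is that the slab reaches far below the cuspidal region and meets many $\Gamma$-translates of a fundamental domain; the key ingredient you are missing is the counting estimate (Lemma~3.5/\ref{counting} in the paper) showing
$\#\{W\in\Gamma : W(P)\in\mathcal{P}_1\times(R,H)\}=O(1+R^{-n})$,
combined with the a priori bound $|E_k(P,\tfrac n2+iT)|=O(\mathcal{Y}_\Gamma(P)^{n/2})$ for $T\in[0,Y]$ and the integral estimate $\int_{\F_B}\mathcal{Y}_\Gamma(P)^n\,d\nu(P)=O(\log(2B))$. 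Unfolding $\int_{\mathcal{P}_1\times(R,H)}|E_k|^2$ via this count gives $\mathcal{J}=O\bigl((1+R^{-n})\log(2\widehat B)\bigr)$ with $\widehat B\asymp X$, and the factor $R^{-n}\asymp X^n$ is precisely where the stated bound $X^n(1+\log^+X)$ comes from. (Maass–Selberg is in fact used in the paper for the $T$-uniform version, Proposition~\ref{FinalRS}, but only to bound $\int_{\F_{\widehat B}}|E_k|^2\,d\nu$ after the unfolding-and-counting step; it does not replace that step.)
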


\begin{proof}
 We mimic \cite[Prop.\ 4.1]{andreas}. Let $\mathcal{P}_1$ be the interior of a fundamental parallelogram for the lattice $\Lambda_1$. We will now for varying $R$ and $H$, satisfying $0<R<H$, study the integral
 \begin{align*}
  \mathcal{J}:=\int_{\mathcal P_1\times(R,H)}\big|E_k\big(P,\sfrac{n}{2}+iT\big)\big|^2 \,d\nu(P).
 \end{align*} 
By the automorphy of the Eisenstein series we get
 \begin{align}\label{Jint}
  \mathcal{J}=\sum_{W\in\Gamma}\int_{\F}I\big(W(P)\in\mathcal{P}_1\times(R,H)\big)\big|E_k\big(P,\sfrac{n}{2}+iT\big)\big|^2\, d\nu(P),
 \end{align}
where $I(\cdot)$ is the indicator function.

We let $\tau_0:=\max_{\ell\in\{1,...,\kappa\}}\tau_1\tau_{\ell}$ and define $\widehat{B}:=\max(B_0,H,\tau_0R^{-1})$ (recall the definitions of $B_0$ and $\tau_{\ell}$ given in Section \ref{notation}). For each $P\in\F\setminus\F_{\widehat{B}}$ there exists
$\ell\in\{1,...,\kappa\}$ such that $P\in\F_{\ell}(\widehat{B})$, i.e.\
$y_{A_{\ell}(P)}\geq \widehat{B}$. Using Lemma \ref{cuspineq} we get that for
each $W\in\Gamma$
\begin{align*}
 y_{W(P)}=y_{A_1W(P)}\leq \frac{\tau_1\tau_{\ell}}{y_{A_{\ell}(P)}}\leq\frac{\tau_0}{\widehat{B}}\leq R, 
 \end{align*}
unless $\ell=1$, $W\in\Gammainfty$ and $y_{W(P)}=y_{P}\geq\widehat{B}\geq H$.
In either case we have $W(P)\notin\mathcal{P}_1\times(R,H)$. This
proves that the integrand in \eqref{Jint} is equal to zero for all
$P\in\F\setminus\F_{\widehat{B}}$ and all $W\in\Gamma$ and we conclude that
\begin{align}\label{JJ}
 \mathcal{J}=\sum_{W\in\Gamma}\int_{\F_{\BB}}I\big(W(P)\in\mathcal{P}_1\times(R,H)\big)\big|E_k\big(P,\sfrac{n}{2}+iT\big)\big|^2\, d\nu(P).
\end{align}

We now note that both $\F_{\BB}$ and $\mathcal{P}_1\times(R,H)$ are
bounded regions which, since $\Gamma$ is discrete, implies that
\begin{align*}
 \#\big\{W\in\Gamma\mid W(\F_{\BB})\cap(\mathcal{P}_1\times(R,H))\neq\emptyset\big\}<\infty.
\end{align*}
This in turn implies that there are only finitely many non-zero terms in the sum \eqref{JJ}. Hence we may interchange the order of summation and integration to get
\begin{align*}
 \mathcal{J}=\int_{\F_{\BB}}\#\big\{W\in\Gamma\mid W(P)\in\mathcal{P}_1\times(R,H)\big\}\big|E_k\big(P,\sfrac{n}{2}+iT\big)\big|^2\, d\nu(P).
\end{align*}
Using Lemma \ref{counting}, and the fact that
$|\Gammainfty'\setminus\Gammainfty|<\infty$, we get
\begin{align*}
 \#\big\{W\in\Gamma\mid W(P)\in\mathcal{P}_1\times(R,H)\big\}\leq \#\big\{W\in\Gammainfty'\setminus\Gamma\mid y_{W(P)}>R\big\}=O\Big(1+\frac{1}{R^n}\Big),
\end{align*}
where the bound is uniform for all $P\in\H^{n+1}$ and all $R>0$. Furthermore, by Lemma \ref{estim} we have $\big|E_k\big(P,\sfrac{n}{2}+iT\big)\big|=O\big((\mathcal{Y}_{\Gamma}(P))^{n/2}\big)$ uniformly for $0\leq T\leq Y$ and $P\in\H^{n+1}$. Hence it follows from Lemma \ref{integral} that
\begin{align}\label{resulta}
 \mathcal{J}=O\Big(\Big(1+\frac{1}{R^n}\Big)\log\big(2\BB\big)\Big).
\end{align}

Next we substitute the Fourier expansion given in
\eqref{fseries} in the definition of $\mathcal{J}$. Using
Parseval's relation we find that $\mathcal{J}$ equals
\begin{multline*}
|\mathcal{P}_1|\int_{R}^H \Big(\big| \delta_{k1}y^{\frac{n}{2}+iT}+\varphi_{k1}\big(\sfrac{n}{2}+iT\big)y^{\frac{n}{2}-iT}\big|^2+\sum_{\vec0\neq\vecmu\in\Lambda_1^*}\big|a_{\vecmu}\big(\sfrac{n}{2}+iT\big)\big|^2y^n\big|K_{iT}(2\pi|\vecmu|y)\big|^2 \Big)\, \frac{dy}{y^{n+1}}\\
\geq|\mathcal{P}_1|\sum_{\vec0\neq\vecmu\in\Lambda_1^*}\big|a_{\vecmu}\big(\sfrac{n}{2}+iT\big)\big|^2\int_{R}^HK_{iT}(2\pi|\vecmu|y)^2\, \frac{dy}{y}\\
=|\mathcal{P}_1|\sum_{\vec0\neq\vecmu\in\Lambda_1^*}\big|a_{\vecmu}\big(\sfrac{n}{2}+iT\big)\big|^2\int_{2\pi|\vecmu|R}^{2\pi|\vecmu|H}
K_{iT}(t)^2\, \frac{dt}{t}.
\end{multline*}
We take $H=\frac{1}{\mu_0}$ and $R=(2\pi X)^{-1}$. Note that for such $R,H$ we have $[1,2\pi]\subset[2\pi |\vecmu|R,2\pi |\vecmu|H]$ for all $\vecmu\in\Lambda_1^*\setminus\{\vec0\}$ with $|\vecmu|\leq X$. Thus
\begin{align}\label{resultb}
\sum_{0<|\vecmu|\leq X}\big|a_{\vecmu}\big(\sfrac{n}{2}+iT\big)\big|^2\leq C^{-1}\mathcal{J},
\end{align}
where $C$ is defined by
\begin{align*}
C=C(Y)=|\mathcal{P}_1|\inf_{T\in[0,Y]}\int_{1}^{2\pi}K_{iT}(t)^2\,\frac{dt}{t}.
\end{align*}
Finally we note that with these choices we get $\BB=\max(B_0,\mu_0^{-1},2\pi \tau_0X)$, and the proposition follows from \eqref{resulta} and \eqref{resultb}.
\end{proof}

We continue with a similar result concerning (certain) cusp forms on $\Gamma\setminus\H^{n+1}$, where we also keep control of the dependence on the eigenvalue.

\begin{prop}\label{cusprankin}
Let $\phi$ be a cusp form on $\Gamma\setminus\H^{n+1}$ with eigenvalue $\lambda=(\sfrac{n}{2})^2+T^2$, $T\geq0$
, normalized in such a way that $\int_{\F}|\phi(P)|^2\,d\nu(P)=1$. Consider the Fourier expansion of $\phi$ at the cusp $\eta_1=\infty$:
\begin{align}\label{cuspexpansion}
\phi(\vecx+yi_n)=\sum_{\vec0\neq\vecmu\in\Lambda_1^*}c_{\vecmu}y^{n/2}K_{iT}(2\pi|\vecmu|y)e^{2\pi i\langle\vecmu,\vecx\rangle}.
\end{align}
Then we have, uniformly over $X\geq\frac{\mu_0}{2}$:
\begin{align*}
\sum_{0<|\vecmu|\leq X}|c_{\vecmu}|^2=O\Big(e^{\pi T}\Big(T+\frac{X^n}{(T+1)^{n-1}}\Big)\Big),
\end{align*}
where the implied constant depends only on $\Gamma$.
\end{prop}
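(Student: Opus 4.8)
The plan is to mimic the proof of Proposition~\ref{rankin1}, now applying the argument to the cusp form $\phi$ instead of the Eisenstein series, but keeping explicit track of the dependence on $T$ (both in the $L^2$-growth of $\phi$ in the cusps and in the Bessel-function factor). As before, let $\mathcal P_1$ be a fundamental parallelogram for $\Lambda_1$, and for $0<R<H$ consider
\begin{align*}
\mathcal J:=\int_{\mathcal P_1\times(R,H)}|\phi(P)|^2\,d\nu(P).
\end{align*}
First I would unfold using $\Gamma$-automorphy and Lemma~\ref{counting} exactly as in the proof of Proposition~\ref{rankin1}, to obtain
\begin{align*}
\mathcal J=\int_{\F_{\BB}}\#\big\{W\in\Gamma\mid W(P)\in\mathcal P_1\times(R,H)\big\}\,|\phi(P)|^2\,d\nu(P)=O\Big(\Big(1+\tfrac1{R^n}\Big)\int_{\F_{\BB}}|\phi(P)|^2\,d\nu(P)\Big),
\end{align*}
with $\BB=\max(B_0,H,\tau_0R^{-1})$. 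The key difference from the Eisenstein case is that I must bound $\int_{\F_{\BB}}|\phi|^2\,d\nu$, with explicit $T$-dependence; since $\int_{\F}|\phi|^2=1$ this requires an upper bound on $|\phi(P)|$ valid deep in each cusp. Using the Fourier expansion \eqref{cuspexpansion} at each cusp $\eta_\ell$ and the standard bound $K_{iT}(t)\ll e^{\pi T/2}\cdot(\text{decaying in }t)$, together with a Cauchy--Schwarz / Bessel-coefficient estimate, one gets a pointwise bound of the shape $|\phi(A_\ell^{-1}(\vecx+yi_n))|\ll e^{\pi T/2}\,y^{-C}$ (with some fixed $C$) for $y$ large; feeding this and Lemma~\ref{estim}, Lemma~\ref{integral} in gives $\int_{\F_{\BB}}|\phi|^2\,d\nu=O(e^{\pi T}\log(2\BB))$, hence $\mathcal J=O\big(e^{\pi T}(1+R^{-n})\log(2\BB)\big)$.

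Next I would substitute the Fourier expansion \eqref{cuspexpansion} into the definition of $\mathcal J$ and apply Parseval in $\vecx$, as in Proposition~\ref{rankin1}, to get
\begin{align*}
\mathcal J=|\mathcal P_1|\sum_{\vec0\neq\vecmu\in\Lambda_1^*}|c_{\vecmu}|^2\int_{2\pi|\vecmu|R}^{2\pi|\vecmu|H}K_{iT}(t)^2\,\frac{dt}t.
\end{align*}
The delicate point now is the choice of the window $(R,H)$ and the lower bound for $\int_a^b K_{iT}(t)^2\,t^{-1}\,dt$ as a function of $T$: for large $T$ the function $K_{iT}$ is exponentially small ($\asymp e^{-\pi T/2}$ times something) in the "transition" region $t\asymp T$, and only becomes of order $1$ (up to the $e^{-\pi T/2}$ factor) once $t\gtrsim T$; more precisely one has $\int_{cT}^{c'T}K_{iT}(t)^2\,t^{-1}\,dt\gg e^{-\pi T}$ for suitable absolute constants. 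To capture all $\vecmu$ with $|\vecmu|\le X$ I would split into the ranges $|\vecmu|\lesssim T$ and $|\vecmu|\gtrsim T$ (or equivalently use two choices of $R,H$, one scaled to the transition region and one finer), choosing $H\asymp 1/\mu_0$ and $R\asymp (X+T)^{-1}$ in such a way that $[1,2\pi]$ or a $T$-scaled analogue is contained in $[2\pi|\vecmu|R,2\pi|\vecmu|H]$; this yields bounds of the form $\sum_{0<|\vecmu|\le X}|c_{\vecmu}|^2\ll e^{\pi T}\,\mathcal J$, and combining with $\mathcal J\ll e^{\pi T}(1+R^{-n})\log(2\BB)$ produces the two terms $e^{\pi T}T$ (coming from the low range $|\vecmu|\lesssim T$, where the count of lattice points is $O(T^n)$ and the effective $R^{-n}$ is $O(1)$) and $e^{\pi T}X^n/(T+1)^{n-1}$ (coming from the high range, where $R^{-n}\asymp X^n$ and the extra $(T+1)^{-(n-1)}$ gain reflects that the useful Bessel window has length $\asymp T$ rather than $\asymp 1$).

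I expect the main obstacle to be the careful bookkeeping of the Bessel-function estimates with uniform $T$-dependence: namely, establishing both the pointwise upper bound $K_{iT}(t)\ll e^{\pi T/2}(\cdots)$ needed for the cuspidal $L^2$-bound on $\phi$, and the matching lower bound for $\int K_{iT}(t)^2\,t^{-1}\,dt$ over a $T$-adapted window, and then splitting the dyadic-in-$|\vecmu|$ sum so that these combine to give exactly the stated shape $O\big(e^{\pi T}(T+X^n/(T+1)^{n-1})\big)$ rather than something lossier. The geometric/counting inputs (Lemmas~\ref{counting}, \ref{estim}, \ref{integral}) are already available and enter exactly as in Proposition~\ref{rankin1}; the genuinely new work is purely in the uniform asymptotics of $K_{iT}$.
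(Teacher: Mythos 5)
There is a gap in the passage where you bound $\int_{\F_{\BB}}|\phi(P)|^2\,d\nu(P)$. You propose deriving a \emph{pointwise} bound of the form $|\phi(A_\ell^{-1}(\vecx+yi_n))|\ll e^{\pi T/2}y^{-C}$ in each cusp and then integrating via Lemma~\ref{estim} and Lemma~\ref{integral}, arriving at $\int_{\F_{\BB}}|\phi|^2\,d\nu=O(e^{\pi T}\log(2\BB))$. This step is both unnecessary and too lossy. Since $\F=\F_{\BB}\cup\bigcup_{k}\F_k(\BB)$ is the fundamental domain for every $\BB\geq B_0$ (with $\F_{\BB}$ disjoint from the cuspidal tails), we have $\F_{\BB}\subset\F$, and the normalization $\int_{\F}|\phi|^2\,d\nu=1$ gives immediately $\int_{\F_{\BB}}|\phi(P)|^2\,d\nu(P)\leq 1$, with no $T$-dependence and no $\log\BB$ factor whatsoever. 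Carrying your extra $e^{\pi T}\log(2\BB)$ through to the end would produce a bound of the shape $O\bigl(e^{2\pi T}(\cdots)\log X\bigr)$, which is strictly weaker than the claimed $O\bigl(e^{\pi T}(T+X^n/(T+1)^{n-1})\bigr)$; in particular the proposition would not follow.

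Once you replace that step by the trivial bound $\int_{\F_{\BB}}|\phi|^2\,d\nu\leq 1$, the remainder of your outline matches the paper's argument. The paper takes $R=\tfrac{T+1}{8\pi X}$, $H=\tfrac{T+1}{4\pi\mu_0}$, so that $\mathfrak J=O(1+X^n/(T+1)^n)$, and then for every $|\vecmu|\leq X$ the window $[2\pi|\vecmu|R,2\pi|\vecmu|H]$ contains the $T$-scaled interval $[\tfrac{T+1}{4},\tfrac{T+1}{2}]$, giving $\sum_{0<|\vecmu|\leq X}|c_\vecmu|^2\leq C^{-1}\mathfrak J$ with $C=|\mathcal P_1|\int_{(T+1)/4}^{(T+1)/2}K_{iT}(t)^2\,dt/t$ and $C^{-1}=O\bigl((T+1)e^{\pi T}\bigr)$. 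One single choice of $(R,H)$ suffices; no splitting into ranges of $|\vecmu|$ is needed. Your intuition about the Bessel transition region and the lower bound $\int K_{iT}^2\,dt/t\gg e^{-\pi T}/(T+1)$ over a $T$-scaled window is exactly the right ingredient; the issue is solely the phantom $e^{\pi T}\log$ factor introduced by the unnecessary pointwise bound on $|\phi|$.
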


\begin{proof}
We let
\begin{align*}
 \mathfrak{J}:=\int_{\mathcal P_1\times(R,H)}|\phi(P)|^2\,d\nu(P),
\end{align*}
where $0<R<H$.
By the same argument as in the proof of Proposition \ref{rankin1} we
obtain
\begin{align*}
\mathfrak{J}=O\Big(1+\frac{1}{R^n}\Big)\int_{\F_{\BB}}|\phi(P)|^2\, d\nu(P)=
O\Big(1+\frac{1}{R^n}\Big).
\end{align*}
We let $R=\frac{T+1}{8\pi X}$ and $H=\frac{T+1}{4\pi\mu_0}$. For such $R$ and $H$ we get
\begin{align}\label{shortint}
\mathfrak{J}=O\Big(1+\frac{X^n}{(T+1)^n}\Big).
\end{align}

If we instead substitute the Fourier expansion \eqref{cuspexpansion} in the definition of $\mathfrak{J}$ and apply Parseval's relation we get
\begin{align*}
 \mathfrak{J}=|\mathcal{P}_1|\sum_{\vec0\neq\vecmu\in\Lambda_1^*}|c_{\vecmu}|^2\int_{2\pi|\vecmu|R}^{2\pi|\vecmu|H}
K_{iT}(t)^2 \,\frac{dt}{t}.
\end{align*}
With the above choice of $R$ and $H$ we have
$[\frac{T+1}{4},\frac{T+1}{2}]\subset[2\pi |\vecmu|R,2\pi |\vecmu|H]$ for all $\vecmu\in\Lambda_1^*\setminus\{\vec0\}$ with $|\vecmu|\leq X$. Thus
\begin{align*}
 \sum_{0<|\vecmu|\leq X}|c_{\vecmu}|^2\leq C^{-1}\mathfrak{J},
\end{align*}
where $C$ is defined by
\begin{align}\label{C}
C=|\mathcal{P}_1|\int_{\frac{T+1}{4}}^{\frac{T+1}{2}}K_{iT}(t)^2\,\frac{dt}{t}.
\end{align}
By a minor modification of \cite[Lemma 3.1.2]{andreas2} (cf.\ also \cite{bal}) we find that $C^{-1}=O\big((T+1)e^{\pi T}\big)$ holds for all $T\geq0$. This fact together with \eqref{shortint} gives the desired result.
\end{proof}

\subsection{Strong version for the Eisenstein series}\label{strongversionsection}

Finally we prove a Rankin-Selberg bound for the Eisenstein series with explicit control on the dependence on the eigenvalue. First we recall some properties of the spectral majorant function $W(t)$ which is defined in \cite[eq.\ (7.10)]{cs}. $W(t)$ is an even function that depends only on $\Gamma$, and $W(t)\geq 1$ for all $t\in\R$. As in \cite[p.\ 315]{dennis} (cf.\ also \cite[Prop.\ 7.12]{cs}) we have
\begin{align}\label{westimate}
\sum_{k=1}^{\kappa}\sum_{j=1}^{\kappa}\varphi_{kj}'(\sfrac{n}{2}+iT)\overline{\varphi_{kj}(\sfrac{n}{2}+iT)}=O\big(W(T)\big),
\end{align}
for all $T\in\R$. We also recall from \cite[Thm.\ 7.14]{cs} that
\begin{align}\label{wt}
 \int_0^TW(t)\,dt=O(T^{n+1})\:\:\:\textrm{as $T\to\infty$}.
\end{align}

We also need the ``cut-off'' Eisenstein series $E_k^{B}(P,s)$ which is defined, for any $B\geq B_0$ and $P\in\F$, by
\begin{align*}
  E_k^{B}(P,s):=\begin{cases}
    E_k(P,s)& \textrm{ if $P\in\F_{B}$,}\\
    E_k(P,s)-\delta_{k\ell}(y_{A_{\ell}(P)})^{s}-\varphi_{k\ell}(s)(y_{A_{\ell}(P)})^{n-s} & \textrm{ if $P\in\F_{\ell}(B)$.}
    \end{cases}
\end{align*}
Using the appropriate Maass-Selberg relation (\cite[6.1.62]{cs}) we obtain, for all $s\in\C\setminus\R$ with $\Re s>\sfrac{n}{2}$:
\begin{multline}\label{MS}
 \int_{\F}\big|E_k^{B}(P,s)\big|^2\,d\nu(P)\\=\frac{B^{s+\bar{s}-n}-\sum_{j=1}^{\kappa}|\varphi_{kj}(s)|^2B^{n-s-\bar{s}}}{s+\bar{s}-n}+\frac{\overline{\varphi_{kk}(s)}B^{s-\bar{s}}-\varphi_{kk}(s)B^{\bar{s}-s}}{s-\bar{s}}.
\end{multline}
We want to consider $s$-values of the form $s=\sfrac{n}{2}+it$ with $t\in\R$, but we notice that for such $s$ the right hand side of \eqref{MS} is not well-defined since $s+\bar{s}-n=0$. In order to overcome this problem we let $s=\sigma+iT$ with $T\neq 0$ fixed and let $\sigma\to \sfrac{n}{2}$ from the right. The following computations correspond to \cite[Prop.\ 7.12 (ii)]{cs} but keep track also of the dependence on $B$.

In terms of  $\sigma$ and $T$, the right hand side of \eqref{MS} equals
\begin{align*}
\frac{B^{2\sigma-n}-B^{n-2\sigma}\sum_{j=1}^{\kappa}|\varphi_{kj}(\sigma+iT)|^2}{2\sigma-n}+\frac{\overline{\varphi_{kk}(\sigma+iT)}B^{2iT}-\varphi_{kk}(\sigma+iT)B^{-2iT}}{2iT}.
\end{align*}
Using \eqref{unitary} we note that
\begin{multline*}
\frac{B^{2\sigma-n}-B^{n-2\sigma}\sum_{j=1}^{\kappa}|\varphi_{kj}(\sigma+iT)|^2}{2\sigma-n}\\
 \to 2\log B-\frac{1}{2}\sum_{j=1}^{\kappa}\frac{\partial}{\partial\sigma}\Big(\big|\varphi_{kj}(\sigma+iT)\big|^2\Big)_{\big|\sigma=\sfrac{n}{2}}\:\:\:\:\textrm{as $\sigma\to\sfrac{n}{2}$}.
\end{multline*}
Furthermore, it follows from \eqref{diff} and \eqref{transpose} that
\begin{align*}
 &\sum_{j=1}^{\kappa}\frac{\partial}{\partial\sigma}\Big(\big|\varphi_{kj}(\sigma+iT)\big|^2\Big)_{\big|\sigma=\sfrac{n}{2}}=2\sum_{j=1}^{\kappa}\varphi_{kj}'(\sfrac{n}{2}+iT)\overline{\varphi_{kj}(\sfrac{n}{2}+iT)}
\end{align*}
and hence
\begin{multline*}
 \frac{B^{2\sigma-n}-B^{n-2\sigma}\sum_{j=1}^{\kappa}|\varphi_{kj}(\sigma+iT)|^2}{2\sigma-n}\\\to
 2\log B-\sum_{j=1}^{\kappa}\varphi_{kj}'(\sfrac{n}{2}+iT)\overline{\varphi_{kj}(\sfrac{n}{2}+iT)}\:\:\:\:\textrm{as $\sigma\to\sfrac{n}{2}$}.
\end{multline*}
We also note that
\begin{align*}
 \frac{\overline{\varphi_{kk}(\sigma+iT)}B^{2iT}-\varphi_{kk}(\sigma+iT)B^{-2iT}}{2iT}
 \to2\text{Re}\bigg(\frac{\overline{\varphi_{kk}(\sfrac{n}{2}+iT)}B^{2iT}}{2iT}\bigg) \:\:\:\textrm{as $\sigma\to\sfrac{n}{2}$}.
\end{align*}
We can thus conclude that for $T\neq 0$ we have
\begin{multline*}
 \int_{\F}\big|E_k^{B}(P,\sfrac{n}{2}+iT)\big|^2\,d\nu(P)\\=2\log B-\sum_{j=1}^{\kappa}\varphi_{kj}'(\sfrac{n}{2}+iT)\overline{\varphi_{kj}(\sfrac{n}{2}+iT)}+\text{Re}\bigg(\frac{\overline{\varphi_{kk}(\sfrac{n}{2}+iT)}B^{2iT}}{iT}\bigg).
\end{multline*}
Finally, using \eqref{westimate}, we obtain that for all $T\geq 1$ we have
\begin{align}\label{eisensteinest}
 \sum_{k=1}^{\kappa}\int_{\F}\big|E_k^{B}(P,\sfrac{n}{2}+iT)\big|^2\,d\nu(P)=2\kappa\log B+O\big(W(T)\big).
\end{align}
We are now ready to prove the following proposition.

\begin{prop}\label{FinalRS}
 In the Fourier series \eqref{fseries} we have, uniformly over $X\geq\frac{\mu_0}{2}$ and $T\geq0$,
 \begin{align*}
\sum_{0<|\vecmu|\leq X}\big|a_{\vecmu}\big(\sfrac{n}{2}+iT\big)\big|^2=O\Big(e^{\pi T}\Big(T+\frac{X^n}{(T+1)^{n-1}}\Big)\Big)\Big\{\log^+\Big(\frac{X}{T+1}+T\Big)+W(T)\Big\},
\end{align*}
where the implied constant depends only on $\Gamma$.
\end{prop}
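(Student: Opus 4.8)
The plan is to run the Rankin--Selberg unfolding argument from the proof of Proposition \ref{rankin1}, but with the eigenvalue-adapted choice of parameters used in the proof of Proposition \ref{cusprankin}, and with the r\^ole played there by the normalization $\int_{\F}|\phi|^2\,d\nu=1$ now taken over by the Maass--Selberg bound \eqref{eisensteinest} for the cut-off Eisenstein series. Since \eqref{eisensteinest} is stated only for $T\geq1$, I would first dispose of the range $0\leq T\leq1$: there Proposition \ref{rankin1} (applied with $Y=1$) already gives $\sum_{0<|\vecmu|\leq X}|a_{\vecmu}(\sfrac n2+iT)|^2=O(X^n(1+\log^+X))$, and for $0\leq T\leq1$ this is dominated by the asserted bound, since $e^{\pi T}\asymp1$, $T+\tfrac{X^n}{(T+1)^{n-1}}\asymp 1+X^n\asymp X^n$ (using $X\geq\sfrac{\mu_0}2$), and $\log^+(\tfrac X{T+1}+T)+W(T)\asymp 1+\log^+X$ (using $W(T)\geq1$). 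So assume $T\geq1$ henceforth.

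Following the proof of Proposition \ref{rankin1}, put $R=\tfrac{T+1}{8\pi X}$, $H=\tfrac{T+1}{4\pi\mu_0}$, $\widehat B:=\max(B_0,H,\tau_0R^{-1})$ with $\tau_0:=\max_{\ell}\tau_1\tau_{\ell}$, and study
\[
\mathcal{J}:=\int_{\mathcal P_1\times(R,H)}\big|E_k(P,\sfrac n2+iT)\big|^2\,d\nu(P),
\]
where $\mathcal P_1$ is a fundamental parallelogram for $\Lambda_1$. Automorphy of $E_k$ and Lemma \ref{cuspineq} show, exactly as in Proposition \ref{rankin1}, that only translates $W(P)$ with $P\in\F_{\BB}$ contribute; since $\Gamma$ is discrete and $\F_{\BB}$, $\mathcal P_1\times(R,H)$ are bounded only finitely many $W$ occur, so after interchanging summation and integration
\[
\mathcal{J}=\int_{\F_{\BB}}\#\big\{W\in\Gamma\mid W(P)\in\mathcal P_1\times(R,H)\big\}\,\big|E_k(P,\sfrac n2+iT)\big|^2\,d\nu(P).
\]
For fixed $P$, distinct such $W$ represent distinct cosets in $\Gammainfty'\backslash\Gamma$ (else two of the points $W(P)$ would differ by a nonzero element of $\Lambda_1$, impossible inside $\mathcal P_1$), so Lemma \ref{counting} together with $|\Gammainfty'\setminus\Gammainfty|<\infty$ bounds the counting factor by $O(1+R^{-n})$, uniformly in $P$. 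On $\F_{\BB}$ one has $E_k^{\BB}(P,\sfrac n2+iT)=E_k(P,\sfrac n2+iT)$; hence, using $\F_{\BB}\subset\F$, nonnegativity of the integrand, and \eqref{eisensteinest} with $B=\BB$, $\int_{\F_{\BB}}|E_k(P,\sfrac n2+iT)|^2\,d\nu(P)\leq 2\kappa\log\BB+O(W(T))$. As $\BB=\max\big(B_0,\tfrac{T+1}{4\pi\mu_0},\tfrac{8\pi\tau_0X}{T+1}\big)$ gives $\log\BB=O\big(1+\log^+(\tfrac X{T+1}+T)\big)$ and $W(T)\geq1$, we obtain
\[
\mathcal{J}=O\!\left(\Big(1+\tfrac{X^n}{(T+1)^n}\Big)\Big\{\log^+\!\big(\tfrac X{T+1}+T\big)+W(T)\Big\}\right).
\]

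For the matching lower bound on $\mathcal{J}$, substitute the Fourier expansion \eqref{fseries}, apply Parseval over $\mathcal P_1$ at each fixed $y$, discard the constant-term contribution and the tail $|\vecmu|>X$, and use that for the present $R,H$ one has $[\tfrac{T+1}4,\tfrac{T+1}2]\subset[2\pi|\vecmu|R,2\pi|\vecmu|H]$ for every $\vecmu\in\Lambda_1^*\setminus\{\vec0\}$ with $|\vecmu|\leq X$ (the lower endpoint uses $|\vecmu|\leq X$, the upper endpoint uses $|\vecmu|\geq\mu_0$). After the substitution $t=2\pi|\vecmu|y$ this gives
\[
\mathcal{J}\geq C\sum_{0<|\vecmu|\leq X}\big|a_{\vecmu}(\sfrac n2+iT)\big|^2,\qquad C=|\mathcal P_1|\int_{\frac{T+1}4}^{\frac{T+1}2}K_{iT}(t)^2\,\tfrac{dt}{t},
\]
with $C^{-1}=O\big((T+1)e^{\pi T}\big)$ by the same modification of \cite[Lemma 3.1.2]{andreas2} used in the proof of Proposition \ref{cusprankin}. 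Combining the two bounds on $\mathcal{J}$ and simplifying $(T+1)\big(1+\tfrac{X^n}{(T+1)^n}\big)=(T+1)+\tfrac{X^n}{(T+1)^{n-1}}=O\big(T+\tfrac{X^n}{(T+1)^{n-1}}\big)$ for $T\geq1$ yields the stated estimate.

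The only genuinely new ingredient relative to Propositions \ref{rankin1} and \ref{cusprankin} is feeding the Maass--Selberg bound \eqref{eisensteinest} through this unfolding scheme; the point to watch is that the $B$-dependence there (the term $2\kappa\log B$) does not spoil the estimate, which works precisely because $\BB$ is at most polynomial in $X$, $T$ and $(T+1)^{-1}$, so $\log\BB$ is absorbed into the factor $\log^+(\tfrac X{T+1}+T)$. I expect the main (though not deep) obstacle to be exactly this bookkeeping: keeping $\log\BB$, $W(T)$, and the $e^{\pi T}$ from the Bessel-integral lower bound all under control simultaneously and uniformly in $T\geq1$ and $X\geq\sfrac{\mu_0}2$.
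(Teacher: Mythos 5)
Your proposal is correct and follows essentially the same route as the paper: recycle the unfolding of $\mathcal{J}$ from Proposition \ref{rankin1} with the eigenvalue-adapted parameters $R,H$ from Proposition \ref{cusprankin}, feed in the Maass--Selberg bound \eqref{eisensteinest} for the cut-off Eisenstein series to control $\int_{\F_{\BB}}|E_k|^2\,d\nu$, and close with the Bessel-integral lower bound and the estimate $C^{-1}=O((T+1)e^{\pi T})$. The paper disposes of $0\leq T\leq 1$ by noting the bound follows directly from \eqref{jest} (cf.\ \eqref{resulta}) rather than invoking Proposition \ref{rankin1} as you do, but this is the same observation.
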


\begin{proof}
 We recall from the proof of Proposition \ref{rankin1} that
 \begin{align}\label{jest}
 \mathcal{J}=O\Big(1+\frac{1}{R^n}\Big)\int_{\F_{\BB}}\big|E_k\big(P,\sfrac{n}{2}+iT\big)\big|^2\, d\nu(P).
\end{align}
Using \eqref{eisensteinest}, we obtain for all $T\geq 1$:
\begin{align}\label{finaljest}
 \mathcal{J}=O\Big(1+\frac{1}{R^n}\Big)\int_{\F}\big|E_k^{\widehat{B}}(P,\sfrac{n}{2}+iT)\big|^2\, d\nu(P)=O\Big(1+\frac{1}{R^n}\Big)\Big\{\log\widehat{B}+W(T)\Big\},
\end{align}
where the implied constant depends only on $\Gamma$. The bound \eqref{finaljest} holds also for $0\leq T\leq 1$, as follows directly from \eqref{jest} (cf.\ \eqref{resulta}).

We also recall that
\begin{align*}
\mathcal{J}\geq |\mathcal{P}_1|\sum_{\vec0\neq\vecmu\in\Lambda_1^*}\big|a_{\vecmu}\big(\sfrac{n}{2}+iT\big)\big|^2\int_{2\pi|\vecmu|R}^{2\pi|\vecmu|H}
K_{iT}(t)^2\, \frac{dt}{t}.
\end{align*}
As in the proof of Proposition \ref{cusprankin} we let $R=\frac{T+1}{8\pi X}$, $H=\frac{T+1}{4\pi\mu_0}$ and note that with this choice we have
\begin{align*}
 \sum_{0<|\vecmu|\leq X}\big|a_{\vecmu}\big(\sfrac{n}{2}+iT\big)\big|^2\leq C^{-1}\mathcal{J},
\end{align*}
where $C$ is given by \eqref{C}. Recalling the definition of $\widehat B$ as well as the facts $C^{-1}=O\big((T+1)e^{\pi T}\big)$ and $W(T)\geq 1$ we get, uniformly for $X\geq \frac{\mu_0}{2}$ and $T\geq 0$,
\begin{align*}
 &\sum_{0<|\vecmu|\leq X}\big|a_{\vecmu}\big(\sfrac{n}{2}+iT\big)\big|^2\\
&=O\bigg((T+1)e^{\pi T}\Big(1+\Big(\frac{8\pi X}{T+1}\Big)^n\Big)\bigg)\Big\{\log^+\Big(\frac{X}{T+1}+T\Big)+W(T)\Big\},
\end{align*}
which is the desired bound.
\end{proof}

\section{The horosphere integral}

As discussed in the introduction, our main objective in this paper is to prove statements of the form:
\begin{align}\label{errortermigen}
  \frac{1}{\delta_1\cdots\delta_n}\int_{\R^n}\chi_{\vecdel,\vecgam}(\vecu)f(u_1\vecom_1+\cdots+u_n\vecom_n+yi_n)\,d\vecu\\
\longrightarrow\frac{\langle\chi\rangle}{\nu(\Gamma\setminus\H^{n+1})}\int_{\Gamma\setminus\H^{n+1}}f(P)\,d\nu(P)\nonumber
 \end{align}
as $y\to0$. Here $f$ is an arbitrary (compactly supported) test function on $\Gamma\setminus\H^{n+1}$ and
$\vecom_1,\ldots,\vecom_n$ is a basis of the lattice $\Lambda_1$ corresponding to the cusp $\eta_1=\infty$; also
\begin{align}\label{chidelta}
\chi_{\vecdel,\vecgam}(\vecu):=\chi\big(\sfrac{u_1-\gamma_1}{\delta_1},\ldots,\sfrac{u_n-\gamma_n}{\delta_n}\big),
\end{align}
where $\chi:\R^n\to\R$ is a fixed measurable function with compact support, $\vecgam=(\gamma_1,\ldots,\gamma_n) \in\R^n$, $\vecdel=(\delta_1,\ldots,\delta_n)\in(0,1]^n$, and the numbers $\delta_1,\ldots,\delta_n$ are allowed to shrink with $y$ as $y\to0$. We are interested in what restrictions are needed on $\delta_1,\ldots,\delta_n$ as $y\to0$ in order that \eqref{errortermigen} holds. Furthermore, when  \eqref{errortermigen} holds, we wish to prove precise results on the rate of convergence in  \eqref{errortermigen}.

We will call the left hand side of  \eqref{errortermigen} "the horosphere integral". We will focus on the situation where $\chi$ is either smooth or the characteristic function of the rectangle $[-1/2,1/2]^n$. Recall that in the latter situation we are in fact studying ``horosphere pieces'' of the form
\begin{align*}
 \Big\{u_1\vecom_1+\cdots+u_n\vecom_n+yi_n\,\big|\, u_i\in[\alpha_i,\beta_i]\:\textrm{for}\:i=1,\ldots,n\Big\}
\end{align*}
where $\beta_i-\alpha_i=\delta_i$ for $i=1,\ldots,n$. (Here $\alpha_i=\gamma_i-\delta_i/2$, $\beta_i=\gamma_i+\delta_i/2$.) 

To prove \eqref{errortermigen} we will use the spectral expansion of $f$ and Fourier expansion of $\chi_{\vecdel,\vecgam}(\vecu)$. The first step is to note that, since $\vecu\mapsto f(u_1\vecom_1+\ldots+u_n\vecom_n+yi_n)$ only depends on $\vecu$ modulo $\Z^n$, we have
\begin{align}\label{hoppasattdetsnartarklart}
&\frac{1}{\delta_1\cdots\delta_n}\int_{\R^n}\chi_{\vecdel,\vecgam}(\vecu)f(u_1\vecom_1+\cdots+u_n\vecom_n+yi_n)\,d\vecu\\
&=\frac{1}{\delta_1\cdots\delta_n}\int_{\R^n/\Z^n}\Psi_{\vecdel,\vecgam}(\vecu)f(u_1\vecom_1+\cdots+u_n\vecom_n+yi_n)\,d\vecu,\nonumber
\end{align}
where
\begin{align}\label{psi}
 \Psi_{\vecdel,\vecgam}(\vecu):=\underset{\vecm\in\Z^n}{\sum}\chi_{\vecdel,\vecgam}(\vecu+\vecm).
\end{align}
Note that $\Psi_{\vecdel,\vecgam}(\vecu)$ depends only on $\vecu$ modulo $\Z^n$. Let us suppose that $\chi$ is smooth. Then also $\Psi_{\vecdel,\vecgam}$ is smooth, and may be expanded as a Fourier series
\begin{align}\label{PSIUFIRSTFORMULA}
\Psi_{\vecdel,\vecgam}(\vecu)=\sum_{\vecnu\in\Z^n}
\biggl(\int_{\R^n/\Z^n}\Psi_{\vecdel,\vecgam}(\tilde\vecu)
e^{-2\pi i\langle\vecnu,\tilde\vecu\rangle}\,d\tilde\vecu\biggr)
\,e^{2\pi i\langle\vecnu,\vecu\rangle}.
\end{align}
Substituting \eqref{psi} in \eqref{PSIUFIRSTFORMULA} and changing order of summation and integration we get
\begin{align}\label{chiseries}
 \Psi_{\vecdel,\vecgam}(\vecu)=\sum_{\vecnu\in\Z^n}\widehat{\chi_{\vecdel,\vecgam}}(\vecnu)e^{2\pi i\langle\vecnu,\vecu\rangle},
\end{align}
where $\widehat{\chi_{\vecdel,\vecgam}}$ is the Fourier transform of $\chi_{\vecdel,\vecgam}$, viz.
\begin{align*}
\widehat{\chi_{\vecdel,\vecgam}}(\vecnu)=\int_{\R^n}\chi_{\vecdel,\vecgam}(\vecu)e^{-2\pi i\langle\vecnu,\vecu\rangle}\,d\vecu.
\end{align*}
Note also that (via the substitution $u_i=\gamma_i+\delta_ix_i$)
\begin{align}\label{transf2}
\widehat{\chi_{\vecdel,\vecgam}}(\vecnu)
=\Big(\prod_{i=1}^n\delta_i\Big)e^{-2\pi i\langle\vecnu,\vecgam\rangle}\widehat{\chi}(\delta_1\nu_1,\ldots,\delta_n\nu_n).
\end{align}
The following formula for $\widehat{\chi_{\vecdel,\vecgam}}$ will also be useful.

\begin{lem}\label{partint}
Let $\vecnu\in\R^n\setminus\{\vec0\}$ and $m\in\Z_{\geq0}$. Then
\begin{align*}
 \widehat{\chi_{\vecdel,\vecgam}}(-\vecnu)=\Big(\frac{i}{2\pi}\Big)^m\frac{1}{|\vecnu|^{2m}}\int_{\R^n}\bigg(\Big(\nu_1\frac{\partial}{\partial u_1}+\cdots+\nu_n\frac{\partial}{\partial u_n}\Big)^m\chi_{\vecdel,\vecgam}(\vecu)\bigg)e^{2\pi i\langle\vecnu,\vecu\rangle}\,d\vecu.
\end{align*}
 \end{lem}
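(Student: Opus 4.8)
The plan is to prove this by repeated integration by parts, which is exactly the standard technique for showing that Fourier coefficients of smooth functions decay rapidly. The identity to be established relates $\widehat{\chi_{\vecdel,\vecgam}}(-\vecnu)$ to the Fourier transform of an $m$-th order directional derivative of $\chi_{\vecdel,\vecgam}$ along the direction $\vecnu$.

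First I would start from the definition
\begin{align*}
\widehat{\chi_{\vecdel,\vecgam}}(-\vecnu)=\int_{\R^n}\chi_{\vecdel,\vecgam}(\vecu)e^{2\pi i\langle\vecnu,\vecu\rangle}\,d\vecu,
\end{align*}
and observe that the directional derivative operator $D_{\vecnu}:=\nu_1\frac{\partial}{\partial u_1}+\cdots+\nu_n\frac{\partial}{\partial u_n}$ acts on the exponential by $D_{\vecnu}e^{2\pi i\langle\vecnu,\vecu\rangle}=2\pi i|\vecnu|^2e^{2\pi i\langle\vecnu,\vecu\rangle}$, since $\langle\vecnu,\vecnu\rangle=|\vecnu|^2$. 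Hence $e^{2\pi i\langle\vecnu,\vecu\rangle}=\frac{1}{2\pi i|\vecnu|^2}D_{\vecnu}e^{2\pi i\langle\vecnu,\vecu\rangle}$, which is where the hypothesis $\vecnu\neq\vec0$ enters. Substituting this once and integrating by parts (the boundary terms vanish because $\chi$, hence $\chi_{\vecdel,\vecgam}$, has compact support, and $D_{\vecnu}$ is a constant-coefficient first-order operator, so integration by parts simply transfers $D_{\vecnu}$ onto $\chi_{\vecdel,\vecgam}$ with a sign change), I obtain
\begin{align*}
\widehat{\chi_{\vecdel,\vecgam}}(-\vecnu)=\frac{-1}{2\pi i|\vecnu|^2}\int_{\R^n}\big(D_{\vecnu}\chi_{\vecdel,\vecgam}(\vecu)\big)e^{2\pi i\langle\vecnu,\vecu\rangle}\,d\vecu=\frac{i}{2\pi|\vecnu|^2}\int_{\R^n}\big(D_{\vecnu}\chi_{\vecdel,\vecgam}(\vecu)\big)e^{2\pi i\langle\vecnu,\vecu\rangle}\,d\vecu.
\end{align*}

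The general case then follows by a straightforward induction on $m$: apply the same one-step argument with $\chi_{\vecdel,\vecgam}$ replaced by $D_{\vecnu}^{j}\chi_{\vecdel,\vecgam}$, which is again smooth of compact support, so all boundary terms continue to vanish. Each step contributes a factor $\frac{i}{2\pi|\vecnu|^{2}}$ and replaces $D_{\vecnu}^{j}$ by $D_{\vecnu}^{j+1}$; after $m$ steps one arrives at the claimed formula with prefactor $\big(\frac{i}{2\pi}\big)^m|\vecnu|^{-2m}$. The case $m=0$ is just the definition of the Fourier transform, giving the base of the induction.

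I do not expect any genuine obstacle here; the only point requiring a word of care is the justification that the boundary terms in the integration by parts vanish, which is immediate from the compact support of $\chi$ (note $\chi_{\vecdel,\vecgam}$ is a rescaled, translated copy of $\chi$ and hence also compactly supported, and all its derivatives are too). One should also note that $D_{\vecnu}^m$ expanded out is $\big(\nu_1\frac{\partial}{\partial u_1}+\cdots+\nu_n\frac{\partial}{\partial u_n}\big)^m$, matching the statement exactly, and that smoothness of $\chi$ guarantees all these mixed partials exist and are continuous, so Fubini and differentiation under the integral sign cause no trouble.
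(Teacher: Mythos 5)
Your proof is correct and follows essentially the same route as the paper: the paper simply cites the standard Fourier-analytic fact (Stein--Weiss, Thm.\ 1.8) that the Fourier transform exchanges differentiation with polynomial multiplication, which is exactly what you establish directly by repeated integration by parts using the compact support of $\chi_{\vecdel,\vecgam}$. The sign/constant bookkeeping ($\frac{-1}{2\pi i}=\frac{i}{2\pi}$) and the identification $D_{\vecnu}e^{2\pi i\langle\vecnu,\vecu\rangle}=2\pi i|\vecnu|^2 e^{2\pi i\langle\vecnu,\vecu\rangle}$ are all accurate.
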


\begin{proof}
The lemma follows from a straightforward application of \cite[p.\ 4, Thm.\ 1.8]{SW}.
\end{proof}

Using Lemma \ref{partint} we find that for all $\vecnu\in\R^n\setminus\{\vec0\}$, \begin{align*}
\widehat{\chi}(\vecnu)=O\big(\|\chi\|_{m,1}|\vecnu|^{-m}\big),
\end{align*}
where the implied constant only depends on $m\in\Z_{\geq0}$ and where
\begin{align*}
\|\chi\|_{m,1}:=\underset{|\ell|\leq m}{\sum}\|D^{\ell}\chi\|_{L^1(\R^n)}
\end{align*}
is the norm in the Sobolev space $W^{m,1}(\R^n)$. (Recall that for any multi-index $\ell=(\ell_1,\ldots,\ell_n)$ of length $|\ell|$ we have $D^{\ell}=\frac{\partial^{|\ell|}}{\partial x_1^{\ell_1}\cdots\partial x_n^{\ell_n}}$.) Using this estimate (with $m=0$ and $m=k$) together with $|(\delta_1\nu_1,\ldots,\delta_n\nu_n)|\geq\big(\underset{i\in\{1,\ldots,n\}}{\min}\delta_i\big)|\vecnu|$ in \eqref{transf2} we conclude
\begin{align}\label{transfestim}
\widehat{\chi_{\vecdel,\vecgam}}(\vecnu)=O\Big(\|\chi\|_{k,1}\prod_{i=1}^n\delta_i\Big)
\begin{cases}
 1 & \text{if $|\vecnu|\leq\big(\underset{i\in\{1,\ldots,n\}}{\min}\delta_i\big)^{-1}$}\\
\Big(\big(\underset{i\in\{1,\ldots,n\}}{\min}\delta_i\big)|\vecnu|\Big)^{-k} & \text{if $|\vecnu|\geq\big(\underset{i\in\{1,\ldots,n\}}{\min}\delta_i\big)^{-1}$,}
\end{cases}
\end{align}
where the implied constant only depends on $k\in\Z_{\geq0}$.

\subsection{The horosphere integral for cusp forms}\label{cuspchap}

In this section we study the horosphere integral when $f$ is a cusp form eigenfunction.
We begin with a bound on sums of the Fourier coefficients of cusp forms, twisted with an additive character, generalizing \cite[Thm. 3]{hafner}.

Recall that $\Omega=\{\vecom_1,\vecom_2,\ldots,\vecom_n\}$ is a basis of the lattice $\Lambda_1$. For $i=1,\ldots,n$ we define $ \vecom_i^*$ to be the unique element in $\Lambda_1^*$ satisfying the relations $\langle\vecom_i^*,\vecom_i\rangle=1$ and $\langle\vecom_i^*,\vecom_j\rangle=0$ for $i\neq j$.
Then $\Omega^*=\{\vecom_1^*,\vecom_2^*,\ldots,\vecom_n^*\}$ is a basis for $\Lambda_1^*$, i.e.
\begin{align*}
\Lambda_1^*=\Big\{m_1\vecom_1^*+m_2\vecom_2^*+\cdots +m_n\vecom_n^*\,\big|\, m_1,\ldots m_n\in\Z\Big\}.
\end{align*}

\begin{prop}\label{Lemma1}
 Let $\phi$ be a cusp form with eigenvalue $\lambda>0$. We define $s$ via $\lambda=s(n-s)$, $s\in[n/2,n)\cup [n/2,n/2+i\infty)$. Let the Fourier expansion of $\phi$ at the cusp $\eta_1=\infty$ be
 \begin{align}\label{smallcuspexp}
  \phi(\vecx+yi_n)=\sum_{\vec0\neq\vecmu\in\Lambda_1^*}c_{\vecmu}y^{n/2}K_{s-n/2}(2\pi|\vecmu|y)e^{2\pi i\langle\vecmu,\vecx\rangle}.
 \end{align}
 Let $c_{\vec0}:=0$. Then the following holds, uniformly over $\vecM=(M_1,\ldots,M_n)\in\Z_{\geq0}^n\setminus\{\vec0\}$ and $(\alpha_1,\ldots,\alpha_n)\in\R^n$:
 \begin{align*}
  \sum_{m_1=0}^{M_1}\cdots\sum_{m_n=0}^{M_n}c_{m_1\vecom_1^*+\cdots+m_n\vecom_n^*}e^{2\pi i(m_1\alpha_1+\cdots+m_n\alpha_n)}=O\Big(|\vecM|^{n/2}\big(\log(2|\vecM|)\big)^{n+1}\Big),
 \end{align*}
 where the implied constant depends only on $\Gamma$, $\phi$ and $\Omega$.
\end{prop}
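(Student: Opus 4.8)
The plan is to combine the Rankin--Selberg type bound of Proposition \ref{cusprankin} (to control the size of the coefficients $c_{\vecmu}$ on average) with the repeated summation-by-parts formula of Corollary \ref{corsumbp} (to exploit the oscillation of the additive character), exactly in the spirit of Hafner's treatment \cite{hafner} of the $n=1$ case. Write $a(m_1,\ldots,m_n):=c_{m_1\vecom_1^*+\cdots+m_n\vecom_n^*}\,e^{2\pi i(m_1\alpha_1+\cdots+m_n\alpha_n)}$, so that $a(\vec0)=0$ by our convention $c_{\vec0}=0$, and let $S(X_1,\ldots,X_n)=\sum_{0\le m_j\le X_j}a(\vecm)$ be the associated summatory function. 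The left-hand side of the proposition is $\sum_{m_1=0}^{M_1}\cdots\sum_{m_n=0}^{M_n}g(\vecm)\,a(\vecm)$ with $g\equiv 1$; of course with $g\equiv 1$ the summation-by-parts formula \eqref{again} collapses to $S(M_1,\ldots,M_n)$ (all terms with $A\neq\emptyset$ vanish since $g_{A,N\setminus A}\equiv 0$), so what we really need is a direct bound on $|S(M_1,\ldots,M_n)|$.

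First I would bound the partial sums $S(X_1,\ldots,X_n)$ by Cauchy--Schwarz. We have
\begin{align*}
|S(X_1,\ldots,X_n)|\le \#\{\vecm\in\Z_{\ge 0}^n\setminus\{\vec0\}\mid m_j\le X_j\}^{1/2}\cdot\Bigl(\sum_{\vec0\neq\vecm,\ m_j\le X_j}|c_{m_1\vecom_1^*+\cdots+m_n\vecom_n^*}|^2\Bigr)^{1/2}.
\end{align*}
The first factor is $O\bigl((X_1\cdots X_n)^{1/2}\bigr)$; more crucially, if we only know $m_j\le X_j$ for each $j$ then $|m_1\vecom_1^*+\cdots+m_n\vecom_n^*|\le c_\Omega\max_j X_j=:c_\Omega|\vecX|$, so the inner sum is dominated by $\sum_{0<|\vecmu|\le c_\Omega|\vecX|}|c_{\vecmu}|^2$, which by Proposition \ref{cusprankin} (with a fixed eigenvalue, so $T$ is a constant depending only on $\phi$) is $O\bigl(|\vecX|^{n}\bigr)$, the implied constant depending on $\Gamma$, $\phi$, $\Omega$. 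This already gives $|S(M_1,\ldots,M_n)|=O\bigl((M_1\cdots M_n)^{1/2}\cdot(\max_j M_j)^{n/2}\bigr)$, which is worse than the claimed $|\vecM|^{n/2}(\log 2|\vecM|)^{n+1}$ whenever the $M_j$ are genuinely unbalanced. So a plain Cauchy--Schwarz over the whole box is not enough, and the extra savings must come from the additive character.

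The actual argument, therefore, is an induction on $n$, peeling off one variable at a time and in each step using summation by parts in that single variable together with the geometric-series cancellation coming from $e^{2\pi i m_j\alpha_j}$ when $\alpha_j$ is not too close to an integer, while the "major arc" case ($\alpha_j$ close to $\Z$) is handled by a Fourier/Parseval bound on the truncated sum $\sum_{m_j\le X}c_{\vecmu}K_{s-n/2}(\cdots)$ as in Hafner. Concretely, I would fix the other $n-1$ indices, regard $m\mapsto c_{\cdots}e^{2\pi i m\alpha_j}$ as a one-variable sequence, apply the $n=1$ version of this very proposition (i.e.\ \cite[Thm.\ 3]{hafner}, or its re-proof via Proposition \ref{cusprankin} and one-dimensional summation by parts), which gives a bound of the shape $O\bigl((\text{length})^{1/2}\log(2\cdot\text{length})\bigr)$ \emph{uniformly in the frozen variables and in $\alpha_j$} — the point being that the cusp form whose Fourier coefficients one is summing along a single $\vecom_j^*$-line is itself, by restriction, governed by the same Rankin--Selberg bound — and then sum the resulting estimate over the remaining $(n-1)$-dimensional box, absorbing one extra logarithm per dimension. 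Carrying the Rankin--Selberg input through the induction is what produces the power $|\vecM|^{n/2}$ and the $(\log 2|\vecM|)^{n+1}$ (one log from each of the $n$ one-dimensional steps, plus one from the final dyadic decomposition or from handling $|\vecmu|$ of size up to $|\vecM|$).

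The main obstacle, as in \cite{hafner}, is the uniformity in the additive character $(\alpha_1,\ldots,\alpha_n)$: when some $\alpha_j$ lies very close to an integer there is no cancellation in the geometric sum and one must instead bound $\bigl|\sum_{0\le m\le X}c_{m\vecom_j^*+\cdots}\,y^{n/2}K_{s-n/2}(2\pi|m\vecom_j^*+\cdots|y)\bigr|$ directly for a well-chosen $y\sim 1/X$, which is exactly the mechanism behind the Rankin--Selberg bound (choosing the horosphere height so that a fixed range of the $K$-Bessel argument is hit by every term with $|\vecmu|\lesssim X$). Making this dichotomy work \emph{simultaneously} in all $n$ variables, with a single implied constant, is the delicate bookkeeping; everything else is a routine multi-dimensional summation-by-parts computation of the type packaged in Lemma \ref{sumbp} and Corollary \ref{corsumbp}.
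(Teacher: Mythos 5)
Your proposal misidentifies the key analytic input and takes a route that the paper does not and, as written, does not close.

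The paper's proof of this proposition does \emph{not} use the Rankin--Selberg bound of Proposition~\ref{cusprankin} at all. The only property of the cusp form that enters is its exponential decay in every cusp, hence its boundedness on $\H^{n+1}$: the argument considers, for a free parameter $\delta\in[0,1)$, the $(n+1)$-fold integral
\begin{align*}
 \mathcal{K}=\int_0^{\infty}\!\int_{I_{\vecal}}\phi(u_1\vecom_1+\cdots+u_n\vecom_n+yi_n)
 \Bigl(\sum_{m_1=0}^{M_1}\cdots\sum_{m_n=0}^{M_n}e^{2\pi i(m_1(\alpha_1-u_1)+\cdots+m_n(\alpha_n-u_n))}\Bigr)\,\frac{d\vecu\,dy}{y^{\delta}},
\end{align*}
bounds it from above by $O\bigl((1-\delta)^{-1}(\log 2|\vecM|)^n\bigr)$ using $|\phi|\leq Ae^{-By}$ together with the $L^1$-bound $\int_{-1/2}^{1/2}|\sum_{m\le M}e^{2\pi imv}|\,dv=O(\log 2M)$, and, in parallel, identifies $\mathcal{K}$ (via the Fourier expansion and the Mellin integral for $K_{s-n/2}$) as a \emph{weighted} coefficient sum. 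Repeated summation by parts (Corollary~\ref{corsumbp}) then removes the weight $|\vecmu|^{\delta-n/2-1}$, and the choice $\delta=1-1/\log(3|\vecM|)$ yields the stated power and log exponent. There is no major/minor arc dichotomy in $\alpha$ anywhere; the $L^1$-norm of the Dirichlet kernel already handles all $\alpha$ uniformly.

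By contrast, you propose an induction on $n$, peeling off one variable and invoking a one-dimensional version of the theorem along each $\vecom_j^*$-line, with the claim that ``the cusp form whose Fourier coefficients one is summing along a single $\vecom_j^*$-line is itself, by restriction, governed by the same Rankin--Selberg bound.'' This is where the gap lies: the one-dimensional slice $m\mapsto c_{m\vecom_j^*+\vecv}$ (for fixed $\vecv$) is \emph{not} the sequence of Fourier coefficients of a cusp form on a Fuchsian quotient, and the Rankin--Selberg argument of Proposition~\ref{cusprankin} controls only the aggregate $\sum_{0<|\vecmu|\le X}|c_{\vecmu}|^2$, not individual slices. Without a slice-wise $L^2$-bound, you have neither a one-dimensional base case nor uniformity in the frozen coordinates, so the induction does not start. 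Your own Cauchy--Schwarz calculation already shows that passing through Rankin--Selberg gives $(M_1\cdots M_n)^{1/2}|\vecM|^{n/2}$, which overshoots by a factor as large as $|\vecM|^{(n-1)/2}$; the rest of the proposal is a plan to recover that loss, but the mechanism you name — Rankin--Selberg on restricted lines — is not available. The paper avoids the problem entirely by never splitting variables: the entire $n$-fold sum is treated at once, and the input that makes the argument work is boundedness of $\phi$, not a mean-square coefficient bound.
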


\begin{proof}
We let $I_{\vecal}:=\prod_{i=1}^n[\alpha_i-\sfrac{1}{2},\alpha_i+\sfrac{1}{2}]$. We will study the following integral, for varying \mbox{$(M_1,\ldots,M_n)\in\Z_{\geq0}^n\setminus\{\vec0\}$,} $(\alpha_1,\ldots,\alpha_n)\in\R^n$ and $\delta\in[0,1)$:
\begin{multline*}
 \mathcal{K}:=\int_0^{\infty}\int_{I_{\vecal}}\phi(u_1\vecom_1+\cdots+u_n\vecom_n+yi_n)\\
\times\bigg(\sum_{m_1=0}^{M_1}\cdots\sum_{m_n=0}^{M_n}e^{2\pi i(m_1(\alpha_1-u_1)+\cdots+m_n(\alpha_n-u_n))}\bigg)\,\frac{du_1\ldots du_ndy}{y^{\delta}}.
\end{multline*}
To begin with we recall that $\phi$ decays exponentially in each cusp, which implies that $\phi$ is bounded on $\H^{n+1}$. In particular this means that we can find constants $A,B>0$ such that $|\phi(\vecx+yi_n)|\leq Ae^{-By}$ for all $\vecx+yi_n\in\H^{n+1}$. Using this observation we get
\begin{align}\label{firstK}
 |\mathcal{K}|&\leq \int_0^{\infty}\int_{I_{\vecal}}Ae^{-By}\bigg|\sum_{m_1=0}^{M_1}\cdots\sum_{m_n=0}^{M_n}e^{2\pi i(m_1(\alpha_1-u_1)+\cdots+m_n(\alpha_n-u_n))}\bigg|\,\frac{du_1\ldots du_ndy}{y^{\delta}}\\
 &=O\Big(1+\int_0^1\frac{dy}{y^{\delta}}\Big)\int_{I_{\vecal}}\bigg|\sum_{m_1=0}^{M_1}\cdots\sum_{m_n=0}^{M_n}e^{2\pi i(m_1(\alpha_1-u_1)+\cdots+m_n(\alpha_n-u_n))}\bigg|\,du_1\ldots du_n\nonumber\\
 &=O\Big(\frac{1}{1-\delta}\Big)\prod_{j=1}^n\int_{-\frac{1}{2}}^{\frac{1}{2}}\Big|\sum_{m_j=0}^{M_j}e^{2\pi im_jv_j}\Big|\,dv_j=O\bigg(\frac{\log(2M_{i_1})\cdots\log(2M_{i_k})}{1-\delta}\bigg),\nonumber
\end{align}
where $M_{i_1},\ldots,M_{i_k}$ are the nonzero coordinates in $(M_1,\ldots,M_n)$.

On the other hand, substituting the Fourier expansion \eqref{smallcuspexp} into the definition of $\mathcal{K}$, we get
\begin{multline}\label{Kint}
 \mathcal{K}=\int_0^{\infty}\int_{I_{\vecal}}\sum_{\vec0\neq\vecmu\in\Lambda_1^*}c_{\vecmu}y^{n/2}K_{s-n/2}(2\pi|\vecmu|y)e^{2\pi i\langle\vecmu,u_1\vecom_1+\cdots+u_n\vecom_n\rangle}\\
 \times\bigg(\sum_{m_1=0}^{M_1}\cdots\sum_{m_n=0}^{M_n}e^{2\pi i(m_1(\alpha_1-u_1)+\cdots+m_n(\alpha_n-u_n))}\bigg)\,\frac{du_1\ldots du_ndy}{y^{\delta}}.
\end{multline}
Here, for each fixed $y>0$, the inner integral equals
\begin{align*}
&\sum_{\vec0\neq\vecmu\in\Lambda_1^*}c_{\vecmu}y^{n/2-\delta}K_{s-n/2}(2\pi|\vecmu|y)\\
&\times\bigg(\sum_{m_1=0}^{M_1}\cdots\sum_{m_n=0}^{M_n}e^{2\pi i(m_1\alpha_1+\cdots+m_n\alpha_n)}\int_{I_{\vecal}}e^{2\pi i\langle\vecmu -m_1\vecom_1^*-\cdots-m_n\vecom_n^*,u_1\vecom_1+\cdots+u_n\vecom_n\rangle}\,du_1\ldots du_n\bigg)\\
&=\underset{m_1+\cdots+m_n>0}{\sum_{m_1=0}^{M_1}\cdots\sum_{m_n=0}^{M_n}}c_{m_1\vecom_1^*+\cdots+m_n\vecom_n^*}y^{n/2-\delta}K_{s-n/2}\big(2\pi|m_1\vecom_1^*+\cdots+m_n\vecom_n^*|y\big)e^{2\pi i(m_1\alpha_1+\cdots+m_n\alpha_n)},
\end{align*}
and since this is a finite sum it follows that we can change the order between integration and summation in \eqref{Kint}. Using  \cite[p.\ 388(8)]{watson} we obtain
\begin{align}\label{Kequal}
 \mathcal{K}
 &=\underset{m_1+\cdots+m_n>0}{\sum_{m_1=0}^{M_1}\cdots\sum_{m_n=0}^{M_n}}c_{m_1\vecom_1^*+\cdots+m_n\vecom_n^*}e^{2\pi i(m_1\alpha_1+\cdots+m_n\alpha_n)}\\
&\times\big(2\pi|m_1\vecom_1^*+\cdots+m_n\vecom_n^*|\big)^{\delta-n/2-1}\int_0^{\infty} u^{n/2-\delta}K_{s-n/2}(u)\,du\nonumber\\
 &=\bigg(\underset{m_1+\cdots+m_n>0}{\sum_{m_1=0}^{M_1}\cdots\sum_{m_n=0}^{M_n}}c_{m_1\vecom_1^*+\cdots+m_n\vecom_n^*}|m_1\vecom_1^*+\cdots+m_n\vecom_n^*|^{\delta-n/2-1}e^{2\pi i(m_1\alpha_1+\cdots+m_n\alpha_n)}\bigg)\nonumber\\
&\times\frac{\pi^{\delta-n/2-1}}{4}\Gamma\Big(\frac{n+1}{2}-\frac{\delta+s}{2}\Big)\Gamma\Big(\frac{1+s}{2}-\frac{\delta}{2}\Big).\nonumber
\end{align}
Combining \eqref{firstK}, \eqref{Kequal} and the observation 
\begin{align*}
 \inf_{\delta\in[0,1)}\Big|\frac{\pi^{\delta-n/2-1}}{4}\Gamma\Big(\frac{n+1}{2}-\frac{\delta+s}{2}\Big)\Gamma\Big(\frac{1+s}{2}-\frac{\delta}{2}\Big)\Big|>0
\end{align*}
yields
\begin{multline}\label{Sestimate}
\underset{m_1+\cdots+m_n>0}{\sum_{m_1=0}^{M_1}\cdots\sum_{m_n=0}^{M_n}}c_{m_1\vecom_1^*+\cdots+m_n\vecom_n^*}|m_1\vecom_1^*+\cdots+m_n\vecom_n^*|^{\delta-n/2-1}e^{2\pi i(m_1\alpha_1+\cdots+m_n\alpha_n)}\\
=O\bigg(\frac{\log(2M_{i_1})\cdots\log(2M_{i_k})}{1-\delta}\bigg)=O\bigg(\frac{\big(\log(2|\vecM|)\big)^n}{1-\delta}\bigg),
\end{multline}
for all $\vecM=(M_1,\ldots,M_n)\in\Z_{\geq0}^n\setminus\{\vec0\}$, $(\alpha_1,\ldots,\alpha_n)\in\R^n$ and $\delta\in[0,1)$. 

We call the sum in the left hand side of \eqref{Sestimate} $S(M_1,\ldots,M_n)$ and we define $S(0,\ldots,0):=0$. We also define
\begin{align*}
g(x_1,\ldots,x_n):=|x_1\vecom_1^*+\cdots+x_n\vecom_n^*|^{n/2+1-\delta},
\end{align*}
and note that $g$ is smooth in $\R^n\setminus\{\vec0\}$. Using Corollary \ref{corsumbp} and \eqref{Sestimate} we get
\begin{align}\label{ibp}
 &\sum_{m_1=0}^{M_1}\cdots\sum_{m_n=0}^{M_n}c_{m_1\vecom_1^*+\cdots+m_n\vecom_n^*}e^{2\pi i(m_1\alpha_1+\cdots+m_n\alpha_n)}\nonumber\\
&=\underset{A\subset N}{\sum}(-1)^{|A|}\underset{\prod_{j\in A}[0,M_j]}{\int} g_{A,N\setminus A}(\vecx)S_{A,N\setminus A}(\vecx)\,d\vecx\\
&=O\bigg(\frac{\big(\log(2|\vecM|)\big)^n}{1-\delta}\bigg)\bigg\{\underset{A\subset N}{\sum}\underset{\prod_{j\in A}[0,M_j]}{\int} |g_{A,N\setminus A}(\vecx)|\,d\vecx \bigg\},\nonumber
\end{align}
where we recall that $N=\{1,\ldots,n\}$. 

In order to bound the right hand side of \eqref{ibp} we need to estimate the derivatives of $g$. We note that
\begin{align*}
 g(x_1,\ldots,x_n)=Q(x_1,\ldots,x_n)^{\sfrac{n}{4}+\sfrac{1}{2}-\sfrac{\delta}{2}},
\end{align*}
for some positive definite quadratic form $Q$ that only depends on $\Omega^*$. We let $k_1,k_2>0$ be such that 
\begin{align}\label{quadraticform}
k_1|\vecx|^2\leq Q(\vecx)\leq k_2|\vecx|^2
\end{align}
for all $\vecx=(x_1,\ldots,x_n)\in\R^n$. Differentiating $g$ and using \eqref{quadraticform} we obtain, for all $\vecx\in\R^n\setminus\{\vec0\}$, $\delta\in[0,1)$ and multi-indices $\ell=(\ell_1,\ldots,\ell_n)$ of length $|\ell|$,
\begin{align}\label{gder}
\frac{\partial^{|\ell|}}{\partial x_1^{\ell_1}\cdots\partial x_n^{\ell_n}}g(\vecx)=O\big(|\vecx|^{n/2+1-|\ell|-\delta}\big).
\end{align}
(The implied constant depends only on $\Omega$ and $|\ell|$.)

Using \eqref{gder} we can now estimate the integrals in \eqref{ibp}.
If $n/2+1-|A|-\delta<0$ then
\begin{align}\label{aa}
\underset{\prod_{j\in A}[0,M_j]}{\int} |g_{A,N\setminus A}(\vecx)|\,d\vecx=O(1)\underset{\substack{|\vecx|\leq|\vecM|\\x_1,\ldots,x_{|A|}>0}}{\int}|\vecx|^{n/2+1-|A|-\delta}\,d\vecx
=O\big(|\vecM|^{n/2+1-\delta}\big),
\end{align}
and if $n/2+1-|A|-\delta\geq0$ then
\begin{align}\label{bb}
 \underset{\prod_{j\in A}[0,M_j]}{\int} |g_{A,N\setminus A}(\vecx)|\,d\vecx=O\big(|\vecM|^{n/2+1-|A|-\delta}\big)\underset{\prod_{j\in A}[0,M_j]}{\int}\,d\vecx=O\big(|\vecM|^{n/2+1-\delta}\big).
\end{align}
Combining \eqref{ibp}, \eqref{aa} and \eqref{bb} we get
\begin{align*}
 \sum_{m_1=0}^{M_1}\cdots\sum_{m_n=0}^{M_n}c_{m_1\vecom_1^*+\cdots+m_n\vecom_n^*}e^{2\pi i(m_1\alpha_1+\cdots+m_n\alpha_n)}
 =O\bigg(\frac{|\vecM|^{n/2+1-\delta}\big(\log(2|\vecM|)\big)^n}{1-\delta}\bigg).
\end{align*}
Finally we choose $\delta=1-\big(\log(3|\vecM|)\big)^{-1}$, and conclude that
\begin{align*}
 \sum_{m_1=0}^{M_1}\cdots\sum_{m_n=0}^{M_n}c_{m_1\vecom_1^*+\cdots+m_n\vecom_n^*}e^{2\pi i(m_1\alpha_1+\cdots+m_n\alpha_n)}
 =O\Big(|\vecM|^{n/2}\big(\log(2|\vecM|)\big)^{n+1}\Big),
\end{align*}
for all $\vecM=(M_1,\ldots,M_n)\in\Z_{\geq0}^n\setminus\{\vec0\}$ and all $(\alpha_1,\ldots,\alpha_n)\in\R^n$.
\end{proof}

\begin{remark}\label{remark}
 In the same way we find that for all $(\ve_1,\ldots,\ve_n)\in\{±1\}^n$, $\vecM=(M_1,\ldots,M_n)\in\Z_{\geq0}^n\setminus\{\vec0\}$ and $(\alpha_1,\ldots,\alpha_n)\in\R^n$,
 \begin{multline*}
  \sum_{m_1=0}^{M_1}\cdots\sum_{m_n=0}^{M_n}c_{\ve_1m_1\vecom_1^*+\cdots+\ve_nm_n\vecom_n^*}e^{2\pi i(\ve_1m_1\alpha_1+\cdots+\ve_nm_n\alpha_n)}\\=O\Big(|\vecM|^{n/2}\big(\log(2|\vecM|)\big)^{n+1}\Big),
 \end{multline*}
where the implied constant depends only on $\Gamma$, $\phi$ and $\Omega$.
\end{remark}

We continue to consider the cusp form $\phi$ described in Proposition \ref{Lemma1} having a Fourier expansion at infinity given by \eqref{smallcuspexp}. We are now ready to study the horosphere integral of $\phi$:
\begin{align*}
\mathcal{I}:=\frac{1}{\delta_1\cdots\delta_n}\int_{\R^n/\Z^n}\Psi_{\vecdel,\vecgam}(\vecu)\phi(u_1\vecom_1+\cdots+u_n\vecom_n+yi_n)\,d\vecu.
\end{align*}
Cf.\ \eqref{hoppasattdetsnartarklart}; here $\Psi_{\vecdel,\vecgam}$ is given by \eqref{psi}, where we assume that $\chi:\R^n\to\R$ is a smooth function of compact support. Using the expansions \eqref{chiseries} and \eqref{smallcuspexp} and integrating term by term yields
\begin{align}\label{Ifirst}
 \mathcal{I}
=\frac{1}{\delta_1\cdots\delta_n}\underset{\vecm\in\Z^n\setminus\{\vec0\}}{\sum}c_{\vecmu}y^{n/2}K_{s-n/2}(2\pi|\vecmu|y)\widehat{\chi_{\vecdel,\vecgam}}(-\vecm),
\end{align}
where in the term corresponding to $\vecm=(m_1,\ldots,m_n)\in \Z^n\setminus\{\vec0\}$ we have $\vecmu=m_1\vecom_1^*+\cdots+m_n\vecom_n^*$. For each nonempty $D\subset\{1,\ldots,n\}$ we define
\begin{align}\label{RD}
 R_D:=\big\{\vecm\in\Z^n\mid m_j\neq0\text{ iff } j\in D\big\}\subset\Z^n.
\end{align}
Hence we have
\begin{align}\label{chicuspint}
\mathcal{I}=\frac{1}{\delta_1\cdots\delta_n}\underset{\substack{D\subset\{1,\ldots,n\}\\D\neq\emptyset}}{\sum}\underset{\vecm\in R_D}{\sum}c_{\vecmu}y^{n/2}K_{s-n/2}(2\pi|\vecmu|y)\widehat{\chi_{\vecdel,\vecgam}}(-\vecm).
\end{align}

\begin{prop}\label{chicuspprop}
 Let $\chi:\R^n\to \R$ be a smooth function with compact support. Let $\ve>0$ and let $\phi$ be a cusp form with eigenvalue $\lambda$. Define $s$ via $\lambda=s(n-s)$, $s\in[n/2,n)\cup [n/2,n/2+i\infty)$. Then the following holds, uniformly over all $0<y<1$, all $\vecgam=(\gamma_1,\ldots,\gamma_n)\in\R^n$ and all $\delta_1,\ldots,\delta_n$ satisfying $0<\delta_1,\ldots,\delta_n\leq 1$:
\begin{multline*}
 \frac{1}{\delta_1\cdots\delta_n}\int_{\R^n}\chi_{\vecdel,\vecgam}(\vecu)\phi(u_1\vecom_1+\cdots+u_n\vecom_n+yi_n)\,d\vecu\\
 =O\Big(\|\chi\|_{n,1}y^{n-\Re s-\ve}\big(\underset{i\in\{1,\ldots,n\}}{\min}\delta_i\big)^{\Re s-n}\Big),
\end{multline*}
 where the implied constant depends only on $\Gamma$, $\phi$, $\Omega$ and $\ve$.
\end{prop}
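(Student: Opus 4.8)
The plan is to start from the expansion \eqref{chicuspint} and bound each of the pieces
\[
\mathcal I_D:=\frac1{\delta_1\cdots\delta_n}\sum_{\vecm\in R_D}c_{\vecmu}\,y^{n/2}K_{s-n/2}(2\pi|\vecmu|y)\,\widehat{\chi_{\vecdel,\vecgam}}(-\vecm),\qquad \vecmu=\textstyle\sum_j m_j\vecom_j^*,
\]
separately, over the nonempty subsets $D\subseteq\{1,\ldots,n\}$. Fixing such a $D$ and relabelling so that $D=\{1,\ldots,d\}$, I would peel off the finitely many sign patterns of $\vecm$ (so that all summation variables become $\geq 1$) and use \eqref{transf2}, reducing the estimation of $\mathcal I_D$ to that of finitely many sums
\[
\sum_{m_1\ge1}\cdots\sum_{m_d\ge1}a(\vecm)\,g(\vecm),
\]
where $a(\vecm):=c_{\vecmu}\,e^{2\pi i\langle\vecm,\vecgam\rangle}$ (extended by $a(\vec0):=0$, which is legitimate precisely because $\phi$ is a cusp form), and $g(\vecx):=y^{n/2}K_{s-n/2}\big(2\pi\big|\textstyle\sum_j x_j\vecom_j^*\big|\,y\big)\,\widehat\chi(-\delta_1x_1,\ldots,-\delta_dx_d,0,\ldots,0)$ is smooth on $[1,\infty)^d$ (the $\prod\delta_i$ in \eqref{transf2} cancelling the $1/(\delta_1\cdots\delta_n)$). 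The key input here is Proposition \ref{Lemma1} together with Remark \ref{remark}: applied with the coordinates outside $D$ set to $0$, it gives, uniformly in $\vecgam$, the bound $S(\vecX):=\sum_{0\le m_j\le X_j}a(\vecm)=O\big(|\vecX|^{n/2}(\log(2+|\vecX|))^{n+1}\big)$.

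Next I would apply the repeated summation by parts formula of Section 2.4 --- Corollary \ref{corsumbp} in the extended form \eqref{intbp*}, with all $\beta_j=+\infty$ --- to $\sum_{m_j\ge1}a(\vecm)g(\vecm)$; the rapid decay of $\widehat\chi$ and the exponential decay of $K_{s-n/2}$ at infinity make all the resulting integrals absolutely convergent and annihilate the boundary terms at infinity (cf.\ Remark \ref{remsumbp}). This rewrites $\mathcal I_D$ as a finite linear combination of integrals
\[
\int_{[1,\infty)^{|A|}}g_A(\vecx)\,S_A(\vecx)\,d\vecx,\qquad A\subseteq\{1,\ldots,d\},
\]
where $g_A$ is (after freezing the coordinates outside $A$ at $1$) the mixed partial derivative $\partial^{|A|}g/\partial x_1\cdots\partial x_{|A|}$, and $S_A$ is a partial sum of the $a(\vecm)$ in the $A$-variables with the remaining variables zeroed, so that $|S_A(\vecx)|\ll|\vecx|^{n/2}(\log(2+|\vecx|))^{n+1}$ by the previous paragraph; in particular the term $A=\emptyset$ contributes $g(1,\ldots,1)\cdot S(\vec0)=0$, which is exactly where the cusp-form hypothesis $c_{\vec0}=0$ enters.

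To finish, I would estimate the derivatives of $g$. Using the classical bounds on $K_\nu$ and its derivatives --- exponential decay for large argument, and $K_\nu(z)\ll_\ve z^{-\max(\Re\nu,0)-\ve}$ for $0<z\le1$, the $\ve$ absorbing the logarithmic singularity of $K_0$ and making the estimate uniform across the borderline imaginary-order case $\Re s=\tfrac n2$ --- together with $\partial^\alpha_{\vecx}\big|\sum_j x_j\vecom_j^*\big|=O\big(\big|\sum_j x_j\vecom_j^*\big|^{1-|\alpha|}\big)$ valid away from the origin, one gets
\[
\Big|\partial^\alpha_{\vecx}\big(y^{n/2}K_{s-n/2}(2\pi|\vecmu(\vecx)|y)\big)\Big|\ll_\ve y^{\,n-\Re s-\ve}\,|\vecmu(\vecx)|^{\,n/2-\Re s-|\alpha|-\ve}\qquad\text{when }|\vecmu(\vecx)|\,y\le1,
\]
while the factor $\widehat\chi$ and its derivatives are controlled via \eqref{transfestim}. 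Inserting these bounds, splitting the region of integration according to whether $|\vecmu(\vecx)|y\le 1$ (the part $>1$ being exponentially negligible) and using that $\widehat\chi$ forces rapid decay once $|\vecmu(\vecx)|\gtrsim\delta_{\min}^{-1}$, the estimate collapses to a radial integral of the form $\int^{\,\delta_{\min}^{-1}}r^{\,n-\Re s-\ve-1}(\log r)^{n+1}\,dr$. Since $n-\Re s>0$ this converges and equals $O_\ve(\delta_{\min}^{\Re s-n+\ve})$; multiplying by the prefactor $y^{n-\Re s-\ve}$, and using $0<y,\delta_{\min}\le1$ to absorb the surviving $y^{-\ve}\delta_{\min}^{-\ve}$ and the logarithmic factors into a single $y^{-\ve}$ (after renaming $\ve$), this yields the claimed $O\big(\|\chi\|_{n,1}\,y^{n-\Re s-\ve}\delta_{\min}^{\Re s-n}\big)$, where $\delta_{\min}:=\min_i\delta_i$; the range $\ve\ge n-\Re s$ is trivial, since there the asserted bound already dominates the elementary estimate $\mathcal I=O(\|\chi\|_{n,1})$ coming from boundedness of $\phi$.

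The hard part will be this last round of bookkeeping. The $n$-fold summation by parts has to be organized so that the \emph{full} exponent $n-\Re s$ (rather than something weaker) survives --- and this is exactly what the additive-character cancellation of Proposition \ref{Lemma1} buys us, whereas feeding in only a Rankin--Selberg bound on $\sum|c_{\vecmu}|^2$, used naively, would cost roughly a factor $\delta_{\min}^{-n/2}$. At the same time one must keep the $K$-Bessel derivative estimates uniform all the way down to the line $\Re s=\tfrac n2$, where the Bessel order is purely imaginary, and track the dependence on $\chi$ carefully enough (through \eqref{transfestim} and Lemma \ref{partint}) to land precisely on the Sobolev norm $\|\chi\|_{n,1}$.
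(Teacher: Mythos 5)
Your overall plan is sound and matches the paper's in its essential ingredients: decompose according to the support set $D$ of the nonzero modes, reduce to sign patterns via Remark~\ref{remark}, feed the twisted coefficient sums into the character--cancellation bound of Proposition~\ref{Lemma1}, perform the repeated summation by parts of Section~2.4 in the form \eqref{intbp*} with $\beta_j=\infty$, and estimate the $K$-Bessel derivatives uniformly down to $\Re s=\tfrac n2$. The identification of the trivial range $\ve\ge n-\Re s$ and the final collapse to $O\big(y^{n-\Re s-\ve}\delta_{\min}^{\Re s-n}\big)$ are also correct.

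There is, however, a genuine gap in the way you handle $\widehat\chi$. You fold $\widehat\chi(-\delta_1x_1,\ldots,-\delta_dx_d,0,\ldots,0)$ into the function $g(\vecx)$ that the repeated summation by parts differentiates, and then appeal to \eqref{transfestim} to control ``$\widehat\chi$ and its derivatives.'' But \eqref{transfestim} bounds only $\widehat{\chi_{\vecdel,\vecgam}}$ itself, not its derivatives, and this is not a cosmetic omission. The derivative $\partial^{\gamma}\widehat\chi$ is the Fourier transform of $(-2\pi i\vecx)^{\gamma}\chi$, so both its sup-norm and its polynomial-decay rate are governed by moments $\|x^{\gamma}D^{\beta}\chi\|_{L^1}$. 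These are \emph{not} controlled by $\|\chi\|_{n,1}$: taking $\chi(\vecx)=\chi_0(\vecx/R)$ one has $\|x_j\chi\|_1/\|\chi\|_{n,1}\sim R\to\infty$. Consequently the mixed partials $g_{A,\emptyset}$ produced by \eqref{intbp*} would carry an implied constant depending on $\operatorname{supp}\chi$, and your argument would only give the weaker conclusion in which the implied constant depends on $\chi$ beyond $\|\chi\|_{n,1}$, contrary to the statement of the proposition.

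The paper is structured precisely so as never to differentiate $\widehat\chi$. It keeps the horosphere integral in its $\vecu$-integral form, performs the $|D|$-fold summation by parts at fixed $\vecu$ (so that $\chi_{\vecdel,\vecgam}(\vecu)$ is a constant), and introduces the smooth partition $\psi(Y^{-1}|\vecm|)+\big(1-\psi(Y^{-1}|\vecm|)\big)$ at scale $Y=\delta_{\min}^{-1}$. On the low-frequency piece $S_D^1$ the $\vecx$-integrals are compactly supported and only $\int|\chi_{\vecdel,\vecgam}|\,d\vecu$ survives, yielding $\|\chi\|_{0,1}$. On the tail $S_D^2$ one invokes Lemma~\ref{partint} with $m=n$, representing $\widehat{\chi_{\vecdel,\vecgam}}(-\vecm)$ as $\big(\tfrac{i}{2\pi}\big)^n|\vecm|^{-2n}\int_{\R^n}\big((\vecm\cdot\nabla_\vecu)^n\chi_{\vecdel,\vecgam}\big)e^{2\pi i\langle\vecm,\vecu\rangle}d\vecu$; the $n$ derivatives now fall on $\chi$ (giving $\|\chi\|_{n,1}$ on integrating over $\vecu$), and the object that is differentiated in $\vecx$ is the explicit rational factor $|\vecx|^{-2n}(x_1\partial_{u_1}+\cdots+x_n\partial_{u_n})^n$, whose derivatives are elementary. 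If you replace ``differentiate $\widehat\chi$'' by this device --- which you do gesture at by citing Lemma~\ref{partint} near the end --- the proposal closes and coincides with the paper's proof.
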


\begin{proof}
We estimate each inner sum in \eqref{chicuspint} separately. That
is, given a nonempty
$D=\{j_1,\ldots,j_{|D|}\}\subset\{1,\ldots,n\}$ we want to
estimate
\begin{align*}
 S_D:=\frac{1}{\delta_1\cdots\delta_n}\underset{\vecm\in R_D}{\sum}c_{\vecmu}y^{n/2}K_{s-n/2}(2\pi|\vecmu|y)\widehat{\chi_{\vecdel,\vecgam}}(-\vecm).
\end{align*}
We let $R_D^+:=R_D\cap(\Z_{\geq0})^n$. It will be sufficient to estimate
\begin{align}\label{chiSD}
 & S_D^+:=\frac{1}{\delta_1\cdots\delta_n}\underset{\vecm\in R_D^+}{\sum}c_{\vecmu}y^{n/2}K_{s-n/2}(2\pi|\vecmu|y)\widehat{\chi_{\vecdel,\vecgam}}(-\vecm).
\end{align}
The remaining parts of $S_D$ can be estimated in the same way using Remark \ref{remark}.

We now fix a monotone function $\psi\in C^{\infty}(\R_{>0})$ satisfying
\begin{align*}
 \psi(x)=\begin{cases}
1 & \text{if $x\in(0,1]$}\\
0 & \text{if $x\in[2,\infty)$}
         \end{cases}
\end{align*}
and let $Y:=\underset{i\in\{1,\ldots,n\}}{\max}{\delta_i}^{-1}$. We split $S_D^+$ as
\begin{align}\label{SD12}
 S_D^+=S_D^1+S_D^2
\end{align}
with
\begin{align*}
S_D^1:=\frac{1}{\delta_1\cdots\delta_n}\underset{\vecm\in R_D^+}{\sum}\psi(Y^{-1}|\vecm|)c_{\vecmu}y^{n/2}K_{s-n/2}(2\pi|\vecmu|y)\widehat{\chi_{\vecdel,\vecgam}}(-\vecm)
\end{align*}
and
\begin{align*}
S_D^2:=\frac{1}{\delta_1\cdots\delta_n}\underset{\vecm\in R_D^+}{\sum}\big(1-\psi(Y^{-1}|\vecm|)\big)c_{\vecmu}y^{n/2}K_{s-n/2}(2\pi|\vecmu|y)\widehat{\chi_{\vecdel,\vecgam}}(-\vecm).
\end{align*}

Next we discuss some estimates that will be used repeatedly below. As in \eqref{gder} we have, for each multi-index $\ell=(\ell_1,\ldots,\ell_{|D|})$ of length $|\ell|$,
\begin{align}\label{chia}
 \frac{\partial^{|\ell|}}{\partial x_{j_1}^{\ell_1}\cdots\partial x_{j_{|D|}}^{\ell_{|D|}}}\big(|x_{j_1}\vecom_{j_1}^*+\cdots+x_{j_{|D|}}\vecom_{j_{|D|}}^*|\big)=O\big(|\vecx|^{1-|\ell|}\big).
\end{align}
We also need estimates of the K-bessel function. If $\sigma=\Re s$ we have
\begin{align}\label{chic}
K_{s-n/2}(u)=O(u^{n/2-\sigma-\ve}),\hspace{8pt} K_{s-n/2}^{(m)}(u)=O(u^{n/2-\sigma-m})
\end{align}
for $u>0$ and $m\geq 1$. For further details see \cite[pp.\ 77-80, 202]{watson}.

We first consider $S_D^1$. We note that
\begin{align}\label{sd1}
 S_D^1=\frac{1}{\delta_1\cdots\delta_n}\underset{\R^n}{\int}\Big(\underset{\vecm\in R_D^+}{\sum}c_{\vecmu}e^{2\pi i\langle\vecm,\vecu\rangle}\psi(Y^{-1}|\vecm|)y^{n/2}K_{s-n/2}(2\pi|\vecmu|y)\chi_{\vecdel,\vecgam}(\vecu)\Big)\,d\vecu.
\end{align}
We now fix $\vecu\in \R^n$ and estimate
\begin{align}\label{S1}
 \mathfrak{S}_D^1(\vecu):=\underset{\vecm\in R_D^+}{\sum}c_{\vecmu}e^{2\pi i\langle\vecm,\vecu\rangle}\psi(Y^{-1}|\vecm|)y^{n/2}K_{s-n/2}(2\pi|\vecmu|y)\chi_{\vecdel,\vecgam}(\vecu).
\end{align}
In order to apply summation by parts to \eqref{S1} we define
\begin{align}\label{afirst}
a(m_{j_1},\ldots,m_{j_{|D|}})
:=
c_{m_{j_1}\vecom_{j_1}^*+\cdots+m_{j_{|D|}}\vecom_{j_{|D|}}^*}e^{2\pi i(m_{j_1}u_{j_1}+\cdots+m_{j_{|D|}}u_{j_{|D|}})}
\end{align}
(recall that $c_{\vec0}=0$) and
\begin{align*}
 g(\vecx):=\psi(Y^{-1}|\vecx|)y^{n/2}K_{s-n/2}\big(2\pi|x_{j_1}\vecom_{j_1}^*+\cdots+x_{j_{|D|}}\vecom_{j_{|D|}}^*|y\big)\chi_{\vecdel,\vecgam}(\vecu),
\end{align*}
where $\vecx=(x_{j_1},\ldots,x_{j_{|D|}})$.
We also define
\begin{align}\label{Sfirst}
S(X_{j_1},\ldots,X_{j_{|D|}}):=\underset{0\leq m_{j_1}\leq X_{j_1}}{\sum}\cdots\underset{0\leq m_{j_{|D|}}\leq X_{j_{|D|}}}{\sum}a(m_{j_1},\ldots,m_{j_{|D|}}).
\end{align}
Applying formula \eqref{intbp*} we get
\begin{align}\label{chiSintbp}
 \mathfrak{S}_D^1(\vecu)=(-1)^{|D|}\underset{A\subset D}{\sum}\underset{\prod_{j\in A}[1,\infty)}{\int} g_{A,\emptyset}(\vecx)S_{A,\emptyset}(\vecx)\,d\vecx.
\end{align}
It follows from Proposition \ref{Lemma1} that $S_{A,\emptyset}(\vecx)=O(|\vecx|^{n/2+\ve})$. Here, and in all estimates in the rest of the proof, the implied constant depends only on $\Gamma$, $\phi$, $\Omega$ and $\ve$. We also have $S_{\emptyset,\emptyset}=c_{\vec0}=0$ and hence the corresponding term in \eqref{chiSintbp} is zero. It remains to consider nonempty $A$.

We now turn to the derivatives of $g$. Recall that the derivatives of $g$ in \eqref{chiSintbp} correspond to multi-indices of the type $\ell=(\ell_1,\ldots,\ell_{|D|})$ satisfying $\ell_i\leq1$. For each such $\ell$, $\frac{\partial^{|\ell|}}{\partial x_{j_1}^{\ell_1}\cdots\partial x_{j_{|D|}}^{\ell_{|D|}}}g(\vecx)$ is the (finite) sum of all expressions of the form
\begin{align}\label{chigderi}
&y^{n/2}\chi_{\vecdel,\vecgam}(\vecu)\frac{\partial^{|\ell^1|}}{\partial x_{j_1}^{\ell_1^1}\cdots\partial x_{j_{|D|}}^{\ell_{|D|}^1}}\big(\psi(Y^{-1}|\vecx|)\big)\\
&\hspace{50pt}\times\frac{\partial^{|\ell^2|}}{\partial x_{j_1}^{\ell_1^2}\cdots\partial x_{j_{|D|}}^{\ell_{|D|}^2}}\Big(K_{s-n/2}\big(2\pi|x_{j_1}\vecom_{j_1}^*+\cdots+x_{j_{|D|}}\vecom_{j_{|D|}}^*|y\big)\Big),\nonumber
\end{align}
where $\ell^1,\ell^2$ are multi-indices satisfying $\ell^1+\ell^2=\ell$. Using \eqref{chia} (but with $\{\vecom_{j_1}^*,\ldots,\vecom_{j_{|D|}}^*\}$ replaced with an orthonormal basis) and noting that all derivatives of $\psi(Y^{-1}|\vecx|)$ are zero except when $Y<|\vecx|<2Y$, we get
\begin{align}\label{psider}
 \frac{\partial^{|\ell^1|}}{\partial x_{j_1}^{\ell_1^1}\cdots\partial x_{j_{|D|}}^{\ell_{|D|}^1}}\big(\psi(Y^{-1}|\vecx|)\big)=O\big(|\vecx|^{-|\ell^1|}\big).
\end{align}
Applying \eqref{chia} and \eqref{chic} (modifying the second bound in \eqref{chic} into $K_{s-n/2}^{(m)}(u)=O(u^{n/2-\sigma-m-\ve})$; this being allowed since $K_{s-n/2}^{(m)}(u)$ anyway decays exponentially as $u\to\infty$) yields
\begin{align}\label{kder}
 \frac{\partial^{|\ell^2|}}{\partial x_{j_1}^{\ell_1^2}\cdots\partial x_{j_{|D|}}^{\ell_{|D|}^2}}\Big(K_{s-n/2}\big(2\pi|x_{j_1}\vecom_{j_1}^*+\cdots+x_{j_{|D|}}\vecom_{j_{|D|}}^*|y\big)\Big)=O\big(y^{n/2-\sigma-\ve}|\vecx|^{n/2-\sigma-|\ell^2|-\ve}\big).
\end{align}
Combining \eqref{chigderi}, \eqref{psider} and \eqref{kder} we obtain
\begin{align*}
 \frac{\partial^{|\ell|}}{\partial x_{j_1}^{\ell_1}\cdots\partial x_{j_{|D|}}^{\ell_{|D|}}}g(\vecx)=O\big(y^{n-\sigma-\ve}|\chi_{\vecdel,\vecgam}(\vecu)||\vecx|^{n/2-\sigma-|\ell|-\ve}\big).
\end{align*}

Returning to \eqref{chiSintbp} we have
\begin{multline*}
\underset{\prod_{j\in A}[1,\infty)}{\int} g_{A,\emptyset}(\vecx)S_{A,\emptyset}(\vecx)\,d\vecx\\
 =O\big(y^{n-\sigma-\ve}|\chi_{\vecdel,\vecgam}(\vecu)|\big)\underset{\substack{\prod_{j\in A}[1,\infty)\\|\vecx|\leq2Y}}{\int}|\vecx|^{n-\sigma-|A|}\,d\vecx
=O\big(y^{n-\sigma-\ve}Y^{n-\sigma}|\chi_{\vecdel,\vecgam}(\vecu)|\big),
\end{multline*}
and hence
\begin{align*}
 \mathfrak{S}_D^1(\vecu)=O\big(y^{n-\sigma-\ve}Y^{n-\sigma}|\chi_{\vecdel,\vecgam}(\vecu)|\big).
\end{align*}
Using this estimate in \eqref{sd1} we conclude that
\begin{align}\label{firstchi}
 S_D^1=O\Big(\frac{y^{n-\sigma-\ve}Y^{n-\sigma}}{\delta_1\cdots\delta_n}\Big)\int_{\R^n}|\chi_{\vecdel,\vecgam}(\vecu)|\,d\vecu
=O\Big(\|\chi\|_{0,1}y^{n-\sigma-\ve}\big(\underset{i\in\{1,\ldots,n\}}{\min}\delta_i\big)^{\sigma-n}\Big).
\end{align}
We stress that the implied constant neither depends on $\delta_1,\ldots,\delta_n$ nor on $\chi$.

Next we consider $S_D^2$. Applying Lemma \ref{partint} with $m=n$ we get
\begin{multline}\label{sd2}
 S_D^2=\frac{1}{\delta_1\cdots\delta_n}\Big(\frac{i}{2\pi}\Big)^n\int_{\R^n}\bigg(\underset{\vecm\in R_D^+}{\sum}c_{\vecmu}e^{2\pi i\langle\vecm,\vecu\rangle}\big(1-\psi(Y^{-1}|\vecm|)\big)y^{n/2}\\
\times K_{s-n/2}(2\pi|\vecmu|y)\frac{1}{|\vecm|^{2n}}\Big(m_1\frac{\partial}{\partial u_1}+\cdots+m_n\frac{\partial}{\partial u_n}\Big)^n\chi_{\vecdel,\vecgam}(\vecu)\bigg)\,d\vecu.
\end{multline}
We fix $\vecu\in \R^n$ and estimate
\begin{multline}\label{S2}
 \mathfrak{S}_D^2(\vecu):=\underset{\vecm\in R_D^+}{\sum}c_{\vecmu}e^{2\pi i\langle\vecm,\vecu\rangle}\big(1-\psi(Y^{-1}|\vecm|)\big)y^{n/2}\\
\times K_{s-n/2}(2\pi|\vecmu|y)\frac{1}{|\vecm|^{2n}}\Big(m_1\frac{\partial}{\partial u_1}+\cdots+m_n\frac{\partial}{\partial u_n}\Big)^n\chi_{\vecdel,\vecgam}(\vecu).
\end{multline}
In order to apply summation by parts to $\mathfrak{S}_D^2$ we
define
\begin{multline*}
g(\vecx):=\big(1-\psi(Y^{-1}|\vecx|)\big)y^{n/2}K_{s-n/2}\big(2\pi|x_{j_1}\vecom_{j_1}^*+\cdots+x_{j_{|D|}}\vecom_{j_{|D|}}^*|y\big)\\
\times\frac{1}{|\vecx|^{2n}}\Big(x_{j_1}\frac{\partial}{\partial
u_{j_1}}+\cdots+x_{j_{|D|}}\frac{\partial}{\partial
u_{j_{|D|}}}\Big)^n\chi_{\vecdel,\vecgam}(\vecu),
\end{multline*}
and let $a(m_{j_1},\ldots,m_{j_{|D|}})$ and $S(X_{j_1},\ldots,X_{j_{|D|}})$ be as above (cf.\ \eqref{afirst} and \eqref{Sfirst}). Using formula \eqref{intbp*} we get
\begin{align}\label{chiSintbp2}
 \mathfrak{S}_D^2(\vecu)=(-1)^{|D|}\underset{A\subset D}{\sum}\underset{\prod_{j\in A}[1,\infty)}{\int} g_{A,\emptyset}(\vecx)S_{A,\emptyset}(\vecx)\,d\vecx.
\end{align}
Again $S_{\emptyset,\emptyset}=0$ (which makes the corresponding
term in \eqref{chiSintbp2} zero) and for nonempty $A$ we have
$S_{A,\emptyset}(\vecx)=O(|\vecx|^{n/2+\ve})$.
For each multi-index $\ell$, $\frac{\partial^{|\ell|}}{\partial x_{j_1}^{\ell_1}\cdots\partial x_{j_{|D|}}^{\ell_{|D|}}}g(\vecx)$ is the (finite) sum of all expressions of the form
\begin{align}\label{chigderi2}
&y^{n/2}\frac{\partial^{|\ell^1|}}{\partial x_{j_1}^{\ell_1^1}\cdots\partial x_{j_{|D|}}^{\ell_{|D|}^1}}\big(1-\psi(Y^{-1}|\vecx|)\big)\\
&\times\frac{\partial^{|\ell^2|}}{\partial x_{j_1}^{\ell_1^2}\cdots\partial x_{j_{|D|}}^{\ell_{|D|}^2}}\Big(K_{s-n/2}\big(2\pi|x_{j_1}\vecom_{j_1}^*+\cdots+x_{j_{|D|}}\vecom_{j_{|D|}}^*|y\big)\Big)\nonumber\\
&\times\frac{\partial^{|\ell^3|}}{\partial x_{j_1}^{\ell_1^3}\cdots\partial x_{j_{|D|}}^{\ell_{|D|}^3}}\Big(\frac{1}{|\vecx|^{2n}}\Big)\frac{\partial^{|\ell^4|}}{\partial x_{j_1}^{\ell_1^4}\cdots\partial x_{j_{|D|}}^{\ell_{|D|}^4}}\Big(\Big(x_{j_1}\frac{\partial}{\partial
u_{j_1}}+\cdots+x_{j_{|D|}}\frac{\partial}{\partial
u_{j_{|D|}}}\Big)^n\chi_{\vecdel,\vecgam}(\vecu)\Big),\nonumber
\end{align}
where $\ell^1,\ell^2,\ell^3,\ell^4$ are multi-indices satisfying $\ell^1+\ell^2+\ell^3+\ell^4=\ell$. Bounds for the first two derivatives in \eqref{chigderi2} are given by \eqref{psider} and \eqref{kder} respectively.
Using \eqref{chia} (slightly modified) we also find that
\begin{align}\label{moreder}
 \frac{\partial^{|\ell^3|}}{\partial x_{j_1}^{\ell_1^3}\cdots\partial x_{j_{|D|}}^{\ell_{|D|}^3}}\Big(\frac{1}{|\vecx|^{2n}}\Big)=O\Big(\frac{1}{|\vecx|^{2n+|\ell^3|}}\Big).
\end{align}
Recalling that we only consider multi-indices of length $|\ell_4|\leq|\ell|\leq n$ as well as the definition \eqref{chidelta} we get
\begin{align}\label{lastder}
&\frac{\partial^{|\ell^4|}}{\partial x_{j_1}^{\ell_1^4}\cdots\partial x_{j_{|D|}}^{\ell_{|D|}^4}}\Big(\Big(x_{j_1}\frac{\partial}{\partial
u_{j_1}}+\cdots+x_{j_{|D|}}\frac{\partial}{\partial
u_{j_{|D|}}}\Big)^n\chi_{\vecdel,\vecgam}(\vecu)\Big)\nonumber\\
&=O\Big(|\vecx|^{n-|\ell^4|}\underset{|\vecal|=n}{\max}\Big|\frac{\partial^{|\vecal|}}{\partial u_{j_1}^{\alpha_1}\cdots\partial u_{j_{|D|}}^{\alpha_{|D|}}}\chi_{\vecdel,\vecgam}(\vecu)\Big|\Big)\\
&=O\Big(|\vecx|^{n-|\ell^4|}Y^n\underset{|\vecal|=n}{\max}\Big|\Big(\frac{\partial^{|\vecal|}}{\partial u_{j_1}^{\alpha_1}\cdots\partial u_{j_{|D|}}^{\alpha_{|D|}}}\chi\Big)\big(\sfrac{u_1-\gamma_1}{\delta_1},\ldots,\sfrac{u_n-\gamma_n}{\delta_n}\big)\Big|\Big).\nonumber
\end{align}
Combining \eqref{chigderi2} with \eqref{psider}, \eqref{kder}, \eqref{moreder} and \eqref{lastder} we obtain
\begin{multline*}
\frac{\partial^{|\ell|}}{\partial x_{j_1}^{\ell_1}\cdots\partial x_{j_{|D|}}^{\ell_{|D|}}}g(\vecx)\\
=O\Big(y^{n-\sigma-\ve}Y^n|\vecx|^{-n/2-\sigma-|\ell|-\ve}\underset{|\vecal|=n}{\max}\Big|\Big(\frac{\partial^{|\vecal|}}{\partial u_{j_1}^{\alpha_1}\cdots\partial u_{j_{|D|}}^{\alpha_{|D|}}}\chi\Big)\big(\sfrac{u_1-\gamma_1}{\delta_1},\ldots,\sfrac{u_n-\gamma_n}{\delta_n}\big)\Big|\Big).
\end{multline*}

Returning to \eqref{chiSintbp2} we have
\begin{align*}
&\underset{\prod_{j\in A}[1,\infty)}{\int} g_{A,\emptyset}(\vecx)S_{A,\emptyset}(\vecx)\,d\vecx\\
 &=O\Big(y^{n-\sigma-\ve}Y^n\underset{|\vecal|=n}{\max}\Big|\Big(\frac{\partial^{|\vecal|}}{\partial u_{j_1}^{\alpha_1}\cdots\partial u_{j_{|D|}}^{\alpha_{|D|}}}\chi\Big)\big(\sfrac{u_1-\gamma_1}{\delta_1},\ldots,\sfrac{u_n-\gamma_n}{\delta_n}\big)\Big|\Big)\underset{\substack{\prod_{j\in A}[1,\infty)\\|\vecx|\geq Y}}{\int}|\vecx|^{-\sigma-|A|}\,d\vecx\\
&=O\Big(y^{n-\sigma-\ve}Y^{n-\sigma}\underset{|\vecal|=n}{\max}\Big|\Big(\frac{\partial^{|\vecal|}}{\partial u_{j_1}^{\alpha_1}\cdots\partial u_{j_{|D|}}^{\alpha_{|D|}}}\chi\Big)\big(\sfrac{u_1-\gamma_1}{\delta_1},\ldots,\sfrac{u_n-\gamma_n}{\delta_n}\big)\Big|\Big),
\end{align*}
and it follows that 
\begin{align*}
 \mathfrak{S}_D^2(\vecu)=O\Big(y^{n-\sigma-\ve}Y^{n-\sigma}\underset{|\vecal|=n}{\max}\Big|\Big(\frac{\partial^{|\vecal|}}{\partial u_{j_1}^{\alpha_1}\cdots\partial u_{j_{|D|}}^{\alpha_{|D|}}}\chi\Big)\big(\sfrac{u_1-\gamma_1}{\delta_1},\ldots,\sfrac{u_n-\gamma_n}{\delta_n}\big)\Big|\Big).
\end{align*}
Using this bound in \eqref{sd2} we conclude that
\begin{align*}
 S_D^2&=O\Big(\frac{y^{n-\sigma-\ve}Y^{n-\sigma}}{\delta_1\cdots\delta_n}\Big)\int_{\R^n}\underset{|\vecal|=n}{\max}\Big|\Big(\frac{\partial^{|\vecal|}}{\partial u_{j_1}^{\alpha_1}\cdots\partial u_{j_{|D|}}^{\alpha_{|D|}}}\chi\Big)\big(\sfrac{u_1-\gamma_1}{\delta_1},\ldots,\sfrac{u_n-\gamma_n}{\delta_n}\big)\Big|\,d\vecu\nonumber\\
&=O\Big(\|\chi\|_{n,1}y^{n-\sigma-\ve}\big(\underset{i\in\{1,\ldots,n\}}{\min}\delta_i\big)^{\sigma-n}\Big).
\end{align*}
Together with \eqref{firstchi} this proves the proposition.
\end{proof}

We continue with a result that shows how to keep control of the dependence on the eigenvalue in the horosphere integral for cusp forms with $\lambda\geq(\sfrac{n}{2})^2$. We use the Rankin-Selberg bound from Section \ref{RSsec}. However, this method gives weaker $y,\delta$-exponents than the previous proposition.

\begin{prop}\label{cuspformintprop}
Let $\chi:\R^n\to \R$ be a smooth function with compact support. Let $\ve>0$ and let $\phi$ be a cusp form on $\Gamma\setminus\H^{n+1}$ with eigenvalue $\lambda=(\sfrac{n}{2})^2+T^2$, $T\geq0$, normalized so that $\int_{\F}|\phi(P)|^2\,d\nu(P)=1$. Then the following holds, uniformly over all $0<y<1$, all $\vecgam=(\gamma_1,\ldots,\gamma_n)\in\R^n$ and all $\delta_1,\ldots,\delta_n$ satisfying $0<\delta_1,\ldots,\delta_n\leq 1$:
\begin{multline*}
 \frac{1}{\delta_1\cdots\delta_n}\int_{\R^n}\chi_{\vecdel,\vecgam}(\vecu)\phi(u_1\vecom_1+\cdots+u_n\vecom_n+yi_n)\,d\vecu\\
 =O\Big(\|\chi\|_{n,1}(T+1)^{1/6+\ve}y^{n/2-\ve}\big(\underset{i\in\{1,\ldots,n\}}{\min}\delta_i\big)^{-n}\Big),
\end{multline*}
 where the implied constant depends only on $\Gamma$, $\Omega$ and $\ve$.
\end{prop}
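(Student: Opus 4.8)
The plan is to estimate the horosphere integral $\mathcal I$ of $\phi$ directly from its spectral expansion, \emph{without} the summation-by-parts apparatus of the proof of Proposition \ref{chicuspprop}: the weaker exponent $\delta_{\min}^{-n}$ asserted here (rather than $\delta_{\min}^{-n/2}$) gives enough slack that a crude dyadic estimate, fed by the Rankin--Selberg bound of Proposition \ref{cusprankin} and by $T$-explicit bounds on $K_{iT}$, already works. Since $s=\tfrac n2+iT$ here, the Bessel function in the Fourier expansion \eqref{smallcuspexp} is $K_{iT}$, and the two facts I need are, uniformly in $T\geq0$ and any $\ve>0$,
\begin{align*}
|K_{iT}(u)|\ll_\ve e^{-\pi T/2}(T+1)^{-1/3}u^{-\ve}\quad(0<u\leq 100(T+1)),\qquad |K_{iT}(u)|\ll u^{-1/2}e^{-u}\quad(u\geq 1).
\end{align*}
The first is the classical consequence of the Airy-type uniform asymptotics of $K_{iT}$ near its turning point $u=T$ (where $|K_{iT}(u)|\asymp e^{-\pi T/2}T^{-1/3}$), together with the $(T^2-u^2)^{-1/4}$-bound for $u<T$ and exponential decay for $u>T$, the factor $u^{-\ve}$ absorbing the logarithmic growth of $K_{iT}$ as $u\to0^+$; the second is the standard large-argument bound.

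First I dispose of a degenerate range. If $\delta_{\min}(T+1)<y/100$, I bound $|\mathcal I|\leq\|\chi\|_{L^1}\|\phi\|_\infty$ and invoke the crude sup-norm estimate $\|\phi\|_\infty=O_\Gamma\big((T+1)^{(n+1)/2}\big)$, which follows from the Bessel-inequality bound \eqref{sp} where $\mathcal Y_\Gamma$ is bounded and from the Fourier expansion \eqref{smallcuspexp} together with Proposition \ref{cusprankin} in the cuspidal regions. In this range $\delta_{\min}^{-n}>\big(100(T+1)/y\big)^n$, so (using $y<1$) the asserted right-hand side is $\gg\|\chi\|_{n,1}(T+1)^{n+1/6}$, which dominates $\|\chi\|_{L^1}(T+1)^{(n+1)/2}$ because $(n+1)/2<n+\tfrac16$ for $n\geq1$. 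Hence from now on I may assume $\delta_{\min}(T+1)\geq y/100$.

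For the main range I start from \eqref{Ifirst}, i.e.\ $\mathcal I=\tfrac{y^{n/2}}{\delta_1\cdots\delta_n}\sum_{\vecm\neq\vec0}c_{\vecmu}\,K_{iT}(2\pi|\vecmu|y)\,\widehat{\chi_{\vecdel,\vecgam}}(-\vecm)$ with $\vecmu=m_1\vecom_1^*+\cdots+m_n\vecom_n^*$, take absolute values, and split the $\vecm$-sum into dyadic shells $|\vecmu|\asymp X$. On each shell, Cauchy--Schwarz with Proposition \ref{cusprankin} and the lattice-point count $\#\{\vecmu\in\Lambda_1^*:|\vecmu|\leq X\}\ll_\Omega X^n$ give $\sum_{|\vecmu|\asymp X}|c_{\vecmu}|\ll e^{\pi T/2}\big(T+X^n(T+1)^{1-n}\big)^{1/2}X^{n/2}$; I bound $\widehat{\chi_{\vecdel,\vecgam}}(-\vecm)$ by $\|\chi\|_{L^1}\prod_i\delta_i$ when $X\leq\delta_{\min}^{-1}$ and by $O\big(\|\chi\|_{n,1}\prod_i\delta_i\,(\delta_{\min}X)^{-n}\big)$ when $X>\delta_{\min}^{-1}$ via \eqref{transfestim} with $k=n$; and for $K_{iT}(2\pi|\vecmu|y)$ I use the first Bessel bound when $2\pi Xy\leq100(T+1)$ and the second otherwise. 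For the shells with $2\pi Xy\leq100(T+1)$ the factors $e^{\pm\pi T/2}$ cancel, each resulting geometric sum in $X$ is dominated by its top scale $X\asymp\min(\delta_{\min}^{-1},(T+1)/y)\leq\delta_{\min}^{-1}$, and a short computation — using $(T+1)^{-1/3}T^{1/2}\ll(T+1)^{1/6}$, $(T+1)^{-1/3-(n-1)/2}\ll(T+1)^{1/6}$, and absorbing a logarithm into $(T+1)^\ve y^{-\ve}$ — yields the bound $O\big(\|\chi\|_{n,1}(T+1)^{1/6+\ve}y^{n/2-\ve}\delta_{\min}^{-n}\big)$; note that under the standing assumption all shells with $X\leq\delta_{\min}^{-1}$ indeed satisfy $2\pi Xy\leq100(T+1)$. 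For the remaining shells ($2\pi Xy>100(T+1)$, hence automatically $X>\delta_{\min}^{-1}$), the second Bessel bound and an Abel summation show the total contribution is $O\big(\|\chi\|_{n,1}y^{n/2}\delta_{\min}^{-n}(T+1)^{-1-n/2}e^{(\pi/2-100)T}\big)$, which is negligible. Adding the three pieces proves the proposition.

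The only delicate point is the first Bessel bound: it is the sole source of the exponent $\tfrac16=\tfrac12-\tfrac13$, the $\tfrac12$ coming from the Rankin--Selberg bound at scales $|\vecmu|\lesssim T$ and the $-\tfrac13$ from the size of $K_{iT}$ at its turning point. One must also be careful that the exponentially-decaying range of $K_{iT}$ cannot spoil the power $y^{n/2}$ — this is precisely what forces the separate, trivial treatment of the degenerate range $\delta_{\min}(T+1)<y/100$, where the $\widehat{\chi_{\vecdel,\vecgam}}$-decay of \eqref{transfestim} is no longer available for those low frequencies.
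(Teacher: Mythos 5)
Your argument uses the same three ingredients as the paper's proof---the term-by-term formula \eqref{Ifirst}, uniform $K_{iT}$-bounds as in \eqref{kbesselestim}, and the Rankin--Selberg bound of Proposition~\ref{cusprankin}---but organizes them differently. The paper avoids any case distinction by using a \emph{weak} form of \eqref{transfestim}: it applies the decaying bound $\widehat{\chi_{\vecdel,\vecgam}}(\vecnu)=O\big(\|\chi\|_{n,1}(\prod_i\delta_i)(\delta_{\min}|\vecnu|)^{-n}\big)$ for \emph{all} $\vecnu\neq\vec0$ (which is an upper bound even when $\delta_{\min}|\vecnu|\leq1$), so that a single Abel summation against $S(X)=\sum_{|\vecmu|\leq X}|c_\vecmu|$ treats every frequency scale at once and no degenerate range appears. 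You instead keep the sharp two-case bound on $\widehat{\chi_{\vecdel,\vecgam}}$ and sum dyadically with Cauchy--Schwarz on each shell, which forces you to excise the range $\delta_{\min}(T+1)<y/100$ and dispose of it by a crude sup-norm estimate $\|\phi\|_\infty=O_\Gamma\big((T+1)^{(n+1)/2}\big)$. That inequality is correct and can indeed be extracted from \eqref{sp} on the compact part together with the Fourier expansion and Proposition~\ref{cusprankin} in the cusps, but it is itself a nontrivial estimate which you state rather than prove, and the paper simply never needs it. Two small technical points: the threshold constant $100$ you use in the degenerate-range definition does not literally align with the constant $100$ in your first Bessel bound (if $\delta_{\min}^{-1}\leq 100(T+1)/y$ then $2\pi Xy$ for $X\asymp\delta_{\min}^{-1}$ can exceed $100(T+1)$), so an adjustment of constants is needed before the claimed implication ``\,$X\leq\delta_{\min}^{-1}\Rightarrow 2\pi Xy\leq100(T+1)$\,'' holds; and the bound $|K_{iT}(u)|\ll u^{-1/2}e^{-u}$ stated for all $u\geq1$ can fail for $u$ of size $\asymp T$, though it does hold in the regime $u\geq100(T+1)$ where you actually invoke it. Modulo these repairable imprecisions the argument is sound, but it is longer than, and conceptually strictly dominated by, the paper's approach: giving up the sharp low-frequency $\widehat{\chi}$-bound (which anyway yields no better than $\delta_{\min}^{-n}$ in the final statement) buys the paper a uniform convergent sum and eliminates both the dyadic bookkeeping and the auxiliary $\|\phi\|_\infty$ input.
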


\begin{proof}
 It is enough to consider $0<\ve<\sfrac{1}{2}$. Given such an $\ve$ we have the following bound on the K-bessel function (cf.\ \cite{bal} and \cite[pp.\ 525-526]{andreas}), which is uniform for all $T\geq 0$ and $t>0$:
 \begin{align}\label{kbesselestim}
  K_{iT}(t)=O\Big(e^{-(\pi/2)T}(T+1)^{-1/3+\ve}t^{-\ve}\min\big(1,e^{(\pi/2)T-t}\big)\Big).
 \end{align}
Applying this bound and (a weak version of) \eqref{transfestim} with $k=n$ in equation \eqref{Ifirst} we get
\begin{multline}\label{Iest}
\mathcal{I}=O\Big(\|\chi\|_{n,1}y^{n/2-\ve}Y^ne^{-(\pi/2)T}(T+1)^{-1/3+\ve}\Big)\\
\times\sum_{\vec0\neq\vecmu\in\Lambda_1^*}|c_{\vecmu}||\vecmu|^{-n-\ve}\min\big(1,e^{(\pi/2)T-2\pi|\vecmu|y}\big).
\end{multline}
(Recall that $Y=\underset{i\in\{1,\ldots,n\}}{\max}\delta_i^{-1}$.) We call the sum in the second line above $\Sigma$. In order to estimate $\Sigma$ we introduce the functions
\begin{align*}
f(X):=X^{-n-\ve}\min\big(1,e^{(\pi/2)T-2\pi yX}\big)
\end{align*}
and
\begin{align*}
 S(X):=\underset{0<|\vecmu|\leq X}{\sum}|c_{\vecmu}|.
\end{align*}
Using the Cauchy-Schwarz inequality and Proposition \ref{cusprankin} we obtain
\begin{align}\label{SB}
 S(X)=O\bigg(e^{(\pi/2)T}\Big(T+\frac{X^n}{(T+1)^{n-1}}\Big)^{1/2}X^{n/2}\bigg)
\end{align}
for all $X\geq\sfrac{\mu_0}{2}$. We further note that for fixed $T\geq0$ and $0<y<1$ the function $f$ is continuous and piecewise smooth. 

Now, by integration by parts,
\begin{align*}
 \Sigma=\int_{\frac{\mu_0}{2}}^{\infty}f(X)\,dS(X)
 =-\int_{\frac{\mu_0}{2}}^{\infty}f'(X)S(X)\,dX.
\end{align*}
Using the straightforward bound
\begin{align}\label{fder}
 f'(X)=\begin{cases}
O\big(X^{-n-1-\ve}\big) & \text{if $X<\frac{T}{4y}$}\\
O\big((1+yX)X^{-n-1-\ve}e^{(\pi/2)T-2\pi yX}\big) &
\text{if $X>\frac{T}{4y}$}
       \end{cases}
\end{align}
together with \eqref{SB} we get 
\begin{align}\label{intermediatebound}
 \Sigma&
 =O\big(e^{(\pi/2)T}\big)\bigg\{\int_{\frac{\mu_0}{2}}^{\max(\frac{\mu_0}{2},\frac{T}{4y})}X^{-n/2-1-\ve}\Big(T+\frac{X^n}{(T+1)^{n-1}}\Big)^{1/2}\,dX\\
 &+\int_{\max(\frac{\mu_0}{2},\frac{T}{4y})}^{\infty}(1+yX)X^{-n/2-1-\ve}\Big(T+\frac{X^n}{(T+1)^{n-1}}\Big)^{1/2}e^{(\pi/2)T-2\pi yX}\,dX\bigg\}.\nonumber
\end{align}
Notice that since $\big(T+\frac{X^n}{(T+1)^{n-1}}\big)^{1/2}\leq
T^{1/2}+X^{n/2}(T+1)^{(1-n)/2}$ the first integral in
\eqref{intermediatebound} is bounded by $O\big((T+1)^{1/2}\big)$.
When $X>\max(\frac{\mu_0}{2},\frac{T}{4y})$ we have $\big(T+\frac{X^n}{(T+1)^{n-1}}\big)^{1/2}=O(X^{n/2})$ and hence the second integral in \eqref{intermediatebound} is bounded by
\begin{align}\label{intb'}
O(y^{\ve})\int_{\max(\frac{\mu_0y}{2},\frac{T}{4})}^{\infty}(1+u^{-1})u^{-\ve}e^{(\pi/2)T-2\pi u}\,du=O(1).
\end{align}
Hence $\Sigma=O\big(e^{(\pi/2)T}(T+1)^{1/2}\big)$ and using this estimate in \eqref{Iest} gives the desired result.
\end{proof}

Note that Proposition \ref{cuspformintprop} in particular holds when $\phi=\phi_m$ with $\lambda_m\geq(\sfrac{n}{2})^2$. We next collect a bound on the contribution from all such cusp forms $\phi_m$ to the horosphere integral \eqref{hoppasattdetsnartarklart} of a general test function $f$.

\begin{prop}\label{cuspimportant}
 Let $\chi:\R^n\to \R$ be a smooth function with compact support. Let $\ve>0$, $k>\frac{n+1}{2}+\frac{1}{6}$ and $f\in H^k(\Gamma\setminus\H^{n+1})$. Let the spectral expansion of $f$ be
\begin{align}\label{spectraldec}
 f(P)=\sum_{m\geq0}c_m\phi_m(P) + \sum_{\ell=1}^{\kappa}\int_{0}^{\infty}g_{\ell}(t)E_{\ell}\big(P,\sfrac{n}{2}+it\big)\,dt.
\end{align}
Then the following holds, for all $0<y<1$, all $\vecgam=(\gamma_1,\ldots,\gamma_n)\in\R^n$ and all $\delta_1,\ldots,\delta_n$ satisfying $0<\delta_1,\ldots,\delta_n\leq 1$:
\begin{multline}\label{important1}
 \frac{1}{\delta_1\cdots\delta_n}\int_{\R^n}\chi_{\vecdel,\vecgam}(\vecu)\Big\{\sum_{\lambda_m\geq(\frac{n}{2})^2}c_m\phi_m(u_1\vecom_1+\cdots+u_n\vecom_n+yi_n)\Big\}\,d\vecu\\
=O\Big(\|f\|_{H^k}\|\chi\|_{n,1}y^{n/2-\ve}\big(\underset{i\in\{1,\ldots,n\}}{\min}\delta_i\big)^{-n}\Big),
 \end{multline}
where the implied constant depends only on $\Gamma$, $\Omega$, $k$ and $\ve$.
\end{prop}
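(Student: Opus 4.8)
The plan is to bound the left-hand side of \eqref{important1} by summing the individual cusp-form contributions, using Proposition \ref{cuspformintprop} for each $\phi_m$ with $\lambda_m\geq(\frac n2)^2$ and then controlling the resulting series via the Cauchy--Schwarz inequality and the definition of the Sobolev norm $\|f\|_{H^k}$. Concretely, since each such $\phi_m$ is a cusp form with eigenvalue $\lambda_m=(\frac n2)^2+r_m^2$, $r_m\geq 0$ (recall $s_m=\frac n2+ir_m$ and $r_m$ is real once $\lambda_m\geq(\frac n2)^2$), Proposition \ref{cuspformintprop} applied to $\phi=\phi_m$ gives, uniformly in $\vecgam$ and $\vecdel\in(0,1]^n$,
\begin{align*}
\frac{1}{\delta_1\cdots\delta_n}\int_{\R^n}\chi_{\vecdel,\vecgam}(\vecu)\phi_m(u_1\vecom_1+\cdots+u_n\vecom_n+yi_n)\,d\vecu
=O\Big(\|\chi\|_{n,1}(r_m+1)^{1/6+\ve'}y^{n/2-\ve'}\big(\underset{i}{\min}\delta_i\big)^{-n}\Big),
\end{align*}
for any fixed $\ve'>0$ with implied constant depending only on $\Gamma,\Omega,\ve'$. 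First I would insert $c_m=\langle f,\phi_m\rangle$, multiply and divide by $|r_m+1|^k$, and apply Cauchy--Schwarz in the $m$-sum:
\begin{align*}
\sum_{\lambda_m\geq(\frac n2)^2}|c_m|\,(r_m+1)^{1/6+\ve'}
\leq\Big(\sum_{m\geq0}|c_m|^2|r_m+1|^{2k}\Big)^{1/2}\Big(\sum_{\lambda_m\geq(\frac n2)^2}(r_m+1)^{1/3+2\ve'-2k}\Big)^{1/2}
\leq\|f\|_{H^k}\Big(\sum_{m}(r_m+1)^{1/3+2\ve'-2k}\Big)^{1/2}.
\end{align*}

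The remaining point is that the last sum over $m$ converges. This is where I would invoke the Weyl-type eigenvalue bound already available in the excerpt: the estimate \eqref{sp}, namely $S_P(T)=O(T^{n+1}+T\,\mathcal Y_\Gamma(P)^n)$, applied at a fixed point $P$ (or integrated over $\F_{B_0}$), gives $\#\{m:r_m\leq T\}=O(T^{n+1})$, hence $\sum_m(r_m+1)^{-\alpha}<\infty$ whenever $\alpha>n+1$. Since we assumed $k>\frac{n+1}{2}+\frac16$, we have $2k-\frac13-2\ve'>n+1$ for $\ve'$ small enough, so the series converges to a constant depending only on $\Gamma,k,\ve'$. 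Choosing $\ve'<\ve$ and absorbing the $y^{-\ve'+\ve}$ slack (legitimate since $0<y<1$) then yields exactly the bound \eqref{important1}, with implied constant depending only on $\Gamma,\Omega,k,\ve$.

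The one technical obstacle worth flagging is justifying the interchange of the (possibly infinite) sum over $m$ with the horosphere integral over $\R^n$ — i.e., that $\frac{1}{\delta_1\cdots\delta_n}\int_{\R^n}\chi_{\vecdel,\vecgam}(\vecu)\big(\sum_{\lambda_m\geq(\frac n2)^2}c_m\phi_m\big)(\cdots)\,d\vecu$ really equals the sum of the individual integrals. This follows because $f\in H^k$ with $k>\frac{n+1}{2}$ so Proposition \ref{pointwise} guarantees that the spectral expansion \eqref{spectraldec} converges uniformly and absolutely on the compact horosphere piece $\{u_1\vecom_1+\cdots+u_n\vecom_n+yi_n:\vecu\in\operatorname{supp}\chi_{\vecdel,\vecgam}\}$, so term-by-term integration is valid; alternatively the absolute convergence of the bounding series established above lets one apply dominated convergence directly. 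Everything else is a routine assembly of the pieces.
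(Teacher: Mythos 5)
Your proposal is correct and takes essentially the same approach as the paper: apply Proposition \ref{cuspformintprop} to each $\phi_m$, then control $\sum_m|c_m|(r_m+1)^{1/6+\ve'}$ by Cauchy--Schwarz and the $H^k$-norm, with a Weyl-type eigenvalue count ensuring convergence of the residual sum for $k>\frac{n+1}{2}+\frac16$. The only cosmetic difference is that the paper applies Cauchy--Schwarz directly to the pairing $\sum_m c_m I_m$ before invoking Proposition \ref{cuspformintprop}, and cites Weyl's law \cite[Thm.\ 7.33]{cs} rather than the local Bessel bound \eqref{sp}; your extra remark on term-by-term integration via Proposition \ref{pointwise} is a reasonable addition, not a deviation.
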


\begin{proof}
 It suffices to consider $0<\ve<k-\frac{n+1}{2}-\frac{1}{6}$. Let $\lambda_m=(\sfrac{n}{2})^2+r_m^2$. Changing order of integration and summation and using the Cauchy-Schwarz inequality we get
\begin{align*}
 &\bigg|\frac{1}{\delta_1\cdots\delta_n}\int_{\R^n}\chi_{\vecdel,\vecgam}(\vecu)\Big\{\sum_{\lambda_m\geq(\frac{n}{2})^2}c_m \phi_m(u_1\vecom_1+\cdots+u_n\vecom_n+yi_n)\Big\}\,d\vecu\bigg|\\
&\leq
 \Big(\sum_{\lambda_m\geq(\frac{n}{2})^2}|c_m|^2(r_m+1)^{2k}\Big)^{1/2}\\
&\times\bigg(\sum_{\lambda_m\geq(\frac{n}{2})^2}\bigg|\frac{1}{(r_m+1)^{k}\delta_1\cdots\delta_n}\int_{\R^n}\chi_{\vecdel,\vecgam}(\vecu)
\phi_m(u_1\vecom_1+\cdots+u_n\vecom_n+yi_n)\,d\vecu\bigg|^2\bigg)^{1/2}.
\end{align*}
By definition 
\begin{align*}
 \Big(\sum_{\lambda_m\geq(\frac{n}{2})^2}|c_m|^2(r_m+1)^{2k}\Big)^{1/2}\leq\|f\|_{H^k},
\end{align*}
and by Proposition \ref{cuspformintprop} the second factor above is bounded by
\begin{align*}
 O\Big(\|\chi\|_{n,1}y^{n/2-\ve}\big(\underset{i\in\{1,\ldots,n\}}{\min}\delta_i\big)^{-n}\Big)\Big\{\sum_{\lambda_m\geq(\frac{n}{2})^2}(r_m+1)^{-2k+1/3+2\ve} \Big\}^{1/2}.
\end{align*}
Finally we use Weyl's law (cf. \cite[Thm.\ 7.33]{cs}) to conclude
that this last sum is convergent (since $k>\frac{n+1}{2}+\frac{1}{6}+\ve$).
\end{proof}

\subsection{The horosphere integral for non-cuspidal eigenfunctions}\label{noncuspchap}

The treatment of a non-cuspidal eigenfunction $\phi$ is considerably more involved than the previous treatment of cusp forms, due to the fact that $\phi$ is no longer uniformly bounded throughout $\H^{n+1}$. Our central technical result is the following bound on linear combinations of Fourier coefficients of $\phi$ of the same type as in Proposition \ref{Lemma1}.

\begin{thm}\label{Lemma1'}
 Let $\ve>0$ and let $\phi$ be a non-cuspidal eigenfunction with eigenvalue $\lambda>0$. Define $s$ via $\lambda=s(n-s)$, $s\in(n/2,n)$. Let the Fourier expansion of $\phi$ at the cusp $\eta_1=\infty$ be
 \begin{align}\label{noncusp}
  \phi(\vecx+yi_n)=c_{\vec0}y^{n-s}+\sum_{\vec0\neq\vecmu\in\Lambda_1^*}c_{\vecmu}y^{n/2}K_{s-n/2}(2\pi|\vecmu|y)e^{2\pi i\langle\vecmu,\vecx\rangle}.
 \end{align}
Given $\vecM=(M_1,\ldots,M_n)\in\Z_{\geq0}^n\setminus\{\vec0\}$ we order the coordinates as $M_{j_1}\geq\ldots\geq M_{j_n}$. Then the following holds, uniformly over $\vecM=(M_1,\ldots,M_n)\in\Z_{\geq0}^n\setminus\{\vec0\}$ and $(\alpha_1,\ldots,\alpha_n)\in\R^n$:
 \begin{multline}\label{lincomb'}
  \sum_{m_1=0}^{M_1}\cdots\sum_{m_n=0}^{M_n}c_{m_1\vecom_1^*+\cdots+m_n\vecom_n^*}e^{2\pi i(m_1\alpha_1+\cdots+m_n\alpha_n)}\\=O\Big(|\vecM|^{n/2+\ve}(M_{j_{i_0}}+1)^{i_0-s}\prod_{k=i_0+1}^n(M_{j_k}+1)\Big),
 \end{multline}
where $i_0$ is the smallest integer $>s$ (viz., $i_0=s+1$ if $s\in\Z$, otherwise $i_0=\lceil s\rceil$). The implied constant depends only on $\Gamma$, $\phi$, $\Omega$ and $\ve$.
\end{thm}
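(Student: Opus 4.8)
The plan is to follow the proof of Proposition~\ref{Lemma1} as a template — the auxiliary integral of $\phi$ against a Dirichlet kernel, the Fourier-side evaluation of that integral, and repeated summation by parts — the one essential new difficulty being that a non-cuspidal $\phi$ \emph{grows} rather than decays in the cusps, so that the crude bound on $\phi$ used in Proposition~\ref{Lemma1} is unavailable and has to be replaced by an argument resting on the counting estimates of the previous section, above all Lemma~\ref{csum} (together with Corollary~\ref{corc} and the counting function $\mathfrak C^k_{\mathfrak B}$). Concretely, for $\delta\in[0,1)$ and the given $\vecM,(\alpha_1,\dots,\alpha_n)$ I would study
\[
\mathcal K:=\int_0^{\infty}\!\!\int_{I_{\vecal}}\phi\bigl(u_1\vecom_1+\cdots+u_n\vecom_n+yi_n\bigr)\Bigl(\underset{m_1+\cdots+m_n>0}{\sum_{m_1=0}^{M_1}\cdots\sum_{m_n=0}^{M_n}}e^{2\pi i(m_1(\alpha_1-u_1)+\cdots+m_n(\alpha_n-u_n))}\Bigr)\frac{d\vecu\,dy}{y^{\delta}},
\]
with $I_{\vecal}=\prod_{i=1}^n[\alpha_i-\tfrac12,\alpha_i+\tfrac12]$; deleting the $\vecm=\vec0$ term of the kernel makes the constant term $c_{\vec0}y^{n-s}$ of $\phi$ at the cusp $\infty$ contribute nothing, which already disposes of the worst growth, as $y\to\infty$. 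Substituting the Fourier expansion \eqref{noncusp}, integrating over $I_{\vecal}$, and inserting the Bessel integral $\int_0^\infty u^{n/2-\delta}K_{s-n/2}(u)\,du$ exactly as in the passage to \eqref{Kequal} evaluates $\mathcal K$ as a nonzero Gamma-factor (bounded below uniformly in $\delta$) times the weighted twisted sum $\sum_{0\le m_i\le M_i}c_{\vecmu}\,|\vecmu|^{\delta-n/2-1}e^{2\pi i(m_1\alpha_1+\cdots+m_n\alpha_n)}$, $\vecmu=\sum_i m_i\vecom_i^{*}$.

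The heart of the proof is the \emph{geometric} estimate of $\mathcal K$. Here I would split $\phi=\phi_{\mathrm{bd}}+\phi_{\mathrm{cusp}}$, where $\phi_{\mathrm{cusp}}$ is the $\Gamma$-invariant function equal to $c_{\vec0}^{(k)}\,\mathcal Y_\Gamma^{\,n-s}$ on the $\Gamma$-orbit of each cuspidal region $\F_k(B_0)$ and $0$ elsewhere; then $\phi_{\mathrm{bd}}$ decays in every cusp and its contribution to $\mathcal K$ is handled exactly as in Proposition~\ref{Lemma1}. The contribution of $\phi_{\mathrm{cusp}}$ is a sum over cuspidal regions; those equivalent to the cusp at $\infty$ are attached to the points $-c^{-1}d$ for $W=A_1^{-1}\matr{a}{b}{c}{d}\in\Gamma_{\eta_1}\backslash\Gamma$ (and similarly for the other cusps), and after carrying out the $y$-integral on each horoball the sum becomes a count of the $W$'s weighted by negative powers of $|c|$ and by the values at $-c^{-1}d$ of the factored Dirichlet kernel $\prod_i\min(M_i+1,\|\,\cdot\,\|^{-1})$ — in other words a sum of exactly the type treated by $\mathfrak C^k_{\mathfrak B}$ and Lemma~\ref{csum}, in which the box $\mathfrak B$ has side lengths tied to the $M_i^{-1}$. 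Ordering $M_{j_1}\ge\cdots\ge M_{j_n}$ and applying Corollary~\ref{corc}/Lemma~\ref{csum} dyadically in $|c|$, the exponents occurring there depend on where $|c|^2$ sits relative to the successive $M_{j_k}$, and it is this threshold structure — with the least integer $i_0>s$ marking the transition — that produces the ordering-dependent factor $(M_{j_{i_0}}+1)^{\,i_0-s}\prod_{k=i_0+1}^{n}(M_{j_k}+1)$ in \eqref{lincomb'}. Feeding the resulting bound for $\mathcal K$, hence for the weighted twisted sum, into repeated summation by parts (Corollary~\ref{corsumbp}, with $g(\vecx)=|\sum_i x_i\vecom_i^{*}|^{\,n/2+1-\delta}$) to remove the weight $|\vecmu|^{n/2+1-\delta}$, and finally choosing $\delta=1-(\log(3|\vecM|))^{-1}$ as at the end of the proof of Proposition~\ref{Lemma1} to turn the $(1-\delta)^{-1}$ and $|\vecM|^{1-\delta}$ factors into a single $|\vecM|^{\ve}$, yields \eqref{lincomb'}.

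The main obstacle, as I see it, is this geometric estimate of the $\phi_{\mathrm{cusp}}$-contribution: one has to carry out the dyadic $W$-count against the product $\prod_i\min(M_i+1,\|\,\cdot\,\|^{-1})$ coming from the one-dimensional pieces of the Dirichlet kernel, keep track of which horoballs are small enough (relative to the $M_i^{-1}$) for that product to be essentially constant over them, and perform the dyadic optimization uniformly in $\vecM$ and in $(\alpha_1,\dots,\alpha_n)$ so as to land on the exact exponents of \eqref{lincomb'}; keeping the Shimizu-type separation of the points $-c^{-1}d$ (Lemma~\ref{shim}, as in Lemma~\ref{counting3}) under control throughout is what makes the counting go through. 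By comparison the remaining ingredients — the $\phi_{\mathrm{bd}}$-contribution, the $\vec0$-frequency, and the bookkeeping in the repeated summation by parts — are routine adaptations of arguments already carried out for cusp forms.
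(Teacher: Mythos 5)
Your proposal reproduces the paper's argument: the same auxiliary integral $\widehat{\mathcal K}$ of $\phi$ against the Dirichlet kernel with the $\vecm=\vec 0$ term deleted, the same evaluation via the Bessel integral and a uniformly nonvanishing Gamma factor, the same geometric bound on the cuspidal contribution via the invariant height function (your explicit split $\phi=\phi_{\mathrm{bd}}+\phi_{\mathrm{cusp}}$ is cosmetically equivalent to the paper's upper bound $|\phi|=O\bigl(1+\lfloor\mathcal Y_\Gamma\rfloor_1^{n-s}\bigr)$), the same dyadic decomposition by position of $-c^{-1}d$ and size of $|c|$, counted via Corollary~\ref{corc} and Lemma~\ref{csum} with the threshold at $i_0$, and the same repeated summation by parts to strip off the weight $|\vecmu|^{\delta-n/2-1}$. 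The one trifling discrepancy is your proposed $\delta=1-(\log 3|\vecM|)^{-1}$: the paper instead takes $\delta=1-\ve$ (which is why the final bound carries $|\vecM|^{\ve}$ rather than a power of $\log$), but the argument is unaffected.
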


\begin{remark}
If $n=1$ then we always get $i_0=1$, and hence \eqref{lincomb'} says 
\begin{align*}
 \sum_{m=0}^{M}c_me^{2\pi im\alpha}=O\big(M^{3/2-s+\ve}\big),\qquad\forall M\geq1,
\end{align*}
which agrees with \cite[Prop.\ 5.1]{andreas} except for the extra $\ve$ in the exponent. If $n=2$ then we always get $i_0=2$, and hence \eqref{lincomb'} says
\begin{align*}
&\sum_{m_1=0}^{M_1}\sum_{m_2=0}^{M_2}c_{m_1\vecom_1^*+m_2\vecom_2^*}e^{2\pi i(m_1\alpha_1+m_2\alpha_2)}\\
&=O\Big(\max(M_1,M_2)^{1+\ve}\big(\min(M_1,M_2)+1\big)^{2-s}\Big),\qquad\forall(M_1,M_2)\in\Z_{\geq0}^2\setminus\{\vec0\}.
\end{align*}
\end{remark}

The proof of Theorem \ref{Lemma1'} will occupy the next 10 pages.

\begin{proof}[Proof of Theorem \ref{Lemma1'}]
 We let $I_{\vecal}:=\prod_{i=1}^n[\alpha_i-\sfrac{1}{2},\alpha_i+\sfrac{1}{2}]$ and $\vecu:=(u_1,\ldots,u_n)$. For varying $\vecM=(M_1,\ldots,M_n)\in\Z_{\geq0}^n\setminus\{\vec0\}$, $(\alpha_1,\ldots,\alpha_n)\in\R^n$ and $\delta\in(0,1)$ satisfying $n>s+1-\delta$ and $\delta-s\notin\Z$ we let
\begin{multline*}
 \widehat{\mathcal{K}}:=\int_0^{\infty}\int_{I_{\vecal}}\phi(u_1\vecom_1+\cdots+u_n\vecom_n+yi_n)\\
\times\bigg(\underset{m_1+\cdots+m_n>0}{\sum_{m_1=0}^{M_1}\cdots\sum_{m_n=0}^{M_n}}e^{2\pi i(m_1(\alpha_1-u_1)+\cdots+m_n(\alpha_n-u_n))}\bigg)\,\frac{du_1\ldots du_ndy}{y^{\delta}}.
\end{multline*}
This integral is not absolutely convergent because of the first term in the expansion \eqref{noncusp}. However, using \eqref{noncusp} we see that for each $y>0$ the inner integral above equals
\begin{align}\label{rearrange}
\underset{m_1+\cdots+m_n>0}{\sum_{m_1=0}^{M_1}\cdots\sum_{m_n=0}^{M_n}}c_{m_1\vecom_1^*+\cdots+m_n\vecom_n^*}y^{n/2}K_{s-n/2}\big(2\pi|m_1\vecom_1^*+\cdots+m_n\vecom_n^*|y\big)e^{2\pi i(m_1\alpha_1+\cdots+m_n\alpha_n)},
\end{align}
which is a finite sum. Hence, by the estimate \eqref{chic} and the exponential decay of the K-Bessel function at infinity (cf.\ \cite[p.\ 202(1)]{watson}), we find that
\begin{multline}\label{finite}
 \int_0^{\infty}\bigg|\int_{I_{\vecal}}\phi(u_1\vecom_1+\cdots+u_n\vecom_n+yi_n)\\
\times\bigg(\underset{m_1+\cdots+m_n>0}{\sum_{m_1=0}^{M_1}\cdots\sum_{m_n=0}^{M_n}}e^{2\pi i(m_1(\alpha_1-u_1)+\cdots+m_n(\alpha_n-u_n))}\bigg)\,d\vecu\bigg|\frac{dy}{y^{\delta}}<\infty.
\end{multline}
Furthermore, using \eqref{rearrange} and \cite[p.\ 388(8)]{watson}, we obtain 
\begin{multline}\label{khat}
 \widehat{\mathcal{K}}=\bigg(\underset{m_1+\cdots+m_n>0}{\sum_{m_1=0}^{M_1}\cdots\sum_{m_n=0}^{M_n}}c_{m_1\vecom_1^*+\cdots+m_n\vecom_n^*}|m_1\vecom_1^*+\cdots+m_n\vecom_n^*|^{\delta-n/2-1}e^{2\pi i(m_1\alpha_1+\cdots+m_n\alpha_n)}\bigg)\\
\times\frac{\pi^{\delta-n/2-1}}{4}\Gamma\Big(\frac{n+1}{2}-\frac{\delta+s}{2}\Big)\Gamma\Big(\frac{1+s}{2}-\frac{\delta}{2}\Big).
\end{multline}

The goal is now to estimate $\widehat{\mathcal{K}}$ from above. For every $k\in\{1,...,\kappa\}$, we obtain from the Fourier expansion of $\phi(P)$ at $\eta_k$ that
\begin{align}\label{fi1}
 \phi(P)=c_{\vec0}^{(k)}(y_{A_k(P)})^{n-s}+O\big(e^{-\pi \mu_0 y_{A_k(P)}}\big)\qquad\text{ as }\,\,y_{A_k(P)}\to\infty.
\end{align}
Here $c_{\vec0}^{(1)}=c_{\vec0}$. Furthermore we know that $\phi$ is bounded on $\mathcal{F}_B$ for any $B\geq B_0$. We now use the $\Gamma$-invariance of $\phi$, Lemma \ref{estim} and the notation
\begin{eqnarray*}
 \lfloor t \rfloor_M:=\begin{cases}
                                  t & \textrm{ if $t\geq M$}\\
                  0 & \textrm{ otherwise,}
                                 \end{cases}
\end{eqnarray*}
to conclude that
\begin{align}\label{fi2}
 \phi(P)=O\big(\mathcal{Y}_{\Gamma}(P)^{n-s}\big)=O\big(1+\lfloor\mathcal{Y}_{\Gamma}(P)\rfloor_1^{n-s}\big) \qquad\forall P\in\H^{n+1}.
\end{align}
For $y\geq1$ we substitute \eqref{fi1}, with $k=1$, into the definition of $\widehat{\mathcal{K}}$ and for $y\leq1$ we use \eqref{fi2} in the same definition. This results in the following estimate:
\begin{align*}
 |\widehat{\mathcal{K}}|&\leq\int_1^{\infty}O(e^{-\pi \mu_0 y})\,\frac{dy}{y^{\delta}}\bigg(1+\prod_{j=1}^n\int_{\alpha_j-1/2}^{\alpha_j+1/2}\Big|\sum_{m_j=0}^{M_j}e^{2\pi im_j(\alpha_j-u_j)}\Big|\,du_j\bigg)\\
 &+\int_0^{1}\int_{I_{\vecal}}O\Big(1+\lfloor\mathcal{Y}_{\Gamma}(u_1\vecom_1+\cdots+u_n\vecom_n+yi_n)\rfloor_1^{n-s}\Big)\\
&\times\bigg(1+\prod_{j=1}^n\Big|\sum_{m_j=0}^{M_j}e^{2\pi im_j(\alpha_j-u_j)}\Big|\bigg)\,\frac{du_1\ldots du_ndy}{y^{\delta}}.
\end{align*}
We note, by e.g.\ using the formula for a geometric sum, that $\Big|\sum_{m_j=0}^{M_j}e^{2\pi im_j(\alpha_j-u_j)}\Big|=O\big(\min(M_j+1,|u_j-\alpha_j|^{-1})\big)$ for all $u_j\in[\alpha_j-1/2,\alpha_j+1/2]$. Hence
\begin{align*}
 \int_{\alpha_j-1/2}^{\alpha_j+1/2}\Big|\sum_{m_j=0}^{M_j}e^{2\pi im_j(\alpha_j-u_j)}\Big|\,du_j=O\big(\log(2(M_j+1))\big)
\end{align*}
for all $j\in\{1,\ldots,n\}$, and it follows that
\begin{multline}\label{Khatestim}
 |\widehat{\mathcal{K}}|\leq O\Big(\big(\log(2|\vecM|)\big)^n\Big)
 +O(1)\int_0^{1}\int_{I_{\vecal}}\lfloor\mathcal{Y}_{\Gamma}(u_1\vecom_1+\cdots+u_n\vecom_n+yi_n)\rfloor_1^{n-s}\\
\times\prod_{j=1}^n\min\big(M_j+1,|u_j-\alpha_j|^{-1}\big)\,\frac{du_1\ldots du_ndy}{y^{\delta}}.
\end{multline}
Here, and in all estimates in the rest of the proof, the implied constants depend only on $\Gamma$, $\phi$, $\Omega$ and $\delta$. 

We now use the definition of $\mathcal{Y}_{\Gamma}(P)$ to see that the integral in \eqref{Khatestim} is bounded from above by
\begin{multline}\label{readytoestimate}
\sum_{k=1}^{\kappa}\sum_{W\in\Gamma_{\eta_k}\backslash\Gamma}\int_0^{1}\int_{I_{\vecal}}\lfloor y_{A_kW(u_1\vecom_1+\cdots+u_n\vecom_n+yi_n)}\rfloor_1^{n-s}\\\times\prod_{j=1}^n\min\big(M_j+1,|u_j-\alpha_j|^{-1}\big)\,\frac{du_1\ldots du_ndy}{y^{\delta}}.
\end{multline}
We study each term in this double sum separately. In order to do so we fix some $k\in\{1,...,\kappa\}$ and $W\in\Gamma_{\eta_k}\backslash\Gamma$ which give a nonzero contribution to \eqref{readytoestimate}. Hence $y_{A_kW(u_1\vecom_1+\cdots+u_n\vecom_n+yi_n)}\geq1$ for some $(u_1,\ldots,u_n,y)\in {I_{\vecal}}^{\circ}\times(0,1)$ and we conclude that $A_kW\notin\Gammainfty$. We write $A_kW=\matr{a}{b}{c}{d}$ and use Lemma \ref{shim} to see that $|c|\geq1/\sqrt{\tau_1\tau_k}$. Now $y_{A_kW(u_1\vecom_1+\cdots+u_n\vecom_n+yi_n)}\geq1$ implies that $u_1\vecom_1+\cdots+u_n\vecom_n+yi_n$ belongs to the horoball tangent to $\R^n$ at $-c^{-1}d$ with Euclidean radius $R:=1/(2|c|^2)$. In particular we have $|u_1\vecom_1+\cdots+u_n\vecom_n+c^{-1}d|\leq R\leq\frac{\tau_1\tau_k}{2}$. This, together with
$|u_j-\alpha_j|<1/2$ for all $j\in\{1,\ldots,n\}$, yields
\begin{multline*}
 \big|\alpha_1\vecom_1+\cdots+\alpha_n\vecom_n+c^{-1}d\big|\leq\big|(\alpha_1-u_1)\vecom_1+\cdots+(\alpha_n-u_n)\vecom_n\big|\\+\big|u_1\vecom_1+\cdots+u_n\vecom_n+c^{-1}d\big|
 <\frac{1}{2}\Big(\sum_{i=1}^n|\vecom_i|+\tau_1\tau_k\Big).
\end{multline*}
Thus we get a nonzero contribution in \eqref{readytoestimate} from $W=A_k^{-1}\matr{a}{b}{c}{d}\in\Gamma_{\eta_k}\backslash\Gamma$ only if $|c|\geq1/\sqrt{\tau_1\tau_k}$ and $-c^{-1}d$ belongs to the open Euclidean ball with centre $\alpha_1\vecom_1+\cdots+\alpha_n\vecom_n$ and radius
\begin{align}\label{rk}
\widetilde R_k:=\frac{1}{2}\Big(\sum_{i=1}^n|\vecom_i|+\tau_1\tau_k\Big).
\end{align}

We write $c^{-1}d=\sum_{j=1}^ne_j\vecom_j$ and let $\alpha_j':=\alpha_j+e_j$ for $j\in\{1,\ldots,n\}$. We note that there exist $A,B>0$ such that $A|\vecu|\leq\big|u_1\vecom_1+\cdots+u_n\vecom_n\big|\leq B|\vecu|$ holds for all $\vecu\in\R^n$ and recall from \eqref{ycoord} that
\begin{align*}
y_{A_kW(u_1\vecom_1+\cdots+u_n\vecom_n+yi_n)}=\frac{y}{|c|^2\big(|u_1\vecom_1+\cdots+u_n\vecom_n+c^{-1}d|^2+y^2\big)}.
\end{align*}
Changing variables, $u_j'=u_j+e_j$, in \eqref{readytoestimate} and using $\big|u_j'-\alpha_j'\big|\geq\big||u_j'|-|\alpha_j'|\big|$, we get 
\begin{align}\label{firststep}
 &\int_0^{1}\int_{I_{\vecal}}\lfloor y_{A_kW(u_1\vecom_1+\cdots+u_n\vecom_n+yi_n)}\rfloor_1^{n-s}
 \prod_{j=1}^n\min\big(M_j+1,|u_j-\alpha_j|^{-1}\big)\,\frac{du_1\ldots du_ndy}{y^{\delta}}\\
 &\leq|c|^{2(s-n)}\int_0^{2R}\underset{[-R/A,R/A]^n}{\int}\Big(\frac{y}{|u_1'\vecom_1+\cdots+u_n'\vecom_n|^2+y^2}\Big)^{n-s}\nonumber\\
&\hspace{145pt}\times\prod_{j=1}^n\min\big(M_j+1,\big||u_j'|-|\alpha_j'|\big|^{-1}\big)\,\frac{du_1'\ldots du_n'dy}{y^{\delta}}.\nonumber
\end{align}

We now fix $y\in(0,2R]$ and study the inner integral in \eqref{firststep}. For $m\geq1$ we note that when $|u_1'\vecom_1+\cdots+u_n'\vecom_n|\geq2^{m-1}y$ we have
\begin{align}\label{m1}
 \frac{y}{|u_1'\vecom_1+\cdots+u_n'\vecom_n|^2+y^2}=O\Big(\frac{1}{2^{2m}y}\Big).
\end{align}
Furthermore, we always have
\begin{align}\label{m0}
\frac{y}{|u_1'\vecom_1+\cdots+u_n'\vecom_n|^2+y^2}=O\Big(\frac{1}{y}\Big).
\end{align}
Using \eqref{m1} and \eqref{m0} we see that the inner integral in \eqref{firststep} is bounded by
\begin{align}\label{estim1}
&O(1)\underset{1\leq2^m< 2R/y}{\sum}\hspace{10pt}\underset{[-2^my/A,2^my/A]^n}{\int}\Big(\frac{1}{2^{2m}y}\Big)^{n-s}\prod_{j=1}^n\min\big(M_j+1,\big||u_j'|-|\alpha_j'|\big|^{-1}\big)\,d\vecu'\\
&=O(1)\underset{1\leq2^m< 2R/y}{\sum}2^{2m(s-n)}y^{s-n}\prod_{j=1}^n\int_0^{2^{m}y/A}\min\big(M_j+1,\big|u_j'-|\alpha_j'|\big|^{-1}\big)\,du_j',\nonumber
\end{align}
where we sum over all integers $m$ satisfying $1\leq2^m< 2R/y$. We now note the following general bound.

\begin{lem}\label{int1}
 For all $S>0$, $M\geq1$ and $\alpha\in\R$ we have
\begin{align}\label{nuardenflyttad}
\int_0^{S}\min\big(M,\big|u-|\alpha|\big|^{-1}\big)\,du=
\begin{cases}
O\big(1+\log^+(SM)\big) & \text{always,}\\
O\big(S\min\big(M,1/|\alpha|\big)\big)  & \text{if $S<\sfrac{1}{2\min(M,1/|\alpha|)}$.}
\end{cases}
\end{align}
\end{lem}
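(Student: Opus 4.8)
The plan is to set $a:=|\alpha|\ge 0$ and split the interval $[0,S]$ according to whether the integrand $\min\!\big(M,|u-a|^{-1}\big)$ takes its ``flat'' value $M$ (which happens precisely when $|u-a|\le 1/M$) or its ``tail'' value $|u-a|^{-1}$ (when $|u-a|>1/M$). With this splitting both assertions become elementary calculus estimates.

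For the unconditional bound, first let $A:=\{u\in[0,S]:|u-a|\le 1/M\}$. This is an interval of length at most $\min(S,2/M)$, so its contribution to the integral is at most $M\min(S,2/M)\le\min(SM,2)$. On $[0,S]\setminus A$ the integrand equals $|u-a|^{-1}$, and this set is the union of the two (possibly degenerate) intervals $[0,\min(S,a-1/M)]$ and $[\max(0,a+1/M),S]$, neither of which contains $a$. On each of them $\int|u-a|^{-1}\,du$ equals a difference of two logarithms; I would then run a short case analysis according to whether $a$ lies to the left of, inside, or to the right of $[0,S]$. When $a\le S$ the relevant endpoints are at distance $\ge 1/M$ and $\le S$ from $a$, so each log-difference is at most $\log(SM)\le 1+\log^+(SM)$. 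When $a>S$ one uses the elementary inequalities $a\le 2S$ (if $a-S\le S$) and $a\le 2(a-S)$ (if $a-S>S$), together with the fact that on the non-degenerate left interval $a-u$ stays $\ge\max(a-S,1/M)$, to bound the log-difference by $\log(2SM)+O(1)=O(1+\log^+(SM))$. Adding the two contributions yields the first line of \eqref{nuardenflyttad}.

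For the conditional bound I would distinguish two cases according to the value of $\min(M,1/a)$. If $a\le 1/M$, then $\min(M,1/a)=M$ and the hypothesis reads $S<1/(2M)$; here there is nothing to do, since $\min\!\big(M,|u-a|^{-1}\big)\le M$ pointwise gives $\int_0^S\le SM=S\min(M,1/a)$. If $a>1/M$, then $\min(M,1/a)=1/a$ and the hypothesis reads $S<a/2$; then for every $u\in[0,S]$ we have $|u-a|=a-u\ge a-S>a/2$, hence $\min\!\big(M,|u-a|^{-1}\big)\le|u-a|^{-1}<2/a$, and integrating over $[0,S]$ gives $\int_0^S<2S/a=2S\min(M,1/a)$.

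There is no genuine obstacle in this lemma; the only point that needs a little care is the bookkeeping in the logarithmic estimate of the first part when $|\alpha|$ lies just outside the interval of integration (i.e.\ $0<|\alpha|-S\le 1/M$, or the mirror situation), which is exactly why one keeps the combined quantity $\log^+(SM)$ on the right-hand side rather than $\log^+(S)$ or $\log^+(M)$ separately.
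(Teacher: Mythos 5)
Your proof of the second (conditional) bound is essentially identical to the paper's: split on whether $\min(M,1/|\alpha|)$ equals $M$ or $1/|\alpha|$, and in the latter case note that $S<|\alpha|/2$ forces $|u-|\alpha||>|\alpha|/2$ on all of $[0,S]$. For the first (unconditional) bound you take a genuinely different, more hands-on route: you decompose $[0,S]$ into the plateau set $A=\{u:|u-|\alpha||\le 1/M\}$ and its two tail intervals, and then bound each logarithmic piece by a short case analysis on the location of $|\alpha|$ relative to $[0,S]$. This works (the bookkeeping you describe does close, including the borderline case $0<|\alpha|-S\le 1/M$), but it is noticeably more laborious than what the paper does. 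The paper instead observes that $u\mapsto|u-|\alpha||^{-1}$ is increasing on $[0,|\alpha|)$, so when $|\alpha|>S$ the integral only grows if $[0,S]$ is translated to $[|\alpha|-S,|\alpha|]$; since the same domination by the symmetric window also holds trivially when $|\alpha|\le S$, the integral is always at most $\int_{|\alpha|-S}^{|\alpha|+S}\min(M,|u-|\alpha||^{-1})\,du=2\int_0^S\min(M,u^{-1})\,du=O(1+\log^+(SM))$, with no case distinctions at all. The monotonicity-plus-symmetrization trick thus buys a one-line proof in place of your three sub-cases, at the cost of the small initial observation that one may shift the window toward $|\alpha|$.
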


\begin{proof}
If $|\alpha|>S$ then the integral increases if the range of integration is replaced by $\big[|\alpha|-S,|\alpha|\big]$, since $u\mapsto |u-|\alpha||^{-1}$ is an increasing function of $u\in[0,|\alpha|)$. Hence the left hand side of \eqref{nuardenflyttad} is always
\begin{align*}
<\int_{|\alpha|-S}^{|\alpha|+S}\min\big(M,\big|u-|\alpha|\big|^{-1}\big)\,du
=2\int_0^S\min\big(M,u^{-1}\big)\,du=O\big(1+\log^+(SM)\big).
\end{align*}

Next assume $S<\sfrac{1}{2\min(M,1/|\alpha|)}$. If $1/|\alpha|<M$ then $S<\frac12|\alpha|$ and thus the integrand in \eqref{nuardenflyttad} is $\leq2|\alpha|^{-1}$ for all $u\in[0,S]$; in the remaining case we simply use that the integrand is everywhere $\leq M$; we thus obtain the bound $O\big(S\min\big(M,1/|\alpha|\big)\big)$, as stated.
\end{proof}

Continuing onwards with the proof of Theorem \ref{Lemma1'}, we define $\beta_j:=\min\big(M_j+1,1/|\alpha_j'|\big)$ for all $j\in\{1,\ldots,n\}$ and order the indices so that $\beta_{j_k}$ are decreasing, i.e.\ $\beta_{j_1}\geq\beta_{j_2}\geq\cdots\geq\beta_{j_n}$. Using Lemma \ref{int1} and recalling that $R=1/(2|c|^2)$ we find that \eqref{estim1} is bounded by
\begin{align*}
 &O(1)\underset{1\leq2^m< 1/(|c|^2y)}{\sum}2^{2m(s-n)}y^{s-n}\prod_{j=1}^n
\left\{\begin{array}{ll}2^my\beta_j  & \textrm{if $2^{m}<\sfrac{A}{2y\beta_j}$}\\
1+\log^+\big(\sfrac{2^my}{A}(M_j+1)\big) & \textrm{if $2^{m}\geq\sfrac{A}{2y\beta_j}$}
\end{array}\right\}\nonumber\\
&=O(1)\underset{1\leq2^m<\min\big(\sfrac{A}{2y\beta_{j_1}},\sfrac{1}{|c|^2y}\big)}{\sum}2^{m(2s-n)}y^s\prod_{k=1}^n\beta_{j_k}\nonumber\\
&+O(1)\sum_{i=1}^n\bigg(\underset{R_i\leq2^m< R_{i+1}}{\sum}2^{m(2s-n-i)}y^{s-i}\prod_{k=1}^i\Big(1+\log^+\big(\sfrac{2^my}{A}(M_{j_k}+1)\big)\Big)\prod_{k=i+1}^n\beta_{j_k}\bigg),
\end{align*}
where
$R_i:=\max\big(1,\min\big(\sfrac{A}{2y\beta_{j_i}},\sfrac{1}{|c|^2y}\big)\big)$
for $i\in\{1,\ldots,n\}$ and $R_{n+1}:=\sfrac{1}{|c|^2y}$. It
follows that \eqref{firststep} is bounded by
\begin{align}\label{i13}
 &O\big(|c|^{2(s-n)}\big)\int_0^{1/|c|^2}\bigg\{\underset{1\leq2^m<\min\big(\sfrac{A}{2y\beta_{j_1}},\sfrac{1}{|c|^2y}\big)}{\sum}2^{m(2s-n)}y^{s-\delta}\prod_{k=1}^n\beta_{j_k}\\
&+\sum_{i=1}^n\bigg(\underset{R_i\leq2^m< R_{i+1}}{\sum}2^{m(2s-n-i)}y^{s-i-\delta}\prod_{k=1}^i\Big(1+\log^+\big(\sfrac{2^my}{A}(M_{j_k}+1)\big)\Big)\prod_{k=i+1}^n\beta_{j_k}\bigg)\bigg\}\,dy.\nonumber
\end{align}

We call the sum in the first line of \eqref{i13} $S_0$ and the $i$:th term in the sum in the last line of  \eqref{i13} $S_i$. In order to estimate $S_0$ we first assume that $\min\big(\sfrac{A}{2y\beta_{j_1}},\sfrac{1}{|c|^2y}\big)=\sfrac{1}{|c|^2y}$, i.e.\ $|c|^2\geq\sfrac{2\beta_{j_1}}{A}$. Note that in this situation $S_j=0$ for all $j\in\{1,\ldots,n\}$. Using $2s-n>0$ and estimating the geometric sum involved, we get
\begin{align*}
 S_0&=y^{s-\delta}\Big(\prod_{k=1}^n\beta_{j_k}\Big)\underset{1\leq2^m<\sfrac{1}{|c|^2y}}{\sum}2^{m(2s-n)}=O\Big(y^{n-s-\delta}|c|^{2n-4s}\prod_{k=1}^n\beta_{j_k}\Big).
\end{align*}
Integrating we find that the contribution to \eqref{i13} is
\begin{align}\label{i14}
&O\Big(|c|^{2(s-n)}|c|^{2n-4s}\prod_{k=1}^n\beta_{j_k}\Big)\int_0^{1/|c|^2}y^{n-s-\delta}\,dy=O\Big(|c|^{2(\delta-n-1)}\prod_{k=1}^n\beta_{j_k}\Big).
\end{align}
We now turn to the case
$\min\big(\sfrac{A}{2y\beta_{j_1}},\sfrac{1}{|c|^2y}\big)=\sfrac{A}{2y\beta_{j_1}}$,
i.e.\ $|c|^2\leq\sfrac{2\beta_{j_1}}{A}$. We note that in this case
$S_0$ is an empty sum if not $y<\sfrac{A}{2\beta_{j_1}}$. For such $y$
we get
\begin{align*}
S_0=y^{s-\delta}\Big(\prod_{k=1}^n\beta_{j_k}\Big)\underset{1\leq2^m<\sfrac{A}{2y\beta_{j_1}}}{\sum}2^{m(2s-n)}
=O\Big(y^{n-s-\delta}\beta_{j_1}^{\,n+1-2s}\prod_{k=2}^n\beta_{j_k}\Big),
\end{align*} 
and hence the contribution from $S_0$ to
\eqref{i13} is
\begin{align}\label{i15}
O\Big(|c|^{2(s-n)}\beta_{j_1}^{\,n+1-2s}\prod_{k=2}^n\beta_{j_k}\Big)\int_{0}^{\sfrac{A}{2\beta_{j_1}}}
y^{n-s-\delta}\,dy=O\Big(|c|^{2(s-n)}\beta_{j_1}^{\,\delta-s}\prod_{k=2}^n\beta_{j_k}\Big).
\end{align}

We next estimate $S_i$ for $i\in\{1,\ldots,n\}$. To begin with we
assume $R_{i}=\max\big(1,\sfrac{A}{2y\beta_{j_i}}\big)$ and
$R_{i+1}=\sfrac{1}{|c|^2y}$, i.e.\ $\sfrac{2\beta_{j_{i+1}}}{A}\leq|c|^2\leq\sfrac{2\beta_{j_i}}{A}$ where $\beta_{j_{n+1}}:=0$. In this
situation all $S_{\ell}$ with $\ell>i$ are zero. Estimating the
geometric sums involved yields
\begin{align}\label{i16}
&S_i=y^{s-i-\delta}\Big(\prod_{k=i+1}^n\beta_{j_k}\Big)\underset{\max\big(1,\sfrac{A}{2y\beta_{j_i}}\big)\leq2^m<\sfrac{1}{|c|^2y}}{\sum}
2^{m(2s-n-i)}\prod_{k=1}^i\Big(1+\log^+\big(\sfrac{2^my}{A}(M_{j_k}+1)\big)\Big)\\
&=O\bigg(y^{s-i-\delta}\prod_{k=1}^i\Big(1+\log^+\big(\sfrac{M_{j_k}+1}{A|c|^2}\big)\Big)\prod_{k=i+1}^n\beta_{j_k}\bigg)
\begin{cases}\max\big(1,\sfrac{1}{y\beta_{j_i}}\big)^{2s-n-i} &
\text{if $2s<n+i$}\\
1+\log\Big(\frac{1}{|c|^2\max\big(y,\sfrac{A}{2\beta_{j_i}}\big)}\Big)
& \text{if $2s=n+i$}\\
\big(\sfrac{1}{|c|^2y}\big)^{2s-n-i} & \text{if $2s>n+i$.}
\end{cases}\nonumber
\end{align}
When $2s<n+i$ we get the following contribution to \eqref{i13}:
\begin{align}\label{i17}
O\bigg(|c|^{2(s-n)}\prod_{k=1}^i\Big(1+\log^+\big(\sfrac{M_{j_k}+1}{A|c|^2}\big)\Big)\prod_{k=i+1}^n\beta_{j_k}\bigg)
\int_0^{1/|c|^2}y^{s-i-\delta}\max\big(1,\sfrac{1}{y\beta_{j_i}}\big)^{2s-n-i}\,dy.
\end{align}
We estimate the integral in \eqref{i17} as follows (using $\delta-s\notin\Z$):
\begin{align}\label{i175}
&\int_0^{1/|c|^2}y^{s-i-\delta}\max\big(1,\sfrac{1}{y\beta_{j_i}}\big)^{2s-n-i}\,dy\nonumber\\
&=\int_0^{\min\big(\sfrac{1}{\beta_{j_i}},\sfrac{1}{|c|^2}\big)}
y^{s-i-\delta}\Big(\frac{1}{y\beta_{j_i}}\Big)^{2s-n-i}\,dy+\int_{\min\big(\sfrac{1}{\beta_{j_i}},\sfrac{1}{|c|^2}\big)}^{1/|c|^2}y^{s-i-\delta}\,dy\\
&=O\Big(\max\big(\beta_{j_i}^{\,i+\delta-s-1},|c|^{2(i+\delta-s-1)}\big)\Big).\nonumber
\end{align}
Hence \eqref{i17} is bounded by
\begin{align}\label{i18}
O\Big(|c|^{2(s-n)}\max\big(\beta_{j_i}^{\,i+\delta-s-1},|c|^{2(i+\delta-s-1)}\big)\prod_{k=1}^i\Big(1+\log^+\big(\sfrac{M_{j_k}+1}{A|c|^2}\big)\Big)\prod_{k=i+1}^n\beta_{j_k}\Big).
\end{align}
When $2s=n+i$ we have $s-i=n-s>0$. In this situation the contribution from \eqref{i16} to \eqref{i13} is
\begin{multline}\label{i19}
O\bigg(|c|^{2(s-n)}\prod_{k=1}^i\Big(1+\log^+\big(\sfrac{M_{j_k}+1}{A|c|^2}\big)\Big)\prod_{k=i+1}^n\beta_{j_k}\bigg)\\\times\int_0^{1/|c|^2}y^{s-i-\delta}\Big(1+\log\Big(\frac{1}{|c|^2\max\big(y,\sfrac{A}{2\beta_{j_i}}\big)}\Big)\Big)\,dy.
\end{multline}
The integral in \eqref{i19} is
\begin{align}\label{i20}
=O\Big(\Big(1+\log\Big(\frac{2\beta_{j_i}}{A|c|^2}\Big)\Big)\beta_{j_i}^{\,i+\delta-s-1}\Big)+O\Big(|c|^{2(i+\delta-s-1)}\Big)+\int_{\sfrac{A}{2\beta_{j_i}}}^{1/|c|^2}y^{s-i-\delta}\log\Big(\frac{1}{|c|^2y}\Big)\,dy.
\end{align}
Changing variables, $y=e^{-t}/|c|^2$, we find that the integral in \eqref{i20} is bounded by
\begin{align}\label{i205}
 |c|^{2(i+\delta-s-1)}\int_0^{\infty}te^{t(i+\delta-s-1)}\,dt=O\big(|c|^{2(i+\delta-s-1)}\big).
\end{align}
Hence \eqref{i20} is $O\big(|c|^{2(i+\delta-s-1)}\big)$
and it follows that \eqref{i19} is bounded by
\begin{align}\label{i21}
O\Big(|c|^{2(s-n)}|c|^{2(i+\delta-s-1)}\prod_{k=1}^i\Big(1+\log^+\big(\sfrac{M_{j_k}+1}{A|c|^2}\big)\Big)\prod_{k=i+1}^n\beta_{j_k}\Big).
\end{align}
Finally, when $2s>n+i$ we find that the contribution from \eqref{i16} to \eqref{i13} is
\begin{align}\label{i22}
&O\Big(|c|^{2(s-n)}\prod_{k=1}^i\Big(1+\log^+\big(\sfrac{M_{j_k}+1}{A|c|^2}\big)\Big)\prod_{k=i+1}^n\beta_{j_k}\Big)
\int_0^{1/|c|^2}y^{s-i-\delta}\Big(\frac{1}{|c|^2y}\Big)^{2s-n-i}\,dy\\
&=O\Big(|c|^{2(s-n)}|c|^{2(i+\delta-s-1)}\prod_{k=1}^i\Big(1+\log^+\big(\sfrac{M_{j_k}+1}{A|c|^2}\big)\Big)\prod_{k=i+1}^n\beta_{j_k}\Big).\nonumber
\end{align}
It follows from \eqref{i18}, \eqref{i21} and \eqref{i22} that in all cases the  contribution from \eqref{i16} to \eqref{i13} is
\begin{align}\label{i23}
O\Big(|c|^{2(s-n)}\max\big(\beta_{j_i}^{\,i+\delta-s-1},|c|^{2(i+\delta-s-1)}\big)\prod_{k=1}^i\Big(1+\log^+\big(\sfrac{M_{j_k}+1}{A|c|^2}\big)\Big)\prod_{k=i+1}^n\beta_{j_k}\Big).
\end{align}

It remains to estimate $S_i$ for $i\in\{1,\ldots,n-1\}$ under the assumption that $R_{i}=\max\big(1,\sfrac{A}{2y\beta_{j_i}}\big)$ and
$R_{i+1}=\max\big(1,\sfrac{A}{2y\beta_{j_{i+1}}}\big)$, i.e.\
$|c|^2\leq\sfrac{2\beta_{j_{i+1}}}{A}$. We note that $S_i$ is an empty sum if not $y<\sfrac{A}{2\beta_{j_{i+1}}}$. For such $y$ we get
\begin{align}\label{i24}
S_i&=O\bigg(y^{s-i-\delta}\prod_{k=1}^i\Big(1+\log^+\big(\sfrac{M_{j_k}+1}{2\beta_{j_{i+1}}}\big)\Big)\prod_{k=i+1}^n\beta_{j_k}\bigg)\\
&\hspace{100pt}\times
\begin{cases}
\max\big(1,\sfrac{1}{y\beta_{j_i}}\big)^{2s-n-i} &
\text{if $2s<n+i$}\\
1+\log\Big(\frac{A}{2\beta_{j_{i+1}}\max\big(y,\sfrac{A}{2\beta_{j_i}}\big)}\Big)
& \text{if $2s=n+i$}\\
\big(\frac{1}{y\beta_{j_{i+1}}}\big)^{2s-n-i} & \text{if $2s>n+i$.}
\end{cases}\nonumber
\end{align}
By a similar case by case analysis as above it follows that in all cases the contribution from \eqref{i24} to \eqref{i13} is
\begin{align}\label{i31}
O\Big(|c|^{2(s-n)}\max\big(\beta_{j_i}^{\,i+\delta-s-1},\beta_{j_{i+1}}^{\,i+\delta-s-1}\big)\prod_{k=1}^i\Big(1+\log^+\big(\sfrac{M_{j_k}+1}{2\beta_{j_{i+1}}}\big)\Big)\prod_{k=i+1}^n\beta_{j_k}\Big).
\end{align}

We now collect the estimates above to get a resulting bound on \eqref{firststep}. When $\sfrac{2\beta_{j_1}}{A}\leq|c|^2$ it follows from \eqref{i13} and \eqref{i14} that \eqref{firststep} is bounded by
\begin{align}\label{secondstep1}
O\Big(|c|^{2(\delta-n-1)}\prod_{k=1}^n\beta_{j_k}\Big).
\end{align}
Furthermore,  when $\sfrac{2\beta_{j_{\ell+1}}}{A}\leq|c|^2<\sfrac{2\beta_{j_{\ell}}}{A}$ and $\ell\in\{1,\ldots,n\}$ (recall that $\beta_{j_{n+1}}=0$), it follows from \eqref{i13}, \eqref{i15}, \eqref{i23} and \eqref{i31} that \eqref{firststep} is bounded by
\begin{align}\label{secondstep2}
 O(1)&\bigg\{|c|^{2(s-n)}\beta_{j_1}^{\,\delta-s}\prod_{k=2}^n\beta_{j_k}\\
&+\sum_{i=1}^{\ell-1}\bigg(|c|^{2(s-n)}\max\big(\beta_{j_i}^{\,i+\delta-s-1},\beta_{j_{i+1}}^{\,i+\delta-s-1}\big)\prod_{k=1}^i\Big(1+\log^+\big(\sfrac{M_{j_k}+1}{2\beta_{j_{i+1}}}\big)\Big)\prod_{k=i+1}^n\beta_{j_k}\bigg)\nonumber\\
&+|c|^{2(s-n)}\max\big(\beta_{j_{\ell}}^{\,\ell+\delta-s-1},|c|^{2(\ell+\delta-s-1)}\big)\prod_{k=1}^{\ell}\Big(1+\log^+\big(\sfrac{M_{j_k}+1}{A|c|^2}\big)\Big)\prod_{k=\ell+1}^n\beta_{j_k}\bigg\}.\nonumber
\end{align}
We define $i_0$ to be the smallest integer $>s+1-\delta$ (thus $\frac n2<i_0\leq n$) and set $i_\ell:=\min(\ell,i_0)$ for $\ell\in\{1,\ldots,n\}$. Comparing the sizes of all terms we find that \eqref{secondstep2} is 
\begin{align}\label{secondstep3}
O\Big(|c|^{2(s-n)}\big(\log(2|\vecM|)\big)^{\ell}\prod_{k=i_{\ell}+1}^n\beta_{j_k}\Big)
\begin{cases}
|c|^{2(\ell+\delta-s-1)} & \text{if $\ell<i_0$}\\
\beta_{j_{i_0}}^{\,i_0+\delta-s-1} & \text{if $\ell\geq i_0$.}
\end{cases}
\end{align}
Combining \eqref{firststep}, \eqref{secondstep1} and \eqref{secondstep3} we conclude that
\begin{align}\label{secondstep4}
 &\int_0^{1}\int_{I_{\vecal}}\lfloor y_{A_kW(u_1\vecom_1+\cdots+u_n\vecom_n+yi_n)}\rfloor_1^{n-s}\prod_{j=1}^n\min\big(M_j+1,|u_j-\alpha_j|^{-1}\big)\,\frac{du_1\ldots du_ndy}{y^{\delta}}\nonumber\\
&=O\Big(\big(\log(2|\vecM|)\big)^{n}\Big)
\begin{cases}
|c|^{2(\delta-n-1)}\prod_{k=1}^n\beta_{j_k} & \text{if $\sfrac{2\beta_{j_1}}{A}\leq|c|^2$}\\
|c|^{2(\ell+\delta-n-1)}\prod_{k=\ell+1}^n\beta_{j_k} & \text{if $\sfrac{2\beta_{j_{\ell+1}}}{A}\leq|c|^2<\sfrac{2\beta_{j_{\ell}}}{A}$, $1\leq\ell<i_0$}\\
|c|^{2(s-n)}\beta_{j_{i_0}}^{\,i_0+\delta-s-1}\prod_{k=i_0+1}^n\beta_{j_k} & \text{if $|c|^2<\sfrac{2\beta_{j_{i_0}}}{A}$}.
\end{cases}
\end{align}

We now add up all nonzero contributions to \eqref{readytoestimate} for one fixed $k\in\{1,...,\kappa\}$. We recall from the discussion above \eqref{rk} that we get such a nonzero contribution from $W=A_k^{-1}\matr{a}{b}{c}{d}\in\Gamma_{\eta_k}\backslash\Gamma$ only if $|c|\geq1/\sqrt{\tau_1\tau_k}$ and $-c^{-1}d$ belongs to the ball with centre $\alpha_1\vecom_1+\cdots+\alpha_n\vecom_n$ and radius $\widetilde R_k$. We will split the summation over these $W$ into dyadic boxes. To fix the notation, let us agree that
each integer vector $\vecm=(m_1,\ldots,m_n) \in\Z^n$
corresponds to the dyadic box of all $W=A_k^{-1}\matr abcd
\in\Gamma_{\eta_k}\backslash\Gamma$ (with $|c|\geq 1/\sqrt{\tau_1\tau_k}$)
such that, for all $j\in\{1,\ldots,n\}$,
\begin{align} \notag
& \alpha_j'\in \bigl[-2/(M_j+1),2/(M_j+1)\bigr]
&& \text{if } \: m_j=0;
\\ \label{DYADICBOX}
& \alpha_j'\in \bigl[2^{m_j}/(M_j+1),2^{m_j+1}/(M_j+1)\bigr]
&& \text{if } \: m_j>0;
\\ \notag
& \alpha_j'\in \bigl[-2^{|m_j|+1}/(M_j+1),-2^{|m_j|}/(M_j+1)\bigr]
&& \text{if } \: m_j<0.
\end{align}
We will consider these dyadic boxes for each integer vector
$\vecm$ with
\begin{align*}
|m_j|\leq \log_2\bigl(\tilde{R}_k(M_j+1)/A\bigr),\qquad
j=1,\ldots,n.
\end{align*}
Note that in this way all the non-zero contributions to \eqref{readytoestimate}
are certainly accounted for.
Furthermore, note that for $W=A_k^{-1}\matr abcd$ belonging to the
dyadic box corresponding to $\vecm$, we have
\begin{align} \label{BETAIDYADICFACT}
\beta_j=\min\big(M_j+1,1/|\alpha_j'|\big)\in
[2^{-|m_j|-1}(M_j+1),2^{-|m_j|}(M_j+1)],\qquad j=1,\ldots,n.
\end{align}

Recall that $-c^{-1}d=\sum_{j=1}^n(\alpha_j-\alpha_j')\vecom_j$. Let us fix a vector $\vecm$ as above, and let
$\widehat{\mathfrak{B}}_{\vecm}\subset\R^n$ be the
corresponding box for $-c^{-1}d$ given by \eqref{DYADICBOX}.
This box has sides parallel with the basis vectors
$\vecom_1,\ldots,\vecom_n$,
of lengths $\leq 2^{|m_j|+2}|\vecom_j|/(M_j+1)$,
$j=1,\ldots,n$.
Now there is some constant $\mathcal{C}>0$, which depends only on $\Omega$,
such that we can find a right-angled box
$\mathfrak{B}_{\vecm}\subset \R^n$
which contains $\widehat{\mathfrak{B}}_{\vecm}$,
and which has sides of lengths
\begin{align}\label{bdef}
b_{\vecm,j}:=\mathcal{C}2^{|m_j|}/(M_j+1),
\qquad j=1,\ldots,n.
\end{align}
(For example, we might let the sides of $\mathfrak{B}_{\vecm}$
be parallel with the orthonormal basis vectors obtained by performing the
Gram-Schmidt process on the vectors $\{\vecom_j\}_{j=1}^{n}$,
ordered by decreasing values of
$2^{|m_j|+2}|\vecom_j|/(M_j+1)$.
This always works with
$\mathcal{C}=4n\max_{1\leq j\leq n}|\vecom_j|$.)

Given $\vecm$ we fix $j_1,\ldots,j_n$ to be a permutation of
the indices $1,\ldots,n$ such that $b_{\vecm,j_1}\leq
b_{\vecm,j_2}
\leq\ldots\leq b_{\vecm,j_n}$.
By \eqref{BETAIDYADICFACT} we have
$\mathcal{C}/(2b_{\vecm,j})\leq\beta_j\leq
\mathcal{C}/b_{\vecm,j}$ for each $j$, and hence
if $j_1',\ldots,j_n'$ is a permutation of $1,\ldots,n$ such that
$\beta_{j_1'}\geq\beta_{j_2'}\geq\ldots\geq\beta_{j_n'}$
then $\beta_{j_i'}/\beta_{j_i}\in[\frac 12,2]$ for each $i$. Hence, by \eqref{secondstep4}, we get the following bound on the total contribution to
\eqref{readytoestimate} from the $W$'s in our dyadic box:
\begin{align}\label{thirdstep}
 &O\Big(\big(\log(2|\vecM|)\big)^{n}\Big)\bigg\{\Big(2^{-|m_{j_{i_0}}|}(M_{j_{i_0}}+1)\Big)^{i_0+\delta-s-1}\\
&\times\Big(\prod_{k=i_0+1}^n\big(2^{-|m_{j_{k}}|}(M_{j_{k}}+1)\big)\Big)\underset{\substack{W\\1/\sqrt{\tau_1\tau_k}\leq|c|<D/\sqrt{b_{\vecm,j_{i_0}}}}}{\sum}|c|^{2(s-n)}\nonumber\\
&+\sum_{\ell=1}^{i_0-1}\Big(\prod_{k=\ell+1}^n\big(2^{-|m_{j_{k}}|}(M_{j_{k}}+1)\big)\Big)\underset{\substack{W\\C/\sqrt{b_{\vecm,j_{\ell+1}}}\leq|c|<D/\sqrt{b_{\vecm,j_{\ell}}}}}{\sum}|c|^{2(\ell+\delta-n-1)}\nonumber\\
&+\Big(\prod_{k=1}^n\big(2^{-|m_{j_{k}}|}(M_{j_{k}}+1)\big)\Big)\underset{\substack{W\\C/\sqrt{b_{\vecm,j_{1}}}\leq|c|}}{\sum}|c|^{2(\delta-n-1)}\bigg\},\nonumber
\end{align}
with $C=\sqrt{\mathcal{C}/2A}$ and $D=2\sqrt{\mathcal{C}/A}$. Here the sums are taken over a set of representatives $W=A_k^{-1}\matr abcd\in\Gamma_{\eta_k}\backslash\Gamma$ restricted by
$-c^{-1}d\in\mathfrak{B}_{\vecm}$ and the stated bounds on
$|c|$. To estimate the sums in \eqref{thirdstep} we use Lemma \ref{csum}.
For the first $W$-sum in \eqref{thirdstep}, set $\nu=2(n-s)$,
and note that $\nu>2(n-i)$ for all $i\in\{i_0,\ldots,n\}$
since $i_0-s>1-\delta>0$. Hence
\begin{align}\label{csum1}
 &\underset{\substack{W\\1/\sqrt{\tau_1\tau_k}\leq|c|<D/\sqrt{b_{\vecm,j_{i_0}}}}}{\sum}|c|^{2(s-n)}\nonumber\\
 &=\underset{\substack{W\\1/\sqrt{\tau_1\tau_k}\leq|c|<D/\sqrt{b_{\vecm,j_{n}}}}}{\sum}|c|^{2(s-n)}+\sum_{i=i_0}^{n-1}\underset{\substack{W\\D/\sqrt{b_{\vecm,j_{i+1}}}\leq|c|<D/\sqrt{b_{\vecm,j_{i}}}}}{\sum}|c|^{2(s-n)}\\
&=O(1)+\sum_{i=i_0}^{n-1}O\Big((b_{\vecm,j_{i+1}})^{\,i-s}\prod_{k=i+1}^nb_{\vecm,j_k}\Big)=O(1),\nonumber
\end{align}
where the implied constant depends only on $\Gamma$, $\phi$ and $\Omega$.
Next, for any $\ell\in\{1,\ldots,i_0-1\}$, if we set
$\nu=2(n+1-\ell-\delta)$
then $\nu>2(n-\ell)$, and hence 
\begin{align}\label{csum2}
 \underset{\substack{W\\C/\sqrt{b_{\vecm,j_{\ell+1}}}\leq|c|<D/\sqrt{b_{\vecm,j_{\ell}}}}}{\sum}|c|^{2(\ell+\delta-n-1)}=O\Big((b_{\vecm,j_{\ell+1}})^{1-\delta}\prod_{k=\ell+1}^nb_{\vecm,j_k}\Big),
\end{align}
where the implied constant depends only on $\Gamma$, $\Omega $ and $\delta$. Finally, using $\nu=2(n+1-\delta)>2n$, we get
\begin{align}\label{csum3}
 \underset{\substack{W\\C/\sqrt{b_{\vecm,j_{1}}}\leq|c|}}{\sum}|c|^{2(\delta-n-1)}=O\Big((b_{\vecm,j_{1}})^{1-\delta}\prod_{k=1}^nb_{\vecm,j_k}\Big),
\end{align}
where again the implied constant depends only on $\Gamma$, $\Omega $ and $\delta$. Using \eqref{bdef}, \eqref{csum1}, \eqref{csum2} and \eqref{csum3} we find that \eqref{thirdstep} is bounded by
\begin{align}\label{finalthirdstep}
 &O\bigg(\Big(2^{-|m_{j_{i_0}}|}(M_{j_{i_0}}+1)\Big)^{i_0+\delta-s-1}\Big(\prod_{k=i_0+1}^n\big(2^{-|m_{j_{k}}|}(M_{j_{k}}+1)\big)\Big)\big(\log(2|\vecM|)\big)^{n}\bigg).
\end{align}
In particular, when $\vecm=(0,\ldots,0)$ the permutation
$j_1,\ldots,j_n$ is such that $M_{j_1}\geq\ldots\geq M_{j_n}$,
and \eqref{finalthirdstep} equals
\begin{align}\label{fourthstep}
O\Big((M_{j_{i_0}}+1)^{i_0+\delta-s-1}\Big(\prod_{k=i_0+1}^n(M_{j_k}+1)\Big)\big(\log(2|\vecM|)\big)^{n}\Big).
\end{align}
For any other $\vecm$ we find by inspection that \eqref{finalthirdstep}
(for $j_1,\ldots,j_n$
with $b_{\vecm,j_1}\leq\ldots\leq b_{\vecm,j_n}$)
is majorized by \eqref{fourthstep}
(for $j_1,\ldots,j_n$ with $M_{j_1}\geq\ldots\geq M_{j_n}$). Since there are $O\big(\big(\log(2|\vecM|)\big)^{n}\big)$ such $\vecm$ we find that \eqref{readytoestimate} is bounded by
\begin{align}\label{fourthstep1}
O\Big((M_{j_{i_0}}+1)^{i_0+\delta-s-1}\Big(\prod_{k=i_0+1}^n(M_{j_k}+1)\Big)\big(\log(2|\vecM|)\big)^{2n}\Big),
\end{align}
where the indices are ordered so that $M_{j_1}\geq\ldots\geq M_{j_n}$. From \eqref{Khatestim} we now get
that also $|\widehat{\mathcal{K}}|$ is bounded by \eqref{fourthstep1},
and from \eqref{khat} it follows that
\begin{multline}\label{news}
\underset{m_1+\cdots+m_n>0}{\sum_{m_1=0}^{M_1}\cdots\sum_{m_n=0}^{M_n}}c_{m_1\vecom_1^*+\cdots+m_n\vecom_n^*}|m_1\vecom_1^*+\cdots+m_n\vecom_n^*|^{\delta-n/2-1}e^{2\pi i(m_1\alpha_1+\cdots+m_n\alpha_n)}\\
=O\bigg((M_{j_{i_0}}+1)^{i_0+\delta-s-1}\Big(\prod_{k=i_0+1}^n(M_{j_k}+1)\Big)\big(\log(2|\vecM|)\big)^{2n}\bigg),
\end{multline}
for all $\vecM\in\Z_{\geq0}^n\setminus\{\vec0\}$, $(\alpha_1,\ldots,\alpha_n)\in\R^n$ and all admissible $\delta\in(0,1)$. 

We call the sum in the left hand side of \eqref{news} $S(M_1,\ldots,M_n)$ and we define $S(0,\ldots,0):=0$. We furthermore define
\begin{align*}
g(x_1,\ldots,x_n):=|x_1\vecom_1^*+\cdots+x_n\vecom_n^*|^{n/2+1-\delta},
\end{align*}
and note that $g$ is smooth in $\R^n\setminus\{\vec0\}$. Using Corollary \ref{corsumbp} and \eqref{news} we get
\begin{align}\label{IBP}
 &\underset{m_1+\cdots+m_n>0}{\sum_{m_1=0}^{M_1}\cdots\sum_{m_n=0}^{M_n}}c_{m_1\vecom_1^*+\cdots+m_n\vecom_n^*}e^{2\pi i(m_1\alpha_1+\cdots+m_n\alpha_n)}\\
&=\underset{A\subset N}{\sum}(-1)^{|A|}\underset{\prod_{j\in A}[0,M_j]}{\int} g_{A,N\setminus A}(\vecx)S_{A,N\setminus A}(\vecx)\,d\vecx\nonumber\\
&=O\bigg((M_{j_{i_0}}+1)^{i_0+\delta-s-1}\Big(\prod_{k=i_0+1}^n(M_{j_k}+1)\Big)\big(\log(2|\vecM|)\big)^{2n}\bigg)\nonumber\\
&\hspace{119pt}\times\bigg\{\underset{A\subset N}{\sum}\underset{\prod_{j\in A}[0,M_j]}{\int} |g_{A,N\setminus A}(\vecx)|\,d\vecx \bigg\}.\nonumber
\end{align}
(Recall that $N=\{1,\ldots,n\}$.) In order to get a bound on the last factor we recall from \eqref{aa} and \eqref{bb} that for all $A$ we have
\begin{align*}
 \underset{\prod_{j\in A}[0,M_j]}{\int} |g_{A,N\setminus A}(\vecx)|\,d\vecx=O\big(|\vecM|^{n/2+1-\delta}\big).
\end{align*}
Using this estimate in \eqref{IBP} we arrive at
\begin{multline*}
 \underset{m_1+\cdots+m_n>0}{\sum_{m_1=0}^{M_1}\cdots\sum_{m_n=0}^{M_n}}c_{m_1\vecom_1^*+\cdots+m_n\vecom_n^*}e^{2\pi i(m_1\alpha_1+\cdots+m_n\alpha_n)}\\
 =O\bigg(|\vecM|^{n/2+1-\delta}(M_{j_{i_0}}+1)^{i_0+\delta-s-1}\Big(\prod_{k=i_0+1}^n(M_{j_k}+1)\Big)\big(\log(2|\vecM|)\big)^{2n}\bigg).
\end{multline*}
Finally we add $c_{\vec0}$, choose $\delta=1-\ve$ (with $\ve$ small) and conclude that
\begin{multline*}
 \sum_{m_1=0}^{M_1}\cdots\sum_{m_n=0}^{M_n}c_{m_1\vecom_1^*+\cdots+m_n\vecom_n^*}e^{2\pi i(m_1\alpha_1+\cdots+m_n\alpha_n)}\\
 =O\Big(|\vecM|^{n/2+2\ve}(M_{j_{i_0}}+1)^{i_0-s}\prod_{k=i_0+1}^n(M_{j_k}+1)\Big),
\end{multline*}
for all $\vecM\in\Z_{\geq0}^n\setminus\{\vec0\}$ and all $(\alpha_1,\ldots,\alpha_n)\in\R^n$.
\end{proof}

\begin{remark}
 In the same way we get that for all $(\ve_1,\ldots,\ve_n)\in\{±1\}^n$, all $\vecM=(M_1,\ldots,M_n)\in\Z_{\geq0}^n\setminus\{\vec0\}$ and all $(\alpha_1,\ldots,\alpha_n)\in\R^n$,
\begin{multline*}
  \sum_{m_1=0}^{M_1}\cdots\sum_{m_n=0}^{M_n}c_{\ve_1m_1\vecom_1^*+\cdots+\ve_nm_n\vecom_n^*}e^{2\pi i(\ve_1m_1\alpha_1+\cdots+\ve_nm_n\alpha_n)}\\=O\Big(|\vecM|^{n/2+\ve}(M_{j_{i_0}}+1)^{i_0-s}\prod_{k=i_0+1}^n(M_{j_k}+1)\Big),
 \end{multline*}
where the implied constant depends only on $\Gamma$, $\phi$, $\Omega$ and $\ve$.
\end{remark}

\begin{remark}
 Note that the right hand side in \eqref{lincomb'} is $O\big(|\vecM|^{3n/2-s+\ve}\big)$. Here the exponent $\frac{3n}{2}-s+\ve$ is essentially optimal, as the following proposition shows. (Compare \cite[Prop.\ 5.1$'$]{andreas} in the $2$-dimensional case.)
\end{remark}

\begin{prop}
 Let $\phi$ be as in Theorem \ref{Lemma1'}. Choose $k\in\{1,\ldots,\kappa\}$ such that $c_{\vec0}^{(k)}\neq0$ (cf.\ \eqref{fi1}), and let $\vecal=\alpha_1\vecom_1+\cdots+\alpha_n\vecom_n\in\R^n$ be any cusp equivalent to $\eta_k$. Then, for at least one choice of $(\ve_1,\ldots,\ve_n)\in\{±1\}^n$, there exists some $c>0$ and infinitely many $\vecM=(M_1,\ldots,M_n)\in\Z_{\geq0}^n$ such that
\begin{align}\label{cbound}
 \bigg|\sum_{m_1=0}^{M_1}\cdots\sum_{m_n=0}^{M_n}c_{\ve_1m_1\vecom_1^*+\cdots+\ve_nm_n\vecom_n^*}e^{2\pi i(\ve_1m_1\alpha_1+\cdots+\ve_nm_n\alpha_n)}\bigg|>c|\vecM|^{3n/2-s}.
\end{align}
\end{prop}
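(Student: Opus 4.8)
The plan is to argue by contradiction, using a signed generalisation of the integral $\widehat{\mathcal K}$ already exploited in the proof of Theorem~\ref{Lemma1'}. First I would fix $\gamma\in\Gamma$ with $\gamma(\vecal)=\eta_k$, so that $T:=A_k\gamma^{-1}\in A_k\Gamma$ satisfies $T(\vecal)=\infty$; writing $T=\matr abcd$ this means $-c^{-1}d=\vecal=\alpha_1\vecom_1+\cdots+\alpha_n\vecom_n$, whence by \eqref{ycoord}
\[
y_{T(u_1\vecom_1+\cdots+u_n\vecom_n+yi_n)}=\frac{y}{|c|^2\big(|(u_1-\alpha_1)\vecom_1+\cdots+(u_n-\alpha_n)\vecom_n|^2+y^2\big)},
\]
and by the $\Gamma$-invariance of $\phi$ together with \eqref{fi1}, $\phi(u_1\vecom_1+\cdots+u_n\vecom_n+yi_n)=c_{\vec0}^{(k)}\,(y_{T(\cdots)})^{n-s}+O\big(e^{-\pi\mu_0\,y_{T(\cdots)}}\big)$ whenever $y_{T(\cdots)}$ is large, while $\phi=O(\mathcal Y_\Gamma^{\,n-s})$ everywhere. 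By the pigeonhole principle it suffices to show that for infinitely many $\vecM=(M,\ldots,M)$ at least one of the $2^n$ sums occurring in \eqref{cbound} exceeds $c|\vecM|^{3n/2-s}$ in absolute value; moreover one may freely replace the sharp cutoffs $\sum_{m_j=0}^{M}$ by smoothly truncated (Fej\'er-type) sums $\sum_{0\le m_j\le M}\prod_j(1-\tfrac{m_j}{M+1})$, since by summation by parts (Corollary~\ref{corsumbp}) a lower bound for the latter forces one for the box sums of \eqref{cbound}.

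Next I would introduce, for each sign vector $\vec\ve=(\ve_1,\ldots,\ve_n)\in\{\pm1\}^n$ and each admissible $\delta\in(0,1)$ with $s+1-\delta<n$ and $\delta-s\notin\Z$, the integral (cf.\ the quantity $\widehat{\mathcal K}$ in the proof of Theorem~\ref{Lemma1'})
\[
\widehat{\mathcal K}_{\vec\ve}:=\int_0^{\infty}\!\int_{I_{\vecal}}\phi(u_1\vecom_1+\cdots+u_n\vecom_n+yi_n)\Big(\prod_{j=1}^n\sum_{m_j=0}^{M}e^{2\pi i\ve_jm_j(\alpha_j-u_j)}-1\Big)\,\frac{d\vecu\,dy}{y^{\delta}},
\]
with $I_{\vecal}=\prod_{i=1}^n[\alpha_i-\tfrac12,\alpha_i+\tfrac12]$. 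The ``$-1$'' kills the constant term $c_{\vec0}y^{n-s}$ of $\phi$, so the double integral converges absolutely exactly as for \eqref{finite}, and Fourier expansion of $\phi$ at $\infty$ together with \cite[p.\ 388(8)]{watson} gives $\widehat{\mathcal K}_{\vec\ve}=c(\delta,s,n)\,\Sigma^{\sharp}_{\vec\ve}(M)$, where $\Sigma^{\sharp}_{\vec\ve}(M):=\sum_{\vec0\neq\vecm,\,0\le m_j\le M}c_{\ve_1m_1\vecom_1^*+\cdots+\ve_nm_n\vecom_n^*}\,|\ve_1m_1\vecom_1^*+\cdots+\ve_nm_n\vecom_n^*|^{\delta-n/2-1}e^{2\pi i\sum\ve_jm_j\alpha_j}$ and $c(\delta,s,n)=\tfrac{\pi^{\delta-n/2-1}}4\Gamma(\tfrac{n+1}2-\tfrac{\delta+s}2)\Gamma(\tfrac{1+s}2-\tfrac\delta2)$, which is finite and nonzero (both $\Gamma$-arguments stay positive since $s<n$ and $\delta<1\le n-s+1$). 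Now assume, contrary to \eqref{cbound}, that $\sum_{m_1=0}^{M}\cdots\sum_{m_n=0}^{M}c_{\ve_1m_1\vecom_1^*+\cdots+\ve_nm_n\vecom_n^*}e^{2\pi i\sum\ve_jm_j\alpha_j}=o(M^{3n/2-s})$ as $M\to\infty$ for \emph{every} $\vec\ve$. Applying the signed analogue of Corollary~\ref{corsumbp} with $g(\vecx)=|x_1\vecom_1^*+\cdots+x_n\vecom_n^*|^{n/2+1-\delta}$ (whose derivatives obey \eqref{gder}) and carrying out the resulting integrals as in \eqref{aa}--\eqref{bb}, one obtains $\Sigma^{\sharp}_{\vec\ve}(M)=o(M^{n-s-1+\delta})$, hence $\widehat{\mathcal K}_{\vec\ve}=o(M^{n-s-1+\delta})$, for every $\vec\ve$; here one fixes $\delta$ with $1-(n-s)<\delta<1$ so that the exponent $n-s-1+\delta$ is positive.

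To contradict this I would estimate $\sum_{\vec\ve}\widehat{\mathcal K}_{\vec\ve}$ directly. Since $\sum_{\vec\ve}\prod_j\sum_{m_j=0}^{M}e^{2\pi i\ve_jm_jv_j}=\prod_j\big(D_M(v_j)+1\big)$ with $D_M(v)=\sum_{|m|\le M}e^{2\pi imv}$, expanding the product and discarding the contributions indexed by proper subsets of the coordinates (which, by Theorem~\ref{Lemma1'}, are $O(M^{3n/2-s-1+\ve})$ for the unweighted sums and hence $o(M^{n-s-1+\delta})$ after the $|\cdot|^{\delta-n/2-1}$-weighting), and using $\int_{I_{\vecal}}\big(\prod_jD_M(\alpha_j-u_j)-1\big)\,d\vecu=0$, one gets
\[
\sum_{\vec\ve}\widehat{\mathcal K}_{\vec\ve}=\int_0^{\infty}\!\int_{I_{\vecal}}\big(\phi(u_1\vecom_1+\cdots+u_n\vecom_n+yi_n)-c_{\vec0}y^{n-s}\big)\Big(\prod_{j=1}^nD_M(\alpha_j-u_j)-1\Big)\frac{d\vecu\,dy}{y^{\delta}}+o\big(M^{n-s-1+\delta}\big).
\]
Unfolding $\mathcal Y_\Gamma$ in this integral as in \eqref{readytoestimate}, the term coming from the distinguished $\gamma$ above dominates: on the main lobe of the kernel, i.e.\ $|\alpha_j-u_j|\ll 1/M$ for all $j$ and $y\ll1/M$, one has $\prod_jD_M(\alpha_j-u_j)>0$ and, by \eqref{fi1}, $\phi$ equals $c_{\vec0}^{(k)}(y_{T(\cdots)})^{n-s}$ to leading order with constant sign, so there is no cancellation; rescaling $u_j-\alpha_j$ and $y$ by $1/M$ and using that $\int_{I_{\vecal}}\big(|(u_1-\alpha_1)\vecom_1+\cdots+(u_n-\alpha_n)\vecom_n|^2+y^2\big)^{-(n-s)}d\vecu$ stays bounded (indeed tends to a positive constant) as $y\to0$, because $s>n/2$, this region contributes $c_{\vec0}^{(k)}\,\kappa\,M^{n-s-1+\delta}(1+o(1))$ with $\kappa>0$ a universal constant (an integral of a Bessel-potential kernel against a positive Fej\'er kernel — positivity is exactly why the smoothing in paragraph~1 is convenient), whereas every other pair $(\ell,\gamma')$ in the unfolding, and the part of the $(\vecu,y)$-integral outside the main lobe, contribute only $o(M^{n-s-1+\delta})$ by the $W$-counting bounds of Corollary~\ref{corc} and Lemma~\ref{csum} applied exactly as in the proof of Theorem~\ref{Lemma1'}. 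Hence $|\sum_{\vec\ve}\widehat{\mathcal K}_{\vec\ve}|\gg M^{n-s-1+\delta}$ for all large $M$, contradicting the previous paragraph; therefore \eqref{cbound} holds for some $\vec\ve$, some $c>0$ and infinitely many $\vecM$.

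The main obstacle is precisely the direct estimate in paragraph~3: the $W$-counting machinery of the proof of Theorem~\ref{Lemma1'} yields error terms of the \emph{same} order $M^{n-s-1+\delta}$ (up to logarithms) as the main term, so it must be re-run keeping careful track that everything except the distinguished $\gamma$ and the main lobe of the kernel is genuinely of strictly smaller order, and one must extract a sign-definite, nonzero main term — this is where the hypothesis $c_{\vec0}^{(k)}\neq0$ enters, and where the smoothing (Fej\'er) step and the admissibility conditions on $\delta$ (namely $1-(n-s)<\delta<1$ and $\delta-s\notin\Z$, so that $n-s-1+\delta>0$, $c(\delta,s,n)\neq0$, and the summation-by-parts bookkeeping of Theorem~\ref{Lemma1'} applies verbatim) are used. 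The remaining steps — the pigeonhole over the finitely many sign vectors, the discarding of the lower-dimensional subsums via Theorem~\ref{Lemma1'}, and the summation-by-parts passage between $\widehat{\mathcal K}_{\vec\ve}$ and the box sums of \eqref{cbound} — are routine adaptations of arguments already in the paper.
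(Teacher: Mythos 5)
Your route is genuinely different from the paper's, and, as you yourself flag at the end, it has a real gap in paragraph~3 that is not a routine adaptation of existing steps. The difficulty is structural, not bookkeeping: the unfolding of $\mathcal{Y}_\Gamma(\cdot)^{n-s}$ via \eqref{readytoestimate}--\eqref{secondstep4} already produces, from a \emph{single} matrix $W$ in each dyadic box, a contribution of order $M^{n-s-1+\delta}$ (up to $\log^{2n}(M)$), and the sum over dyadic boxes, \eqref{fourthstep1}, is again of that same order times $\log^{2n}(M)$. So there is no spare power of $M$ to absorb error terms; the machinery of Theorem~\ref{Lemma1'} only provides an \emph{upper} bound that matches your claimed main term modulo logarithms, and no ingredient in the paper isolates the ``distinguished $\gamma$'' as the dominant contributor in a sign-definite way. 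You would need both sharper counting ($\log$-free, or with negative powers of $\log M$, in the off-main-lobe and off-$\gamma$ regions) and a quantitative positivity statement; neither is available. There is also a secondary inconsistency: the Fej\'er smoothing you invoke in paragraph~1 (needed precisely because you want a non-negative kernel on the main lobe) is not actually carried through paragraph~3, where the displayed kernel is $\prod_j(D_M+1)$ with $D_M$ the Dirichlet kernel, which is not non-negative; as written, the ``no cancellation on the main lobe'' claim is unjustified.

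The paper avoids all of this by a much shorter argument that never integrates over $y$ against any kernel. Assume \eqref{cbound} fails for every $\vec\ve$ and $c>0$. Substitute the Fourier expansion \eqref{noncusp} into $\phi(\vecal+yi_n)$ and apply summation by parts \eqref{intbp*} to each partial sum $S_D^+$ with weight $g(\vecx)=y^{n/2}K_{s-n/2}(2\pi|\cdots|y)$; the Bessel bounds \eqref{chic} and the exponential decay of $K_{s-n/2}$, combined with the hypothesised $|S(\vecX)|\leq c|\vecX|^{3n/2-s}$ for $|\vecX|\geq R(c)$, give $\phi(\vecal+yi_n)=O(y^{n-s})+O(cy^{s-n})$ as $y\to0$, with implied constants independent of $c$. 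Letting $c\to0$ yields $\phi(\vecal+yi_n)=o(y^{s-n})$. But choosing $W\in\Gamma$ with $W(\eta_k)=\vecal$ and writing $A_kW^{-1}=\matr{a}{b}{c}{d}$ (so $c\ne0$), the Fourier expansion at $\eta_k$ and $\Gamma$-invariance give $\phi(\vecal+yi_n)\sim c_{\vec0}^{(k)}|c|^{2(s-n)}y^{s-n}$, a contradiction. This pointwise comparison of decay rates at a cusp replaces your entire Dirichlet/Fej\'er-kernel and $W$-unfolding machinery.
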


\begin{proof}
Assume that the conclusion above is not true, i.e.\ assume that for all \linebreak$(\ve_1,\ldots,\ve_n)\in\{±1\}^n$ and all $c>0$ there are only finitely many $\vecM\in\Z_{\geq0}^n$ such that \eqref{cbound} holds.
(Here, and in the rest of this proof, $\vecal$ is fixed.) We write
\begin{align}\label{bla}
  \phi(\vecal+yi_n)=c_{\vec0}y^{n-s}+\underset{\substack{D\subset\{1,\ldots,n\}\\D\neq\emptyset}}{\sum}\underset{\vecm\in R_D}{\sum}c_{\vecmu}y^{n/2}K_{s-n/2}(2\pi|\vecmu|y)e^{2\pi i\langle\vecmu,\vecal\rangle},
 \end{align}
where $R_D$ is as in \eqref{RD} and where $\vecmu=m_1\vecom_1^*+\cdots+m_n\vecom_n^*$ as usual. We estimate each inner sum in \eqref{bla} seperately. It will be sufficient to estimate, for each nonempty
$D=\{j_1,\ldots,j_{|D|}\}\subset\{1,\ldots,n\}$,
\begin{align}\label{bestSD}
 & S_D^+:=\underset{\vecm\in R_D^+}{\sum}c_{\vecmu}y^{n/2}K_{s-n/2}(2\pi|\vecmu|y)e^{2\pi i\langle\vecmu,\vecal\rangle},
\end{align}
where $R_D^+=R_D\cap(\Z_{\geq0})^n$. The remaining parts of the $R_D$-sum can then be estimated in the same way using our assumption with various $\{\pm1\}^n$-vectors $(\varepsilon_1,\ldots,\varepsilon_n)$. In order to apply summation by parts to \eqref{bestSD} we let $a(0,\ldots,0)=0$ and
\begin{align*}
a(m_{j_1},\ldots,m_{j_{|D|}})
:=
c_{m_{j_1}\vecom_{j_1}^*+\cdots+m_{j_{|D|}}\vecom_{j_{|D|}}^*}e^{2\pi i(m_{j_1}\alpha_{j_1}+\cdots+m_{j_{|D|}}\alpha_{j_{|D|}})}
\end{align*}
for $(m_{j_1},\ldots,m_{j_{|D|}})\neq\vec0$. We also introduce
\begin{align*}
 g(x_{j_1},\ldots,x_{j_{|D|}}):=y^{n/2}K_{s-n/2}\big(2\pi|x_{j_1}\vecom_{j_1}^*+\cdots+x_{j_{|D|}}\vecom_{j_{|D|}}^*|y\big)
\end{align*}
and
\begin{align*}
S(X_{j_1},\ldots,X_{j_{|D|}}):=\underset{0\leq m_{j_1}\leq X_{j_1}}{\sum}\cdots\underset{0\leq m_{j_{|D|}}\leq X_{j_{|D|}}}{\sum}a(m_{j_1},\ldots,m_{j_{|D|}}).
\end{align*}
According to \eqref{intbp*},
\begin{align}\label{bestSintbp}
 S_D^+=(-1)^{|D|}\underset{A\subset D}{\sum}\underset{\prod_{j\in A}[1,\infty)}{\int} g_{A,\emptyset}(\vecx)S_{A,\emptyset}(\vecx)\,d\vecx.
\end{align}
It is clear that $S_{\emptyset,\emptyset}=0$ and thus that the corresponding term in the sum in \eqref{bestSintbp} is zero. We now consider nonempty $A$. Let $c$ be given. By our assumption there exists a number $R>1$, depending on $c$, such that $|S(\vecX)|\leq c|\vecX|^{3n/2-s}$ for all $\vecX=(X_{j_1},\ldots,X_{j_{|D|}})$ satisfying $|\vecX|\geq R$. Let $T:=\sup_{|\vecX|\leq R}|S(\vecX)|$.

Recall that the derivatives of $g$ in \eqref{bestSintbp} correspond to multi-indices of the type $\ell=(\ell_1,\ldots,\ell_{|D|})$ satisfying $\ell_i\leq1$. One shows by induction that for each such $\ell$, $\frac{\partial^{|\ell|}}{\partial x_{j_1}^{\ell_1}\cdots\partial x_{j_{|D|}}^{\ell_{|D|}}}g(\vecx)$ is a finite sum of terms of the form
\begin{multline*}
y^{n/2}K_{s-n/2}^{(m)}\big(2\pi|x_{j_1}\vecom_{j_1}^*+\cdots+x_{j_{|D|}}\vecom_{j_{|D|}}^*|y\big)\nonumber\\
\times(2\pi y)^m\prod_{i=1}^m\frac{\partial^{|\ell^i|}}{\partial x_{j_1}^{l_1^i}\cdots\partial x_{j_{|D|}}^{l_{|D|}^i}}\big(|x_{j_1}\vecom_{j_1}^*+\cdots+x_{j_{|D|}}\vecom_{j_{|D|}}^*|\big)
\end{multline*}
where $0\leq m\leq|\ell|$, $\ell^1,\dots,\ell^m$ are multi-indices of length $\geq 1$, and $\sum_{i=1}^m|\ell_i|=|\ell|$. Using the bounds \eqref{chia} and \eqref{chic} and recalling  \eqref{quadraticform} we get
\begin{align}\label{bestg1}
\frac{\partial^{|\ell|}}{\partial x_{j_1}^{\ell_1}\cdots\partial x_{j_{|D|}}^{\ell_{|D|}}}g(\vecx)=O\Big(y^{n-s}|\vecx|^{n/2-s-|\ell|}\Big),\hspace{10pt} \forall\vecx\in(\R_{>0})^{|D|}.
\end{align}
By \eqref{quadraticform} there exists a positive constant $k_3$ such that $k_3|\vecx|\leq|x_{j_1}\vecom_{j_1}^*+\cdots+x_{j_{|D|}}\vecom_{j_{|D|}}^*|$ for all $\vecx\in\R^{|D|}$. 
Moreover, it follows from \cite[pp.\ 79,202]{watson} that $K_{s-n/2}^{(m)}(u)=O(u^{-1/2}e^{-u})$ for $u\geq 2\pi k_3$ and $m\geq0$. Hence, for $|\vecx|\geq\frac{1}{y}$ and the same multi-index $\ell$ as in \eqref{bestg1}, we get the stronger bound
\begin{align*}
&\frac{\partial^{|\ell|} }{\partial x_{j_1}^{\ell_1}\cdots\partial x_{j_{|D|}}^{\ell_{|D|}}}g(\vecx)=O\Big(y^{(n-1)/2+|\ell|}|\vecx|^{-1/2}e^{-2\pi k_3|\vecx|y}\Big).
\end{align*}

Returning to the integrals in \eqref{bestSintbp} we get, for $y<\frac{1}{R}$,
\begin{align*}
 &\underset{\prod_{j\in A}[1,\infty)}{\int} g_{A,\emptyset}(\vecx)S_{A,\emptyset}(\vecx)\,d\vecx\\
 &=O\big(y^{n-s}T\big)\underset{\substack{\prod_{j\in A}[1,\infty)\\|\vecx|\leq R}}{\int}|\vecx|^{n/2-s-|A|}\,d\vecx
 +O\big(y^{n-s}c\big)\underset{\substack{\prod_{j\in A}[1,\infty)\\R<|\vecx|\leq \frac{1}{y}}}{\int}|\vecx|^{2n-2s-|A|}\,d\vecx\\
&+O\big(y^{(n-1)/2+|A|}c\big)\underset{\substack{\prod_{j\in A}[1,\infty)\\\frac{1}{y}<|\vecx|}}{\int}|\vecx|^{(3n-1)/2-s}e^{-2\pi k_3|\vecx|y}\,d\vecx
=O\big(y^{n-s}T\big)+O\big(y^{s-n}c\big).
\end{align*}
Hence it follows from \eqref{bla} and \eqref{bestSintbp} that
\begin{align}\label{bestphi}
 |\phi(\vecal+yi_n)|=O\big(y^{n-s}\big)\big(|c_{\vec0}|+T\big)+O\big(y^{s-n}c\big)\qquad\text{as }\:y\to0,
\end{align}
where the implied constants only depend on $\Gamma$, $\phi$ and $\Omega$. Now, since $s-n<0<n-s$, $c>0$ is arbitrary and the implied constants in \eqref{bestphi} do not depend on $c$, it follows that
\begin{align}\label{best}
 \phi(\vecal+yi_n)=o(y^{s-n})\qquad\text{as } y\to0.
\end{align}

In order to reach a contradiction we choose $W\in\Gamma$ such that $\vecal=W(\eta_k)$ and write $A_kW^{-1}=\matr{a}{b}{c}{d}$. Since $A_kW^{-1}(\vecal)=\infty$ we have $c\vecal+d=0$, which shows that $c\neq0$. Thus, using \eqref{ycoord}, we find that
\begin{align*}
 y_{A_kW^{-1}(\vecal+yi_n)}=\frac{1}{|c|^2y}
\end{align*}
for all $y>0$. Finally it follows from \eqref{fi1} and the $\Gamma$-invariance of $\phi$ that
\begin{align*}
 \phi(\vecal+yi_n)\sim c_{\vec0}^{(k)}\big(y_{A_kW^{-1}(\vecal+yi_n)}\big)^{n-s}=c_{\vec0}^{(k)}|c|^{2(s-n)}y^{s-n}
\end{align*}
as $y\to0$. This contradicts the result in \eqref{best}.
\end{proof}

We next study the horosphere integral of non-cuspidal eigenfunctions.

\begin{prop}\label{chinoncuspprop}
 Let $\chi:\R^n\to \R$ be a smooth function with compact support. Let $\ve>0$ and let $\phi$ be a non-cuspidal eigenfunction with eigenvalue $\lambda>0$. Define $s$ via $\lambda=s(n-s)$, $s\in(n/2,n)$. Then the following holds, uniformly over all $0<y<1$, all $\vecgam=(\gamma_1,\ldots,\gamma_n)\in\R^n$ and all $\delta_1,\ldots,\delta_n$ satisfying $0<\delta_1,\ldots,\delta_n\leq 1$:
\begin{multline*}
 \frac{1}{\delta_1\cdots\delta_n}\int_{\R^n}\chi_{\vecdel,\vecgam}(\vecu)\phi(u_1\vecom_1+\cdots+u_n\vecom_n+yi_n)\,d\vecu\\
 =O\Big(\|\chi\|_{n,1}y^{n-s-\ve}\big(\underset{i\in\{1,\ldots,n\}}{\min}\delta_i\big)^{2(s-n)}\Big),
\end{multline*}
 where the implied constant depends only on $\Gamma$, $\phi$, $\Omega$ and $\ve$.
\end{prop}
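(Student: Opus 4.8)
The plan is to run through the proof of Proposition~\ref{chicuspprop} almost verbatim, with two changes: the coefficient bound of Proposition~\ref{Lemma1} gets replaced by the (weaker) bound of Theorem~\ref{Lemma1'}, and the non-cuspidal Fourier expansion~\eqref{noncusp} carries an extra zeroth term. Writing the horosphere integral as in~\eqref{hoppasattdetsnartarklart} and substituting~\eqref{noncusp} together with the Fourier expansion~\eqref{chiseries} of $\Psi_{\vecdel,\vecgam}$, term-by-term integration (justified since $\chi$ is smooth of compact support, exactly as for~\eqref{Ifirst}) gives
\begin{multline*}
\frac{1}{\delta_1\cdots\delta_n}\int_{\R^n}\chi_{\vecdel,\vecgam}(\vecu)\phi(u_1\vecom_1+\cdots+u_n\vecom_n+yi_n)\,d\vecu=c_{\vec0}\langle\chi\rangle\,y^{n-s}\\
+\frac{1}{\delta_1\cdots\delta_n}\underset{\substack{D\subset\{1,\ldots,n\}\\D\neq\emptyset}}{\sum}\underset{\vecm\in R_D}{\sum}c_{\vecmu}\,y^{n/2}K_{s-n/2}(2\pi|\vecmu|y)\,\widehat{\chi_{\vecdel,\vecgam}}(-\vecm),
\end{multline*}
with $R_D$ as in~\eqref{RD} and $\vecmu=m_1\vecom_1^*+\cdots+m_n\vecom_n^*$. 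The first term is $O(y^{n-s})$ with implied constant depending on $\phi$ and $\|\chi\|_{0,1}$, hence dominated by the claimed error term since $\delta_{\min}:=\min_{1\le i\le n}\delta_i\le1$ and $s<n$. For the double sum, the symmetrization (as in the remark following Theorem~\ref{Lemma1'}) reduces matters to estimating, for each nonempty $D=\{j_1,\ldots,j_{|D|}\}$, the part $S_D^+$ of the inner sum running over $R_D^+:=R_D\cap\Z_{\geq0}^n$.

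With $Y:=\delta_{\min}^{-1}$ and the cutoff $\psi$ of the proof of Proposition~\ref{chicuspprop} ($\psi\equiv1$ on $(0,1]$, $\psi\equiv0$ on $[2,\infty)$), I would split $S_D^+=S_D^1+S_D^2$ just as there, $\psi(Y^{-1}|\vecm|)$ going with $S_D^1$ and $1-\psi(Y^{-1}|\vecm|)$ with $S_D^2$. In both cases the key object, after interchanging the $\vecu$-integration with the sum and applying repeated summation by parts (Corollary~\ref{corsumbp}, formula~\eqref{intbp*}) in the variables $(m_{j_1},\ldots,m_{j_{|D|}})$, is a sum over $A\subset D$ of integrals $\int_{\prod_{j\in A}[1,\infty)}g_{A,\emptyset}(\vecx)S_{A,\emptyset}(\vecx)\,d\vecx$. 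Here Theorem~\ref{Lemma1'} supplies, on the range $|\vecx|\ge1$, the crude bound $S_{A,\emptyset}(\vecx)=O(|\vecx|^{3n/2-s+\ve})$ (the consequence of~\eqref{lincomb'} recorded in the remark after it). Since $c_{\vec0}\neq0$ one first redefines the coefficient at $\vec0$ to be $0$; this changes the partial sums only by the bounded amount $c_{\vec0}$, leaves the above estimate intact, and makes the $A=\emptyset$ boundary term vanish exactly as in Proposition~\ref{chicuspprop}. The only difference from that proposition is thus that the factor $|\vecx|^{n/2}$ coming from $S_{A,\emptyset}$ there is now $|\vecx|^{3n/2-s}$, i.e. there is an extra $|\vecx|^{n-s}$ in every integrand.

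Feeding this into the $S_D^1$-estimate of Proposition~\ref{chicuspprop} (whose derivative bounds~\eqref{chia},~\eqref{chic}, with $\sigma=\Re s=s$, are unchanged) turns the radial integral $\int_1^{2Y}r^{n-\sigma-1}\,dr=O(Y^{n-\sigma})$ there into $\int_1^{2Y}r^{2n-2s-1}\,dr=O(Y^{2n-2s})$, using $2n-2s>0$; this gives $S_D^1=O(\|\chi\|_{0,1}\,y^{n-s-\ve}\,\delta_{\min}^{2(s-n)})$. For $S_D^2$ one first rewrites $\widehat{\chi_{\vecdel,\vecgam}}(-\vecm)$ via Lemma~\ref{partint} with $m=n$ and then proceeds as in Proposition~\ref{chicuspprop}; the extra $|\vecx|^{n-s}$ turns the radial integral $\int_Y^\infty r^{-\sigma-1}\,dr$ there into $\int_Y^\infty r^{n-2s-1}\,dr$, which converges precisely because $\phi$ is \emph{non-cuspidal}, i.e. $s>n/2$, and equals $O(Y^{n-2s})$; this gives $S_D^2=O(\|\chi\|_{n,1}\,y^{n-s-\ve}\,\delta_{\min}^{2(s-n)})$. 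Summing over the finitely many nonempty $D$ and sign patterns, and adding the $\vecm=\vec0$ term, completes the proof, since $Y^{2n-2s}=\delta_{\min}^{2(s-n)}$.

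The substance here is routine exponent-bookkeeping in the summation-by-parts integrals; I expect the estimates~\eqref{chia},~\eqref{chic} and the summation-by-parts framework of Proposition~\ref{chicuspprop} to transfer without change. The two genuinely new points — and the places where the hypothesis $s>n/2$ and the nonvanishing of $c_{\vec0}$ enter — are the convergence of the tail integral $\int_Y^\infty r^{n-2s-1}\,dr$ in the $S_D^2$-estimate, and the small amount of bookkeeping needed to discard the $\vec0$-coefficient from the truncated partial sums (and to see that the extra $c_{\vec0}\langle\chi\rangle y^{n-s}$ term is harmless). The tail-integral convergence is the one point that really requires the full strength of $s\in(n/2,n)$, and so is the main obstacle.
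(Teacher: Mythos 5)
Your proposal is correct and follows exactly the route the paper takes: the paper's proof is a one-paragraph pointer to the proof of Proposition~\ref{chicuspprop}, noting that the only changes are the extra $c_{\vec0}y^{n-s}\widehat{\chi_{\vecdel,\vecgam}}(\vec0)/(\delta_1\cdots\delta_n)$ term (bounded via \eqref{transfestim} with $k=0$) and the substitution of the weaker bound $S_{A,\emptyset}(\vecx)=O(|\vecx|^{3n/2-s+\ve})$ from Theorem~\ref{Lemma1'} for $O(|\vecx|^{n/2+\ve})$ from Proposition~\ref{Lemma1}. Your exponent bookkeeping — turning $Y^{n-\sigma}$ into $Y^{2n-2s}=\delta_{\min}^{2(s-n)}$ in both the $S_D^1$ and $S_D^2$ radial integrals, with convergence of the $S_D^2$ tail $\int_Y^\infty r^{n-2s-1}\,dr$ requiring $s>n/2$ — matches what the paper leaves implicit, and you correctly handle the redefinition of the $\vec 0$-coefficient before applying summation by parts.
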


\begin{proof}
 The proof is very similar to the proof of Proposition \ref{chicuspprop}. Using the Fourier expansions \eqref{chiseries}, \eqref{noncusp} and integrating term by term we get
 \begin{multline}\label{chinoncuspint}
  \frac{1}{\delta_1\cdots\delta_n}\int_{\R^n}\chi_{\vecdel,\vecgam}(\vecu)\phi(u_1\vecom_1+\cdots+u_n\vecom_n+yi_n)\,d\vecu=\frac{1}{\delta_1\cdots\delta_n}c_{\vec0}y^{n-s}\widehat{\chi_{\vecdel,\vecgam}}(\vec0)\\
+\frac{1}{\delta_1\cdots\delta_n}\underset{\substack{D\subset\{1,\ldots,n\}\\D\neq\emptyset}}{\sum}\underset{\vecm\in R_D}{\sum}c_{\vecmu}y^{n/2}K_{s-n/2}(2\pi|\vecmu|y)\widehat{\chi_{\vecdel,\vecgam}}(-\vecm).
 \end{multline}
(Recall the definition of $R_D$ from \eqref{RD}.) It follows from \eqref{transfestim} (with $k=0$) that
\begin{align*}
 \frac{1}{\delta_1\cdots\delta_n}c_{\vec0}y^{n-s}\widehat{\chi_{\vecdel,\vecgam}}(\vec0)=O\big(\|\chi\|_{0,1}y^{n-s}\big).
\end{align*}
We estimate each inner sum in \eqref{chinoncuspint} separately. As in the proof of Proposition \ref{chicuspprop}, given a nonempty $D=\{j_1,\ldots,j_{|D|}\}\subset\{1,\ldots,n\}$ it is enough to estimate
\begin{align}\label{SD*}
 & S_D^+:=\frac{1}{\delta_1\cdots\delta_n}\underset{\vecm\in R_D^+}{\sum}c_{\vecmu}y^{n/2}K_{s-n/2}(2\pi|\vecmu|y)\widehat{\chi_{\vecdel,\vecgam}}(-\vecm).
\end{align}
We split this sum into the parts $S_D^1$ and $S_D^2$ as in \eqref{SD12} and estimate these sums in the same way as in the proof of Proposition \ref{chicuspprop} except for the following minor difference: When estimating $S_{A,\emptyset}$ we use the bound
\begin{align*}
 S_{A,\emptyset}(\vecx)=O\big(|\vecx|^{3n/2-s+\ve}\big),
\end{align*}
 which is a weak form of the result in Theorem \ref{Lemma1'}. We find that
\begin{align*}
 S_D^+=O\Big(\|\chi\|_{n,1}y^{n-s-\ve}\big(\underset{i\in\{1,\ldots,n\}}{\min}\delta_i\big)^{2(s-n)}\Big),
\end{align*}
which proves the proposition.
\end{proof}

\subsection{The horosphere integral for the Eisenstein series}\label{Eisenchap}

The proofs here are similar to the proofs of Proposition \ref{cuspformintprop} and Proposition \ref{cuspimportant}.

\begin{prop}\label{eisensteinprop}
 Let $\chi:\R^n\to \R$ be a smooth function with compact support. Let $\varepsilon>0$ and $\ell\in\{1,...,\kappa\}$. Keep $0<y<1$ and $T\geq0$. Furthermore let $\vecgam=(\gamma_1,\ldots,\gamma_n)\in\R^n$ and let $\delta_1,\ldots,\delta_n$ be such that $0<\delta_1,\ldots,\delta_n\leq 1$. Then
 \begin{align*}
 &\frac{1}{\delta_1\cdots\delta_n}\int_{\R^n}\chi_{\vecdel,\vecgam}(\vecu)E_{\ell}\big(u_1\vecom_1+\cdots+u_n\vecom_n+yi_n,\sfrac{n}{2}+iT\big)\,d\vecu\\
 &\hspace{100pt}=O\Big(\|\chi\|_{n,1}(T+1)^{1/6+2\ve}\sqrt{W(T)}y^{n/2-\ve}\big(\underset{i\in\{1,\ldots,n\}}{\min}\delta_i\big)^{-n}\Big),
 \end{align*}
where the implied constant depends only on $\Gamma$, $\Omega$ and $\ve$.
\end{prop}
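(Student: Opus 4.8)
The plan is to follow the template of Proposition \ref{cuspformintprop}, substituting the Fourier expansion \eqref{fseries} of the Eisenstein series at the cusp $\eta_1=\infty$ instead of the Fourier expansion of a cusp form, and replacing Proposition \ref{cusprankin} with the strong Rankin-Selberg bound from Proposition \ref{FinalRS}. First I would write the horosphere integral of $E_\ell\big(\cdot,\sfrac n2+iT\big)$, using \eqref{hoppasattdetsnartarklart} and the Fourier expansion, as a constant term (coming from $\delta_{\ell1}y^{n/2+iT}+\varphi_{\ell1}(\sfrac n2+iT)y^{n/2-iT}$) plus a sum over $\vec0\neq\vecmu\in\Lambda_1^*$ exactly parallel to \eqref{Ifirst}. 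The constant term contributes only to the $\vecnu=\vec0$ Fourier mode of $\chi_{\vecdel,\vecgam}$; since $|y^{n/2\pm iT}|=y^{n/2}$ and $\widehat{\chi_{\vecdel,\vecgam}}(\vec0)=\big(\prod\delta_i\big)\langle\chi\rangle$, this piece is $O\big(\|\chi\|_{0,1}y^{n/2}\big)$, which is absorbed in the claimed bound (note $|\varphi_{\ell1}(\sfrac n2+iT)|$ is bounded by a fixed power of $W(T)$, or simply $O(1)$ on the critical line by \eqref{unitary}, so there is no extra $T$-dependence here). So the main term is
\begin{align*}
\mathcal{I}'=\frac{1}{\delta_1\cdots\delta_n}\sum_{\vec0\neq\vecmu\in\Lambda_1^*}a_{\vecmu}\big(\sfrac n2+iT\big)y^{n/2}K_{iT}(2\pi|\vecmu|y)\widehat{\chi_{\vecdel,\vecgam}}(-\vecmu),
\end{align*}
where I abuse notation identifying $\vecmu\leftrightarrow\vecm\in\Z^n$ via $\vecmu=m_1\vecom_1^*+\cdots+m_n\vecom_n^*$.

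Next I would apply the uniform $K$-Bessel bound \eqref{kbesselestim} and the Fourier-decay estimate \eqref{transfestim} with $k=n$, exactly as in the derivation of \eqref{Iest}, to obtain
\begin{align*}
\mathcal{I}'=O\Big(\|\chi\|_{n,1}y^{n/2-\ve}Y^n e^{-(\pi/2)T}(T+1)^{-1/3+\ve}\Big)\sum_{\vec0\neq\vecmu\in\Lambda_1^*}\big|a_{\vecmu}\big(\sfrac n2+iT\big)\big||\vecmu|^{-n-\ve}\min\big(1,e^{(\pi/2)T-2\pi|\vecmu|y}\big),
\end{align*}
with $Y=\max_i\delta_i^{-1}$. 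Call the sum $\Sigma'$. I would estimate $\Sigma'$ by partial summation in $|\vecmu|$: set $S(X):=\sum_{0<|\vecmu|\leq X}\big|a_{\vecmu}\big(\sfrac n2+iT\big)\big|$, and by Cauchy-Schwarz together with Proposition \ref{FinalRS},
\begin{align*}
S(X)=O\bigg(e^{(\pi/2)T}\Big(T+\frac{X^n}{(T+1)^{n-1}}\Big)^{1/2}X^{n/2}\Big(\log^+\big(\sfrac{X}{T+1}+T\big)+W(T)\Big)^{1/2}\bigg)
\end{align*}
for all $X\geq\sfrac{\mu_0}{2}$. This is the analogue of \eqref{SB}, but with the extra factor $\big(\log^+(\cdots)+W(T)\big)^{1/2}$. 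Then, integrating by parts against $f(X)=X^{-n-\ve}\min\big(1,e^{(\pi/2)T-2\pi yX}\big)$ and using the derivative bound \eqref{fder} exactly as in \eqref{intermediatebound}--\eqref{intb'}, the same computation goes through: the logarithmic and $W(T)$ factors only cost an extra $(T+1)^\ve\sqrt{W(T)}$ (absorbing $\log^+$ into $(T+1)^\ve$, since $W(T)\geq1$ and the relevant range of $X$ in the first integral is $X\ll T/y$ so $\log^+(\sfrac X{T+1}+T)\ll\log(2/y)\ll y^{-\ve}$, which I would fold into the overall $y^{-\ve}$; and in the exponentially decaying tail one simply bounds $W(T)^{1/2}$ out front). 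One concludes $\Sigma'=O\big(e^{(\pi/2)T}(T+1)^{1/2+\ve}\sqrt{W(T)}\big)$.

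Combining, $\mathcal{I}'=O\Big(\|\chi\|_{n,1}y^{n/2-\ve}Y^n(T+1)^{1/6+2\ve}\sqrt{W(T)}\Big)$, and since $Y^n=\big(\min_i\delta_i\big)^{-n}$ this is precisely the asserted bound (recall $e^{-(\pi/2)T}(T+1)^{-1/3+\ve}\cdot e^{(\pi/2)T}(T+1)^{1/2+\ve}=(T+1)^{1/6+2\ve}$). The main obstacle, as compared to the cusp-form case, is keeping the bookkeeping of the $\log^+$ and $W(T)$ factors from Proposition \ref{FinalRS} under control: one must verify that over the range of $X$ where $S(X)$ is actually used (namely $X\lesssim 1/(|c|^2 y)$-type ranges, or more simply $X\lesssim \max(T,1)/y$ in the non-exponentially-small region) the term $\log^+\big(\sfrac X{T+1}+T\big)$ is $O(T^\ve + y^{-\ve})$, so that it contributes nothing worse than the already-present $y^{-\ve}$ and $(T+1)^{2\ve}$ factors, while $\sqrt{W(T)}$ is pulled out as a clean multiplicative factor. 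Beyond that, the argument is a direct transcription of the proof of Proposition \ref{cuspformintprop}, with \eqref{fseries}, \eqref{kbesselestim}, \eqref{transfestim}, \eqref{fder} and Proposition \ref{FinalRS} in place of their cusp-form counterparts; I would also note in passing that the constant term contributes a term of exact size $\sim y^{n/2}$ which is harmless but explains why one cannot do better than the $y^{n/2-\ve}$ exponent here.
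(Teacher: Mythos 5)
Your proposal is correct and follows essentially the same route as the paper: substitute the Fourier expansion at $\eta_1$, bound the constant term via $|\varphi_{\ell1}(\tfrac n2+iT)|\le 1$ and \eqref{transfestim} with $k=0$, then treat the non-constant modes exactly as in Proposition \ref{cuspformintprop} but with Proposition \ref{FinalRS} in place of Proposition \ref{cusprankin}. The one small stylistic difference is that the paper absorbs the $\bigl(\log^+(\tfrac{X}{T+1}+T)+W(T)\bigr)^{1/2}$ factor into $(T+1)^\ve\sqrt{W(T)}\,X^{\ve/2}$ already at the level of the $S(X)$ bound (so the subsequent partial summation is literally identical to the cusp-form case), whereas you carry the $\log^+$ along and absorb it later via the range of $X$ — both handlings are sound.
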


\begin{proof}
It is enough to consider $0<\ve<\frac{1}{2}$. We recall that the Eisenstein series has a Fourier expansion at infinity of the form
\begin{multline*}
E_{\ell}\big(\vecx+yi_n,\sfrac{n}{2}+iT\big)=\delta_{\ell1}y^{\sfrac{n}{2}+iT}+\varphi_{\ell1}\big(\sfrac{n}{2}+iT\big)y^{\sfrac{n}{2}-iT}\\
+\sum_{\vec0\neq\vecmu\in\Lambda_1^*}a_{\vecmu}\big(\sfrac{n}{2}+iT\big)y^{n/2}K_{iT}(2\pi|\vecmu|y)e^{2\pi
i\langle\vecmu,\vecx\rangle}.
\end{multline*}
Using this, \eqref{chiseries} and integrating term by term we get
\begin{align}\label{eis}
&\frac{1}{\delta_1\cdots\delta_n}\int_{\R^n}\chi_{\vecdel,\vecgam}(\vecu)E_{\ell}\big(u_1\vecom_1+\cdots+u_n\vecom_n+yi_n,\sfrac{n}{2}+iT\big)\,d\vecu\nonumber\\
&=\frac{1}{\delta_1\cdots\delta_n}\Big(\delta_{\ell1}y^{\sfrac{n}{2}+iT}+\varphi_{\ell1}\big(\sfrac{n}{2}+iT\big)y^{\sfrac{n}{2}-iT}\Big)\widehat{\chi_{\vecdel,\vecgam}}(\vec0)\\
&+\frac{1}{\delta_1\cdots\delta_n}\underset{\vecm\in
\Z^n\setminus{\{\vec0\}}}{\sum}a_{\vecmu}\big(\sfrac{n}{2}+iT\big)y^{n/2}K_{iT}(2\pi|\vecmu|y)\widehat{\chi_{\vecdel,\vecgam}}(-\vecm).\nonumber
\end{align}
We note that since $\Phi\big(\sfrac{n}{2}+iT\big)$ is unitary for
real $T$ we have $\big|\varphi_{\ell
1}\big(\sfrac{n}{2}+iT\big)\big|\leq1$. This fact together with
\eqref{transfestim} (with $k=0$) yields
\begin{align}\label{1eis}
 \frac{1}{\delta_1\cdots\delta_n}\Big(\delta_{\ell1}y^{\sfrac{n}{2}+iT}+\varphi_{\ell1}\big(\sfrac{n}{2}+iT\big)y^{\sfrac{n}{2}-iT}\Big)\widehat{\chi_{\vecdel,\vecgam}}(\vec0)=O\big(\|\chi\|_{0,1}y^{n/2}\big).
\end{align}
Using (a weak version of) \eqref{transfestim} with $k=n$ and the estimate of the K-Bessel function in \eqref{kbesselestim} we estimate the last line of \eqref{eis} by
\begin{align}\label{Iestim}
 &O\Big(\|\chi\|_{n,1}y^{n/2-\ve}Y^ne^{-(\pi/2)T}(T+1)^{-1/3+\ve}\Big)\\
&\hspace{45pt}\times\sum_{\vec0\neq\vecmu\in\Lambda_1^*}\big|a_{\vecmu}\big(\sfrac{n}{2}+iT\big)\big||\vecmu|^{-n-\ve}\min\big(1,e^{(\pi/2)T-2\pi|\vecmu|y}\big).\nonumber
\end{align}
(Recall that $Y=\underset{i\in\{1,\ldots,n\}}{\max}\delta_i^{-1}$.) 

We estimate the sum in \eqref{Iestim} (which we call $\Sigma$) in the same way as we estimate the corresponding sum in the proof of Proposition \ref{cuspformintprop}. The only difference is that we have to use Proposition \ref{FinalRS}
instead of Proposition \ref{cusprankin} when we estimate the sum $S(X)$.
Recalling that $W(T)\geq 1$ (cf.\ Sec.\ \ref{strongversionsection}) we get 
\begin{align*}
 S(X):&=\underset{0<|\vecmu|\leq X}{\sum}\big|a_{\vecmu}\big(\sfrac{n}{2}+iT\big)\big|\nonumber\\
&=O\bigg(e^{(\pi/2)T}(T+1)^{\ve}\sqrt{W(T)}\Big(T+\frac{X^n}{(T+1)^{n-1}}\Big)^{1/2}X^{n/2+\ve/2}\bigg)
\end{align*}
for all $X\geq\sfrac{\mu_0}{2}$. Using this bound together with \eqref{fder} we find that $ \Sigma$ is
\begin{multline*}
 O\Big(e^{(\pi/2)T}(T+1)^{\ve}\sqrt{W(T)}\Big)\bigg\{\int_{\frac{\mu_0}{2}}^{\max(\frac{\mu_0}{2},\frac{T}{4y})}X^{-n/2-1-\ve/2}\Big(T+\frac{X^n}{(T+1)^{n-1}}\Big)^{1/2}\,dX\nonumber\\
 +\int_{\max(\frac{\mu_0}{2},\frac{T}{4y})}^{\infty}(1+yX)X^{-n/2-1-\ve/2}\Big(T+\frac{X^n}{(T+1)^{n-1}}\Big)^{1/2}e^{(\pi/2)T-2\pi yX}\,dX\bigg\}.
\end{multline*}
Estimating these integrals as in the proof of Proposition \ref{cuspformintprop} we obtain
\begin{align*}
 \Sigma=O\Big(e^{(\pi/2)T}(T+1)^{1/2+\ve}\sqrt{W(T)}\Big),
\end{align*}
which together with \eqref{Iestim} gives the desired result.
\end{proof}

\begin{prop}\label{eisimportant}
Let $\chi:\R^n\to \R$ be a smooth function with compact support.
Let $\ve>0$, $k>\frac{n+1}{2}+\frac{1}{6}$ and
$f\in H^k(\Gamma\setminus\H^{n+1})$. Let the spectral expansion of
$f$ be
\begin{align}\label{spectraldecigen}
 f(P)=\sum_{m\geq0}c_m \phi_m(P) + \sum_{\ell=1}^{\kappa}\int_{0}^{\infty}g_{\ell}(t) E_{\ell}\big(P,\sfrac{n}{2}+it\big)\,dt.
\end{align}
Then the following holds, for all $0<y<1$, $\ell\in\{1,\ldots,\kappa\}$, $\vecgam=(\gamma_1,\ldots,\gamma_n)\in\R^n$ and all $\delta_1,\ldots,\delta_n$ satisfying $0<\delta_1,\ldots,\delta_n\leq 1$:
\begin{multline}\label{important2}
 \frac{1}{\delta_1\cdots\delta_n}\int_{\R^n}\chi_{\vecdel,\vecgam}(\vecu)
\bigg\{\int_{0}^{\infty}g_{\ell}(t) E_{\ell}\big(u_1\vecom_1+\cdots+u_n\vecom_n+yi_n,\sfrac{n}{2}+it\big)\,dt\bigg\}\,d\vecu\\
=O\Big(\|f\|_{H^k}\|\chi\|_{n,1}y^{n/2-\ve}\big(\underset{i\in\{1,\ldots,n\}}{\min}\delta_i\big)^{-n}\Big),
 \end{multline}
where the implied constant depends only on $\Gamma$, $\Omega$ , $k$ and $\ve$.
\end{prop}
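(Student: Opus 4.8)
The plan is to follow the proof of Proposition~\ref{cuspimportant} almost verbatim, with the sum over cusp forms of eigenvalue $\lambda_m\geq(\sfrac n2)^2$ replaced by the integral over the continuous spectrum, and with Proposition~\ref{eisensteinprop} playing the role of Proposition~\ref{cuspformintprop}. First I would fix $\ell\in\{1,\ldots,\kappa\}$ and observe that, since $k>\sfrac{n+1}{2}$, Proposition~\ref{pointwise} makes the expansion \eqref{spectraldecigen} absolutely and uniformly convergent on compact subsets of $\H^{n+1}$; in particular $P\mapsto\int_0^\infty g_\ell(t)E_\ell\big(P,\sfrac n2+it\big)\,dt$ is a continuous function, and because $\chi_{\vecdel,\vecgam}$ has compact support I may interchange the $\vecu$-integration with the $t$-integral (and with the sum over $\ell$). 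Next I would apply the Cauchy--Schwarz inequality in the variable $t$ with respect to $dt$ on $(0,\infty)$, inserting the weight $(t+1)^k$, so as to split the left-hand side of \eqref{important2} for the single cusp $\ell$ into a product of $\big(\int_0^\infty|g_\ell(t)|^2(t+1)^{2k}\,dt\big)^{1/2}$ and $\big(\int_0^\infty(t+1)^{-2k}\,|I_\ell(t)|^2\,dt\big)^{1/2}$, where $I_\ell(t)$ denotes the horosphere integral of $E_\ell\big(\cdot,\sfrac n2+it\big)$.

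The first factor is at most $\|f\|_{H^k}$, by the definition of the $H^k$-norm. To handle the second factor I would invoke Proposition~\ref{eisensteinprop} with the same $\ve$, which gives $I_\ell(t)=O\big(\|\chi\|_{n,1}(t+1)^{1/6+2\ve}\sqrt{W(t)}\,y^{n/2-\ve}(\underset{i\in\{1,\ldots,n\}}{\min}\delta_i)^{-n}\big)$. Squaring and pulling the $y$-, $\vecdel$- and $\chi$-dependence out of the integral, the second factor becomes
\begin{align*}
O\Big(\|\chi\|_{n,1}\,y^{n/2-\ve}\big(\underset{i\in\{1,\ldots,n\}}{\min}\delta_i\big)^{-n}\Big)\Big(\int_0^\infty(t+1)^{1/3+4\ve-2k}\,W(t)\,dt\Big)^{1/2}.
\end{align*}

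The one place where genuine care is needed --- and the only real obstacle --- is the convergence of $\int_0^\infty(t+1)^{1/3+4\ve-2k}W(t)\,dt$; this is where the hypothesis $k>\sfrac{n+1}{2}+\sfrac16$ enters. I would first reduce to $0<\ve<\frac12\big(k-\sfrac{n+1}{2}-\sfrac16\big)$, then write the integral as a Stieltjes integral against $t\mapsto\int_0^tW(u)\,du$, integrate by parts, and use the bound $\int_0^TW(u)\,du=O(T^{n+1})$ from \eqref{wt}; since $2k-\sfrac13-4\ve>n+1$ by the choice of $\ve$, both the boundary term and the resulting integral converge, and the whole expression is bounded by a constant depending only on $\Gamma$ and $\ve$. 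Taking square roots, multiplying the two factors and summing over $\ell\in\{1,\ldots,\kappa\}$ then yields \eqref{important2}. Apart from this convergence check and the (standard) justification of the interchange of integrations, the argument is a direct transcription of the proof of Proposition~\ref{cuspimportant}, with Weyl's law there replaced by the growth bound \eqref{wt} for the spectral majorant $W$.
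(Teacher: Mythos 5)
Your argument is correct and follows essentially the same route as the paper's proof: Cauchy--Schwarz in $t$ with the weight $(t+1)^k$, bounding the first factor by $\|f\|_{H^k}$, invoking Proposition~\ref{eisensteinprop} for the Eisenstein horosphere integral, and finally establishing convergence of $\int_0^\infty(t+1)^{1/3+4\ve-2k}W(t)\,dt$ via integration by parts against $P(t)=\int_0^tW(u)\,du$ and the bound \eqref{wt}. (Note only that the statement is already for a fixed $\ell$, so the final sum over $\ell$ is not needed.)
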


\begin{proof}
 It is enough to consider $0<\ve<\frac{1}{2}\big(k-\frac{n+1}{2}-\frac{1}{6}\big)$. We change the order of integration and apply the Cauchy-Schwarz inequality to the left hand side of \eqref{important2} to obtain
 \begin{align*}
 &\bigg|\frac{1}{\delta_1\cdots\delta_n}\int_{\R^n}\chi_{\vecdel,\vecgam}(\vecu)\bigg\{\int_{0}^{\infty}g_{\ell}(t) E_{\ell}\big(u_1\vecom_1+\cdots+u_n\vecom_n+yi_n,\sfrac{n}{2}+it\big)\,dt\bigg\}\,d\vecu\bigg|\\
&\leq\bigg(\int_{0}^{\infty}\big|g_{\ell}(t)\big|^2(t+1)^{2k}\,dt\bigg)^{1/2}\\
&\times\bigg(\int_{0}^{\infty}(t+1)^{-2k}\bigg|\frac{1}{\delta_1\cdots\delta_n}\int_{\R^n}\chi_{\vecdel,\vecgam}(\vecu)
E_{\ell}\big(u_1\vecom_1+\cdots+u_n\vecom_n+yi_n,\sfrac{n}{2}+it\big)\,d\vecu\bigg|^2\,dt\bigg)^{1/2}.
 \end{align*}
By definition the first factor above is bounded by $\|f\|_{H^k}$.
Furthermore, using Proposition \ref{eisensteinprop}, we find that
the second factor is bounded by
\begin{align}\label{lastestimate}
O\Big(\|\chi\|_{n,1}y^{n/2-\ve}\big(\underset{i\in\{1,\ldots,n\}}{\min}\delta_i\big)^{-n}\Big)
\bigg(\int_{0}^{\infty}(t+1)^{-2k+1/3+4\ve}W(t)\,dt\bigg)^{1/2}.
\end{align}
Let $P(t)$ be defined by
\begin{align*}
 P(t):=\int_{0}^{t}W(r)\,dr.
\end{align*}
Then $P(t)=O(t^{n+1})$ for large $t$ (cf.\ \eqref{wt}) and the derivative of $P(t)$
equals $W(t)$. Integration by parts yields
\begin{align*}
 &\int_{0}^{K}(t+1)^{-2k+1/3+4\ve}W(t)\,dt=\int_{0}^{K}(t+1)^{-2k+1/3+4\ve}\,dP(t)\\
 & =(K+1)^{-2k+1/3+4\ve}P(K)+\big(2k-\sfrac{1}{3}-4\ve\big)\int_{0}^{K}(t+1)^{-2k-2/3+4\ve}P(t)\,dt.
\end{align*}
Note that the integrand above is positive, that
$(t+1)^{-2k-2/3+4\ve}P(t)=O(t^{n-2k+1/3+4\ve})$ for
large $t$, and that $(K+1)^{-2k+1/3+4\ve}P(K)\to0$ as $K\to\infty$.
Hence we conclude that the integral in \eqref{lastestimate} is convergent and we arrive at the desired
estimate.
\end{proof}

\section{The main theorems}

We now put together the results of sections \ref{cuspchap}, \ref{noncuspchap} and \ref{Eisenchap}.

\begin{thm}\label{chithm}
Let $\chi:\R^n\to \R$ be a smooth function with compact support. Let $\ve>0$ and $k>\frac{n+1}{2}+\frac{1}{6}$. We then have, for all $f\in H^k(\Gamma\setminus\H^{n+1})$, all $0<y<1$, all $\vecgam=(\gamma_1,\ldots,\gamma_n)\in\R^n$ and all $\delta_1,\ldots,\delta_n$ satisfying $\sqrt{y}\leq\delta_1,\ldots,\delta_n\leq 1$,
 \begin{align*}
  &\frac{1}{\delta_1\cdots\delta_n}\int_{\R^n}\chi_{\vecdel,\vecgam}(\vecu)f(u_1\vecom_1+\cdots+u_n\vecom_n+yi_n)\,d\vecu\\
  &=\frac{\langle\chi\rangle}{\nu(\Gamma\setminus\H^{n+1})}\int_{\Gamma\setminus\H^{n+1}}f(P)\,d\nu(P)+O\Big(\|f\|_{H^k}\|\chi\|_{n,1}y^{-\ve}\big(y/\delta_{\min}^2\big)^{n/2}\Big)\\
  &\hspace{170pt}+O\Big(\|f\|_{L^2}\|\chi\|_{n,1}y^{-\ve}\big(y/\delta_{\min}\big)^{n-s_1}\Big)\\
  &\hspace{170pt}+O\Big(\|f\|_{L^2}\|\chi\|_{n,1}y^{-\ve}\big(y/\delta_{\min}^2\big)^{n-\hat s_1}\Big),
 \end{align*}
where $\langle\chi\rangle=\int_{\R^n}\chi(\vecx)\,d\vecx$, $\delta_{\min}=\underset{i\in\{1,\ldots,n\}}{\min}\delta_i$, and the implied constants depend only on $\Gamma$, $\Omega$, $k$ and $\ve$. In the above $s_1\in(n/2,n)$ is the largest number such that there exists a cusp form on $\Gamma\setminus\H^{n+1}$ of eigenvalue $\lambda=s_1(n-s_1)$. If no such function exists on  $\Gamma\setminus\H^{n+1}$ the middle error term above is omitted. Also, $\hat s_1\in(n/2,n)$ is the largest number such that there exists a non-cuspidal eigenfunction on $\Gamma\setminus\H^{n+1}$ of eigenvalue $\lambda=\hat s_1(n-\hat s_1)$. Again, if no such function exists on  $\Gamma\setminus\H^{n+1}$ the third error term above is omitted.
\end{thm}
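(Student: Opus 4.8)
The proof assembles the estimates of Sections \ref{cuspchap}, \ref{noncuspchap} and \ref{Eisenchap}. The plan is to insert the spectral decomposition \eqref{spectraldec} of $f$ into the horosphere integral and estimate the resulting blocks separately. Since $k>\frac{n+1}{2}+\frac16>\frac{n+1}{2}$, Proposition \ref{pointwise} ensures that \eqref{spectraldec} converges uniformly and absolutely on compact subsets of $\H^{n+1}$; for each fixed $y\in(0,1)$ the points $u_1\vecom_1+\cdots+u_n\vecom_n+yi_n$ with $\vecu$ in a fundamental domain for $\Z^n$ form such a compact set, so the convergence is uniform there and we may integrate \eqref{spectraldec} against $\chi_{\vecdel,\vecgam}$ term by term and regroup freely; absolute convergence also shows that each of the sub-series below converges in its own right. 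Using Weyl's law (\cite[Thm.\ 7.33]{cs}) the discrete spectrum below $(\frac n2)^2$ is finite, so I would split $f$ into five pieces: (a) the constant term $c_0\phi_0$; (b) the cusp forms $\phi_m$ with $0<\lambda_m<(\frac n2)^2$; (c) the non-cuspidal eigenfunctions $\phi_m$ with $0<\lambda_m<(\frac n2)^2$; (d) the tail $\sum_{\lambda_m\ge(\frac n2)^2}c_m\phi_m$ of cusp forms; and (e) the Eisenstein part $\sum_{\ell=1}^\kappa\int_0^\infty g_\ell(t)E_\ell(\cdot,\frac n2+it)\,dt$; the families (b) and (c) are finite.

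For (a), $\phi_0\equiv\nu(\Gamma\setminus\H^{n+1})^{-1/2}$ and $c_0=\nu(\Gamma\setminus\H^{n+1})^{-1/2}\int_{\Gamma\setminus\H^{n+1}}f\,d\nu$; by \eqref{transf2} with $\vecnu=\vec0$ we have $\widehat{\chi_{\vecdel,\vecgam}}(\vec0)=(\delta_1\cdots\delta_n)\langle\chi\rangle$, so the horosphere integral of $c_0\phi_0$ equals exactly $\frac{\langle\chi\rangle}{\nu(\Gamma\setminus\H^{n+1})}\int_{\Gamma\setminus\H^{n+1}}f\,d\nu$, the asserted main term. For each of the finitely many cusp forms in (b) I would apply Proposition \ref{chicuspprop} with $s=s_m\in(\frac n2,n)$, using $|c_m|\le\|f\|_{L^2}$; since $\delta_{\min}\ge\sqrt y$ forces $0<y/\delta_{\min}\le1$, the contribution is dominated by the term with the largest $s_m$, namely $s_1$, and summing the finitely many terms gives $O(\|f\|_{L^2}\|\chi\|_{n,1}y^{-\ve}(y/\delta_{\min})^{n-s_1})$ (the middle error term being simply absent if no such cusp form exists). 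Likewise, for each of the finitely many non-cuspidal eigenfunctions in (c), Proposition \ref{chinoncuspprop} (again with $|c_m|\le\|f\|_{L^2}$, and using $0<y/\delta_{\min}^2\le1$) yields, after summation, $O(\|f\|_{L^2}\|\chi\|_{n,1}y^{-\ve}(y/\delta_{\min}^2)^{n-\hat s_1})$, the third error term, omitted if there is no non-cuspidal eigenfunction. I use Propositions \ref{chicuspprop} and \ref{chinoncuspprop} here (rather than \ref{cuspimportant}) precisely because they give the sharper $\delta_{\min}$-exponents, at the cost of no control of the eigenvalue dependence, which is harmless for finitely many terms.

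Finally, for (d) I would quote Proposition \ref{cuspimportant}, which bounds the horosphere integral of $\sum_{\lambda_m\ge(\frac n2)^2}c_m\phi_m$ by $O(\|f\|_{H^k}\|\chi\|_{n,1}y^{n/2-\ve}\delta_{\min}^{-n})=O(\|f\|_{H^k}\|\chi\|_{n,1}y^{-\ve}(y/\delta_{\min}^2)^{n/2})$; for (e) I would sum Proposition \ref{eisimportant} over the $\kappa$ cusps, obtaining a bound of the same shape. Adding the five contributions and replacing $\ve$ by a slightly smaller value to absorb all the $y^{-\ve}$, $\delta_{\min}$-losses proves the theorem. I do not expect any genuine obstacle: all the analytic work lies in the propositions of Sections \ref{cuspchap}--\ref{Eisenchap}, and the only steps needing care are the justification of the termwise integration of the spectral expansion — which is exactly what Proposition \ref{pointwise} (with $f\in H^k$, $k>\frac{n+1}{2}$) provides — together with the bookkeeping of the exponents and of which norm, $\|f\|_{L^2}$ or $\|f\|_{H^k}$, governs each block.
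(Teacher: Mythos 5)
Your proposal follows the paper's proof essentially verbatim: substitute the spectral decomposition into the horosphere integral, peel off the constant term as the main term, bound the finitely many small-eigenvalue cusp forms and non-cuspidal eigenfunctions via Propositions \ref{chicuspprop} and \ref{chinoncuspprop} (using $|c_m|\le\|f\|_{L^2}$ and $y/\delta_{\min},\,y/\delta_{\min}^2\le 1$ to keep only the extremal $s_1$, $\hat s_1$), and handle the tempered discrete and continuous spectrum via Propositions \ref{cuspimportant} and \ref{eisimportant}. The only thing you add is an explicit appeal to Proposition \ref{pointwise} to justify the termwise integration, which the paper leaves implicit but is exactly the intended justification.
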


\begin{proof}
 Let $f$ have the spectral decomposition \eqref{spectraldecigen}. We substitute this expansion into the horosphere integral
 \begin{align}\label{int}
 \frac{1}{\delta_1\cdots\delta_n}\int_{\R^n}\chi_{\vecdel,\vecgam}(\vecu)f(u_1\vecom_1+\cdots+u_n\vecom_n+yi_n)\,d\vecu,                                             \end{align}
and integrate the result termwise. Recalling that $\phi_0=\nu(\Gamma\setminus\H^{n+1})^{-1/2}$, we note that $\phi_0$'s contribution to \eqref{int} equals
\begin{align*}
 \frac{\langle\chi\rangle}{\nu(\Gamma\setminus\H^{n+1})}\int_{\Gamma\setminus\H^{n+1}}f(P)\,d\nu(P).
\end{align*}
We also note that $|\langle f,\phi_m\rangle|\leq\|f\|_{L^2}$ and recall that there are only finitely many $m$ satisfying $\lambda_m<(\frac{n}{2})^2$. Now the theorem follows from Proposition \ref{chicuspprop}, Proposition \ref{cuspimportant}, Proposition \ref{chinoncuspprop} and Proposition \ref{eisimportant}.
\end{proof}

We now turn to the situation where $\chi$ is the characteristic function of the rectangle $I_{\vec0}:=[-1/2,1/2]^n$. To distinguish this situation from the one in Theorem \ref{chithm} we call this function $\chi_0$. To prove asymptotic equidistribution also in this case we will approximate $\chi_0$ by smooth functions and then apply Theorem \ref{chithm}.

\begin{thm}\label{chi0thm}
 Let $\ve'>0$ and $k>\frac{n+1}{2}+\frac{1}{6}$. We then have, for all bounded functions $f\in H^k(\Gamma\setminus\H^{n+1})$, all $0<y<1$ and all $\alpha_1,\ldots,\alpha_n,\beta_1,\ldots,\beta_n$ satisfying \mbox{$\sqrt{y}\leq\beta_1-\alpha_1,\ldots,\beta_n-\alpha_n\leq 1$},
 \begin{align}\label{finally}
 &\frac{1}{(\beta_1-\alpha_1)\cdots(\beta_n-\alpha_n)}\int_{\alpha_1}^{\beta_1}\cdots\int_{\alpha_n}^{\beta_n}f(u_1\vecom_1+\cdots+u_n\vecom_n+yi_n)\,du_1\ldots du_n\\
  &=\frac{1}{\nu(\Gamma\setminus\H^{n+1})}\int_{\Gamma\setminus\H^{n+1}}f(P)\,d\nu(P)+O\Big(\|f\|_{L^{\infty}}^{\frac{n-1}{n}}\|f\|_{H^k}^{1/n}y^{1/2-\ve'}\big(\underset{i\in\{1,\ldots,n\}}{\min}(\beta_i-\alpha_i)\big)^{-1}\Big)\nonumber\\
  &\hspace{120pt}+O\Big(\|f\|_{L^{\infty}}^{\frac{n-1}{n}}\|f\|_{L^2}^{1/n}y^{1-s_1/n-\ve'}\big(\underset{i\in\{1,\ldots,n\}}{\min}(\beta_i-\alpha_i)\big)^{s_1/n-1}\Big)\nonumber\\
  &\hspace{120pt}+O\Big(\|f\|_{L^{\infty}}^{\frac{n-1}{n}}\|f\|_{L^2}^{1/n}y^{1-\hat s_1/n-\ve'}\big(\underset{i\in\{1,\ldots,n\}}{\min}(\beta_i-\alpha_i)\big)^{2(\hat s_1/n-1)}\Big),\nonumber
 \end{align}
where the implied constants depend only on $\Gamma$, $\Omega$, $k$ and $\ve'$. In the above $s_1\in(n/2,n)$ is the largest number such that there exists a cusp form on $\Gamma\setminus\H^{n+1}$ of eigenvalue $\lambda=s_1(n-s_1)$. If no such function exists on  $\Gamma\setminus\H^{n+1}$ the middle error term above is omitted. Also, $\hat s_1\in(n/2,n)$ is the largest number such that there exists a non-cuspidal eigenfunction on $\Gamma\setminus\H^{n+1}$ of eigenvalue $\lambda=\hat s_1(n-\hat s_1)$. Again, if no such function exists on  $\Gamma\setminus\H^{n+1}$ the third error term above is omitted.
\end{thm}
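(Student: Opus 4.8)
The plan is to deduce Theorem \ref{chi0thm} from Theorem \ref{chithm} by a standard sandwiching argument. The characteristic function $\chi_0$ of $I_{\vec0}=[-1/2,1/2]^n$ is not smooth, so we fix a small parameter $\rho\in(0,1)$ (to be optimized at the end) and construct two smooth cutoffs $\chi_-,\chi_+:\R^n\to\R$ with $0\le\chi_-\le\chi_0\le\chi_+\le 1$, where $\chi_+$ is supported in $[-\tfrac12-\rho,\tfrac12+\rho]^n$, $\chi_-$ equals $1$ on $[-\tfrac12+\rho,\tfrac12-\rho]^n$, and all Sobolev norms are controlled by $\|\chi_\pm\|_{n,1}=O(\rho^{-(n-1)})$ (more precisely, the $k$-th derivatives contribute $\rho^{-k}$ while the measure of the transition region is $O(\rho)$, so $\|D^\ell\chi_\pm\|_{L^1}=O(\rho^{1-|\ell|})$ and the worst term is $|\ell|=n$, giving $O(\rho^{1-n})$). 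Then $\langle\chi_\pm\rangle=1+O(\rho)$ and $\langle\chi_+\rangle-\langle\chi_-\rangle=O(\rho)$.

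Next I would apply Theorem \ref{chithm} to $\chi_+$ and to $\chi_-$ separately, with $\delta_i=\beta_i-\alpha_i$ and $\gamma_i=(\alpha_i+\beta_i)/2$ (noting that the rescaling built into $\chi_{\vecdel,\vecgam}$ reproduces the box $\prod[\alpha_i,\beta_i]$ up to the $\rho$-fattening, which in the rescaled variable is a fattening by $\rho\delta_i\le\rho$). Since
\begin{align*}
\frac{1}{\delta_1\cdots\delta_n}\int_{\R^n}(\chi_-)_{\vecdel,\vecgam}(\vecu)f\,d\vecu
\le\frac{1}{\delta_1\cdots\delta_n}\int_{\prod[\alpha_i,\beta_i]}f\,d\vecu
\le\frac{1}{\delta_1\cdots\delta_n}\int_{\R^n}(\chi_+)_{\vecdel,\vecgam}(\vecu)f\,d\vecu
\end{align*}
whenever $f\ge0$ — and in general we split $f=f_+-f_-$ into positive and negative parts, or more cleanly bound the difference between the box integral and either smooth integral by $\|f\|_{L^\infty}$ times the measure of the symmetric difference, which is $O(\rho)$ — we obtain that the box integral equals the main term $\tfrac{1}{\nu}\int f\,d\nu$ plus an error consisting of (i) the term $\langle\chi_\pm\rangle - 1 = O(\rho)$ times the main term, which is $O(\|f\|_{L^2}\rho)$; (ii) $\|f\|_{L^\infty}\cdot O(\rho)$ from the fattening mismatch; and (iii) the three error terms from Theorem \ref{chithm} with $\|\chi\|_{n,1}$ replaced by $O(\rho^{1-n})$.

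Collecting, the total error is
\begin{align*}
O\Big(\|f\|_{L^\infty}\rho\Big)+O\Big(\rho^{1-n}\|f\|_{H^k}y^{-\ve}(y/\delta_{\min}^2)^{n/2}\Big)+O\Big(\rho^{1-n}\|f\|_{L^2}y^{-\ve}(y/\delta_{\min})^{n-s_1}\Big)+O\Big(\rho^{1-n}\|f\|_{L^2}y^{-\ve}(y/\delta_{\min}^2)^{n-\hat s_1}\Big).
\end{align*}
Now I balance the first term against each of the others by choosing $\rho$ appropriately; e.g. balancing $\|f\|_{L^\infty}\rho$ against $\rho^{1-n}\|f\|_{H^k}y^{-\ve}(y/\delta_{\min}^2)^{n/2}$ gives $\rho^n\sim \|f\|_{L^\infty}^{-1}\|f\|_{H^k}y^{-\ve}(y/\delta_{\min}^2)^{n/2}$, i.e.\ $\rho\sim\|f\|_{L^\infty}^{-1/n}\|f\|_{H^k}^{1/n}y^{-\ve/n}(y/\delta_{\min}^2)^{1/2}$, whence the combined contribution is $O\big(\|f\|_{L^\infty}^{(n-1)/n}\|f\|_{H^k}^{1/n}y^{1/2-\ve'}\delta_{\min}^{-1}\big)$ after absorbing $\ve/n$ and harmless constants into a new $\ve'$. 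The same balancing applied to the second and third error terms yields the two remaining claimed error terms $O\big(\|f\|_{L^\infty}^{(n-1)/n}\|f\|_{L^2}^{1/n}y^{1-s_1/n-\ve'}\delta_{\min}^{s_1/n-1}\big)$ and $O\big(\|f\|_{L^\infty}^{(n-1)/n}\|f\|_{L^2}^{1/n}y^{1-\hat s_1/n-\ve'}\delta_{\min}^{2(\hat s_1/n-1)}\big)$. One must check $\rho$ stays in $(0,1)$: since $y\le\delta_{\min}^2$ we have $(y/\delta_{\min}^2)^{1/2}\le1$, so $\rho$ is bounded by a constant depending on $f$, and if it exceeds $1$ the corresponding error term already dominates the trivial bound $\|f\|_{L^\infty}$ and there is nothing to prove. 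The main obstacle — though it is more bookkeeping than genuine difficulty — is verifying the $\|\chi_\pm\|_{n,1}=O(\rho^{1-n})$ estimate uniformly and tracking which power of $\rho$ pairs with which error term so that the three separate balancings all come out to the stated exponents; the construction of $\chi_\pm$ with controlled derivatives is routine (convolve $\chi_0$, suitably shrunk/enlarged, with a fixed bump scaled by $\rho$).
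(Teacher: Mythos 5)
Your proposal is essentially the paper's own proof: mollify $\chi_0$ at scale $\rho$ (the paper calls it $h$ and uses a single convolution $\chi_h=\chi_0\ast\psi_h$ rather than a two-sided sandwich, which has the minor advantage that $\langle\chi_h\rangle=1$ exactly), bound the replacement error by $\|f\|_{L^\infty}$ times the $O(\rho)$ measure of the discrepancy set, invoke Theorem \ref{chithm} with $\|\chi_h\|_{n,1}=O(\rho^{1-n})$, and optimize $\rho$ with the trivial-bound fallback when the optimal $\rho$ would exceed $1$. The only imprecision is the phrase ``three separate balancings'' -- one must choose a single $\rho$ minimizing $A\rho+B\rho^{1-n}$ with $B$ the sum of the three Theorem \ref{chithm} error terms, and then use $(B_1+B_2+B_3)^{1/n}\le B_1^{1/n}+B_2^{1/n}+B_3^{1/n}$ to split the result into the three stated errors, which is exactly how the paper proceeds.
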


\begin{proof}
Fix, once and for all, a function $\psi\in C^{\infty}(\R^n)$, with support contained in the closed unit ball, satisfying $\psi\geq0$ and $\int_{\R^n}\psi(\vecx)\,d\vecx=1$. For $h>0$ we define
\begin{align*}
 \psi_h(\vecx):=h^{-n}\psi(h^{-1}\vecx).
\end{align*}
We define $\chi_h$ as the convolution of $\chi_0$ and $\psi_h$:
\begin{align}\label{chih}
 \chi_{h}(\vecx):=\chi_0\ast\psi_h(\vecx)=h^{-n}\int_{I_{\vec0}}\psi\big(h^{-1}(\vecx-\vecy)\big)\,d\vecy.
\end{align}
By \cite[Thm.\ 1.3.2]{hor} we know that $\chi_h\in C^{\infty}(\R^n)$, with support in an $h$-neighbourhood of $I_{\vec0}$, and that $\chi_h$ tends to $\chi_0$ in $L^1(\R^n)$ (even uniformly outside an $h$-neighbourhood of the boundary of $I_{\vec0}$) as $h\to0$. Since each $\chi_h$ is smooth, Theorem \ref{chithm} holds with $\chi=\chi_h$.

Clearly 
\begin{align}\label{mean}
 \langle\chi_h\rangle=\int_{I_{\vec0}}\Big(h^{-n}\int_{\R^n}\psi\big(h^{-1}(\vecx-\vecy)\big)\,d\vecx\Big)\,d\vecy=1
\end{align}
for all $h>0$. By differentiating under the integral sign in \eqref{chih} it follows that $D^{\vecal}\chi_h=O(h^{-|\vecal|})$ for all multi-indices $\vecal$, where the implied constant depends only on $|\vecal|$. It also follows from \eqref{chih} that $\chi_h$ is (locally) constant except in an $h$-neighbourhood of the boundary of $I_{\vec0}$. Hence 
\begin{align}\label{sobolev}
 \|\chi_h\|_{n,1}=O(h^{1-n})
\end{align}
 for all $0<h<1$.

Now using \eqref{mean} and \eqref{sobolev} in Theorem \ref{chithm}, with $\delta_i=\beta_i-\alpha_i$ and $\gamma_i=\frac{1}{2}(\alpha_i+\beta_i)$ for $i\in\{1,\ldots,n\}$, and noting that \mbox{$\chi_0-\chi_h$} is zero except on an $h$-neighbourhood of the boundary of $I_{\vec0}$, we get, for $0<h<1$,
\begin{align}\label{approx}
 &\frac{1}{(\beta_1-\alpha_1)\cdots(\beta_n-\alpha_n)}\int_{\alpha_1}^{\beta_1}\cdots\int_{\alpha_n}^{\beta_n}f(u_1\vecom_1+\cdots+u_n\vecom_n+yi_n)\,du_1\ldots du_n\\
&=\frac{1}{\delta_1\cdots\delta_n}\int_{\R^n}\big((\chi_0)_{\vecdel,\vecgam}(\vecu)-(\chi_h)_{\vecdel,\vecgam}(\vecu)\big)f(u_1\vecom_1+\cdots+u_n\vecom_n+yi_n)\,d\vecu\nonumber\\
&+\frac{1}{\delta_1\cdots\delta_n}\int_{\R^n}(\chi_h)_{\vecdel,\vecgam}(\vecu)f(u_1\vecom_1+\cdots+u_n\vecom_n+yi_n)\,d\vecu\nonumber\\
&=\frac{1}{\nu(\Gamma\setminus\H^{n+1})}\int_{\Gamma\setminus\H^{n+1}}f(P)\,d\nu(P)+O\Big(\|f\|_{L^{\infty}}h\Big)
+O\Big(\|f\|_{H^k}y^{n/2-\ve}\big(\underset{i\in\{1,\ldots,n\}}{\min}\delta_i\big)^{-n}h^{1-n}\Big)\nonumber\\
&+O\Big(\|f\|_{L^2}y^{n-s_1-\ve}\big(\underset{i\in\{1,\ldots,n\}}{\min}\delta_i\big)^{s_1-n}h^{1-n}\Big)+O\Big(\|f\|_{L^2}y^{n-\hat s_1-\ve}\big(\underset{i\in\{1,\ldots,n\}}{\min}\delta_i\big)^{2(\hat s_1-n)}h^{1-n}\Big).\nonumber
\end{align}
Next we find an $h$ that minimizes the sum of the error terms in \eqref{approx}. When $n=1$ we let $h\to0$ in \eqref{approx} for any fixed $y$ and $\delta_1,\ldots,\delta_n$, and thus obtain the right hand side of \eqref{finally} (with $\ve'=\ve$). Now assume that $n>1$. Note that for any $A,B>0$, the function $h\mapsto Ah+Bh^{1-n}$ attains its minimum at $h_{\min}=\big(\frac{(n-1)B}{A}\big)^{1/n}$, and $Ah_{\min}+Bh_{\min}^{1-n}=O\big(A^{\frac{n-1}{n}}B^{\frac{1}{n}}\big)$, where the implied constant depends only on $n$. Using this fact with $A=\|f\|_{L^{\infty}}$ and
\begin{align*}
 B=\|f\|_{H^k}y^{n/2-\ve}\big(\underset{i\in\{1,\ldots,n\}}{\min}\delta_i\big)^{-n}&+\|f\|_{L^2}y^{n-s_1-\ve}\big(\underset{i\in\{1,\ldots,n\}}{\min}\delta_i\big)^{s_1-n}\\
&+\|f\|_{L^2}y^{n-\hat s_1-\ve}\big(\underset{i\in\{1,\ldots,n\}}{\min}\delta_i\big)^{2(\hat s_1-n)},
\end{align*}
gives the result \eqref{finally} (with $\ve'=\ve/n$) also for $n>1$,
so long as $h_{\text{min}}<1$ (so that \eqref{approx} can be applied with
$h=h_{\text{min}}$). In the remaining case, $h_{\text{min}}\geq 1$,
viz.\ $A\leq (n-1)B$, we use instead the trivial fact that the left hand side of \eqref{finally} equals
\begin{align*}
\frac{1}{\nu(\Gamma\setminus\H^{n+1})}\int_{\Gamma\setminus\H^{n+1}}f(P)\,d\nu(P) + O\bigl(\|f\|_{L^{\infty}}\bigr),
\end{align*}
and here
\begin{align*}
\|f\|_{L^{\infty}}=A\leq A^{\frac{n-1}n}\big((n-1)B\big)^{\frac 1n}
=O\bigl(A^{\frac{n-1}n}B^{\frac 1n}\bigr).
\end{align*}
Hence \eqref{finally} holds also in this case.
\end{proof}

Theorem \ref{chi0thm} is easily seen to imply Theorem \ref{thmA} in the introduction, that is
\begin{multline*}
 \frac{1}{(\beta_1-\alpha_1)\cdots(\beta_n-\alpha_n)}\int_{\alpha_1}^{\beta_1}\cdots\int_{\alpha_n}^{\beta_n}f(u_1\vecom_1+\cdots+u_n\vecom_n,y)\,du_1\ldots
du_n\nonumber\\
\to\frac{1}{\nu(\Gamma\setminus\H^{n+1})}\int_{\Gamma\setminus\H^{n+1}}f(P)\,d\nu(P),
\end{multline*}
uniformly as $y\to0$ so long as $\beta_1-\alpha_1,\ldots,\beta_n-\alpha_n\geq y^{1/2-\ve}$. The fact that Theorem \ref{thmA} holds for any \textit{continuous} function $f$ of compact support on $\Gamma\setminus\H^{n+1}$ follows because of the following standard approximation fact: Given any continuous function $f$ of compact support on $\Gamma\setminus\H^{n+1}$, then for any $\ve'>0$ there exists a $C^\infty$-function $f_1$ on $\H^{n+1}$ which is $\Gamma$-invariant and of compact support on $\Gamma\setminus\H^{n+1}$, and such that $\|f-f_1\|_{L^\infty}<\ve'$.

\begin{remark}\label{finalremark}
The exponent $\frac{1}{2}$ in "$\beta_1-\alpha_1,\ldots,\beta_n-\alpha_n\geq y^{1/2-\ve}$" is in fact the best possible.  To see this, note that it follows from \eqref{ycoord} that for any $W=\matr{a}{b}{c}{d}\in\PSL(2,C_n)$ with $c\neq0$ we have
\begin{align}\label{horoball}
 W^{-1}\big(\R^n\times[B,\infty)\big)=\bigg\{\vecx+yi_n\in\H^{n+1}\,\Big|\, y\leq\sfrac{1}{B|c|^2}, |\vecx+c^{-1}d|\leq\sqrt{\sfrac{y}{B|c|^2}-y^2}\bigg\}
\end{align}
(cf.\ \cite[Rem.\ 2.1.4]{andreas2}). We now fix $B>B_0$ and $W\in\Gamma$ with $c\neq0$. Then $\eta=-c^{-1}d$ is a cusp equivalent to infinity and $W^{-1}\big(\R^n\times[B,\infty)\big)$ a horoball tangent to $\partial\H^{n+1}$ at $\eta$. If we keep $\beta_1-\alpha_1=\ldots=\beta_n-\alpha_n=k y^{1/2}$ for a small enough constant $k>0$, it follows from \eqref{horoball} that the box-shaped horosphere subset
\begin{align*}
 \B=\Big\{u_1\vecom_1+\cdots+u_n\vecom_n+yi_n \mid u_i\in[\alpha_i,\beta_i]\:\textrm{for}\:i=1,\ldots,n\Big\}
\end{align*}
 can be placed in such a way that, as $y\to0$, $\B$ is completely contained inside $W^{-1}\big(\R^n\times[B,\infty)\big)$. Thus the projection of $\B$ to $\Gamma\setminus\H^{n+1}$ stays inside the cuspidal region $\F_1(B)$ (cf.\ \eqref{cuspreg}). Since $B>B_0$ it follows that $\B$ is far from being equidistributed on $\Gamma\setminus\H^{n+1}$.
\end{remark}

\subsubsection*{Acknowledgement} I am grateful to Andreas Str\"ombergsson for many helpful and inspiring discussions on this work.

\end{document}